\newcommand{\Addresses}{{
		\bigskip
		\footnotesize
		
		\textsc{Department of Mathematics, Technion - Israel Institute of Technology, Haifa, Israel}\par\nopagebreak
		\textit{E-mail address:} \texttt{ofir.gor@technion.ac.il}

        \medskip

        \textsc{Department of Mathematical Sciences, Durham University, Stockton Road, Durham DH1 3LE}\par\nopagebreak
		\textit{E-mail address:} \texttt{mo-dick.wong@durham.ac.uk}
}}
\title{Multiplicative chaos measure for multiplicative functions: the $L^1$-regime}
\author{Ofir Gorodetsky, Mo Dick Wong}
\date{}
\theoremstyle{plain}
\newtheorem{thm}{Theorem}[section]
\newtheorem{lem}[thm]{Lemma}  
\newtheorem{proposition}[thm]{Proposition}
\newtheorem{cor}[thm]{Corollary}
\newtheorem{definition}[thm]{Definition}
\theoremstyle{remark}
\newtheorem{rem}{Remark}[section]
\newcommand{\PP}{\mathbb{P}}
\newcommand{\RR}{\mathbb{R}}
\newcommand{\CC}{\mathbb{C}}
\newcommand{\EE}{\mathbb{E}}
\newcommand{\NN}{\mathbb{N}}
\newcommand{\ZZ}{\mathbb{Z}}
\newcommand{\Fa}{\mathcal{F}}
\newcommand{\Ga}{\mathcal{G}}
\newcommand{\Na}{\mathcal{N}}
\newcommand{\Ma}{\mathcal{M}}
\newcommand{\Ea}{\mathcal{E}}
\newcommand{\Sa}{\mathcal{S}}
\newcommand{\Wa}{\mathcal{W}}
\newcommand{\Pa}{\mathcal{P}}
\newcommand{\Ia}{\mathcal{I}}
\newcommand{\OurEpsilon}{\varepsilon}
\newcommand{\weight}{\Phi}
\newcommand{\fname}{H}
\numberwithin{equation}{section}
\begin{document}

\maketitle

\begin{abstract}
Let $\alpha$ be a Steinhaus random multiplicative function. For a wide class of multiplicative functions $f$ we construct a multiplicative chaos measure arising from the Dirichlet series of $\alpha f$, in the whole $L^1$-regime. Our method does not rely on the thick point approach or Gaussian approximation, and uses a modified second moment method with the help of an approximate Girsanov theorem. We also employ the idea of weak convergence in $L^r$ to show that the limiting measure is independent of the choice of the approximation schemes, and this may be seen as a non-Gaussian analogue of Shamov's characterisation of multiplicative chaos.

Our class of $f$-s consists of those for which the mean value of $|f(p)|^2$ lies in $(0,1)$. In particular, it includes the indicator of sums of two squares. As an application of our construction, we establish a generalised central limit theorem for the (normalised) sums of $\alpha f$, with random variance determined by the total mass of our measure.
\end{abstract}
\section{Introduction}
Let $\alpha\colon \NN \to \CC$ be a Steinhaus random multiplicative function. This is a multiplicative function such that $(\alpha(p))_{p\text{ prime}}$ are i.i.d.~random variables uniformly distributed on $\{ z \in \CC: |z|=1\}$, and $\alpha(n) := \prod_{p}\alpha(p)^{a_p}$ if $n$ factorises into primes as $n=\prod_{p} p^{a_p}$. This function satisfies the orthogonality relation
\begin{equation}\label{eq:orth}
\EE [\alpha(n)\overline{\alpha}(m)]=\delta_{n,m}
\end{equation}
for all $n,m \in \NN$. We denote by $\Ma$ the set of (deterministic) multiplicative functions on $\NN$,
\[ \Ma := \{ f \colon \NN \to \CC : (n,m)=1 \implies f(nm)=f(n)f(m)\}.\]
Given $f\in \Ma$ we consider the random Euler product
\[A_y(s) := \prod_{p \le y} \bigg(\sum_{k \ge 0} \frac{\alpha(p^k) f(p^k)  }{p^{ks}}\bigg)=\sum_{\substack{n\ge 1\\p\mid n \implies p \le y}} \frac{\alpha(n)f(n)}{n^s}\]
where $y \ge 2$ and $\Re s >0$. 
Motivated by questions concerning the asymptotics for partial sums of random multiplicative functions (see \Cref{sec:app-rs}), we provide two constructions of a multiplicative chaos measure associated with $A_y$
in the entire subcritical ($L^1$) regime, one based on truncated Euler product (martingale approach) and the other based on approximation away from the critical line. In addition, we show that the two constructions give rise to the same limiting measure, based on a novel modified second moment method (see \Cref{sec:main-ideas} for further details) without Gaussian approximation or thick point analysis. Our result may also be seen as an example/partial answer to an open problem raised by Junnila in \cite{Jun2020} regarding the universality of non-Gaussian multiplicative chaos.

To describe the functions $f\in \Ma$ to which our construction applies we introduce a set of functions. Given $\theta\in \CC$ we denote by $\mathbf{P}_\theta$ the set  of functions
$g\colon \NN \to \CC$ that satisfy the following: the sum $\pi_{g}(t):=\sum_{p\le t} g(p)$ can be written as
\begin{equation}\label{eq:condition-P}
\pi_{g}(t) = \theta \frac{t}{\log t} + \Ea_g(t),
\end{equation}
where $\Ea_g(t) = o(t / \log t)$ as $t \to \infty$ and $\int_2^\infty t^{-2} |\Ea_g(t)|dt < \infty$.\footnote{
Let $\mathrm{Li}(t)=\int_{2}^{t} \tfrac{dv}{\log v}=\tfrac{t}{ \log t}  + O(\tfrac{t}{\log^2 t})$. Abusing notation, we may write \eqref{eq:condition-P} equivalently as
 $\pi_{g}(t) = \theta \mathrm{Li}(t) + \Ea_g(t)$ for $t\ge 2$, where $\Ea_g(t) = o(t / \log t)$ is such that $\int_2^\infty t^{-2} |\Ea_g(t)|dt < \infty$. We will use both formulations interchangeably.}
\begin{thm}\label{thm:mc-L1}
Let $\theta \in (0, 1)$, and $f\in \Ma$ be a function such that $|f|^2 \in \mathbf{P}_\theta$ and
\begin{equation}\label{eq:f-summability}
  \sum_p \left[\frac{|f(p)|^3}{p^{3/2}} \log^3 p
+ \left(\frac{|f(p^2)|^2}{p^{2}} +\sum_{k \ge 3} \frac{|f(p^k)|}{p^{k/2}} \right)\log^2 p\right] < \infty.
\end{equation}
Write 
\[\sigma_t = \frac{1}{2}\left(1 + \frac{1}{\log t}\right) \quad \text{and} \quad     m_{y, t}(ds) := \frac{|A_y(\sigma_t + is)|^2}{\EE\left[|A_y(\sigma_t + is)|^2\right]}ds, \quad s \in \RR.\]
Then there exists a non-trivial random Radon measure $m_\infty(ds)$ on $\RR$  such that the following are true: for any bounded interval $I$ and any test function $h \in C(I)$, we have
\[   (i) \quad m_{y, \infty}(h) \xrightarrow[y \to \infty]{L^r} m_\infty(h)
    \qquad \text{and} \qquad
  (ii) \quad   m_{\infty, t}(h) \xrightarrow[t \to \infty]{L^r} m_\infty(h)\]
for any $r \in [1, 1/\theta) \cap [1,2]$. In particular, the above convergence also holds in probability. Moreover,  the limiting measure $m_\infty(ds)$ is supported on $\RR$ and non-atomic almost surely.
\end{thm}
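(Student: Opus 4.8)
The plan is to construct $m_\infty$ as the limit of the truncated-Euler-product martingale on the critical line, to control all of its moments by a modified second moment method, and then to reconcile the second approximation scheme by weak convergence in $L^r$. The first point is that the two families in the theorem are nonnegative martingales in the prime cutoff: for $y'<y$, write $A_y(w+is)=A_{y'}(w+is)\,B_{y',y}(w+is)$ with $B_{y',y}(w+is):=\prod_{y'<p\le y}\big(\sum_{k\ge0}\alpha(p)^kf(p^k)p^{-k(w+is)}\big)$ independent of $\Fa_{y'}:=\sigma(\alpha(p):p\le y')$, and note that the normalised factor $|B_{y',y}(w+is)|^2/\EE|B_{y',y}(w+is)|^2$ has mean $1$; taking conditional expectation of the factorised density then gives $\EE[m_{y,t}(h)\mid\Fa_{y'}]=m_{y',t}(h)$ for each fixed $t$, and the identical computation works on the critical line $w=\tfrac12$ (the case $t=\infty$, which for fixed $y$ is simply the harmless deterministic limit $\sigma_t\to\tfrac12$). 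Since $m_{y,t}(|h|)\ge|m_{y,t}(h)|$ these martingales are bounded in $L^1$, hence converge a.s.; write $m_{\infty,t}(h)$ and $m_\infty(h):=\lim_{y\to\infty}m_{y,\infty}(h)$ for the limits, and use separability over a countable dense family of test functions to realise $m_\infty$ as a random Radon measure. It then remains to prove: (A) the critical-line martingale is bounded in $L^r$ for $r\in[1,1/\theta)\cap[1,2]$, giving uniform integrability, $L^r$-convergence, nontriviality, and part (i); (B) $m_{\infty,t}(h)\to m_\infty(h)$ in $L^r$, which is part (ii); and (C) full support and non-atomicity.

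For (A), expanding with the orthogonality relation \eqref{eq:orth} gives $\EE[m_{y,t}(h)^2]=\iint_{I\times I}h(s)h(s')K_{y,t}(s-s')\,ds\,ds'$, and using $|f|^2\in\mathbf{P}_\theta$, partial summation, and the summability \eqref{eq:f-summability} to absorb the prime-power corrections, one finds $K_{y,t}(\tau)\asymp(|\tau|+1/\log t)^{-2\theta}$ uniformly in $y$. For $\theta\in(0,\tfrac12)$ this is integrable against $|h|\otimes|h|$ uniformly in $y,t$, so the $L^2$ bound covers the whole range $r\in[1,2]$. For $\theta\in[\tfrac12,1)$ the second moment diverges as $t\to\infty$, and the heart of the proof is to replace $m_{y,t}$ by a \emph{modified} measure $\widetilde m_{y,t}$, obtained by inserting into the density the indicator of a good event $G$ — a barrier-type regularity condition on the partial Euler products $(A_z(\sigma_t+is))_{z\le y}$, set up directly rather than through thick points or a Gaussian surrogate — and to prove that (a) $\EE[\widetilde m_{y,t}(h)^2]$ is bounded uniformly in $y,t$, because $G$ removes precisely the divergent diagonal contribution, while (b) $\EE[|m_{y,t}(h)-\widetilde m_{y,t}(h)|]$ is negligible, because the first moment of $m_{y,t}$ concentrates on $G$. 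Interpolating (a) against (b) produces the $L^r$ bound for every $r<1/\theta$. The tool underlying both (a) and (b) is an approximate Girsanov theorem: size-biasing the law of $\alpha$ by $|A_y(\sigma_t+is_0)|^2/\EE|A_y(\sigma_t+is_0)|^2$ at a reference point $s_0$ turns $A_y(\sigma_t+is)$ into a deterministic shift of an untilted copy, up to a multiplicative error that is exactly what the cubic sum $\sum_p|f(p)|^3p^{-3/2}\log^3p$ in \eqref{eq:f-summability} controls. I expect making this Girsanov error uniformly negligible — in $y$, in $t$, and, most delicately, after integration over the reference points $s_0$ — to be the principal obstacle: unlike the exact Gaussian Cameron--Martin identity, the tilt leaves residual cumulant corrections at every prime, which must be shown not to accumulate.

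For (B), having in (A) an $L^r$ bound with $r>1$ for both $(m_{y,\infty}(h))_y$ and $(m_{\infty,t}(h))_t$, reflexivity of $L^r$ produces weakly convergent subsequences, and the task is to identify every subsequential weak limit with $m_\infty(h)$; this is a non-Gaussian analogue of Shamov's characterisation of multiplicative chaos. The law of such a limit, jointly with finitely many $\alpha$-dependent test functionals, is pinned down by the quantities $\EE[(\text{limit})\cdot\prod(\text{test functionals})]$, and the same orthogonality-plus-approximate-Girsanov computation as in (A) shows that these agree across the two schemes and coincide with those attached to $m_\infty(h)$. Hence the full sequences converge weakly to $m_\infty(h)$; one then upgrades weak to strong $L^r$ convergence via the Radon--Riesz property of $L^r$ ($1<r<\infty$) together with convergence of the $L^r$-norms (once more from the moment estimates), handling $r=1$ by uniform integrability and convergence in probability.

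Finally, for (C): nontriviality is immediate once the martingale is uniformly integrable, as $\EE[m_\infty(h)]=\lim_y\EE[m_{y,\infty}(h)]=\int_I h>0$ for nonnegative $h\not\equiv0$. For the support, fix an interval $I$ and write $m_\infty(ds)=D_N(s)\,\nu_N(ds)$, where $D_N(s)=|A_{p_N}(\tfrac12+is)|^2/\EE|A_{p_N}(\tfrac12+is)|^2$ is a strictly positive continuous function of $\{\alpha(p):p\le p_N\}$ and $\nu_N$ is the chaos measure built from $\{\alpha(p):p>p_N\}$; then $\{m_\infty(I)=0\}=\{\nu_N(I)=0\}$ is measurable with respect to $\sigma(\alpha(p):p>p_N)$ for every $N$, hence lies in the tail $\sigma$-algebra of $(\alpha(p))_p$, so Kolmogorov's $0$--$1$ law combined with $\EE[m_\infty(I)]=|I|>0$ forces $\PP(m_\infty(I)=0)=0$; applying this to all rational intervals shows that a.s.\ $\mathrm{supp}(m_\infty)=\RR$. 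For non-atomicity, the $L^1$-regime leaves room for a moment of order $q_0\in(1,1/\theta)$: a local version of the moment analysis above (or, equivalently, the approximate scaling relation for $m_\infty$) gives $\EE[m_\infty([x,x+\delta])^{q_0}]\le C\delta^{\xi(q_0)}$ uniformly in $x$, with $\xi(q_0)=(1+\theta)q_0-\theta q_0^2$, and one checks that $\xi(q_0)>1$ exactly for $q_0\in(1,1/\theta)$; a union bound over the $O(2^n)$ dyadic subintervals $I_j^{(n)}$ of $[-K,K]$ of length $2^{-n}$ then gives $\PP(\exists j:\,m_\infty(I_j^{(n)})\ge\varepsilon)\le C\varepsilon^{-q_0}2^{-n(\xi(q_0)-1)}\to0$, so letting $n\to\infty$, then $\varepsilon\to0$ and $K\to\infty$, excludes atoms almost surely.
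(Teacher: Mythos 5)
Your overall architecture (martingale construction of $m_\infty$, uniform $L^r$ bounds for $r<1/\theta$, identification of the off-critical approximation by testing, Kolmogorov 0--1 law for the support, local $q_0$-moments for non-atomicity) overlaps with the paper at several points, and your weak-convergence identification of subsequential limits of $m_{\infty,t}(h)$ --- testing against bounded $\Fa_k$-measurable variables and using the martingale projection --- is essentially the argument the paper itself uses for its ``cross term''. The decisive gap is in your step (B), the upgrade from weak to strong convergence. Radon--Riesz requires convergence of the norms, $\EE[m_{\infty,t}(h)^r]\to\EE[m_\infty(h)^r]$, and you claim this comes ``once more from the moment estimates''; but those estimates give only \emph{uniform boundedness} of the norms, not their convergence, and for the non-integer exponents $r\in(1,1/\theta)$ which are the only ones available when $\theta\ge\tfrac12$ (the second moment diverges) there is no expansion that lets you compute $\lim_t\EE[m_{\infty,t}(h)^r]$. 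Indeed, given weak convergence, norm convergence is \emph{equivalent} to the strong convergence you are trying to establish, so as written the step is circular, and weak $L^r$ convergence alone does not even yield convergence in probability. This is exactly the point where the paper deploys its modified second moment method: it shows that quantities of the form $\EE\big[|\nu_{\infty,y}(h)-\nu_{y,y}(h)|^2 e^{-\nu_{y,y}(I)/K}\big]$ and $\EE\big[|\nu_{y,\infty}(h)-\nu_{y,y}(h)|^2 e^{-\nu_{y,\infty}(I)/K}\big]$ vanish, the damping factor $e^{-\nu(I)/K}$ keeping all expanded terms finite beyond the $L^2$ threshold; the cross term is handled by the weak-convergence lemma you rediscovered, while the diagonal term is compared to the martingale term through a change of measure in which the approximate Girsanov theorem enters as a \emph{coupling} bound ($\EE|U_a-U|\ll|a|$, $\EE|U_a-U|^2\ll|a|$ for tilted Steinhaus variables), combined with a tensorisation lemma and the non-atomicity of the limit to control merging singularities. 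Your proposal invokes an approximate Girsanov theorem only inside the moment/barrier computation of (A) and supplies no comparable mechanism in (B); consequently part (ii) of the theorem is not proved.

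A secondary, repairable divergence: for the uniform $L^r$ bounds in the range $\theta\in[\tfrac12,1)$ you reintroduce a barrier-type good event and propose to ``interpolate'' the bounded second moment of the modified measure against the small first moment of the discarded part, whereas the paper avoids good events entirely and obtains the bound (its Lemma on uniform integrability) from the multifractal estimate $\EE[\nu_{y,t}([0,\OurEpsilon])^q]\ll \OurEpsilon^{q}\,\delta^{-\theta q(q'-1)}$ via a dyadic decomposition and a self-similar iteration. Your route is of a type known to work, but the interpolation is under-specified: $\EE[(m-\widetilde m)^r]$ for $r>1$ is not controlled by $\EE[m-\widetilde m]$ alone, so you must balance a quantitative (polynomial in $\log$) blow-up of the unrestricted second moment against a quantitative decay of the bad-event contribution, none of which is set up in your sketch. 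Your support argument matches the paper's, and your non-atomicity exponent $\xi(q_0)=(1+\theta)q_0-\theta q_0^2>1$ for $q_0\in(1,1/\theta)$ is correct and equivalent to the paper's multifractal-based proof, provided the local moment bound is actually carried over to the limiting measure (in the paper this is the multifractal lemma plus Fatou).
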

\begin{rem}
Since the case $\theta \in (0, 1/2)$ was treated in \cite{GW}, we shall focus on the $L^1$-regime $\theta \in [1/2, 1)$ and $L^r$-convergence with $r \in [1, 1/\theta)$ in this paper.
\end{rem}
As an immediate corollary, proved at the end of \Cref{sec:measure}, we have:
\begin{cor}\label{cor:mc-L1}
Under the same assumption on $f$ as in \Cref{thm:mc-L1}, let $t(y)$ be any sequence satisfying $0 < t(y) \to \infty$ as $y \to \infty$. Then for any $h \in L^1(\RR)$, we have
\[m_{y, t(y)}(h) \xrightarrow[y \to \infty]{L^1} m_\infty(h).\]
In particular, the convergence above also holds in probability.
\end{cor}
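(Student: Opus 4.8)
The plan is to reduce to continuous, compactly supported test functions by a density argument, and then to exploit the martingale structure of $(m_{y,t})_{y}$ in the Euler-product truncation parameter $y$ in order to upgrade the one-parameter convergence of \Cref{thm:mc-L1}(ii) to the diagonal.

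First I would record the two elementary facts that make the density reduction work: by the very normalisation of $m_{y,t}$ one has $\EE[m_{y,t}(ds)] = ds$ for all $y,t$, and hence, using \Cref{thm:mc-L1}(i) together with monotone convergence, $\EE[m_\infty(\phi)] = \|\phi\|_{L^1}$ for every nonnegative $\phi \in L^1(\RR)$. Given $h \in L^1(\RR)$ and $\OurEpsilon > 0$, choose $g \in C(I)$ (for some bounded interval $I$) with $\|h-g\|_{L^1} < \OurEpsilon$; the triangle inequality then bounds $\EE|m_{y,t(y)}(h) - m_\infty(h)|$ by $\EE[m_{y,t(y)}(|h-g|)] + \EE|m_{y,t(y)}(g) - m_\infty(g)| + \EE[m_\infty(|h-g|)] < 2\OurEpsilon + \EE|m_{y,t(y)}(g) - m_\infty(g)|$, so it suffices to prove the statement for $h \in C(I)$.

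For such $h$, the key observation is that for each fixed $t$ the process $(A_y(\sigma_t + is))_y$ is a martingale for the filtration $\mathcal{F}_y := \sigma(\alpha(p) : p \le y)$ — each new Euler factor has mean $1$ — so that $(m_{y,t}(h))_y$ is an $L^1$-bounded martingale. In the off-critical regime underlying the construction of $m_{\infty,t}$ one has $m_{y,t}(h) \to m_{\infty,t}(h)$ in $L^1$ as $y \to \infty$ (here, for fixed $\sigma_t > 1/2$, the relevant variances $\sum_n |f(n)|^2 n^{-2\sigma_t}$ are finite and bounded in $y$, so one closes the argument with Doob's maximal inequality and Scheffé's lemma), hence the martingale is closed: $m_{y,t}(h) = \EE[m_{\infty,t}(h) \mid \mathcal{F}_y]$. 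Writing $t = t(y)$ and applying Jensen,
\begin{align*}
\EE\big| m_{y,t(y)}(h) - m_\infty(h) \big|
&= \EE\big| \EE[m_{\infty,t(y)}(h) \mid \mathcal{F}_y] - m_\infty(h) \big| \\
&\le \EE\big| m_{\infty,t(y)}(h) - m_\infty(h) \big| + \EE\big| \EE[m_\infty(h) \mid \mathcal{F}_y] - m_\infty(h) \big|.
\end{align*}
Since $t(y) \to \infty$, the first term vanishes as $y \to \infty$ by \Cref{thm:mc-L1}(ii); the second vanishes by Lévy's upward theorem, as $m_\infty(h) \in L^1$ is $\mathcal{F}_\infty$-measurable and $\mathcal{F}_y \uparrow \mathcal{F}_\infty$. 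Together with the density step this gives the corollary, and convergence in probability is immediate from $L^1$-convergence.

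The main obstacle is the martingale-closure identity $m_{y,t}(h) = \EE[m_{\infty,t}(h) \mid \mathcal{F}_y]$, equivalently the uniform integrability in $y$ of the truncation martingale at a fixed finite $t$; this should already be contained in the construction of the away-from-the-line approximants, but it is the one input beyond \Cref{thm:mc-L1}(i)--(ii) that one must verify. A more self-contained but laborious alternative would be to prove directly, via the modified second moment method of the main text, the joint estimate $\EE|m_{y,t}(h) - m_\infty(h)| \to 0$ as $\min(y,t) \to \infty$, from which the corollary is then immediate.
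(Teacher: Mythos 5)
Your argument is correct and is essentially the paper's proof: both rest on the closure identity $m_{y,t}(h)=\EE[m_{\infty,t}(h)\mid \Fa_y]$ for fixed $t$, conditional Jensen's inequality, parts (i) and (ii) of \Cref{thm:mc-L1} (your use of L\'evy's upward theorem is interchangeable with the paper's direct appeal to part (i), since $m_{y,\infty}(h)=\EE[m_\infty(h)\mid\Fa_y]$ by uniform integrability), and a density argument in $L^1(\RR)$ using $\EE[m_{y,t}(ds)]=ds$ and $\EE[m_\infty(\phi)]\le\|\phi\|_1$. The only difference is cosmetic (you do the density reduction first, the paper last), and the off-critical $L^2$-boundedness you flag for the closure identity is indeed the right, easy justification.
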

\Cref{thm:mc-L1} strengthens our earlier result \cite[Theorem 4.1]{GW} where the restriction $\theta < 1/2$ was imposed. Examples of functions satisfying the conditions of \Cref{thm:mc-L1} include
\begin{itemize}
    \item Any $f\in \Ma$ taking the values $0$ and $1$, such that $\sum_{f(p)=1,\,p\le x}1=(1+O(\log^{-\delta} x))\theta x/\log x$ holds for some $\theta \in (0,1)$ and $\delta>0$. An explicit example of such an $f$ is the indicator of sums of two squares, with $\theta=1/2$. Indeed, it is known to be multiplicative and it takes the value $1$ on primes $p\not\equiv 3\bmod 4$. The Prime Number Theorem in Arithmetic Progressions implies $\sum_{p\equiv 1\bmod 4,\, p \le x} 1 = (1+O(1/\log x))x/(2\log x)$ \cite[Theorem~11.20]{MV}.
    \item The function $f(n)=d_{\sqrt{\theta}}(n)$ for $\theta \in (0,1)$, where $d_z$ stands for the $z$-fold divisor function. Similarly, $f(n)=\theta^{\Omega(n)/2}$ and $f(n)=\theta^{\omega(n)/2}$ where $\theta \in (0,1)$, $\Omega$ counts prime factors with multiplicity and $\omega$ counts them without. These $f$ are in $\mathbf{P}_\theta$ as a direct consequence of the Prime Number Theorem with Error Term \cite[Theorem~6.9]{MV}.
\end{itemize}
\subsection{Application to random sums}\label{sec:app-rs}
Given $f \in \Ma$ we consider the random sum
\[S_x=S_{x,f}:=\frac{1}{\sqrt{\sum_{n\le x} |f(n)|^2}}\sum_{n\le x} \alpha(n) f(n).\]
We refer the reader to \cite{GW} for a discussion of the motivation for studying $S_x$ and overview of previous works, and also the recent paper of Hardy \cite{Har2025} for progress towards critical case. We describe the limiting distribution of $S_x$ as $x\to \infty$ for a wide class of $f$-s. 
\begin{thm}\label{thm:summain}
Let $f \in \Ma$. Suppose the following conditions are satisfied.
\begin{itemize}
\item[(a)] There is $\theta \in (0,1)$ such that $|f|^2 \in \mathbf{P}_{\theta}$.
\item[(b)] There is $c>0$ such that $|f(p^k)|^2=O(2^{k(1-c)})$ holds for all $k\ge 2$ and primes $p$, and $\sum_{p} |f(p)|^3\tfrac{\log^5p}{p^{3/2}}<\infty$.
\end{itemize}
Then we have
\[    S_x \xrightarrow[x \to \infty]{d} \sqrt{V_\infty} \ G\]
where $\displaystyle V_\infty:= \frac{1}{2\pi}\int_{\RR} \frac{m_\infty(ds)}{|\tfrac{1}{2} + is|^2}$ is almost surely finite and strictly positive ($m_\infty$ is as defined in \Cref{thm:mc-L1}), and is independent of $G \sim \Na_\CC(0, 1)$. 
Moreover, the convergence in distribution is stable in the sense of \Cref{def:stable}.
\end{thm}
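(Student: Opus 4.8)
The plan is to combine two ingredients: (A) a Perron-type reduction of $S_x$ to a contour integral of the Dirichlet series of $\alpha f$ close to the critical line, whose quadratic structure is governed by $\tfrac1{2\pi}\int \frac{m(dt)}{|\tfrac12+it|^2}$, and (B) a conditional central limit theorem yielding Gaussianity, in which the hypothesis $\theta<1$ is used in an essential way. Throughout set $c_x:=\sum_{n\le x}|f(n)|^2$; since $|f|^2\in\mathbf{P}_\theta$ and hypothesis (b) controls prime powers, a Selberg--Delange/Wirsing estimate gives $c_x\asymp x(\log x)^{\theta-1}$, while the orthogonality relation \eqref{eq:orth} already yields $\EE|S_x|^2=1$ and $\EE S_x^2=1/c_x\to0$, so the limit must be circularly symmetric with unit second moment — consistent with $\EE V_\infty=\tfrac1{2\pi}\int_\RR\frac{dt}{1/4+t^2}=1$ (using $\EE m_\infty(dt)=dt$).

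\textbf{Step 1 (reduction to the critical line).} By Perron's formula on the vertical line $\mathrm{Re}(s)=\sigma_x$ — legitimate since $\sum_{n\le x}\alpha(n)f(n)n^{-s}$ is a Dirichlet polynomial and the only pole of $x^s/s$, at $s=0$, is not crossed — and using that on $\mathrm{Re}(s)>1/2$ the series of $\alpha f$ converges in $L^2$ and equals the Euler product $A_\infty(\sigma_x+it)=\lim_{y\to\infty}A_y(\sigma_x+it)$ (the terms $n>x$ integrating to zero against $x^s/s$ up to a negligible conditional-convergence correction), one obtains
\[ S_x=\frac{x^{\sigma_x}}{\sqrt{c_x}}\cdot\frac1{2\pi}\int_\RR A_\infty(\sigma_x+it)\,\frac{x^{it}}{\sigma_x+it}\,dt+o_{L^2}(1), \]
the error (Perron truncation, the $x$-smooth tail, and large prime powers) being bounded in $L^2$ via \eqref{eq:orth} and the summability in (b). The kernel $x^s/s$ on $\mathrm{Re}(s)=1/2$ is responsible for the weight $|\tfrac12+it|^{-2}$: squaring, averaging over the oscillation $x^{it}$ (period $\asymp1/\log x$, the decorrelation scale of $A_\infty$ near $\mathrm{Re}(s)=1/2$), and noting $x^{2\sigma_x}\,\EE|A_\infty(\sigma_x+it)|^2/((\log x)\,c_x)\to\text{const}$ (as $\EE|A_\infty(\sigma_x+it)|^2\asymp(\log x)^\theta$), this is the analytic origin of $V_\infty=\tfrac1{2\pi}\int_\RR\frac{m_\infty(dt)}{|\tfrac12+it|^2}$.

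\textbf{Step 2 (blocked CLT and identification of the variance).} Fix $\beta\in(0,1)$, factor $A_\infty=A_{x^\beta}\,B$ with $B=\prod_{p>x^\beta}(\cdots)$, and condition on $\Fa_{x^\beta}:=\sigma(\alpha(p):p\le x^\beta)$. Since $B(\sigma_x+it),B(\sigma_x+it')$ decorrelate once $|t-t'|\gg1/\log x$, partitioning the $t$-integral into windows of width $\eta_x$ with $1/\log x\ll\eta_x\ll(\log x)^{-\theta}$ presents it as a superposition of conditionally almost independent, negligible contributions. The Lindeberg negligibility — that no window carries a non-vanishing share of the conditional variance — holds because $\max_t|A_{x^\beta}(\sigma_x+it)|^2$ is of order $(\log x)^{2\theta}$ while $\int|A_{x^\beta}(\sigma_x+it)|^2|\sigma_x+it|^{-2}\,dt\asymp(\log x)^\theta$, so the ratio is $\ll(\log x)^\theta\eta_x\to0$ exactly when $\theta<1$; this is where the upper endpoint of the $L^1$-regime enters. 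A blocked CLT then gives, conditionally on $\Fa_{x^\beta}$ and with probability tending to one, that the right-hand side of Step 1 is asymptotically a circularly symmetric complex Gaussian with conditional variance $V^{(\beta)}_x:=\tfrac1{2\pi}\int_\RR\frac{m_{x^\beta,x}(dt)}{|\sigma_x+it|^2}$, the normalisation fixed by $\EE|S_x|^2=1$. By \Cref{cor:mc-L1} with $t(y)=y^{1/\beta}$ (so that $m_{x^\beta,x}(h)\to m_\infty(h)$ in $L^1$ for $h(t)=1/|\tfrac12+it|^2\in L^1(\RR)$), together with $\sigma_x\downarrow\tfrac12$ and the uniform tail bound from $\EE m_{x^\beta,x}(dt)=dt$, we get $V^{(\beta)}_x\to V_\infty$ in $L^1$ as $x\to\infty$ for each fixed $\beta$.

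\textbf{Step 3 (conclusion, stability, positivity) and the main obstacle.} Taking characteristic functions and combining Steps 1--2,
\[ \EE\!\left[e^{\,i\,\mathrm{Re}(\bar\lambda S_x)}\right]=\EE\!\left[e^{-\frac14|\lambda|^2 V^{(\beta)}_x}\right]+o(1)\xrightarrow[x\to\infty]{}\EE\!\left[e^{-\frac14|\lambda|^2 V_\infty}\right]=\EE\!\left[e^{\,i\,\mathrm{Re}(\bar\lambda\sqrt{V_\infty}\,G)}\right], \]
where the first step uses $|e^{-\frac14|\lambda|^2 V^{(\beta)}_x}|\le1$ (dominated convergence) and the limit is the characteristic function of $\sqrt{V_\infty}G$; by Lévy's continuity theorem (tightness from $\EE|S_x|^2=1$) this gives $S_x\xrightarrow{d}\sqrt{V_\infty}G$. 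Here $V_\infty<\infty$ a.s.\ since $\EE V_\infty=1$, and $V_\infty>0$ a.s.\ since by \Cref{thm:mc-L1} $m_\infty$ has full support on $\RR$ almost surely, hence charges every neighbourhood of $0$. For the stable refinement one carries a bounded random variable $Z$ measurable with respect to the $\sigma$-algebra of \Cref{def:stable} through Steps 1--3: the Gaussian fluctuation in Step 2 comes from the primes in $(x^\beta,x]$, asymptotically independent of the chaos — hence, in the limit, of that $\sigma$-algebra — so the same computation yields $\EE[e^{\,i\,\mathrm{Re}(\bar\lambda S_x)}Z]\to\EE[e^{-\frac14|\lambda|^2 V_\infty}Z]$. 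The main obstacle is that $\theta$ ranges over all of $(0,1)$: once $\theta\ge\tfrac12$ the quantity $m_{x^\beta,x}(h)$ is not bounded in $L^2$, so a naive second-moment control of $S_x$ fails, and Steps 1--2 must be run with only $L^1$-robust estimates — that is, using the modified second moment method and approximate Girsanov theorem underlying \Cref{thm:mc-L1} rather than brute-force fourth-moment bounds — with the double limit made rigorous through errors uniform in the relevant parameters. The two analytic inputs needing most care are the Lindeberg negligibility of Step 2 (precisely where $\theta<1$ is used) and the Perron and prime-power error estimates of Step 1, which is what the slightly strengthened hypothesis (b), compared with \eqref{eq:f-summability}, is designed to supply.
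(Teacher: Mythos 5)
Your outline is not the paper's argument (the paper runs a complex martingale CLT, \Cref{lem:mCLT}, on the decomposition of $S_x$ by largest prime factor, with a truncation on the second-largest prime factor and a Plancherel identity to compute the bracket process), and as written it has genuine gaps at both of its load-bearing steps. In Step 1 you assert that $S_x$ equals, up to $o_{L^2}(1)$, a full-line Perron integral of $A_\infty(\sigma_x+it)\,x^{it}/(\sigma_x+it)$. This is a substantial unproven claim: the integral is only conditionally convergent (the integrand decays like $1/|t|$ while $|A_\infty|$ does not decay), the identification of the Dirichlet series with the limiting Euler product $A_\infty$ in $\tfrac12<\Re s\le 1$ needs an argument, and sharp-cutoff Perron errors are not obviously $o_{L^2}(1)$. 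The paper never needs any pointwise contour representation of $S_x$; it only uses Plancherel for the quadratic averages $\int_0^\infty |s_{t,y}|^2\,t^{-1-r}dt$ appearing in the bracket process (\Cref{lem:plancherelapp}), which is an exact $L^2$ identity.

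The more serious gap is in Step 2. With $\beta$ fixed, the $x^\beta$-smooth integers $n\le x$ carry a positive proportion of the variance of $S_x$ and are $\Fa_{x^\beta}$-measurable, so conditionally on $\Fa_{x^\beta}$ they are not part of any Gaussian fluctuation; your decomposition ignores them. More importantly, you identify the conditional variance of the large-prime contribution with $\tfrac1{2\pi}\int m_{x^\beta,x}(dt)/|\sigma_x+it|^2$ "with the normalisation fixed by $\EE|S_x|^2=1$" — that is circular. Computing this variance is exactly the hard part of the theorem: the contribution of integers with $P(n)=p$ involves sums over $p$-smooth (not $x^\beta$-smooth) numbers of size $x/p$, so the smoothness parameter varies with the summation variable, and the resulting constant is a Dickman-type quantity, not $1$. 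The paper resolves this with the truncation $S^{\weight}_{x,\OurEpsilon,\delta}$ (restricting $P(n/P(n))<\min I_k$, justified in \Cref{lem:negdelta}), computes the bracket in \Cref{prop:bracket} via \Cref{cor:mc-L1}, and only recovers the correct normalisation through the constants $C_{\OurEpsilon,\delta}$ and the identity $\Gamma(\theta+1)\int_0^\infty\rho_\theta(v)(1+v)^{-\theta}dv=1$ in the double limit $\delta\to0^+$, $\OurEpsilon\to0^+$. Your "blocked CLT" and Lindeberg claims are likewise only heuristic (and the bound $\max_t|A_{x^\beta}(\sigma_x+it)|^2\asymp(\log x)^{2\theta}$ is not correct as stated), whereas the paper verifies the Lindeberg condition by an explicit fourth-moment bound (\Cref{lem:lind}) and gets stable convergence for free from the martingale CLT. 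So the proposal identifies the right ingredients ($m_\infty$, the weight $|\tfrac12+is|^{-2}$, \Cref{cor:mc-L1}) but does not contain a proof of either the reduction or the variance identification.
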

We also obtain convergence of moments.
\begin{cor}\label{cor:mom-convergence}
Under the same setting as \Cref{thm:summain}, for any fixed $0 \le q \le 1$, 
\[     \lim_{x \to \infty} \EE\left[|S_x|^{2q}\right] = \Gamma(1+q) \EE\left[V_\infty^q\right]. \]
\end{cor}
Given \Cref{thm:summain}, the deduction of \Cref{cor:mom-convergence} is identical to the proof of Corollary 1.2 in \cite{GW}, and we omit the details here. \Cref{thm:summain} and \Cref{cor:mom-convergence} resolve Conjecture 1.5 from our earlier paper \cite{GW}, except for the range of $q$ in \Cref{cor:mom-convergence} being suboptimal.

The case where $f$ is taken to be the indicator of sums of two squares in \Cref{thm:summain} is of particular interest. Soundararajan and Xu \cite[Corollary~1.2]{SoundXu} proved that in this case, if we let $I=[x-h,x]$ where $h=h(x)$ is a function of $x$, then $\sum_{n \in I}\alpha(n)f(n) / \sqrt{\sum_{n \in I}|f(n)|^2}\xrightarrow[x \to \infty]{d}  G$ where $G \sim \Na_\CC(0, 1)$, as long as $h$ is sublinear in $x$ ($\lim_{x\to \infty} h/x=0$) and is not too small ($h>x^{1/3}$). \Cref{thm:summain} shows that once $h$ is linear in $x$ the distribution changes dramatically. In \Cref{thm:summainw} we prove a weighted version of \Cref{thm:summain} which allows one to restrict $n$ to lie in intervals of the shape $I=[cx,x]$ for any $c\in (0,1)$.
\subsection{Main ideas}\label{sec:main-ideas}
\paragraph{Construction of multiplicative chaos.}
To establish \Cref{thm:mc-L1}, it suffices to prove an analogous statement for the random measures
\[ \nu_{y, t}(ds) := \frac{\exp\left(\Ga_{y, t}(s)\right)}{\EE\exp\left(\Ga_{y, t}(s)\right)} ds
\qquad \text{with} \qquad 
\Ga_{y, t}(s) = \sum_{p \le y} G_{p, t}(s), \quad G_{p, t}(s) := 2 \Re \frac{f(p) \alpha(p)}{p^{\sigma_t + is}}, \]
i.e.~there exists a random measure $\nu_\infty$ such that $\nu_{y, \infty}, \nu_{\infty, y} \xrightarrow[y \to \infty]{p} \nu_\infty$ as $y \to \infty$. The claim for the sequence $(\nu_{y, \infty})_{y}$ (originating from truncated Euler product) readily follows from martingale convergence theorem provided one can establish uniform integrability (for non-trivial limits); it is the other approximation sequence $(\nu_{\infty, y})_y$ that requires extra work. To show both approximation sequences lead to the same limiting measure, it is very natural to apply a second moment method, i.e.~one would like to verify that
\[   \EE\left[ |\nu_{y, \infty}(h) - \nu_{\infty, y}(h)|^2\right] \xrightarrow[y \to \infty]{} 0 \]
by expanding the expectation on the left-hand side and evaluating limits separately. This approach was implemented in \cite{GW} when $\theta <1/2$. The issue about this approach is that beyond the $L^2$-regime (i.e.~when $\theta \in [1/2, 1)$) the individual terms blow up as $y \to \infty$ since e.g.
\[    \EE\left[ \nu_{y, t}(I)^2 \right] \asymp \int_{I \times I} \frac{du dv}{ \left(|u - v| \vee \log^{-1} \min(y, t)\right)^{2\theta}} \xrightarrow[y, t \to \infty]{} \infty. \]
To circumvent the problem, Berestycki \cite{Berestycki} used the idea of thick points (also known as barriers) in the construction of Gaussian multiplicative chaos (GMC) and introduced good events to avoid blow-up in moment computations. At a high level, this requires restricting the support of the approximating measures to points at which the value of the underlying fields are not exceptional large. Here we adopt instead a novel modified second moment method, which amounts to analysing expressions like
\begin{equation}\label{eq:mod2ndmom}
    \EE\left[ |\nu_{y, \infty}(h) - \nu_{\infty, y}(h)|^2 e^{-\nu_{y, \infty}(I) / K}\right]
\end{equation}
and showing that it vanishes as $y \to \infty$ for any fixed $K > 0$ (see \Cref{sec:approximate-away-critical} for the actual modified second moments studied). In simple terms, the introduction of the extra exponential factor automatically penalises events that are responsible for the blow-up of the usual 2nd moment (i.e.~when $\nu_{y, \infty}(h)$ or $\nu_{\infty, y}(h)$ are atypically large). Unlike Berestycki's approach which is based on imposing a thickness/barrier condition on the support of the limiting measure $\nu_\infty$, our analysis only requires the much weaker information that $\nu_\infty$ is non-atomic, which may be obtained for free from earlier uniform integrability estimates.

At first glance, it may come as a surprise that one could analyse \eqref{eq:mod2ndmom} by expanding the terms and evaluating limits separately given its complicated form. It turns out that this could be achieved by a simple change-of-measure argument in the spirit of Cameron--Martin--Girsanov theorem (in \Cref{sec:pf-ch-measure} we provide a further discussion of our philosophy before the detailed proof). Even though our underlying random variables are non-Gaussian, this could be made possible by our approximate Girsanov theorem (see \Cref{theo:Girsanov}) which may be of independent interest. Another innovation in our method is the use of the concept of weak convergence in $L^r$, which not only helps simplify computations but also provides a way to establish the universality of non-Gaussian multiplicative chaos (i.e.~its construction is independent of the choice of approximation schemes, see \Cref{sec:weakpf}). This strategy shares a similar philosophy with Shamov's characterisation of subcritical Gaussian multiplicative chaos \cite{Sha2016}, and can be seen as a substitute for his randomised shift in the non-Gaussian setup.

Our method does not rely on any Gaussian approximations, which is in contrast to various works on non-Gaussian multiplicative chaos in recent years. The work of Saksman and Webb \cite{sw}, for instance, studies the martingale sequence $(\nu_{y, \infty})_y$ when $f(p) \equiv \theta$ by constructing a coupling between $(\alpha(p))_p$ and a collection of i.i.d.~Gaussian random variables. With careful analysis, it is likely that their technique could be extended to more general twists $f$ like the ones considered in the current article, but the method does not work very well for the approximation away from the critical line since this would introduce a collection of couplings indexed by $y$, i.e.~they may not be defined on a common probability space and the convergence of $\nu_{\infty, y}$ as $y \to \infty$ would hold only in the sense of distribution (and additional efforts would be needed to identify and upgrade the limit). Resolving a conjecture of Kim and Kriechbaum \cite{KK2024}, Chowdhury and Ganguly \cite{CG2025} introduce a new Gaussian approximation technique, but their result does not cover Steinhaus random variables (the real and imaginary parts of which are orthogonal but not independent) and their proof technique does not extend immediately because of the use of Skorokhod embedding.
\paragraph{Martingale central limit theorem for random multiplicative functions.}
In \Cref{sec:randomsum} we undertake the proof of \Cref{thm:summain}, in which we take inspiration from the recent work of Najnudel, Paquette, Simm and Vu \cite{NPSV} on holomorphic multiplicative chaos. The sum $S_{x}$ has a natural martingale structure with respect to the filtration $\Fa_{p} := \sigma(\alpha(q), q \le p)$. Indeed, $S_x = \sum_{p \le x} Z_{x,p}$ where $(Z_{x,p})_{p\le x}$ is a martingale difference sequence, namely
\[ Z_{x,p} = \big(\sum_{n\le x} |f(n)|^2\big)^{-\frac{1}{2}}\sum_{\substack{n\le x \\P(n) = p}} \alpha(n)f(n)\]
where $P(n)$ stands for the largest prime factor of $n$. The random variable $ Z_{x,p}$ is determined by $\Fa_p$. It is then natural to apply a martingale central limit theorem to $(S_x)_x$; Harper was the first to successfully apply such a theorem when $f$ is the indicator of product of $k$ primes \cite{Harper}. The main difficulty in applying such a theorem is the computation of the so-called bracket process, which in our case is
\[T_{x} :=\sum_{p\le x} \EE\left[ |Z_{x,p}|^2 \mid \Fa_{p^-}\right] \approx (\sum_{n\le x}|f(n)|^2)^{-1} \sum_{p \le x} |f(p)|^2 \big|\sum_{\substack{m\le x/p\\P(m)<p}} f(m) \alpha(m)\big|^2.\]
If we introduce
\[ s_{t,y} := t^{-\frac{1}{2}}\sum_{\substack{n\le t\\ P(n) \le y}}\alpha(n) f(n)\]
then 
\[T_{x}\approx x\big(\sum_{n\le x}|f(n)|^2\big)^{-1} \sum_{p \le x} \frac{|f(p)|^2}{p} |s_{x/p,p}|^2.\]
If we let
\[R(t):=\sum_{p \le t}\frac{|f(p)|^2}{p} \approx \theta \log \log t\]
then, since $R'(t) \approx 1/(t\log t)$,
\begin{equation}\label{eq:Txapprox} 
T_x  \approx \theta x\big(\sum_{n\le x}|f(n)|^2\big)^{-1} \int_{2}^{x} \frac{|s_{x/t,t}|^2 dt}{t\log t}.
\end{equation}
From \Cref{cor:mc-L1} and Plancherel's theorem, we may obtain the limit of a somewhat similar expression, namely
\begin{equation}\label{eq:planc}
x\big(\sum_{n\le x}|f(n)|^2\big)^{-1} \int_{0}^{\infty} q(t^{1/\log x})\frac{|s_{x/t,x^a}|^2 dt}{t\log x}
\end{equation}
for any fixed $a>0$ and any `nice' function $q$, see \Cref{lem:plancherelapp} and \Cref{cor:sandwich}. The main difference between \eqref{eq:Txapprox} and \eqref{eq:planc}, however, is that in \eqref{eq:planc} the `smoothness parameter' in $s_{x/t,x^a}$, namely $x^a$, does not depend on the integration variable $t$, while in \eqref{eq:Txapprox} the smoothness parameter in $s_{x/t,t}$ is $t$, the integration variable itself. 

To circumvent this issue, we modify $S_x$. We divide the primes in $[2,x]$ into finitely many disjoint intervals $I_k=(x_k,x_{k+1}]$ ($x_k$ an increasing sequence, $0\le k \le K$), and if $n\le x$ has $P(n) \in I_k$, we `keep' this $n$ in the modified version of $S_x$ only if $k\ge 1$ and if $P(n/P(n))$ (the second largest prime factor of $n$, counting multiplicity) does not exceed $\inf I_k=x_k$. In this way, the new bracket process takes the shape
\begin{equation}\label{eq:newbracket}
T'_x \approx x \big(\sum_{n \le x}|f(n)|^2\big)^{-1} \sum_{k\ge 1} \sum_{p\in I_k} \frac{|f(p)|^2}{p} |s_{x/p, x_k}|^2 \approx \theta x  \big(\sum_{n \le x}|f(n)|^2\big)^{-1} \sum_{k\ge 1} \int_{x_k}^{x_{k+1}} \frac{|s_{x/t,x_k}|^2 dt}{t\log t} .
\end{equation}
For a given $k$, the smoothness parameter is now fixed within the integral, namely it is $x_k$. This allows us to handle the $k$th integral using \eqref{eq:planc}. One has to justify working with this modified $S_x$, and this is done in \Cref{lem:negdelta} and its proof. 

A complication that does not arise in \cite{NPSV} is the fact that $T'_x$ is a sum over primes, and we want to replace it by an integral against $dt/\log t$ (the density of primes); this is exactly the second approximation in \eqref{eq:newbracket}. We justify it in \Cref{lem:closel1}.

Before we continue, let us also mention a recent work of Garban and Vargas \cite{GV2023}, where a similar distributional result has been established in the high-frequency limit for the Fourier coefficients of real GMCs in the analogous subcritical regime. The approach there is based on a very different conditioning argument, but does not apply to our present problem due to their use of the cone construction/white-noise decomposition of logarithmically correlated fields. More specifically, this technique introduces independence at suitable scales crucial to their novel application of a result from the literature of Stein's method on weakly dependent variables. This decomposition breaks down immediately, however, when one studies non-Gaussian fields coming from e.g.~the Euler product associated to (twisted) Steinhaus multiplicative functions, and it would be challenging but also interesting to see how their philosophy could be adapted to the general setting.

\paragraph{Notation.}
Following the number-theoretic convention, we use Vinogradov's notation $A\ll B$ to mean the same thing as $A = O(B)$, i.e.~$|A|\le CB$ holds for some absolute constant $C$; we also write $A \ll_r B$ if the implicit constant depends on some parameter $r$.
\section{Part I: Construction of measure}\label{sec:measure}
This section is devoted to the proof of \Cref{thm:mc-L1} and is organised as follows:
\begin{itemize}
\item In \Cref{sec:elementary-estimates}, we compile a list of elementary estimates for the multiplicative function $f$ and exponential moments involving $G_{y, t}(s) := 2\Re \frac{f(p) \alpha(p)}{p^{\sigma_t + is}}$, which will be used throughout the entire section.
\item In \Cref{sec:mc-prepare}, we collect two probabilistic results which allow us to reformulate our problem in terms of new sequence of measures $\nu_{y, t}$ defined in \eqref{eq:reduced-measure} and \eqref{eq:def-nu} (see \Cref{lem:reduced-mc}).
\item In \Cref{sec:mc-moment}, we derive various moment estimates for the measure $\nu_{y, t}$, which will allow us to reduce $L^r$-convergence to convergence in probability by a standard argument of uniform integrability. We also explain a tensorisation trick (\Cref{lem:tensorise}) which will be used later in the modified second moment method.
\item In \Cref{sec:mc-martingale-support}, we quickly recall the almost sure convergence of $\nu_{y, \infty}(h)$ to some limiting variable $\nu_\infty(h)$, and explain the proof of the non-atomicity of the underlying random measure $\nu_\infty$.
\item In \Cref{sec:approximate-away-critical}, we establish $\nu_{\infty, y}(h) \xrightarrow[y \to \infty]{p} \nu_\infty(h)$ and explain our new modified second moment method with the help of an approximate Girsanov theorem (see \Cref{theo:Girsanov}).
\end{itemize}

\subsection{Elementary estimates}\label{sec:elementary-estimates}
\subsubsection{Estimates for multiplicative functions}
Our first estimate is borrowed from \cite[Lemma 4.1]{GW}.
\begin{lem}\label{lem:truncate}
Let $f\colon \NN \to \CC$ be a function satisfying \eqref{eq:f-summability} and $|f|^2 \in \mathbf{P}_\theta$ for some $\theta >0$. Then there exists some deterministic $y_0 = y_0(f) > 0$ such that
\[\left|1 + \sum_{k =1}^2 \frac{\alpha(p)^k f(p^k)}{p^{z}}\right|^{-2} \left|1 + \sum_{k \ge 1} \frac{\alpha(p)^k f(p^k)}{p^{z}}\right|^2
= 
1 + O\left(  \left|\frac{f(p)}{p^{1/2}}\right|^3 + \left|\frac{f(p^2)}{p}\right|^3 +  \sum_{k \ge 3} \left|\frac{f(p^k)}{p^{k/2}}\right| \right)\]
where the implicit constants on the right-hand side are uniform in $p \ge y_0$, $\Re z \ge \frac{1}{2}$, and also for any sequence $(\alpha(p))_{p}$ satisfying $\sup_p |\alpha(p)| \le 1$. In particular, there exists some deterministic constant $C = C(y_0) \in (0, \infty)$ such that
\[\prod_{p \ge y_0} \left|1 + \sum_{k \ge 1} \frac{\alpha(p)^k f(p^k)}{p^{kz}}\right|^2
\le C \prod_{p \ge y_0} \left|1 + \sum_{k = 1}^2 \frac{\alpha(p)^k f(p^k)}{p^{kz}}\right|^2\]
uniformly in $\Re z \ge \frac{1}{2}$.
\end{lem}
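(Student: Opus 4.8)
\emph{Proof sketch.} The plan is to realise the full local factor as a small multiplicative perturbation of its degree-two truncation, so that the ratio on the left-hand side takes the shape $|1+(\text{tail})/(\text{truncated factor})|^2$, which one then expands. Concretely, I would set $L_p(z):=1+\sum_{k\ge1}\alpha(p)^k f(p^k)p^{-kz}$ and $L_p^{(2)}(z):=1+\sum_{k=1}^{2}\alpha(p)^k f(p^k)p^{-kz}$, write $L_p=L_p^{(2)}+R_p$ with $R_p:=\sum_{k\ge3}\alpha(p)^k f(p^k)p^{-kz}$, and note at once that, since $\sup_p|\alpha(p)|\le1$ and $\Re z\ge\tfrac12$, one has $|\alpha(p)^k f(p^k)p^{-kz}|\le|f(p^k)|p^{-k/2}$ for every $k$; in particular $|R_p|\le\sum_{k\ge3}|f(p^k)|p^{-k/2}$, and the two non-constant terms of $L_p^{(2)}$ have modulus at most $|f(p)|p^{-1/2}$ and $|f(p^2)|p^{-1}$ respectively --- bounds that involve neither $z$ nor the particular sequence $(\alpha(p))_p$.

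The first real step is to observe that these three quantities all vanish as $p\to\infty$, using \eqref{eq:f-summability}: the general terms $|f(p)|^3p^{-3/2}\log^3 p$ and $|f(p^2)|^2p^{-2}\log^2 p$ of convergent series tend to $0$, hence so do $|f(p)|p^{-1/2}$ and $|f(p^2)|p^{-1}$; and since $\big(\sum_{k\ge3}|f(p^k)|^2p^{-k/2}\big)\log^2 p\to0$, a Cauchy--Schwarz step together with $\sum_{k\ge3}p^{-k/2}=\Oa(p^{-3/2})$ gives $\sum_{k\ge3}|f(p^k)|p^{-k/2}\to0$ as well. This produces a deterministic $y_0=y_0(f)$ such that for all $p\ge y_0$, \emph{uniformly} over $\Re z\ge\tfrac12$ and over $(\alpha(p))_p$ with $\sup_p|\alpha(p)|\le1$, one has $|L_p^{(2)}(z)|\ge1-|f(p)|p^{-1/2}-|f(p^2)|p^{-1}\ge\tfrac12$ and $|R_p|\le\tfrac12$. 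The first displayed identity of the lemma then follows by expanding
\[\frac{|L_p|^2}{|L_p^{(2)}|^2}=\Big|1+\frac{R_p}{L_p^{(2)}}\Big|^2 = 1 + 2\Re\frac{R_p}{L_p^{(2)}} + \Big|\frac{R_p}{L_p^{(2)}}\Big|^2\]
and bounding the two error terms in absolute value by $4|R_p|+4|R_p|^2\le6|R_p|\le6\sum_{k\ge3}|f(p^k)|p^{-k/2}$ for $p\ge y_0$, which is a fortiori $\Oa\big(|f(p)/p^{1/2}|^3+|f(p^2)/p|^3+\sum_{k\ge3}|f(p^k)/p^{k/2}|\big)$, as claimed.

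For the product bound I would write $|L_p|^2/|L_p^{(2)}|^2=1+\epsilon_p$ with $|\epsilon_p|$ controlled by the displayed error, so that $\prod_{p\ge y_0}|L_p|^2/|L_p^{(2)}|^2\le\exp\big(\sum_{p\ge y_0}|\epsilon_p|\big)=:C<\infty$ uniformly in $\Re z\ge\tfrac12$, provided $\sum_{p\ge y_0}|\epsilon_p|<\infty$. Checking this last summability is the only slightly fiddly point: $\sum_p|f(p)/p^{1/2}|^3<\infty$ is already part of \eqref{eq:f-summability}; for $p$ large $|f(p^2)/p|^3\le|f(p^2)/p|^2\le|f(p^2)/p|^2\log^2 p$, which is summable by \eqref{eq:f-summability}; and $\sum_p\sum_{k\ge3}|f(p^k)|p^{-k/2}<\infty$ follows from two applications of Cauchy--Schwarz, using $\sum_{k\ge3}p^{-k/2}=\Oa(p^{-3/2})$, $\sum_p p^{-3/2}<\infty$, and the $k\ge3$ part of \eqref{eq:f-summability}. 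The main obstacle, such as it is, is exactly this bookkeeping --- the $k\ge3$ contribution to \eqref{eq:f-summability} is phrased with $|f(p^k)|^2$ and an extra $\log^2 p$ rather than directly as $\sum_{k\ge3}|f(p^k)|p^{-k/2}$, so the passage to summability over $p$ is not literally immediate --- together with making sure the lower bound on $|L_p^{(2)}|$ is genuinely uniform in $z$ and in $(\alpha(p))_p$, which works only because the coefficient bounds used to derive it see neither.
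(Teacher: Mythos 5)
Your argument is correct and complete: writing the full Euler factor as the degree-two truncation plus the tail $R_p$, using only $|\alpha(p)|\le 1$ and $\Re z\ge\tfrac12$ to bound all coefficients, deducing from \eqref{eq:f-summability} (with the Cauchy--Schwarz step in $k$) that $|f(p)|p^{-1/2}$, $|f(p^2)|p^{-1}$ and $\sum_{k\ge3}|f(p^k)|p^{-k/2}$ all tend to $0$ so that $|L_p^{(2)}|\ge\tfrac12$ beyond some $y_0$, and then expanding $|1+R_p/L_p^{(2)}|^2$, gives the stated estimate with the required uniformity (in fact with the sharper error $\Oa\big(\sum_{k\ge3}|f(p^k)|p^{-k/2}\big)$, which a fortiori yields the claim). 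Note the paper itself does not prove this lemma but imports it from \cite[Lemma 4.1]{GW}, so there is no in-text proof to compare against; your route is the natural one, and your two-step Cauchy--Schwarz verification that $\sum_p\sum_{k\ge3}|f(p^k)|p^{-k/2}<\infty$ (the only point where the hypothesis is not literally in the needed form) correctly closes the product bound. One trivial remark: the exponents $p^{z}$ in the first display of the lemma are a typo for $p^{kz}$, and you have (rightly) read them as such.
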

The next result is a small generalisation of \cite[Lemma 4.3]{GW}.
\begin{lem}\label{lem:new-martingaleL2est}
Let $g\colon \NN \to \CC$ be a function such that $g \in \mathbf{P}_\theta$ for some $\theta \in \CC$. We have
\begin{equation}\label{eq:new-exp-sum}
\sum_{p \le y} \frac{g(p)}{p^{1+\frac{c}{\log t}}} \cos(s \log p) 
= \theta \log\left[\min \left(|s|^{-1}, \log y, \log t\right)\right]  + O(1)
\end{equation}
uniformly in $y, t \ge 3$ and any compact subsets of $s \in \RR$ and $c \ge 0$. Moreover, for each fixed $c \ge 0$ there exists some constant $C_g = C_g(c) \in \CC$ such that
\begin{equation}\label{eq:new-log-sum-twist}
\sum_{p \le y} \frac{g(p)}{p^{1+\frac{c}{\log y}}}
= \theta \log\log y+ C_g + o(1)
\qquad \text{as $y \to \infty$.}
\end{equation}
\end{lem}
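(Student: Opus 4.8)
The plan is to follow the proof of \cite[Lemma 4.3]{GW} closely, since \Cref{lem:new-martingaleL2est} differs from it only in two respects: the presence of the parameter $c$, and the need in \eqref{eq:new-log-sum-twist} to produce a genuine limit rather than just an $\Oa(1)$ bound. In both parts I would start from partial summation against the prime-counting function $\pi_g(u):=\sum_{p\le u}g(p)$, combined with the decomposition $d\pi_g(u)=\theta\,d\mathrm{Li}(u)+d\Ea_g(u)$ that is available because $g\in\mathbf{P}_\theta$ (I would use the $\mathrm{Li}$-formulation of \eqref{eq:condition-P}, so that with the usual convention $\Ea_g(2^-)=0$ and there is no boundary term at $u=2$). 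For \eqref{eq:new-exp-sum} this gives
\[ \sum_{p\le y}\frac{g(p)\cos(s\log p)}{p^{1+c/\log t}} = \theta\int_2^y \frac{\cos(s\log u)}{u^{1+c/\log t}\log u}\,du + \int_{2^-}^y \frac{\cos(s\log u)}{u^{1+c/\log t}}\,d\Ea_g(u), \]
and the case \eqref{eq:new-log-sum-twist} is the specialisation $s=0$ with $t$ replaced by $y$. The task then splits into handling the main (first) term and the error (second) term.

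For the main term I would substitute $u=e^v$, reducing it to $\theta\int_{\log 2}^{\log y}\frac{\cos(sv)\,e^{-cv/\log t}}{v}\,dv$. For \eqref{eq:new-exp-sum} this is a classical Mertens-type oscillatory integral: the integrand is comparable to $1/v$ until one of the three scales $|s|^{-1}$ (where the cosine starts to oscillate), $\asymp\log t$ (where the exponential starts to decay, with implied constants depending on the compact range of $c$), or $\log y$ (where the range of integration ends) is reached. Splitting $[\log 2,\log y]$ at $\min(|s|^{-1},\log y,\log t)$ and estimating the tails by the elementary bounds for $\int\frac{\cos(sv)}{v}\,dv$, $\int\frac{e^{-\delta v}}{v}\,dv$ and $\int\frac{|e^{-\delta v}-1|}{v}\,dv$ would give $\theta\log[\min(|s|^{-1},\log y,\log t)]+\Oa(1)$, uniformly in the stated ranges. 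For \eqref{eq:new-log-sum-twist} I would instead set $s=0$ and substitute once more $v=w\log y$, rewriting the integral as $\theta\int_{\log 2/\log y}^1\frac{e^{-cw}}{w}\,dw=\theta\int_{\log 2/\log y}^1\frac{dw}{w}+\theta\int_{\log 2/\log y}^1\frac{e^{-cw}-1}{w}\,dw$; the first piece equals $\theta(\log\log y-\log\log 2)$ and, since $(e^{-cw}-1)/w$ stays bounded near $0$, the second converges as $y\to\infty$ to $\theta\int_0^1\frac{e^{-cw}-1}{w}\,dw$, yielding $\theta\log\log y+\theta(\int_0^1\frac{e^{-cw}-1}{w}\,dw-\log\log 2)+o(1)$.

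For the error term I would integrate by parts, obtaining $\frac{\cos(s\log y)}{y^{1+c/\log t}}\Ea_g(y)-\int_2^y \Ea_g(u)\frac{d}{du}\big[\frac{\cos(s\log u)}{u^{1+c/\log t}}\big]\,du$. The boundary term at $u=y$ is $\Oa(|\Ea_g(y)|/y)=o(1/\log y)$ by the hypothesis $\Ea_g(y)=o(y/\log y)$, and since $\big|\frac{d}{du}[\cos(s\log u)\,u^{-1-c/\log t}]\big|=\Oa((1+|s|)u^{-2})$ uniformly for $s,c$ in compact sets and $t\ge 3$, the integral is $\Oa((1+|s|)\int_2^\infty u^{-2}|\Ea_g(u)|\,du)=\Oa(1)$ by the summability hypothesis on $\Ea_g$; this completes \eqref{eq:new-exp-sum}. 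For \eqref{eq:new-log-sum-twist}, where $s=0$ and $c/\log t=c/\log y\to 0$, the boundary term is $o(1)$, and dominated convergence (with dominating function $u^{-2}|\Ea_g(u)|\in L^1([2,\infty))$ and integrands $u^{-2-c/\log y}\Ea_g(u)\mathbf{1}_{u\le y}\to u^{-2}\Ea_g(u)$) shows that $(1+\tfrac{c}{\log y})\int_2^y u^{-2-c/\log y}\Ea_g(u)\,du\to\int_2^\infty u^{-2}\Ea_g(u)\,du$. Collecting everything gives \eqref{eq:new-log-sum-twist} with $C_g(c)=\theta(\int_0^1\frac{e^{-cw}-1}{w}\,dw-\log\log 2)+\int_2^\infty u^{-2}\Ea_g(u)\,du\in\CC$. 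I expect the only mildly delicate point to be this last step — upgrading the $\Oa(1)$ bound to an actual limit, which genuinely uses $\int_2^\infty t^{-2}|\Ea_g(t)|\,dt<\infty$ and not merely $\Ea_g(t)=o(t/\log t)$; the oscillatory-integral estimate behind \eqref{eq:new-exp-sum} is routine.
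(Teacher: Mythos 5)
Your proposal is correct and follows essentially the same route as the paper: partial summation against $\theta\,d\mathrm{Li}+d\Ea_g$, integration by parts on the error term using $\Ea_g(t)=o(t/\log t)$ together with $\int_2^\infty t^{-2}|\Ea_g(t)|\,dt<\infty$, and a splitting of the resulting Mertens-type integral at the scale $\min(|s|^{-1},\log y,\log t)$ (the paper organises this as an explicit three-case analysis, which is the same estimate). For \eqref{eq:new-log-sum-twist} the paper simply reads the constant off its $s=0$, $t=y$ identity, whereas you spell out the dominated-convergence step identifying $C_g(c)$ explicitly; this is a harmless elaboration of the same argument.
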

\begin{proof}
Let $\OurEpsilon := c / \log t  > 0$. We have
\begin{align*}
\sum_{p \le y} \frac{g(p)}{p^{1+\frac{c}{\log t}}}
& = \int_{2^-}^{y^+} \frac{\cos(s \log p)}{p^{1 + \frac{c}{\log t}}} \left[ \frac{\theta dp}{\log p} + d\Ea_g(p)\right]\\
& = \theta \int_{ \log 2}^{\log y} e^{-\OurEpsilon u} \cos (|s| u) \frac{du}{u} 
+ \int_{2^-}^{y^+} \frac{\cos(s \log p)}{p^{1 + \frac{c}{\log t}}}d\Ea_g(p)
=: I + II.
\end{align*}
We first treat the error term: using integration by parts,
\[II =  \frac{\cos(s \log p)}{p^{1 +\OurEpsilon}}\Ea_g(p) \Bigg|_{2_-}^{y^+} + \int_2^y \frac{\Ea_g(p)}{p^{2 + \OurEpsilon}} \left[  (1 +\OurEpsilon)\cos(s \log p) + s \sin (s \log p)\right] dp\]
which is uniformly bounded in $y, t \ge 3$ and $s$ in any compact subsets of $\RR$ because $\Ea_g(x) = o(x)$ and $\int_2^\infty x^{-2} |\Ea_g(x)| dx < \infty$.

As for the main term, we will use the observation that for any $a, b \ge 0$, the function $0 \le u \mapsto e^{-au} \cos(bu)$ has a Lipschitz constant bounded by $a + b$. We now verify \eqref{eq:new-exp-sum} case by case:
\begin{itemize}
\item When $\log y \le \min (\log t, |s|^{-1})$,
\[I = \theta \int_{\frac{\log 2}{\log y}}^{1} e^{-\OurEpsilon u \log y} \cos (u |s| \log y) \frac{du}{u} 
= \theta \int_{\frac{\log 2}{\log y}}^{1} \frac{du}{u}  + \theta \int_{\frac{\log 2}{\log y}}^{1} \left[e^{-\OurEpsilon u \log y} \cos (u |s| \log y) - 1\right]\frac{du}{u} .\]
The first integral on the right-hand side is equal to $\theta \log \frac{\log y}{\log 2}$, whereas the second integral is bounded with desired uniformity because $\OurEpsilon \log y = c\frac{\log y}{\log t} \in [0, c]$ and $|s| \log y \in [0,1]$.

\item When $|s|^{-1} \le \min(\log y, \log t)$, we have
\begin{align*}
I & =\theta \int_{|s| \log 2}^{|s| \log y} e^{-|s|^{-1} \OurEpsilon u } \cos(u) \frac{du}{u}\\
&= \theta \left\{\int_{|s| \log 2}^1 \frac{du}{u} + \int_{|s| \log 2}^{1} \left[e^{-|s|^{-1} \OurEpsilon u }\cos(u) - 1\right] \frac{du}{u} + \int_1^{|s| \log y} e^{-|s|^{-1} \OurEpsilon u } \cos(u) \frac{du}{u}\right\}
\end{align*}
where
\begin{itemize}
\item the first integral on the right-hand side is equal to $-\log(|s| \log 2)$;
\item the second integral is bounded uniformly for $s$ in any compact subset because $|s|^{-1} \OurEpsilon \in [0, c]$, $|s| \log 2$ is bounded, and the integrand is bounded uniformly near $u = 0$;
\item using the identity
\[\frac{d}{dx} \frac{e^{-bx}[a\sin(ax) - b \cos(ax)]}{a^2 + b^2}
= e^{-bx} \cos (ax)\]
and $|s| \log y \ge 1$, the third integral
\[\left[ e^{-|s|^{-1} \OurEpsilon u} \frac{\sin(u) - |s|^{-1} \OurEpsilon \cos(u)}{1 + (|s|^{-1} \OurEpsilon)^2}\right]_1^{|s| \log y}
+ \int_1^{|s| \log y} e^{-|s|^{-1} \OurEpsilon u} \frac{\sin(u) - |s|^{-1} \OurEpsilon \cos(u)}{1 + (|s|^{-1} \OurEpsilon)^2} \frac{du}{u^2}\]
is also bounded with the desired uniformity.
\end{itemize}
\item When $\log t \le \min(\log y, |s|^{-1})$ such that $|s| \log t \le 1$ and $\frac{\log y}{\log t} \ge 1$, 
\begin{align}
\notag
I &= \theta \int_{\frac{\log 2}{\log t}}^{\frac{\log y}{\log t}} e^{-c u} \cos(u |s| \log t) \frac{du}{u}\\
\label{eq:ymin-term}
& = \theta \left\{
\int_{\frac{\log 2}{\log t}}^{1}  \frac{du}{u}
+ \int_{\frac{\log 2}{\log t}}^{1} \left[e^{-c u} \cos(u |s| \log t) - 1\right]\frac{du}{u}
+ \int_{1}^{\frac{\log y}{\log t}} e^{-c u} \cos(u |s| \log t) \frac{du}{u}
\right\}
\end{align}
which is equal to $\theta \log \log t + O(1)$ by analysis similar to that in the previous case.
\end{itemize}
The claim \eqref{eq:new-log-sum-twist} follows immediately by inspecting the identity \eqref{eq:ymin-term} and setting $s = 0$ as well as $t = y$.
\end{proof}
The third result is again borrowed from \cite[Lemma 4.4]{GW}.
\begin{lem}\label{lem:martingaleL2est-2}
Let $g\colon \NN \to \CC$ be a function such that $g \in \mathbf{P}_\theta$ for some $\theta \in \CC$. Then
\[\sum_{p > y}\frac{g(p)\cos(s\log p)}{p^{1+ \frac{1}{\log y}}} = O(1)\]
uniformly in $y \ge 3$ and $s$ in any compact subset of $\RR$. Moreover,
\begin{itemize}
\item For $s$ in any compact subset of $\RR \setminus \{0\}$, we have uniformly
\[\sum_{p > y}\frac{g(p)\cos(s\log p)}{p^{1+ \frac{1}{\log y}}} = o(1)
\qquad \text{as $y \to \infty$}.\]
\item For $s = 0$, we have
\[    \lim_{y \to \infty} \sum_{p > y}\frac{g(p)}{p^{1+ \frac{1}{\log y}}} = \theta \int_1^\infty e^{-u} \frac{du}{u}.\]
\end{itemize} 
\end{lem}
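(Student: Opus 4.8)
The plan is to mirror the argument used for \Cref{lem:new-martingaleL2est}: convert the prime sum into a Stieltjes integral against $\pi_g$, split off the main term using $d\pi_g(p)=\theta\,dp/\log p + d\Ea_g(p)$, and treat the two pieces separately. First I would write, by partial summation,
\[
\sum_{p > y}\frac{g(p)\cos(s\log p)}{p^{1+\frac{1}{\log y}}}
= \int_{y^+}^{\infty}\frac{\cos(s\log p)}{p^{1+\frac{1}{\log y}}}\left[\frac{\theta\,dp}{\log p} + d\Ea_g(p)\right]
=: I + II.
\]
For the main term $I$, substitute $p = y^{u}$ (so $\log p = u\log y$, $dp/p = \log y\,du$, and $p^{1/\log y}=e^{u}$), which gives
\[
I = \theta\int_{1}^{\infty} e^{-u}\cos(u|s|\log y)\,\frac{du}{u}.
\]
Since $\int_1^\infty e^{-u}\,du/u<\infty$, this already yields $|I|\le|\theta|\int_1^\infty e^{-u}\,du/u = \Oa(1)$ uniformly, and at $s=0$ it is exactly the constant $\theta\int_1^\infty e^{-u}\,du/u$.

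For the error term $II$ I would integrate by parts once more:
\[
II = \left[\frac{\cos(s\log p)}{p^{1+\frac{1}{\log y}}}\Ea_g(p)\right]_{p=y^+}^{p=\infty}
+ \int_{y}^{\infty}\Ea_g(p)\,\frac{s\sin(s\log p)+(1+\tfrac{1}{\log y})\cos(s\log p)}{p^{2+\frac{1}{\log y}}}\,dp.
\]
The boundary contribution at $\infty$ vanishes because $|\Ea_g(p)|/p^{1+1/\log y}\le|\Ea_g(p)|/p\to0$, and the boundary contribution at $y$ is $\Oa(|\Ea_g(y)|/y)=\Oa(1/\log y)$ using $\Ea_g(y)=o(y/\log y)$; in particular both are $o(1)$ as $y\to\infty$. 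The remaining integral is bounded in absolute value by $(|s|+2)\int_{y}^{\infty}|\Ea_g(p)|\,p^{-2}\,dp$, which is $\Oa(1)$ uniformly for $s$ in a compact set by the hypothesis $\int_2^\infty|\Ea_g(p)|p^{-2}\,dp<\infty$, and tends to $0$ as $y\to\infty$ (tail of a convergent integral). Combining with the bound on $I$ proves the first assertion.

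For the refined statements, $s=0$ follows immediately since $I$ is then the stated constant and $II=o(1)$. For $s$ in a compact subset of $\RR\setminus\{0\}$, one has $|s|\ge\delta>0$, so $|s|\log y\to\infty$ uniformly; integrating $I$ by parts in $u$ against the weight $e^{-u}/u$ (which is $C^1$ on $[1,\infty)$ with $e^{-u}/u\to 0$ and $\int_1^\infty|(e^{-u}/u)'|\,du<\infty$) shows $I = \Oa\big((|s|\log y)^{-1}\big)\to 0$ uniformly, i.e. a quantitative Riemann–Lebesgue estimate. Together with $II=o(1)$ this gives the claimed $o(1)$. The only point requiring a little care is bookkeeping the uniformity in $s$ throughout — in particular that the constant in the error-term bound depends on $s$ only through $|s|+2$ — but there is no substantial obstacle; the estimate is essentially a repackaging of \cite[Lemma 4.4]{GW}.
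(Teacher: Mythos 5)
Your proof is correct, and it is essentially the argument one would expect: the paper does not prove \Cref{lem:martingaleL2est-2} itself but cites \cite[Lemma 4.4]{GW}, and your decomposition into $\theta\,dp/\log p$ plus $d\Ea_g$, the substitution $p=y^u$ giving $\theta\int_1^\infty e^{-u}\cos(u|s|\log y)\,du/u$, the integration by parts on the error term, and the Riemann--Lebesgue-type integration by parts for $|s|\ge\delta$ mirror exactly the technique the paper uses for the companion \Cref{lem:new-martingaleL2est}. No gaps; the uniformity bookkeeping (constants depending on $s$ only through $|s|+2$, boundary term $o(|\Ea_g(y)|/y)$) is handled at the same level of care as the paper's own proof of that companion lemma.
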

\subsubsection{Estimates for exponential moments}
Next, we need to collect a few estimates for $G_{y, t}(s) := 2\Re \frac{f(p) \alpha(p)}{p^{\sigma_t + is}}$. We start with the asymptotics for the ``one-point" and ``two-point" functions.
\begin{lem}\label{lem:gf-estimates}
We have
\begin{align}
\label{eq:gf-1point}
\EE\left[\exp \left( G_{p, t}(s_1)\right)\right] & =  \exp\left(\frac{1}{2} \EE[G_{p, t}(s_1)^2] + O\left( \left| \frac{f(p)}{p^{\sigma_t}}\right|^3\right) \right)\\
\label{eq:gf-2point}
\text{and} \qquad \EE\left[\exp \left( G_{p, t}(s_1) + G_{p, t}(s_2)\right)\right] & =  \exp\left(\frac{1}{2} \EE\left[ \left(G_{p, t}(s_1) + G_{p, t}(s_2)\right)^2\right] + O\left( \left| \frac{f(p)}{p^{\sigma_t}}\right|^3\right) \right)
\end{align}
uniformly in $s_1, s_2 \in \RR$ and $t \ge 3$ as $p \to \infty$.
\end{lem}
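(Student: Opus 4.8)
The plan is to use the rotational invariance of the Steinhaus variable $\alpha(p)$ to reduce both exponential moments to a single one‑dimensional integral, and then Taylor expand. Write $z_j := f(p) p^{-\sigma_t - i s_j}$, so that $G_{p,t}(s_j) = 2\Re(z_j \alpha(p))$ and in both cases the exponent is of the form $2\Re(w\alpha(p))$ for a deterministic $w \in \CC$: namely $w = z_1$ for \eqref{eq:gf-1point} and $w = z_1 + z_2$ for \eqref{eq:gf-2point}, with $|w| \le 2|f(p)|/p^{\sigma_t}$ in all cases. Since $\alpha(p) = e^{i\Theta}$ with $\Theta$ uniform on $[0,2\pi)$, the variable $2\Re(w\alpha(p)) = 2|w|\cos(\Theta + \arg w)$ has the same law as $2|w|\cos\Theta$, and hence
\[
\EE\big[\exp(2\Re(w\alpha(p)))\big] = \frac{1}{2\pi}\int_0^{2\pi} e^{2|w|\cos\theta}\,d\theta = I_0(2|w|),
\]
the modified Bessel function of the first kind.

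Next I would expand the right‑hand side. Picking out the even moments $\EE[\cos^{2m}\Theta] = \binom{2m}{m}4^{-m}$ (the odd ones vanish) yields the entire function $I_0(2\rho) = \sum_{m \ge 0}\rho^{2m}/(m!)^2 = 1 + \rho^2 + \Oa(\rho^4)$ as $\rho \to 0$, so that $\log I_0(2\rho) = \rho^2 + \Oa(\rho^4)$ uniformly for $\rho$ in a fixed neighbourhood of $0$. On the other hand, expanding the square and using $|\alpha(p)| = 1$ and $\EE[\alpha(p)^2] = 0$,
\[
\tfrac12 \EE\big[(2\Re(w\alpha(p)))^2\big] = \tfrac12\big(\EE[w^2\alpha(p)^2] + 2|w|^2 + \EE[\overline{w}^{\,2}\,\overline{\alpha(p)}^{\,2}]\big) = |w|^2.
\]
Combining the two displays with $\rho = |w|$, and using $|w| \le 2|f(p)|/p^{\sigma_t}$ to express the error, gives
\[
\log \EE\big[\exp(2\Re(w\alpha(p)))\big] = \tfrac12\EE\big[(2\Re(w\alpha(p)))^2\big] + \Oa\big(|f(p)/p^{\sigma_t}|^4\big).
\]

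Finally, for the uniformity: since $\sigma_t \ge \tfrac12$ for all $t \ge 3$ we have $|f(p)|/p^{\sigma_t} \le |f(p)|/p^{1/2}$, and the first term of \eqref{eq:f-summability}, $\sum_p |f(p)|^3 p^{-3/2}\log^3 p < \infty$, forces $|f(p)|/p^{1/2} \to 0$ as $p \to \infty$. Hence $|f(p)/p^{\sigma_t}| \le 1$ for every $t \ge 3$ once $p$ is large, so $\Oa(|f(p)/p^{\sigma_t}|^4) = \Oa(|f(p)/p^{\sigma_t}|^3)$ with an absolute implicit constant, uniformly in $s_1, s_2 \in \RR$ and $t \ge 3$. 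Specialising $w = z_1$ yields \eqref{eq:gf-1point} and $w = z_1 + z_2$ yields \eqref{eq:gf-2point}. There is no real obstacle here: the computation is elementary once one spots the rotational‑invariance reduction, and the only point requiring a little care is that the error naturally comes out as $\Oa(|w|^4)$, so one must invoke the smallness of $|f(p)|/p^{\sigma_t}$ (equivalently, \eqref{eq:f-summability}) to downgrade it to the stated $\Oa(|f(p)/p^{\sigma_t}|^3)$.
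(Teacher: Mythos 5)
Your proof is correct, and it takes a somewhat different (more explicit) route than the paper. The paper simply notes that $G_{p,t}$ is uniformly bounded and Taylor-expands $\EE[\exp(G_{p,t}(s_1))]$ directly to third order, obtaining the cubic error immediately, and dismisses the two-point case as ``similar''. You instead exploit the rotational invariance of $\alpha(p)$ to write both exponential moments exactly as the modified Bessel function $I_0(2|w|)$ with $w=z_1$ or $w=z_1+z_2$, which neatly unifies \eqref{eq:gf-1point} and \eqref{eq:gf-2point} in a single computation and in fact yields the sharper error $\Oa(|w|^4)$ (since $I_0$ is even), which you then legitimately downgrade to $\Oa(|f(p)/p^{\sigma_t}|^3)$ using $|f(p)|p^{-1/2}\to 0$, a consequence of \eqref{eq:f-summability}; the uniformity in $s_1,s_2,t$ is clear since $|w|\le 2|f(p)|p^{-1/2}$. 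The trade-off is that your argument leans on the exact Steinhaus distribution (the reduction to $I_0$), whereas the paper's bare Taylor expansion would work verbatim for any bounded mean-zero $\alpha(p)$; on the other hand your version gives the exact closed form and a better remainder essentially for free. One tiny point worth keeping in mind: the expansion $\log I_0(2\rho)=\rho^2+\Oa(\rho^4)$ is only uniform for $\rho$ in a fixed neighbourhood of $0$, which you correctly justify by the ``as $p\to\infty$'' framing of the lemma, since $\rho\le 2|f(p)|p^{-1/2}\to 0$.
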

\begin{proof}
Note that $\|G_{p, t}\|_\infty \le 2|f(p) / p^{\sigma_t}|^3$ and in particular it is uniformly bounded in $p$ due to our assumption on $f$. By Taylor series expansion, we obtain
\begin{align*}
\EE\left[\exp \left( G_{p, t}(s_1)\right)\right]
&= \EE\left[ 1 + G_{p, t}(s_1) + G_{p, t}(s_1)^2 + O\left(|f(p) / p^{\sigma_t}|^3\right) \right]\\
&= 1 +  \EE\left[G_{p, t}(s_1)^2\right] + O\left(|f(p) / p^{\sigma_t}|^3\right)
\end{align*}
which gives \eqref{eq:gf-1point}. The other estimate \eqref{eq:gf-2point} is similar.
\end{proof}
The next lemma explains how ``small tilting" affects mean values and exponential moments.
\begin{lem}\label{lem:tilt_exp}
Let $X$ be an $\RR^d$-valued random variable satisfying $\EE[X] = 0$ and $\PP(|X| \le r) = 1$ for some fixed $r > 0$. Then
\begin{align*}
\frac{\EE\left[\langle a_1, X \rangle e^{\langle a_2, X \rangle}\right]}{\EE[e^{\langle a_2, X\rangle}]} 
&= a_1^T \EE[XX^T]a_2 + O( |a_1||a_2|^2)\\
\text{and} \qquad \frac{\EE\left[e^{\langle a_1 + a_2, X \rangle}\right]}{\EE[e^{\langle a_2, X\rangle}]} 
&= \exp \left(\frac{1}{2}a_1^T \EE\left[ X X^T \right] (a_1 +2a_2) +O( |a_1||a_2|^2 + |a_1|^3)\right)
\end{align*}
uniformly for $a_1, a_2$ in compact subsets of $\RR^d$.
\end{lem}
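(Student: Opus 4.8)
The plan is to prove Lemma~\ref{lem:tilt_exp} by a direct Taylor-expansion argument, exploiting the fact that $X$ is bounded and centred so that all the relevant power-series manipulations converge with uniform control. Throughout I fix $a_1, a_2$ in a compact set $\mathcal{K} \subset \RR^d$, and write $R := \sup_{a \in \mathcal{K}} |a|$ so that $|\langle a_i, X\rangle| \le R r =: \rho$ almost surely; all implied constants may depend on $\rho$ (equivalently on $\mathcal{K}$ and $r$) but nothing else.

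\textbf{Step 1: the denominator.} First I would expand $\EE[e^{\langle a_2, X\rangle}]$. Since $|\langle a_2, X\rangle| \le \rho$, Taylor's theorem with remainder gives $e^{\langle a_2, X\rangle} = 1 + \langle a_2, X\rangle + \tfrac12 \langle a_2, X\rangle^2 + \Oa(|\langle a_2, X\rangle|^3)$ pointwise, with the $\Oa$ uniform. Taking expectations and using $\EE[X] = 0$ yields
\[
\EE[e^{\langle a_2, X\rangle}] = 1 + \tfrac12 a_2^T \EE[XX^T] a_2 + \Oa(|a_2|^3).
\]
In particular this quantity is bounded above and below by positive constants on $\mathcal{K}$, so dividing by it is harmless and preserves the uniformity of error terms. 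I would also record that $\log \EE[e^{\langle a_2, X\rangle}] = \tfrac12 a_2^T \EE[XX^T] a_2 + \Oa(|a_2|^3)$, by composing with the expansion of $\log(1+x)$.

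\textbf{Step 2: the first identity.} For the numerator of the first claim, expand $e^{\langle a_2, X\rangle}$ as above and multiply by $\langle a_1, X\rangle$ (which is itself $\Oa(|a_1|)$ and bounded):
\[
\langle a_1, X\rangle e^{\langle a_2, X\rangle} = \langle a_1, X\rangle + \langle a_1, X\rangle \langle a_2, X\rangle + \Oa\!\left(|a_1|\,|a_2|^2\right).
\]
Taking expectations, the first term vanishes by $\EE[X]=0$ and the second is exactly $a_1^T \EE[XX^T] a_2$, so $\EE[\langle a_1, X\rangle e^{\langle a_2, X\rangle}] = a_1^T \EE[XX^T] a_2 + \Oa(|a_1||a_2|^2)$. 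Dividing by $\EE[e^{\langle a_2, X\rangle}] = 1 + \Oa(|a_2|^2)$ and absorbing the correction (note $a_1^T \EE[XX^T] a_2 = \Oa(|a_1||a_2|)$, so multiplying by $1/(1+\Oa(|a_2|^2))$ only introduces a further $\Oa(|a_1||a_2|^3) \subseteq \Oa(|a_1||a_2|^2)$ term) gives the first displayed formula.

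\textbf{Step 3: the second identity.} Here I would not expand $e^{\langle a_1, X\rangle}$ to second order directly in the ratio, but instead write $\EE[e^{\langle a_1 + a_2, X\rangle}] = \EE[e^{\langle a_2, X\rangle}] \cdot \big(\EE[e^{\langle a_1, X\rangle} e^{\langle a_2, X\rangle}] / \EE[e^{\langle a_2, X\rangle}]\big)$, interpret the second factor as an expectation under the tilted probability measure $d\PP_{a_2} := e^{\langle a_2, X\rangle} d\PP / \EE[e^{\langle a_2, X\rangle}]$, and Taylor-expand $e^{\langle a_1, X\rangle}$ to second order there:
\[
\frac{\EE[e^{\langle a_1 + a_2, X\rangle}]}{\EE[e^{\langle a_2, X\rangle}]} = \EE_{\PP_{a_2}}[e^{\langle a_1, X\rangle}] = 1 + \EE_{\PP_{a_2}}[\langle a_1, X\rangle] + \tfrac12 \EE_{\PP_{a_2}}[\langle a_1, X\rangle^2] + \Oa(|a_1|^3).
\]
The first-order term $\EE_{\PP_{a_2}}[\langle a_1, X\rangle]$ is precisely the ratio computed in Step~2, namely $a_1^T \EE[XX^T] a_2 + \Oa(|a_1||a_2|^2)$. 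For the second-order term, a computation identical in spirit to Step~2 gives $\EE_{\PP_{a_2}}[\langle a_1, X\rangle^2] = a_1^T \EE[XX^T] a_1 + \Oa(|a_1|^2|a_2|)$. Summing, the bracket equals $1 + a_1^T \EE[XX^T] a_2 + \tfrac12 a_1^T \EE[XX^T] a_1 + \Oa(|a_1||a_2|^2 + |a_1|^2|a_2| + |a_1|^3)$; since $|a_1|^2|a_2| \le \tfrac12(|a_1||a_2|^2 + |a_1|^3)$ by AM--GM, the error is $\Oa(|a_1||a_2|^2 + |a_1|^3)$. Finally I apply $\log(1+x) = x + \Oa(x^2)$ to this expression: the linear part is $a_1^T \EE[XX^T](a_2 + \tfrac12 a_1) = \tfrac12 a_1^T \EE[XX^T](a_1 + 2a_2)$, and the quadratic correction $\Oa\big((a_1^T\EE[XX^T]a_2)^2 + \ldots\big)$ is $\Oa(|a_1|^2|a_2|^2 + \ldots) \subseteq \Oa(|a_1||a_2|^2 + |a_1|^3)$ on the compact set. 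Exponentiating back yields the claimed formula.

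The argument is essentially a bookkeeping exercise, so there is no single hard obstacle; the only point requiring a little care is making sure that \emph{all} error terms stay uniform over the compact set $\mathcal{K}$, which is guaranteed by the a.s.\ bound $|X| \le r$ turning every Taylor remainder into a genuinely uniform $\Oa$, and that the various cross-terms like $|a_1|^2|a_2|$ are correctly absorbed into the stated error shapes via elementary inequalities. One should also note that $\EE[XX^T]$ is a bona fide (finite) matrix since $X$ is bounded, so all the quadratic forms appearing are well defined.
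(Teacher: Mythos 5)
Your proposal is correct and follows essentially the same route as the paper: a second-order Taylor expansion of the exponentials with uniform remainders coming from the a.s.\ bound $|X| \le r$, combined with $\EE[X]=0$, followed by taking expectations and a $\log$/$\exp$ step with the cross-terms absorbed on compact sets. Your tilted-measure phrasing in Step 3 is only a cosmetic repackaging of the paper's direct computation of $\left(e^{\langle a_1, X\rangle}-1\right) e^{\langle a_2, X\rangle}/\EE\left[e^{\langle a_2, X\rangle}\right]$, so there is nothing substantive to add.
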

\begin{proof}
Since $X$ is bounded, we have $e^{\langle a_1, X \rangle } = 1 + \langle a_1, X \rangle  + \frac{1}{2} \langle a_1, X \rangle ^2 + O(|a_1|^3)$ where the error can be bounded uniformly, and similarly for $e^{\langle a_2, X \rangle}$. In particular, it is straightforward to check that
\begin{align*}
\langle a_1, X \rangle \frac{e^{\langle a_2, X \rangle}}{\EE[e^{\langle a_2, X \rangle}]}
& =\langle a_1, X \rangle \left [1 +\langle a_2, X \rangle+  O( |a_2|^2)\right]\\
\text{and} \qquad \left(e^{\langle a_1, X \rangle} -1 \right) \frac{e^{\langle a_2, X \rangle}}{\EE[e^{\langle a_2, X \rangle}]}
& =\langle a_1, X \rangle  + \frac{1}{2} \langle a_1, X \rangle^2 +\langle a_1, X \rangle\langle a_2, X \rangle + O( |a_1||a_2|^2 + |a_1|^3),
\end{align*}
and the claim follows from taking expectations on both sides of the above estimates.
\end{proof}
\begin{lem}\label{lem:tilt_ratios}
Let $\OurEpsilon_{p, y} = p^{-1/2\log y} - 1$ and
\[c_{p, y}(s_1, s_2) 
:=  \frac{\EE\exp\left(G_{p, y}(s_1) + G_{p, y}(s_2)\right)}{\EE\exp\left(G_{p, \infty}(s_1) + G_{p, \infty}(s_2)\right)}
\left[\frac{\EE\exp\left(G_{p, y}(s_1)\right)\EE\exp\left(G_{p, y}(s_2)\right)}{\EE\exp\left(G_{p, \infty}(s_1)\right)\EE\exp\left(G_{p, \infty}(s_2)\right)}\right]^{-1}.\]
We have
\begin{equation}\label{eq:ratio_error}
c_{p, y}(s_1, s_2)  = \exp \left\{\frac{2|f(p)|^2}{p} \left(p^{-1/\log y} - 1\right) \cos(|s_1-s_2| \log p)+ O\left( \frac{|\OurEpsilon_{p, y}| |f(p)|^3}{p^{3/2}}\right)\right\}
\end{equation}
uniformly in $y \ge 3$, $p \le y$ and $s_1, s_2 \in \RR$. In particular, $C_{y, y}(s_1, s_2) / C_{y, \infty}(s_1, s_2) = \prod_{p\le y} c_{p, y}(s_1, s_2)$ is uniformly bounded in $y \ge 3, s_1, s_2 \in \RR$, and $C_{y, y}(s_1, s_2) / C_{y, \infty}(s_1, s_2) \to 1 $ as $y \to \infty$ uniformly for all $s_1, s_2 \in \RR$ satisfying $\delta \le |s_1 - s_2| \le \delta^{-1}$ for any fixed $\delta \in (0, 1)$.
\end{lem}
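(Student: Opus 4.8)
The plan rests on the observation that raising the abscissa from $\sigma_\infty=\tfrac12$ to $\sigma_y=\tfrac12(1+\tfrac1{\log y})$ only rescales the field: since $p^{-\sigma_y-is}=p^{-1/(2\log y)}p^{-\sigma_\infty-is}$, one has $G_{p,y}(s)=(1+\OurEpsilon_{p,y})\,G_{p,\infty}(s)$ with $1+\OurEpsilon_{p,y}=p^{-1/(2\log y)}\in[e^{-1/2},1)$ for $2\le p\le y$. Writing $G_i:=G_{p,\infty}(s_i)$, which are bounded mean-zero real variables with $\|G_i\|_\infty\le r_p:=2|f(p)|/p^{1/2}$, I would express $\log c_{p,y}(s_1,s_2)=\Psi_{p,y}-\Psi_{p,\infty}$, where $\Psi_{p,t}:=\log\EE e^{G_{p,t}(s_1)+G_{p,t}(s_2)}-\log\EE e^{G_{p,t}(s_1)}-\log\EE e^{G_{p,t}(s_2)}$. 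Using $\EE[\alpha(p)^2]=0$ and $\EE[|\alpha(p)|^2]=1$ one computes $\EE[G_{p,t}(s_i)^2]=\tfrac{2|f(p)|^2}{p^{2\sigma_t}}$ and $\EE[G_{p,t}(s_1)G_{p,t}(s_2)]=\tfrac{2|f(p)|^2}{p^{2\sigma_t}}\cos((s_1-s_2)\log p)$; since $\tfrac12\EE[(G_1+G_2)^2]-\tfrac12\EE[G_1^2]-\tfrac12\EE[G_2^2]=\EE[G_1G_2]$, a Taylor expansion as in \Cref{lem:gf-estimates} gives $\Psi_{p,t}=\EE[G_{p,t}(s_1)G_{p,t}(s_2)]+(\text{cubic error})$, whence the main term of $\log c_{p,y}$ is $\EE[G_{p,y}(s_1)G_{p,y}(s_2)]-\EE[G_{p,\infty}(s_1)G_{p,\infty}(s_2)]=\tfrac{2|f(p)|^2}{p}(p^{-1/\log y}-1)\cos((s_1-s_2)\log p)$, as in \eqref{eq:ratio_error}.

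The step I expect to be the crux is the size of the cubic error: comparing $\sigma_y$ and $\sigma_\infty$ naively via \Cref{lem:gf-estimates} only yields an error $\Oa(|f(p)|^3/p^{3/2})$, which is summable but does not vanish, whereas \eqref{eq:ratio_error} demands the sharper $\Oa(|\OurEpsilon_{p,y}|\,|f(p)|^3/p^{3/2})$. To gain the factor $|\OurEpsilon_{p,y}|$ I would regard $\Phi(\lambda):=\log\EE e^{\lambda(G_1+G_2)}-\log\EE e^{\lambda G_1}-\log\EE e^{\lambda G_2}$, so that $\log c_{p,y}=\Phi(1+\OurEpsilon_{p,y})-\Phi(1)=\int_1^{1+\OurEpsilon_{p,y}}\Phi'(\lambda)\,d\lambda$, and use $\Phi(0)=\Phi'(0)=0$, $\Phi''(0)=2\EE[G_1G_2]$, together with $\sup_{|\lambda|\le 2}|\Phi'''(\lambda)|=\Oa(r_p^3)$ (the third cumulants of the tilted variables are bounded by $\Oa(r_p^3)$ because $|\lambda(G_1+G_2)|\lesssim r_p$). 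Then $\Phi'(\lambda)=\lambda\Phi''(0)+\Oa(\lambda^2 r_p^3)$, and integrating, the main part gives $\tfrac12\Phi''(0)\big((1+\OurEpsilon_{p,y})^2-1\big)=\tfrac{2|f(p)|^2}{p}(p^{-1/\log y}-1)\cos((s_1-s_2)\log p)$ while the remainder is $\Oa(|\OurEpsilon_{p,y}|r_p^3)=\Oa(|\OurEpsilon_{p,y}|\,|f(p)|^3/p^{3/2})$, exactly \eqref{eq:ratio_error}. (Equivalently one may invoke \Cref{lem:tilt_exp} with $a_1=\OurEpsilon_{p,y}(1,1)$, $a_2=(1,1)$ in the form that tracks the $\ell^\infty$-bound $r$ on $X$ — every Taylor remainder in its proof carries a factor $r^3$ — the finitely many small primes being harmless.)

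Granting \eqref{eq:ratio_error}, I would take logarithms in $C_{y,y}/C_{y,\infty}=\prod_{p\le y}c_{p,y}$ and sum. The error contributes $\Oa\big(\sum_p|\OurEpsilon_{p,y}|\,|f(p)|^3/p^{3/2}\big)$, which by \eqref{eq:f-summability} is uniformly bounded by $\sum_p|f(p)|^3/p^{3/2}<\infty$ and, since $|\OurEpsilon_{p,y}|\to0$ for each fixed $p$, tends to $0$ as $y\to\infty$ by dominated convergence. For the main part, uniform boundedness over all $s_1,s_2$ is immediate: $p^{-1/\log y}-1$ has a constant sign, so bounding $|\cos((s_1-s_2)\log p)|\le1$ reduces it to $\sum_{p\le y}\tfrac{|f(p)|^2}{p}-\sum_{p\le y}\tfrac{|f(p)|^2}{p^{1+1/\log y}}$, and by \eqref{eq:new-log-sum-twist} (applied with $c=0$ and with $c=1$) both sums equal $\theta\log\log y+\Oa(1)$, so their difference is $\Oa(1)$ uniformly in $y$. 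This gives the asserted uniform boundedness of $C_{y,y}/C_{y,\infty}$.

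Finally, for $C_{y,y}/C_{y,\infty}\to1$ when $\delta\le|s_1-s_2|\le\delta^{-1}$, the oscillation must be exploited — this is the only place where the separation of $|s_1-s_2|$ from $0$ and $\infty$ enters. Writing $\tilde s=s_1-s_2$ and $g:=|f|^2\in\mathbf{P}_\theta$, it remains to show $2\sum_{p\le y}\tfrac{g(p)\cos(\tilde s\log p)}{p}(p^{-1/\log y}-1)\to0$. Inserting $\pi_g(t)=\theta\,\mathrm{Li}(t)+\Ea_g(t)$ and substituting $\log p=w\log y$, the $\theta\,\mathrm{Li}$-part becomes $2\theta\int_{(\log 2)/\log y}^{1}\tfrac{e^{-w}-1}{w}\cos(\tilde s\,w\log y)\,dw$; since $w\mapsto(e^{-w}-1)/w$ extends to a $C^\infty$ function on $[0,1]$, one integration by parts bounds this by $\Oa(1/(\tilde s\log y))=\Oa(1/(\delta\log y))\to0$, uniformly for $\tilde s\ge\delta$. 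The $\Ea_g$-contribution, after integration by parts, is dominated by $C_\delta\int_2^\infty t^{-2}|\Ea_g(t)|\,dt<\infty$ while the relevant weight and its $u$-derivative tend to $0$ pointwise as $y\to\infty$, so it too vanishes by dominated convergence, uniformly for $\delta\le|\tilde s|\le\delta^{-1}$. Combined with the vanishing of the error term this yields $\sum_{p\le y}\log c_{p,y}(s_1,s_2)\to0$, i.e.\ $C_{y,y}/C_{y,\infty}\to1$ with the stated uniformity. (Alternatively one could peel off the tail $\sum_{p>y}$ and combine \Cref{lem:martingaleL2est-2} with Abel's theorem for Dirichlet series, but the integral estimate above is the most self-contained.)
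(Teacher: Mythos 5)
Your proposal is correct and follows essentially the same route as the paper: the per-prime estimate \eqref{eq:ratio_error} is obtained by Taylor expansion in the small tilt $\OurEpsilon_{p,y}$ (your scalar interpolation of $\Phi(\lambda)$, with $\Phi'(0)=0$, $\Phi''(0)=2\EE[G_1G_2]$ and a third-cumulant bound $\Oa(r_p^3)$, is just a repackaging of \Cref{lem:tilt_exp}, as you yourself note), and the sum over $p\le y$ is then treated exactly as in the paper, via the $\mathbf{P}_\theta$ decomposition, integration by parts against the oscillatory factor giving $\Oa(1/(|s_1-s_2|\log y))$, and dominated convergence for the $\Ea_{|f|^2}$ contribution and for the cubic error $\sum_p|\OurEpsilon_{p,y}||f(p)|^3p^{-3/2}$. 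Your shortcut for uniform boundedness of the main sum — exploiting the constant sign of $p^{-1/\log y}-1$ and two applications of \eqref{eq:new-log-sum-twist} with $c=0$ and $c=1$ — is a valid minor variation on the paper's direct estimation of the integrated-by-parts expressions.
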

\begin{proof}
Applying \Cref{lem:tilt_exp} with
\[X = \begin{pmatrix} \Re \alpha(p) \\ \Im \alpha(p) \end{pmatrix},
\qquad 
a_1^{(i)} := \OurEpsilon_{p, y} a_2^{(i)}
\quad \text{and} \quad
a^{(i)}_2 := \frac{2|f(p)|}{p^{1/2}} \begin{pmatrix} \cos (s_i \log p) \\ \sin(s_i \log p)\end{pmatrix}\]
where $\OurEpsilon_{p, y} = p^{-1/2\log y} - 1$, we see that $c_{p, y}(s_1, s_2)$ is equal to
\begin{align*}
& \frac{\EE\left[\exp\left(\langle a_1^{(1)} + a_2^{(1)}  + a_1^{(2)} + a_2^{(2)} , X \rangle\right)\right]}{\EE[\exp\left(\langle a_2^{(1)} + a_2^{(2)}, X\rangle\right)]} 
\left\{\frac{\EE\left[\exp\left(\langle a_1^{(1)} +a_2^{(1)}, X \rangle\right)\right]}{\EE[\exp\left(\langle a_2^{(1)}, X\rangle\right)]}  \frac{\EE\left[\exp\left(\langle a_1^{(2)} +a_2^{(2)}, X \rangle\right)\right]}{\EE[\exp\left(\langle a_2^{(2)}, X\rangle\right)]} \right\}^{-1}\\
& = \exp \left\{\left(a_1^{(1)}\right)^T \EE[X X^T]  a_1^{(2)} + \left(a_1^{(1)}\right)^T \EE[X X^T]  a_2^{(2)} + \left(a_1^{(2)}\right)^T \EE[X X^T] a_2^{(1)} + O\left( \frac{|\OurEpsilon_{p, y}| |f(p)|^3}{p^{3/2}}\right)\right\}\\
& = \exp \left\{\frac{2|f(p)|^2}{p} (\OurEpsilon_{p, y}^2 +2 \OurEpsilon_{p, y}) [\cos(s_1 \log p)\cos(s_2 \log p) + \sin(s_1 \log p)\sin(s_2 \log p)]  + O\left( \frac{|\OurEpsilon_{p, y}| |f(p)|^3}{p^{3/2}}\right)\right\}
\end{align*}
which is \eqref{eq:ratio_error}. If we now consider
\begin{align*}
\log \frac{C_{y,y}(s_1, s_2)}{C_{y,\infty}(s_1, s_2)}
&= \sum_{p \le y} \log c_{p, y}(s_1, s_2)\\
& = \sum_{p \le y}\frac{2|f(p)|^2}{p} \left(p^{-1/\log y} - 1\right) \cos(|s_1-s_2| \log p)+ O\left( \sum_{p \le y} \frac{|\OurEpsilon_{p, y}| |f(p)|^3}{p^{3/2}}\right),
\end{align*}
we see that the error term on the right-hand side is uniformly bounded since $\sum_p |f(p)|^3 p^{-3/2} < \infty$, and converges to $0$ by dominated convergence since $\OurEpsilon_{p, y} \to 0$ for each fixed prime $p$. On the other hand, (recalling $\sum_{p \le y} |f(p)|^2 =: \pi_{|f|^2}(y)$) the first sum is equal to
\begin{align}
\notag &\int_{2^-}^{y^+} \frac{2}{p}  \left(p^{-1/\log y} - 1\right) \cos(|s_1 - s_2| \log p) d\left[\theta \mathrm{Li}(p) + \Ea_{|f|^2}(p)\right]\\
\notag 
& = 2\theta \int_2^y \frac{p^{-1/\log y} - 1}{p \log p}  \cos(|s_1 - s_2| \log p) dp + \int_{2^-}^{y^+} \frac{2}{p}  \left(p^{-1/\log y} - 1\right) \cos(|s_1 - s_2| \log p) d \Ea_{|f|^2}(p)\\
\label{eq:twopt-error1}
& = 2 \theta \int_{\frac{\log 2}{\log y}}^1 \frac{e^{-u} - 1}{u} \cos\left(|s_1 - s_2| u \log y\right) du
+\left[ \Ea_{|f|^2}(p) \frac{2}{p}\left(p^{-1/\log y} - 1\right) \right]_{2^{-}}^{y^+}\\
\label{eq:twopt-error2}
& +2 \int_2^y  \frac{\Ea_{|f|^2}(p)}{p^2} \left\{\left[ (p^{-1/\log y} - 1) + \frac{p^{-1/\log y}}{\log y}\right] \cos(|s_1 - s_2|\log p) + |s_1 - s_2| \sin(|s_1 - s_2|\log p) \right\}
dp.
\end{align}
After integration by parts, it is easy to see that the expressions in \eqref{eq:twopt-error1} are of size
\[ O \left( \frac{1}{\log y} + \frac{1}{\max(1, |s_1 - s_2| \log y)} + \frac{\Ea_{|f|^2}(y^+)}{y} \right)\]
which is uniformly bounded in $y\ge 3, s_1, s_2 \in \RR$, and converges to $0$ as $y \to \infty$ uniformly for  $s_1, s_2$ bounded away from each other. Meanwhile, the integral in \eqref{eq:twopt-error2} is uniformly bounded in $y \ge 3$ and $s_1, s_2$ in any compact interval since $\int_2^\infty dp |\Ea_{|f|^2}(p) |/ p^2  < \infty$, and its value converges to $0$ as $y \to \infty$ uniformly for all $s_1, s_2$ in any compact interval by dominated convergence. This concludes our proof.
\end{proof}
\begin{lem}
Let $\OurEpsilon_{p, y} = p^{-1/2\log y} - 1$. We have
\begin{align}
\notag &\frac{\EE\left[ G_{p, \infty}(u) \exp\left(G_{p, y}(s_1) + G_{p, y}(s_2) \right)\right]}{\EE\left[ \exp\left(G_{p, y}(s_1) + G_{p, y}(s_2) \right)\right]}
- \frac{\EE\left[ G_{p, \infty}(u) \exp\left(G_{p, \infty}(s_1) + G_{p, \infty}(s_2) \right)\right]}{\EE\left[ \exp\left(G_{p, \infty}(s_1) + G_{p, \infty}(s_2) \right)\right]} \\
\label{eq:estimate-newMean}
&\qquad =  \frac{2|f(p)|^2 \OurEpsilon_{p, y}}{p}\left[  \cos(|u-s_1| \log p) +  \cos(|u-s_2| \log p)\right]+ O\left(\frac{|f(p)|^3}{p^{3/2}}|\OurEpsilon_{p, y}|\right)
\end{align}
uniformly in  $y \ge 3$, $p \le y$ and $u, s_1, s_2 \in \RR$. 
\end{lem}
\begin{proof}
Let us choose $X$ as before, but now with
\[a_1 = \frac{2|f(p)|}{p^{1/2}} \begin{pmatrix} \cos(u \log p) \\ -\sin(u \log p)\end{pmatrix},
\qquad a_2(t) := \frac{2|f(p)|}{p^{\frac{1}{2} \left(1 + \frac{1}{\log t}\right)}} \begin{pmatrix} \cos(s_1 \log p) + \cos(s_2 \log p) \\ -\sin(s_1 \log p) -\sin(s_2 \log p)\end{pmatrix}.\]
Then the quantity we are interested in is indeed given by
\[\frac{\EE\left[\langle a_1, X \rangle e^{\langle a_2(y), X \rangle}\right]}{\EE[e^{\langle a_2(y), X\rangle}]} 
- \frac{\EE\left[\langle a_1, X \rangle e^{\langle a_2(\infty), X \rangle}\right]}{\EE[e^{\langle a_2(\infty), X\rangle}]} 
= \int_1^{1+\OurEpsilon_{p, y}} \left[\frac{d}{dt} \frac{\EE\left[\langle a_1, X \rangle e^{\langle ta_2(\infty), X \rangle}\right]}{\EE[e^{\langle t a_2(\infty), X\rangle}]} \right]dt.\]
But using \Cref{lem:tilt_exp}, we see that
\begin{align*}
 \frac{d}{dt}& \frac{\EE\left[\langle a_1, X \rangle e^{\langle ta_2(\infty), X \rangle}\right]}{\EE[e^{\langle t  a_2(\infty), X\rangle}]}\\
& = \frac{\EE\left[\langle a_1, X \rangle\langle a_2(\infty), X \rangle e^{\langle t a_2(\infty), X \rangle}\right]}{\EE[e^{\langle t a_2(\infty), X\rangle}]}
- \frac{\EE\left[\langle a_1, X \rangle e^{\langle ta_2(\infty), X \rangle}\right]}{\EE[e^{\langle t a_2(\infty), X\rangle}]}\frac{\EE\left[\langle a_2(\infty) X \rangle e^{\langle ta_2(\infty), X \rangle}\right]}{\EE[e^{\langle t a_2(\infty), X\rangle}]}\\
& =  a_1^T \EE[XX^T] a_2(\infty) + O(|a_1| |a_2(\infty)|^2)\\
& = \frac{2|f(p)|^2}{p}\left[  \cos(|u-s_1| \log p) +  \cos(|u-s_2| \log p)\right] + O\left(\frac{|f(p)|^3}{p^{3/2}}\right)
\end{align*}
uniformly for $p \le y$, $s_1, s_2 \in \RR$ and $t \le 2$, and thus we obtain \eqref{eq:estimate-newMean} with the desired uniformity.
\end{proof}
\begin{lem}\label{lem:M_y}
Let $\OurEpsilon_{p, y} = p^{-1/2\log y} - 1$ and
\[ \Ma_{y}(u; s_1, s_2):= \sum_{p \le y} \frac{2|f(p)|^2\OurEpsilon_{p, y}}{p}\left[  \cos(|u-s_1| \log p) +  \cos(|u-s_2| \log p)\right]. \]
The following statements are true.
\begin{itemize}
\item $\Ma_{y}(u; s_1, s_2)$ is uniformly bounded in $y \ge 3$ and $s_1, s_2 \in \RR$.
\item $\Ma_{y}(u; s_1, s_2) \to 0$ as $y \to \infty$, uniformly for all $u, s_1, s_2 \in \RR$ satisfying $|u-s_1|, |u-s_2| \in [\delta, \delta^{-1}]$ for any fixed $\delta \in (0, 1)$.
\end{itemize}
\end{lem}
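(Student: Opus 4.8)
The plan is to reduce both statements to the estimate $\eqref{eq:new-exp-sum}$ from \Cref{lem:new-martingaleL2est}, applied to the function $g = |f|^2 \in \mathbf{P}_\theta$, after separating out the effect of the small factor $\OurEpsilon_{p,y} = p^{-1/2\log y} - 1$. First I would rewrite $\Ma_y(u; s_1, s_2)$ as a sum of two pieces of the form
\[ \Sigma_y(w) := \sum_{p \le y} \frac{2|f(p)|^2 \OurEpsilon_{p,y}}{p} \cos(|w| \log p), \qquad w \in \{u - s_1, u - s_2\}, \]
so it suffices to prove the two bullet points for $\Sigma_y(w)$. Since $\OurEpsilon_{p,y} = p^{-1/2\log y} - 1 = -\tfrac{1}{2}\tfrac{\log p}{\log y} + \Oa((\log p/\log y)^2)$ for $p \le y$, the quantity $p \cdot \OurEpsilon_{p,y}/(2|f(p)|^2)$ is not quite of the shape handled directly by \Cref{lem:new-martingaleL2est}; instead I would use the exact identity $p^{-1/2\log y} = p^{-1/2\log y}$ to recognise $2|f(p)|^2 \OurEpsilon_{p,y}/p = 2|f(p)|^2 p^{-1 - 1/2\log y} - 2|f(p)|^2/p$, i.e.
\[ \Sigma_y(w) = 2\sum_{p \le y} \frac{|f(p)|^2}{p^{1 + 1/(2\log y)}}\cos(|w|\log p) - 2 \sum_{p \le y} \frac{|f(p)|^2}{p}\cos(|w|\log p). \]

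Now each of the two sums is exactly of the form appearing in $\eqref{eq:new-exp-sum}$: the first with $t$ chosen so that $c/\log t = 1/(2\log y)$, e.g. $c = 1/2$ and $t = y$, giving $\theta \log[\min(|w|^{-1}, \log y, \log y)] + \Oa(1) = \theta \log \min(|w|^{-1}, \log y) + \Oa(1)$; the second with $c = 0$ and $t$ arbitrary large, again giving $\theta \log \min(|w|^{-1}, \log y) + \Oa(1)$ — here one uses $\eqref{eq:new-exp-sum}$ with $c=0$ and sends the auxiliary parameter $t \to \infty$, which is legitimate since the bound is uniform for $c$ in compact subsets of $[0,\infty)$ and $t \ge 3$. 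Subtracting, the two leading $\theta \log \min(|w|^{-1}, \log y)$ terms cancel exactly, and we are left with $\Sigma_y(w) = \Oa(1)$ uniformly in $y \ge 3$ and $w \in \RR$. Summing the two copies ($w = u-s_1$ and $w = u - s_2$) gives the first bullet point.

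For the second bullet point, fix $\delta \in (0,1)$ and restrict to $|u - s_1|, |u - s_2| \in [\delta, \delta^{-1}]$. Since the above cancellation is only up to $\Oa(1)$, I cannot conclude convergence to $0$ directly from it; instead I would revisit the difference before taking absolute values. Writing $F_y(w) := \sum_{p \le y} \tfrac{|f(p)|^2}{p}\bigl(p^{-1/(2\log y)} - 1\bigr)\cos(|w|\log p)$ (so $\Sigma_y(w) = 2F_y(w)$), I would split $|f|^2 = \theta\,\mathrm{Li}' + d\Ea_{|f|^2}$ via $\eqref{eq:condition-P}$ and integrate by parts exactly as in the proof of \Cref{lem:tilt_ratios} (the computation in $\eqref{eq:twopt-error1}$–$\eqref{eq:twopt-error2}$ is essentially identical, with $|s_1 - s_2|$ replaced by $|w|$); this expresses $F_y(w)$ as $2\theta \int_{\log 2/\log y}^{1} \tfrac{e^{-u/2} - 1}{u}\cos(|w| u \log y)\,du$ plus boundary and error terms that are $\Oa\bigl(\tfrac{1}{\log y} + \tfrac{1}{|w|\log y} + \tfrac{|\Ea_{|f|^2}(y)|}{y}\bigr)$ plus an integral against $\Ea_{|f|^2}(p)/p^2$. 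On the range $|w| \in [\delta, \delta^{-1}]$ the boundary/error terms are $\Oa_\delta(1/\log y) \to 0$; the $\Ea_{|f|^2}$-integral tends to $0$ uniformly by dominated convergence (using $\int_2^\infty |\Ea_{|f|^2}(p)| p^{-2}dp < \infty$); and the main integral tends to $0$ because the oscillatory factor $\cos(|w| u \log y)$ has frequency $|w|\log y \ge \delta \log y \to \infty$ while $\tfrac{e^{-u/2}-1}{u}$ is bounded and of bounded variation on $[0,1]$, so a Riemann–Lebesgue / integration-by-parts argument gives $\Oa(1/(|w|\log y))$ uniformly for $|w| \ge \delta$. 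Adding the $w = u - s_1$ and $w = u - s_2$ contributions yields $\Ma_y(u; s_1, s_2) \to 0$ with the claimed uniformity.

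The main obstacle is the second bullet point: the crude $\Oa(1)$-cancellation argument for the first part is far too lossy to detect the decay, so one genuinely has to redo the Abel-summation/integration-by-parts analysis for $F_y(w)$ and extract the oscillatory cancellation in the main integral uniformly over $|w| \in [\delta,\delta^{-1}]$. Fortunately this is a routine variant of the estimates already carried out in the proof of \Cref{lem:tilt_ratios}, so in the write-up I would simply point to that computation rather than repeat it.
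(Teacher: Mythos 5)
Your treatment of the second bullet is exactly the paper's (the paper simply says the claims follow from the same computations as for $\log\left[C_{y,y}(s_1,s_2)/C_{y,\infty}(s_1,s_2)\right]$ in \Cref{lem:tilt_ratios} and omits the details): Abel summation against $\theta\,\mathrm{Li}+\Ea_{|f|^2}$, the boundary and $\Ea_{|f|^2}$-terms handled as in \eqref{eq:twopt-error1}--\eqref{eq:twopt-error2} (note the upper bound $|u-s_i|\le\delta^{-1}$ is what controls the $|w|\sin(|w|\log p)$ contribution via dominated convergence), and oscillatory decay $\Oa(1/(|w|\log y))$ of the main integral for $|w|\ge\delta$. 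So the substantive part of your proposal is correct and matches the intended proof.

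One small caveat on your first bullet: you derive $\Sigma_y(w)=\Oa(1)$ "uniformly in $w\in\RR$" by subtracting two applications of \eqref{eq:new-exp-sum}, but the uniformity stated in \Cref{lem:new-martingaleL2est} is only over \emph{compact} subsets of $s$ (its error term contains a contribution of size $|s|\int_2^\infty|\Ea_g(p)|p^{-2}dp$), so this route only gives boundedness for $|u-s_1|,|u-s_2|$ in compact sets — enough for the applications, but not literally the statement. The full claim is immediate from the trivial bound: $|\OurEpsilon_{p,y}|\le\frac{\log p}{2\log y}$ for $p\le y$, hence $|\Ma_y(u;s_1,s_2)|\le\frac{2}{\log y}\sum_{p\le y}\frac{|f(p)|^2\log p}{p}\ll 1$, using $\sum_{p\le y}|f(p)|^2\log p/p\ll\log y$ from $|f|^2\in\mathbf{P}_\theta$ by partial summation; I would state it this way rather than via \eqref{eq:new-exp-sum}.
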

\begin{proof}
The claims follow from computations similar to those concerning $\log \left[C_{y, y}(s_1, s_2) / C_{y, \infty}(s_1, s_2)\right] = \sum_{p \le y} \log c_{p, y}(s_1, s_2)$ in the proof of \Cref{lem:tilt_ratios}, and we omit the details here.
\end{proof}
\subsection{Some probabilistic results and reduction}\label{sec:mc-prepare}
In this subsection we collect a few probabilistic results. We shall often speak of convergence in probability of abstract random variables living in the space of continuous functions $C(I)$ or measures $\Ma(I)$ on some interval $I\subset \RR$ - see \Cref{app:abstract} for a short summary and references therein for further details.

Our first probabilistic result is a small generalisation of \cite[Lemma 3.1]{sw}.
\begin{lem}\label{lem:randomFourier}
Let $I \subset \RR$ be a compact interval, and consider the sequence of random functions $F_n(\cdot) := \sum_{k=1}^n U_k f_{n, k}(\cdot)$ where
\begin{itemize}
\item $(U_k)_{k}$ are i.i.d.~(real- or complex-valued) random variables that are either (i) standard normal or (ii) symmetric and uniformly bounded (i.e.~there exists some constant $C$ such that $|U_k| \le C$ almost surely);
\item $f_{n, k} \xrightarrow[n \to \infty]{} f_k$ in $C^1(I)$ for each $k \ge 1$, and
$
\sum_{k=1}^\infty \sup_{n \ge 1} \left[ \| f_{n,k}\|_{\infty}^2 + \| f_{n,k}'\|_{\infty}^2\right] < \infty.
$
\end{itemize}
Then $\sup_{n \ge 1} \EE\left[\exp\left( \lambda \|F_n\|_{\infty}\right)\right] < \infty$ for any $\lambda > 0$, and $F_n(\cdot) \xrightarrow[n \to \infty]{p} F(\cdot) := \sum_{k \ge 1} U_k f_k(\cdot)$ in $C(I)$. 
\end{lem}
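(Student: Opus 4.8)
### Proof Proposal for Lemma \ref{lem:randomFourier}

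The plan is to split the statement into two parts: the uniform exponential moment bound and the convergence in probability in $C(I)$. For both I would lean on Gaussian-type concentration: in case (i) the $U_k$ are already Gaussian, and in case (ii) symmetric bounded random variables are subgaussian, so in either case there is a uniform constant $\sigma$ such that $\EE[e^{\lambda \Re(\bar z U_k)}] \le e^{\sigma^2 |z|^2 \lambda^2/2}$ for all $\lambda \in \RR$ and $z \in \CC$. First I would reduce to the real-valued case by treating real and imaginary parts separately (this only costs constants), so that $F_n$ is a real random function on the compact interval $I$.

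For the exponential moment bound, the key step is a chaining / Dudley-type argument. Fix $n$ and consider the centered process $(F_n(x))_{x \in I}$. Using the subgaussianity of the $U_k$ together with the uniform $\ell^2$-summability hypothesis $\sum_k \sup_n \|f_{n,k}\|_\infty^2 < \infty$, one gets that $F_n(x)$ is subgaussian with a variance proxy bounded uniformly in $n$ and $x$; and using $\sum_k \sup_n \|f_{n,k}'\|_\infty^2 < \infty$ one controls the increments, $\|F_n(x) - F_n(x')\|_{\psi_2} \lesssim |x - x'|$ uniformly in $n$. Since $I$ is a bounded interval, its metric entropy is logarithmic, so Dudley's entropy integral converges and gives $\EE[\sup_{x} |F_n(x) - F_n(x_0)|] \le C$ for a fixed reference point $x_0 \in I$, with $C$ independent of $n$. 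Combining with the subgaussian tail of $F_n(x_0)$ and the Borell–TIS / Talagrand-type concentration inequality for suprema of subgaussian processes, one obtains $\PP(\|F_n\|_\infty > u) \le C_1 e^{-c_1 u^2}$ uniformly in $n$, which immediately yields $\sup_n \EE[e^{\lambda \|F_n\|_\infty}] < \infty$ for every $\lambda > 0$. (Alternatively, one can avoid chaining altogether by bounding $\|F_n\|_\infty$ via a Sobolev embedding $\|F_n\|_\infty \lesssim \|F_n\|_{L^2(I)} + \|F_n'\|_{L^2(I)}$ and then using that $\EE\exp(\lambda \|F_n\|_{L^2}^2)$ and $\EE\exp(\lambda\|F_n'\|_{L^2}^2)$ are finite by expanding the norm and using Fubini plus the summability hypotheses; this is cleaner and I would probably present it this way.)

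For the convergence in probability, I would first show pointwise and then upgrade to uniform convergence. Set $F(x) = \sum_{k \ge 1} U_k f_k(x)$; this series converges in $L^2(\Omega)$ for each $x$ since $\sum_k \|f_k\|_\infty^2 < \infty$ and the $U_k$ are orthogonal with bounded second moments, so $F$ is well-defined. Write $F_n(x) - F(x) = \sum_{k=1}^n U_k (f_{n,k}(x) - f_k(x)) + \sum_{k > n} U_k f_k(x) =: A_n(x) + B_n(x)$. For $B_n$, the tail $\sum_{k>n}\|f_k\|_\infty^2 + \|f_k'\|_\infty^2 \to 0$, so the Sobolev-embedding estimate from the first part (applied to the tail series) gives $\EE\|B_n\|_\infty^2 \to 0$. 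For $A_n$, I would fix $N$, split at index $N$: the terms $k \le N$ contribute $\sum_{k \le N}|U_k| \|f_{n,k} - f_k\|_\infty \to 0$ in probability as $n \to \infty$ by the $C^1(I)$ (hence $C(I)$) convergence of $f_{n,k}$ to $f_k$; the terms $N < k \le n$ are handled uniformly in $n$ by the summable tail via the same Sobolev/exponential-moment estimate, and then one lets $N \to \infty$. This gives $\|F_n - F\|_\infty \to 0$ in probability and, since we also have uniform exponential moments, in every $L^p$ as well, and in particular $F \in C(I)$ a.s.

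The main obstacle, and the part requiring the most care, is making the supremum-norm estimates genuinely \emph{uniform in $n$}: the random functions $F_n$ have $n$-dependent coefficients, so one cannot just apply a fixed a.s.\ bound. The summability hypothesis $\sum_k \sup_n [\|f_{n,k}\|_\infty^2 + \|f_{n,k}'\|_\infty^2] < \infty$ is exactly what is needed to feed the Sobolev-embedding (or chaining) bound with constants that do not see $n$, and the delicate point is to organize the $N/n$ double-limit in the convergence argument so that the uniform tail control is used before taking $n \to \infty$. Once that bookkeeping is set up, the rest is routine; the case distinction between Gaussian and bounded symmetric $U_k$ enters only through the single subgaussianity estimate and otherwise plays no role.
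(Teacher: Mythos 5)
Your proposal is correct, and your convergence argument (Sobolev embedding of $W^{1,2}(I)$ into $C(I)$, second-moment estimate $\EE\|F_n-F\|_{W^{1,2}}^2\ll\sum_k(\|f_{n,k}-f_k\|_\infty^2+\|f_{n,k}'-f_k'\|_\infty^2)$, tail splitting at $N$, Markov) is essentially the paper's, which runs the same computation and concludes by dominated convergence. For the uniform exponential moment your primary route differs: you chain the increments (controlled by $\sum_k\sup_n\|f_{n,k}'\|_\infty^2$) and invoke a Talagrand-type tail bound for the supremum of a process with subgaussian increments, which directly yields $\PP(\|F_n\|_\infty>u)\le C_1e^{-c_1u^2}$ uniformly in $n$ and hence all exponential moments; this is perfectly valid (the paper itself uses exactly this generic chaining bound in its multifractal estimate, just not here). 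The paper instead takes the route you sketch in your parenthetical: Sobolev bound $\|F_n\|_\infty\le 2\|F_n\|_{W^{1,2}}$, Fubini, and pointwise Gaussian/Hoeffding tails for $F_n(t),F_n'(t)$. One caveat on that parenthetical as you stated it: $\EE\exp(\lambda\|F_n\|_{L^2}^2)$ and $\EE\exp(\lambda\|F_n'\|_{L^2}^2)$ are \emph{not} finite for every $\lambda>0$ (already false for a single Gaussian term), only for $\lambda$ below a threshold determined by the uniform variance proxy. To recover the claim for arbitrary $\lambda$ in the linear exponent one needs the paper's additional splitting step, namely
\begin{align*}
\EE\left[e^{\lambda\|F_n\|_\infty}\right]\le e^{4\lambda^2/a}+\EE\left[e^{a\|F_n\|_{W^{1,2}(I)}^2}\right]
\end{align*}
with $a>0$ chosen small; with that adjustment the Sobolev version is complete. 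Your chaining route needs no such fix, so either presentation works; the chaining one buys the Gaussian tail for $\|F_n\|_\infty$ (stronger than needed), the Sobolev one is more elementary and reuses the same $W^{1,2}$ machinery as the convergence step.
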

\begin{proof}
Suppose $(U_k)_k$ and $(f_{n, k})_{n, k}$ are all real-valued without loss of generality (otherwise we treat the real and imaginary parts separately) and set $f_{n, k} \equiv 0$ for $n > k$. By an affine transformation of coordinates it suffices to establish the claim for $I = [0,1]$. Let us recall the Sobolev inequality (see \Cref{thm:sobolev})
\[ \|f\|_{\infty} \ll \|f\|_{W^{1,2}([0,1])} := \left( \int_0^1 [|f(t)|^2 + |f'(t)|^2 ]dt\right)^{\frac{1}{2}}. \]
We start with the estimate for exponential moments. Pick $a > 0$, and consider
\begin{align}
\EE\left[\exp\left( \lambda \|F_n\|_{\infty}\right)\right] 
\notag
&\le\EE\left[\exp\left( \lambda \|F_n\|_{\infty}\right)\mathbf{1}_{\{\|F_n\|_\infty \le 4\lambda / a\}}\right] + \EE\left[\exp\left( \lambda \|F_n\|_{\infty}\right) \mathbf{1}_{\{\|F_n\|_\infty > 4\lambda / a\}}\right]  \\
\label{eq:mgf_estimate}
& \le e^{4\lambda^2/a} + \EE\left[ \exp \left(a\|F_n\|_{W^{1,2}([0,1])}^2\right)\right]
\le e^{4\lambda^2/a} + \int_0^1 \EE\left[ e^{a\left(|F_n(t)|^2 + |F_n'(t)|^2\right)}\right]dt.
\end{align}
By Gaussian tail estimates or Hoeffding's inequality \cite{Hoeffding} (if $U_k$'s are bounded), we can verify that
\[ \PP\left(|F_n(t)| \ge u\right) \le 2 e^{-u^2 / (C \sum_{k \ge 1} |f_{n, k}(t)|^2)}  
\qquad \text{and} \qquad 
\PP\left(|F_n'(t)| \ge u\right) \le 2 e^{-u^2 /  (C \sum_{k \ge 1} |f_{n, k}'(t)|^2)} \]
where $C \in (0, \infty)$ is independent of $n \ge 1$ and $u \ge 0$. In particular, our assumption on $(f_{n,k})_{n, k}$ implies
\[ \sup_{n \ge 1}\sup_{t \in [0,1]} \EE\left[ e^{a\left(|F_n(t)|^2 + |F_n'(t)|^2\right)}\right]
\le \sup_{n \ge 1} \sup_{t \in [0,1]} \sqrt{\EE\left[ e^{2a|F_n(t)|^2}\right]\EE\left[ e^{2a|F_n'(t)|^2}\right]} < \infty \]
for sufficiently small $a > 0$. Substituting this into \eqref{eq:mgf_estimate}, we obtain $\EE\left[\exp\left( \lambda \|F_n\|_{\infty}\right)\right] < \infty$ for any $\lambda > 0$.

We now study the convergence of $F_n$. Since $(U_k)_{k}$ are i.i.d.~variables with zero mean and finite variance,
\[ \EE\left[\|F_n - F\|_{W^{1,2}([0,1])}^2\right]
\ll  \sum_{k \ge 1} \left(\|f_{n, k} - f_k\|_\infty^2 + \|f'_{n, k} - f'_k\|_\infty^2\right) =: \sum_{k \ge 1} I_{n, k}. \]
Note that $I_{n, k} \xrightarrow{n \to \infty} 0$ for each $k \ge  1$, and $I_{n, k} \le 2\sup_{n \ge 1} \left[ \| f_{n,k}\|_{\infty}^2 + \| f_{n,k}'\|_{\infty}^2\right]$ which is summable. It follows from dominated convergence that $\lim_{n \to \infty} \sum_k I_{n, k} = 0$ , and by Markov's inequality
\[ \lim_{n \to \infty} \PP\left( \|F_n - F\|_\infty > \OurEpsilon \right)
\le \lim_{n \to \infty} \OurEpsilon^{-2}\EE\left[\|F_n - F\|_{\infty}^2\right]
\ll \lim_{n \to \infty} \OurEpsilon^{-2} \cdot  \EE\left[\|F_n - F\|_{W^{1,2}([0,1])}^2\right] = 0 \qquad \forall \OurEpsilon > 0. \]
\end{proof}
The next result concerns convergence of measures in the presence of additional continuous densities.
\begin{lem}\label{lem:add-cont-density}
Let $I \subset \RR$ be a compact interval, and suppose
\[    X_n \xrightarrow[n \to \infty]{p} X\quad  \text{in $C(I)$} \qquad \text{and} \qquad
    \nu_n \xrightarrow[n \to \infty]{p} \nu \quad\text{in $\Ma(I)$}, \]
then $X_n \nu_n \xrightarrow[n \to \infty]{p} X \nu$ in $\Ma(I)$.
\end{lem}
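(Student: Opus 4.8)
The plan is to reduce the joint convergence $X_n\nu_n \to X\nu$ to the two separate convergences via a sandwiching argument on $\Ma(I)$. Recall that convergence in $\Ma(I)$ for Radon measures on a compact interval $I$ means $\nu_n(\phi)\to\nu(\phi)$ for every $\phi\in C(I)$, and that convergence in probability for the random measures means this holds in probability for each fixed test function (equivalently, along the separable family generated by a countable dense subset of $C(I)$). So it suffices to show that for each fixed $\phi\in C(I)$ we have $\int_I \phi\, X_n\, d\nu_n \xrightarrow{p} \int_I \phi\, X\, d\nu$. By replacing $\phi$ with $\phi$ times a real/imaginary part decomposition if needed, and by Skorokhod-type arguments we may pass to almost sure convergence along subsequences; but a cleaner route avoids subsequences entirely.

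First I would write the difference as
\[
\int_I \phi X_n\, d\nu_n - \int_I \phi X\, d\nu
= \int_I \phi (X_n - X)\, d\nu_n + \left(\int_I \phi X\, d\nu_n - \int_I \phi X\, d\nu\right).
\]
For the first term, bound its absolute value by $\|\phi\|_\infty\, \|X_n - X\|_\infty\, \nu_n(I)$. Since $X_n\to X$ in $C(I)$ in probability, $\|X_n-X\|_\infty \xrightarrow{p} 0$; and since $\nu_n \to \nu$ in $\Ma(I)$ in probability, the total masses $\nu_n(I)$ converge in probability to $\nu(I)$, hence the sequence $(\nu_n(I))_n$ is tight (bounded in probability). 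A product of a sequence tending to $0$ in probability and a tight sequence tends to $0$ in probability, so the first term vanishes in probability. For the second term, note that $\phi X$ is a (random) element of $C(I)$; if $X$ were deterministic this is exactly the definition of $\nu_n\to\nu$ applied to the test function $\phi X$. To handle the randomness of $X$, approximate: fix $\epsilon>0$, and on the event $\{\|X\|_\infty \le M\}$ (which has probability $\ge 1-\epsilon$ for $M$ large) approximate $X$ uniformly by a simple function, or more directly use that for any deterministic $\psi\in C(I)$, $\nu_n(\psi)\to\nu(\psi)$ in probability, and a standard density/tightness argument upgrades this to $\nu_n(\Psi)\to\nu(\Psi)$ in probability for random $\Psi\in C(I)$ that is independent of — or merely jointly measurable with — the $\nu_n$'s, using the tightness of $\nu_n(I)$ to control the error $|\nu_n(\Psi)-\nu_n(\psi)|\le \nu_n(I)\|\Psi-\psi\|_\infty$.

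The main obstacle is the second term: one must be careful that the convergence $\nu_n\to\nu$ in probability in $\Ma(I)$, which a priori is tested against \emph{fixed} continuous functions, can be applied against the \emph{random} test function $\phi X$. The clean way is: pick a countable dense set $\{\psi_j\}\subset C(I)$; for each $j$, $\nu_n(\psi_j)\to\nu(\psi_j)$ in probability. Given $\delta>0$, choose (random) $j$ with $\|\phi X - \psi_j\|_\infty < \delta$ — measurably, since $X$ is continuous and $\{\psi_j\}$ is dense — then
\[
\left|\int_I \phi X\, d\nu_n - \int_I \phi X\, d\nu\right|
\le \nu_n(I)\,\delta + |\nu_n(\psi_j) - \nu(\psi_j)| + \nu(I)\,\delta.
\]
Since only finitely many $\psi_j$ are needed once one localises to $\{\|X\|_\infty\le M\}$ and uses a finite $\delta$-net of the compact set $\{\phi g : \|g\|_\infty\le M\}\subset C(I)$, the middle term goes to $0$ in probability (finite maximum of terms going to $0$ in probability), and the outer terms are $O(\delta)$ times tight quantities. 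Letting $\delta\to0$ and $M\to\infty$ finishes the proof. I expect this localisation-plus-net step to be the only point requiring genuine care; everything else is the triangle inequality together with the elementary fact that $o_p(1)\cdot O_p(1) = o_p(1)$.
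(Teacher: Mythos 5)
Your decomposition and your treatment of the first term are exactly the paper's: bound $\bigl|\int_I \phi\,(X_n-X)\,d\nu_n\bigr|$ by $\|\phi\|_\infty\|X_n-X\|_\infty\,\nu_n(I)$ and use that an $o_p(1)$ factor times a tight factor is $o_p(1)$. The problem is in the step you yourself flag as the delicate one. Your finite-net argument rests on the claim that $\{\phi g : g\in C(I),\ \|g\|_\infty\le M\}$ is a compact subset of $C(I)$. It is not: closed norm-balls in $C(I)$ are bounded but not totally bounded (compactness would require equicontinuity, by Arzel\`a--Ascoli), so no finite $\delta$-net of this set exists, and localising to $\{\|X\|_\infty\le M\}$ does not by itself reduce you to finitely many test functions $\psi_j$. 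As written, the argument that the middle term $|\nu_n(\psi_j)-\nu(\psi_j)|$ is a maximum over finitely many indices therefore does not go through, and this is precisely the point where the randomness of the test function $\phi X$ had to be tamed.

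The gap is repairable by either of two standard devices. (i) Use that $C(I)$ is Polish, so by Ulam's theorem the law of $X$ is tight: choose a genuinely compact $K_\epsilon\subset C(I)$ with $\PP(X\in K_\epsilon)\ge 1-\epsilon$ and take your finite $\delta$-net inside the compact set $\phi\cdot K_\epsilon$; the rest of your estimate then works verbatim. (ii) More simply, invoke the subsequence characterisation of convergence in probability (the route the paper itself spells out in \Cref{lem:tensorise}): along a subsequence both convergences hold almost surely, and then for each fixed realisation $\phi X(\omega)$ is just a fixed element of $C(I)$, so weak convergence of $\nu_n$ applies to it directly; since every subsequence has a further subsequence along which the conclusion holds a.s., the convergence in probability follows. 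Either repair makes your proof complete; without one of them the net step fails.
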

\begin{proof}
We will use the subsequential limit characterisation of convergence in probability and reduces the problem to a deterministic one. Let $(n_k)_k$ be any subsequence. By \Cref{lem:subseq_rv} and \Cref{lem:subseq_measure}, there exists a further subsequence $(n_{m_k})_k$ along which we have
\[    X_{n_{m_k}} \xrightarrow[k \to \infty]{a.s.} X \quad \text{in $C(I)$} \qquad \text{ and } \qquad
    \nu_{n_{m_k}} \xrightarrow[k \to \infty]{a.s.} \nu
    \quad \text{in $\Ma(I)$}, \]
i.e.~there exists an event $E$ with $\PP(E) = 1$ such that for any $\omega \in E$ we have
\[    \lim_{k \to \infty} X_{n_{m_k}}(\cdot; \omega) = X(\cdot; \omega) \qquad \text{and} \qquad 
    \lim_{k \to \infty} \nu_{n_{m_k}}(\cdot; \omega) = \nu(\cdot; \omega).\]
Let us fix $\omega \in E$. For any given function $f \in C(I)$, consider
\begin{align*}
&\left| \int_I f(x) X_{n_{m_k}}(x; \omega) \nu_{n_{m_k}}(dx; \omega) - \int_I f(x) X(x;\omega) \nu(dx; \omega)\right|\\
& \qquad \le
\left| \int_I f(x) [X_{n_{m_k}}(x; \omega) - X(x;\omega)] \nu_{n_{m_k}; \omega}(dx)\right| + \left|\int_I f(x) X(x; \omega) [\nu_{n_{m_k}}(dx; \omega)- \nu(dx;\omega)]\right|.
\end{align*}
Observe that:
\begin{itemize}
    \item The first term on the right-hand side is bounded by $\|f\|_{\infty} \|X_{n_{m_k}}(\cdot; \omega) - X(\cdot; \omega)\|_{\infty} \nu_{n_{m_k}}(I)$. This vanishes as $k \to \infty$ since $\nu_{n_{m_k}}(I; \omega) \to \nu(I; \omega) < \infty$ and $ \|X_{n_{m_k}}(\cdot; \omega) - X(\cdot; \omega)\|_{\infty} \to 0$.
    \item The second term on the right-hand side also vanishes as $k \to \infty$ since we can take $f(\cdot) X(\cdot; \omega) \in C(I)$ as test function and use the convergence of $\nu_{n_{m_k}}(\cdot; \omega) \to \nu(\cdot; \omega)$ in $\Ma(I)$.
\end{itemize}
Since $f \in C(I)$ and $\omega \in E$ are arbitrary, we have shown that $X_{n_{m_k}} \nu_{n_{m_k}} \xrightarrow[k \to \infty]{a.s.} X \nu$ in $\Ma(I)$, and the claim follows by another application of \Cref{lem:subseq_measure}.
\end{proof}
Let $y_0 = y_0(f)$ be sufficiently large such that  
\[ \left|\frac{f(p)}{p^{1/2}}\right|  + \left|\frac{f(p^2)}{p}\right| \le \frac{1}{2}
\qquad \text{and thus} \qquad \sup_{s \in \RR} \left|\sum_{k=1}^2 \frac{\alpha(p)^k f(p^k)  }{p^{k(1/2+is)}}\right| \le \frac{1}{2} \]
for any $p \ge y_0$ (which is possible by \eqref{eq:f-summability}), and that the conclusion in \Cref{lem:truncate} holds. Rewrite
\begin{equation}\label{eq:reduced-measure}
m_{y, t}(ds) = X_{y, t}(s) \nu_{y, t}(ds)
\end{equation}
where
\begin{equation}\label{eq:def-nu}
\nu_{y, t}(ds) := \frac{\exp\left(\Ga_{y, t}(s)\right)}{\EE\exp\left(\Ga_{y, t}(s)\right)} ds
\qquad \text{with} \qquad 
\Ga_{y, t}(s) = \sum_{p \le y} G_{p, t}(s), \quad G_{p, t}(s) := 2 \Re \frac{f(p) \alpha(p)}{p^{\sigma_t + is}}
\end{equation}
and $X_{y, t}(s) = \prod_{j=0}^3 X_{y, t}^{(j)}(s)$ where
\begin{align*}
X_{y, t}^{(0)}(s) & := \frac{\EE\exp \left(\Ga_{y, t}(s)\right)}{\EE\left[|A_y(\sigma_t + is)|^2 \right]},
\qquad X_{y, t}^{(1)}(s) := \exp\bigg(- \sum_{p \le y_0} G_{p, t}(s)\bigg)  |A_{y_0}(\sigma_t + is)|^2, \\
X_{y, t}^{(2)}(s) & :=\prod_{y_0 < p \le y}  \left|1 + \sum_{k =1}^2 \frac{\alpha(p)^k f(p^k)  }{p^{k(\sigma_t + is)}}\right|^{-2}\left|1 + \sum_{k \ge 1} \frac{\alpha(p)^k f(p^k)  }{p^{k(\sigma_t+ is)}}\right|^2,  \\
X_{y, t}^{(3)}(s) & :=\prod_{y_0 < p \le y} \exp\left(-G_{p, y}(s)\right) \left|1 + \sum_{k =1}^2 \frac{\alpha(p)^k f(p^k)  }{p^{k(1/2 + is)}}\right|^{2}.
\end{align*}
We claim that:
\begin{lem}\label{lem:cont-density}
For any $r > 0$, 
\[ \sup_{y, t \ge 3} \EE\left[\|X_{y, t}\|_\infty^r\right] < \infty. \]
Moreover, both $X_{y, \infty}$ and $X_{\infty, y}$ converge in probability to some continuous field $X_\infty$ in $C(I)$ as $y \to \infty$.
\end{lem}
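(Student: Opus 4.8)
The plan is to handle the two assertions separately, treating each factor $X_{y,t}^{(j)}$ in turn and exploiting the product structure. For the uniform moment bound, the key observation is that $\|X_{y,t}\|_\infty^r \le \prod_j \|X_{y,t}^{(j)}\|_\infty^r$, so by H\"older's inequality it suffices to bound $\sup_{y,t}\EE[\|X_{y,t}^{(j)}\|_\infty^{4r}]$ for each $j\in\{0,1,2,3\}$. The factor $X_{y,t}^{(0)}$ is \emph{deterministic}: using \Cref{lem:gf-estimates} together with \Cref{lem:truncate} to compare $\EE|A_y(\sigma_t+is)|^2$ with $\prod_{p\le y}\EE\exp(G_{p,t}(s))$, one sees $X_{y,t}^{(0)}$ is bounded above and below by absolute constants uniformly in $y,t,s$, so its contribution is trivial. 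The factor $X_{y,t}^{(1)}$ involves only primes $p\le y_0$, a \emph{fixed finite} set, so $\|X_{y,t}^{(1)}\|_\infty$ is dominated by a fixed continuous function of the finitely many $\alpha(p)$, $p\le y_0$ (and $\sigma_t\in[1/2,1]$, $s$ ranging over a compact interval $I$ — here I should note that for the $L^1(\RR)$ statement of \Cref{cor:mc-L1} one works interval-by-interval, so it is legitimate to fix a compact $I$), hence has finite moments of all orders, uniformly. For $X_{y,t}^{(2)}$, \Cref{lem:truncate} gives that the product over $y_0<p\le y$ is $1+\Oa(|f(p)/p^{1/2}|^3+|f(p^2)/p|^3+\sum_{k\ge3}|f(p^k)/p^{k/2}|)$ termwise, and by \eqref{eq:f-summability} the sum of these error terms converges, so $\|X_{y,t}^{(2)}\|_\infty$ is \emph{bounded by a deterministic constant}, uniformly in $y,t,s$.

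The genuinely probabilistic factor is $X_{y,t}^{(3)}$. Here I would take logarithms: $\log X_{y,t}^{(3)}(s) = \sum_{y_0<p\le y}\bigl[-G_{p,y}(s) + \log|1+\sum_{k=1}^2 \alpha(p)^k f(p^k)p^{-k(1/2+is)}|^2\bigr]$. Expanding the logarithm using the smallness $|\sum_{k=1}^2\alpha(p)^kf(p^k)p^{-k(1/2+is)}|\le 1/2$ for $p\ge y_0$, the leading term of the $p$-summand is $2\Re(\alpha(p)f(p)p^{-1/2-is}) - G_{p,y}(s) + (\text{lower order})$; the difference of the first two pieces is $O(|f(p)|\,|p^{-is}(p^{-1/2}-p^{-\sigma_y})|)=O(|f(p)|p^{-1/2}\log p/\log y)$, and the lower-order terms are $O(|f(p)|^2/p + |f(p^2)|/p + \ldots)$. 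Writing $\log X_{y,t}^{(3)} = \sum_p U_p h_{y,p}(\cdot)$ in the form required by \Cref{lem:randomFourier} — with $U_p$ built from the bounded symmetric variables $\Re\alpha(p),\Im\alpha(p)$ (and products $\alpha(p)^2$) and $h_{y,p}$ the corresponding deterministic Fourier-type coefficient functions on $I$ — one checks the summability hypothesis $\sum_p \sup_y(\|h_{y,p}\|_\infty^2+\|h_{y,p}'\|_\infty^2)<\infty$ precisely from \eqref{eq:f-summability} (the $\log^2 p$, $\log^3 p$ weights there are exactly what controls the $C^1$-norms on $I$, since differentiating in $s$ produces factors of $\log p$). \Cref{lem:randomFourier} then yields $\sup_y\EE[\exp(\lambda\|\log X_{y,t}^{(3)}\|_\infty)]<\infty$ for every $\lambda$, which gives $\sup_y\EE[\|X_{y,t}^{(3)}\|_\infty^{4r}]<\infty$; the same argument applies with $y$ fixed and the truncation parameter $\to\infty$. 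Combining via H\"older finishes the first assertion.

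For the convergence assertion, I would again argue factorwise. Since $X_\infty$ will be defined as the product of the limiting factors, it suffices to show each $X_{y,\infty}^{(j)}$ and $X_{\infty,y}^{(j)}$ converges in $C(I)$ in probability, and then use that multiplication $C(I)\times C(I)\to C(I)$ is continuous (together with the uniform moment bounds just established, which upgrade convergence in probability of factors to convergence in probability of products — essentially the argument in \Cref{lem:add-cont-density}, or directly: a product of sequences converging in probability in $C(I)$, each tight in sup-norm, converges in probability). The factor $X_{y,\infty}^{(0)}$ converges since $\EE\exp(\Ga_{y,\infty}(s))/\EE|A_y(1/2+is)|^2$ converges uniformly on $I$ by \Cref{lem:new-martingaleL2est} and \Cref{lem:martingaleL2est-2} (and \Cref{lem:truncate}); as $y\to\infty$ it is in fact deterministic so convergence is immediate. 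The factor $X_{y,\infty}^{(1)}$ stabilises: for $y\ge y_0$ it does not depend on $y$ at all, and similarly $X_{\infty,y}^{(1)}\to X_{\infty}^{(1)}$ as the $\sigma_y\to 1/2$. For $X_{y,\infty}^{(2)}$ and $X_{y,\infty}^{(3)}$, the $p$-summands of the logarithms are absolutely summable uniformly in $y$ (by \eqref{eq:f-summability}) and converge pointwise in $p$ as $y\to\infty$ (for $X^{(3)}$ because $\sigma_y\to 1/2$ so $G_{p,y}\to G_{p,\infty}$ and the $p^{-1/2-is}$ factors align), so \Cref{lem:randomFourier} — or plain dominated convergence of the series in $C^1(I)$ combined with the moment estimate to pass from $L^2$ to probability — gives convergence in $C(I)$ in probability; the limits $X_\infty^{(2)},X_\infty^{(3)}$ are continuous fields. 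The analogous statements for $X_{\infty,y}^{(j)}$ (now $y$ is the \emph{smoothing} parameter and the Euler product is over all primes) follow the same way, using \Cref{lem:tilt_ratios} and \Cref{lem:M_y} to control the dependence on $y$ through $\sigma_y$. \emph{The main obstacle} is the bookkeeping in $X_{y,t}^{(3)}$: one must carefully extract the cancellation between $-G_{p,y}(s)$ and the $k=1$ term of $\log|1+\cdots|^2$, showing the residual is summable in $p$ with the right $C^1(I)$-control, and verify that the hypotheses of \Cref{lem:randomFourier} are met uniformly in the second parameter — everything else is either deterministic or a finite computation.
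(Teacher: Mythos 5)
Your overall structure (treat $X^{(0)},\dots,X^{(3)}$ separately, deterministic bounds for $X^{(0)},X^{(1)},X^{(2)}$, H\"older/product arguments, and \Cref{lem:randomFourier} for the genuinely random factor) is the same as the paper's. The gap is in your treatment of $X^{(3)}$, which is exactly the step you flag as "the main obstacle". You read the definition with $\exp(-G_{p,y}(s))$ against an Euler factor at exponent $1/2+is$, and treat the mismatch $2\Re\big[\alpha(p)f(p)p^{-is}\big(p^{-1/2}-p^{-\sigma_y}\big)\big]$ as a lower-order error of size $O\big(|f(p)|p^{-1/2}\log p/\log y\big)$, claiming the summability hypothesis of \Cref{lem:randomFourier} then follows from \eqref{eq:f-summability}. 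It does not: the relevant coefficient functions $h_{y,p}(s)=2f(p)p^{-is}(p^{-1/2}-p^{-\sigma_y})$ have $\sup_{y\ge p}\big(\|h_{y,p}\|_\infty^2+\|h_{y,p}'\|_\infty^2\big)\asymp |f(p)|^2\log^2 p/p$, whose sum over $p$ diverges since $|f|^2\in\mathbf{P}_\theta$ with $\theta>0$. Worse, this residual field is not small at all: its variance is $\asymp\sum_{p\le y}\frac{|f(p)|^2}{p}\min\big(1,\tfrac{\log p}{\log y}\big)^2\asymp\theta$, of order one uniformly in $y$, and (since the mass comes from primes near $y$) it does not converge as $y\to\infty$. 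So both the uniform exponential-moment bound and the convergence claim for $X^{(3)}$ break down along your route.

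The resolution is that the displayed formula for $X^{(3)}_{y,t}$ in the paper contains typos: for the factorisation $m_{y,t}=X_{y,t}\nu_{y,t}$ to hold one must have $X^{(3)}_{y,t}(s)=\prod_{y_0<p\le y}\exp(-G_{p,t}(s))\big|1+\sum_{k=1}^2\alpha(p)^kf(p^k)p^{-k(\sigma_t+is)}\big|^2$, i.e.\ the tilt and the Euler factor live at the \emph{same} exponent $\sigma_t$. With this reading the expansion of $\log\big|1+\cdots\big|^2$ produces a first-order term $2\Re\big(\alpha(p)f(p)p^{-\sigma_t-is}\big)$ that cancels $-G_{p,t}(s)$ \emph{exactly}, and the surviving random part is only the field $\Delta_{y,t}(s)=\Re\sum_{y_0<p\le y}\big[\alpha(p)^2(2f(p^2)-f(p)^2)p^{-2(\sigma_t+is)}-2\alpha(p)^3f(p)f(p^2)p^{-3(\sigma_t+is)}\big]$ plus deterministically summable errors; the squared $C^1(I)$-norms of its coefficients are controlled by $\sum_p\big\{[|f(p^2)|^2+|f(p)|^4]\log^2p/p^2+|f(p)|^2|f(p^2)|^2\log^2p/p^3\big\}<\infty$, which is what \eqref{eq:f-summability} actually gives, and \Cref{lem:randomFourier} then applies. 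The remainder of your argument (the factors $X^{(0)},X^{(1)},X^{(2)}$ and the passage from factorwise convergence to convergence of the product) is correct and matches the paper, but as written the proposal's central step asserts a false summability and would not prove the lemma.
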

\begin{proof}
It suffices to verify the analogous claim for $X_{y, t}^{(j)}$ for $j = 0, \dots, 3$ which we perform below.
\begin{itemize}
\item $X_{y, t}^{(0)}(s)$ is just a sequence of numbers that are independent of $s \in I$ because of the rotational invariance of $\alpha(p)$. For this reason let us just set $s = 0$ and estimate
\[ \frac{\EE\exp \left(\Ga_{y, t}(s)\right)}{\EE\left[|A_y(\sigma_t + is)|^2 \right]}
=\prod_{p \le y} \frac{\EE\left[ \exp G_{p, t}(0)\right]}{\EE\left[\left|1 + \sum_{k\ge1} \frac{\alpha(p)^k f(p^k)  }{p^{k\sigma_t}}\right|^2\right]}. \]
Note that for each fixed $p$,
\begin{align*}
\lim_{t \to \infty}\EE\left[\exp\left(G_{p, t}(0)\right) \right]  & = \EE\left[\exp\left(G_{p, \infty}(0)\right) \right] \\
 \text{and} \quad 
\lim_{t \to \infty} \EE\left[\left|1 + \sum_{k\ge1} \frac{\alpha(p)^k f(p^k)  }{p^{k\sigma_t}}\right|^2\right]
& = \EE\left[\left|1 + \sum_{k\ge1} \frac{\alpha(p)^k f(p^k)  }{p^{k/2}}\right|^2\right].
\end{align*}
Also, for $p \le y$ we have
\[ \EE\left[ \exp\left(G_{p, t}(0)\right)\right] 
 =  \exp\left(\frac{1}{2} \EE[G_{p, t}(0)^2] + O\left( \left| \frac{f(p)}{p^{\sigma_t}}\right|^3\right) \right)
= \exp\left( \frac{|f(p)|^2}{p^{2\sigma_t}} + O\left(\frac{|f(p)|^3}{p^{3/2}}\right) \right) \]
by \Cref{lem:gf-estimates}, and 
\[ \EE\left[\left|1 + \sum_{k\ge1} \frac{\alpha(p)^k f(p^k)  }{p^{k\sigma_t}}\right|^2\right]
\ge 1 + \frac{|f(p)|^2}{p^{2\sigma_t}} 
= \exp \left(  \frac{|f(p)|^2}{p^{2\sigma_t}} +  O\left(\frac{|f(p)|^4}{p^{2}}\right)\right). \]
Therefore,
\[ \log \left[\frac{\EE\left[ \exp G_{p, t}(0)\right]}{\EE\left[\left|1 + \sum_{k\ge1} \frac{\alpha(p)^k f(p^k)  }{p^{k\sigma_t}}\right|^2\right]}\right]
\ll \frac{|f(p)|^3}{p^{3/2}} \]
is summable in $p$ with an upper bound uniform in $y, t \ge 3$, and by dominated convergence
\[ X_{y, \infty}^{(0)}(0), ~X_{\infty, y}^{(0)}(0) \xrightarrow[y \to \infty]{} \prod_{p} \frac{\EE\left[ \exp G_{p, \infty}(0)\right]}{\EE\left[\left|1 + \sum_{k\ge1} \frac{\alpha(p)^k f(p^k)  }{p^{k/2}}\right|^2\right]}. \]
\item $X_{y, t}^{(1)}$ is a finite product of random functions that are bounded deterministically and uniformly in $y, t \ge 3$, and each of them is converging to some random continuous functions almost surely. Thus $X_{y, t}^{(1)}$ converges almost surely and in $L^r$ for any $r > 1$ (and hence in probability) to $X_{\infty, \infty}^{(1)}$ in $C(I)$.
\item The analysis of $X_{y, t}^{(2)}$ follows essentially that of $X_{y, t}^{(0)}$. Using \Cref{lem:truncate}, we have
\[ \log \left[\left|1 + \sum_{k =1}^2 \frac{\alpha(p)^k f(p^k)  }{p^{k(\sigma_t+ is)}}\right|^{-2}\left|1 + \sum_{k \ge 1} \frac{\alpha(p)^k f(p^k)  }{p^{k(\sigma_t + is)}}\right|^2\right]
\ll  \left|\frac{f(p)}{p^{1/2}}\right|^3 + \left|\frac{f(p^2)}{p}\right|^3 +  \sum_{k \ge 3} \left|\frac{f(p^k)}{p^{k/2}}\right| \]
which is uniformly summable in $p$. In particular, $\|X_{y, t}^{(2)}\|_\infty$ is bounded deterministically and uniformly in $y, t \ge 3$, and by dominated convergence we also have
\[ X_{y, \infty}^{(2)}(s), ~X_{\infty, y}^{(2)} (s)\xrightarrow[y \to \infty]{a.s.} \prod_{p} \left|1 + \sum_{k =1}^2 \frac{\alpha(p)^k f(p^k)  }{p^{k(1/2+ is)}}\right|^{-2}\left|1 + \sum_{k \ge 1} \frac{\alpha(p)^k f(p^k)  }{p^{k(1/2 + is)}}\right|^2 \qquad \text{in $C(I)$}. \]
\item For $X_{y, t}^{(3)}$, note that
\begin{align*}
&\left|1 + \sum_{k=1}^2 \frac{\alpha(p)^k f(p^k)  }{p^{k(\sigma_t+is)}}\right|^2
 = \exp \left\{
2\Re \left(\sum_{j=1}^2 \frac{(-1)^{j-1}}{j} \left(\sum_{k=1}^2 \frac{\alpha(p)^k f(p^k)  }{p^{k(\sigma_t+is)}}\right)^j\right) + O\left(\frac{|f(p)|^3}{p^{3/2}} + \frac{|f(p^2)|^3}{p^3} \right)
 \right\}\\
&\quad  = \exp \left\{ \Re \left[ 2\frac{\alpha(p) f(p)}{p^{\sigma_t + is}} + \frac{\alpha(p)^2 (2f(p^2) - f(p)^2)}{p^{2(\sigma_t +is)}} - 2\frac{\alpha(p)^3f(p)f(p^2)}{p^{3(\sigma_t + is)}}\right] + O\left(\frac{|f(p)|^3}{p^{3/2}} + \frac{|f(p^2)|^2}{p^2}\right)\right\} 
\end{align*}
where the implicit constant is uniform in $p \ge y_0$. It is straightforward to check that the ``error fields" in $O\left(\frac{|f(p)|^3}{p^{3/2}} + \frac{|f(p^2)|^2}{p^2}\right)$ are summable with deterministic bound uniformly in $y, t \ge 3$, and that they converge almost surely to some continuous field using dominated convergence as before.

To conclude our proof, it suffices to check that the field
\[ \Delta_{y, t}(s):=\Re \sum_{y_0 < p \le y} \left[ \frac{\alpha(p)^2 (2f(p^2) - f(p)^2)}{p^{2(\sigma_t + is)}} - 2\frac{\alpha(p)^3f(p)f(p^2)}{p^{3 (\sigma_t +is)}}\right] \]
has uniformly bounded exponential moments of all orders and that $\Delta_{y, \infty}, \Delta_{\infty, y}$ converge in probability in $C(I)$. But \eqref{eq:f-summability} implies
\[ \sum_{y_0 < p}\left\{ \frac{\left[|f(p^2)|^2 + |f(p)|^4\right]\log^2 p}{p^2} + \frac{ |f(p)|^2 |f(p^2)|^2 \log^2 p}{p^3}\right\} < \infty, \]
and our desired claims follow by an application of \Cref{lem:randomFourier}.
\end{itemize}
\end{proof}
In view of \Cref{lem:add-cont-density} and \Cref{lem:cont-density},  \Cref{thm:mc-L1} holds if we can prove the analogous statement for the random measures $\nu_{y, t}$. This is the content of the following lemma.
\begin{lem}\label{lem:reduced-mc}
Under the same setting as \Cref{thm:mc-L1}, there exists a non-trivial random Radon measure $\nu_\infty(ds)$ on $\RR$  such that the following are true: for any bounded interval $I$ and any test function $h \in C(I)$, we have
\begin{equation}\label{eq:reduced-mc-conv}
   (i) \quad \nu_{y, \infty}(h) \xrightarrow[y \to \infty]{L^r} \nu_\infty(h)
    \qquad \text{and} \qquad
  (ii) \quad   \nu_{\infty, t}(h) \xrightarrow[t \to \infty]{L^r} \nu_\infty(h)
\end{equation}
for any $r \in [1, 1/\theta)$. In particular, the above convergence also holds in probability. Moreover,  the limiting measure $\nu_\infty(ds)$ is supported on $\RR$ and non-atomic almost surely.
\end{lem}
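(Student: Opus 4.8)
The plan is to reduce \Cref{thm:mc-L1} to \Cref{lem:reduced-mc} by combining \Cref{lem:add-cont-density} and \Cref{lem:cont-density}, and then to establish \Cref{lem:reduced-mc} through the five-step program outlined in the overview of \Cref{sec:measure}. I would organise the argument around the two approximation sequences $(\nu_{y,\infty})_y$ and $(\nu_{\infty,t})_t$ separately, and then glue them together.

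First, for the martingale sequence $(\nu_{y,\infty}(h))_y$: fix a bounded interval $I$ and $h \in C(I)$, and observe that for each fixed $s$ the process $y \mapsto \exp(\Ga_{y,\infty}(s))/\EE\exp(\Ga_{y,\infty}(s))$ is a nonnegative martingale with respect to the filtration $\Fa_y = \sigma(\alpha(p) : p \le y)$, and hence so is $\nu_{y,\infty}(h)$ after integrating against $h\,ds$. By the martingale convergence theorem it converges almost surely to some limit $\nu_\infty(h)$; the content is to show the convergence is in $L^r$ for $r \in [1,1/\theta) \cap [1,2]$, which amounts to uniform integrability in $L^r$. This is where the moment estimates of \Cref{sec:mc-moment} enter: using \Cref{lem:new-martingaleL2est} to compute $\EE[\Ga_{y,\infty}(s_1)\Ga_{y,\infty}(s_2)]$ and the one- and two-point exponential moment asymptotics from \Cref{lem:gf-estimates}, one shows $\EE[\nu_{y,\infty}(I)^r]$ stays bounded uniformly in $y$ when $r\theta < 1$ (the blow-up heuristic $\EE[\nu_{y,t}(I)^2]\asymp \int_{I\times I}(|u-v|\vee\log^{-1}\min(y,t))^{-2\theta}$ quoted in \Cref{sec:main-ideas} shows the threshold is exactly $r\theta = 1$ for $r\le 2$). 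Non-triviality of the limit follows because $\EE[\nu_{y,\infty}(I)] = |I|$ for all $y$, so uniform integrability forbids the mass from escaping to zero. The non-atomicity of the limiting measure $\nu_\infty$ should follow cheaply: using that $\EE[\nu_\infty(J)^r] \ll |J|^{r-r\theta+o(1)}$ for small intervals $J$ (a byproduct of the same moment bounds, taking $r$ slightly above $1$ so that $r - r\theta > 1$... more carefully, one wants $\EE[\nu_\infty(J)] \le$ something vanishing faster than $|J|^0$, which is already true since it equals $|J|$), a standard covering argument (cover $\RR$ by dyadic intervals and apply Borel–Cantelli) shows $\nu_\infty(\{x\}) = 0$ for all $x$ almost surely. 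I'd also use that the support is all of $\RR$ because $\EE[\nu_\infty(J)] = |J| > 0$ for every nonempty open $J$.

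The harder half — and the main obstacle — is the approximation-away-from-criticality sequence $(\nu_{\infty,t})_t$ and showing it has the \emph{same} limit $\nu_\infty$. Here I would follow the modified second moment method sketched in \Cref{sec:main-ideas}: rather than trying to bound $\EE[|\nu_{y,\infty}(h) - \nu_{\infty,y}(h)|^2]$, which diverges term-by-term when $\theta \ge 1/2$, one studies the penalised quantity $\EE[|\nu_{y,\infty}(h) - \nu_{\infty,y}(h)|^2 e^{-\nu_{y,\infty}(I)/K}]$ (or the appropriate variant in \Cref{sec:approximate-away-critical}) and shows it tends to $0$ as $y\to\infty$ for each fixed $K$. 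The key mechanism is that the extra factor $e^{-\nu_{y,\infty}(I)/K}$ tilts the measure, and under the tilted measure the relevant moments are analysed via the approximate Girsanov theorem (\Cref{theo:Girsanov}), reducing the computation to understanding how the tilted one-point function behaves — this is precisely what \Cref{lem:Girsanov_newMean}, \Cref{lem:M_y}, and \Cref{lem:tilt_ratios} are built for, since $\Ma_y(u;s_1,s_2)\to 0$ on the diagonal-avoiding region and the ratio $C_{y,y}/C_{y,\infty}\to 1$ there. Expanding the penalised second moment and splitting the $s_1,s_2$ integration into the near-diagonal region $|s_1-s_2|<\delta$ (controlled uniformly by the penalisation, which kills the contribution of atypically large mass) and the off-diagonal region $|s_1-s_2|\in[\delta,\delta^{-1}]$ (where everything converges by the lemmas above), one gets the bound, and then sends $\delta\to 0$ using non-atomicity of $\nu_\infty$. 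To upgrade from ``$\nu_{\infty,y}$ and $\nu_{y,\infty}$ are close under penalisation'' to honest convergence $\nu_{\infty,y}(h)\xrightarrow{p}\nu_\infty(h)$, I would invoke the weak-$L^r$-convergence idea from \Cref{sec:main-ideas}: the uniform $L^r$-boundedness (proved for $\nu_{\infty,y}$ exactly as for $\nu_{y,\infty}$, since the moment computations are symmetric in the roles of $y$ and $t$) gives tightness along subsequences, the penalised estimate pins down any subsequential limit to agree with $\nu_\infty$, and uniform integrability promotes convergence in probability to convergence in $L^r$.

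Finally, I would assemble the pieces: \Cref{lem:reduced-mc} follows, and then \Cref{thm:mc-L1} follows from it together with $m_{y,t}(ds) = X_{y,t}(s)\,\nu_{y,t}(ds)$ (equation \eqref{eq:reduced-measure}), the uniform $L^r$-boundedness of $\|X_{y,t}\|_\infty$ and the convergence $X_{y,\infty}, X_{\infty,y} \to X_\infty$ in $C(I)$ from \Cref{lem:cont-density}, and the product convergence lemma \Cref{lem:add-cont-density}; setting $m_\infty(ds) := X_\infty(s)\,\nu_\infty(ds)$, one checks this is a nontrivial Radon measure, supported on $\RR$ (since $X_\infty > 0$, being a convergent product of strictly positive factors) and non-atomic (since $\nu_\infty$ is and $X_\infty$ is a bounded continuous density). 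The deduction of \Cref{cor:mc-L1} from \Cref{thm:mc-L1} is then a soft argument, approximating $h \in L^1(\RR)$ by continuous compactly supported functions and using the uniform mass bound $\EE[m_{y,t(y)}(J)] \asymp |J|$. I expect steps I have described as routine (the moment computations for uniform integrability, the covering argument for non-atomicity) to be genuinely routine given the elementary estimates already assembled, and the real work to be concentrated in making the penalised second moment argument and the weak-$L^r$ identification of the limit rigorous.
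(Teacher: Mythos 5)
Your overall strategy is the paper's: martingale convergence plus uniform integrability for $(\nu_{y,\infty})$, and a penalised (modified) second moment combined with the approximate Girsanov theorem and weak $L^r$-convergence for $(\nu_{\infty,t})$, with non-atomicity of $\nu_\infty$ as the key soft input. However, two of the steps you label routine contain genuine gaps. First, the uniform bound $\sup_y \EE[\nu_{y,\infty}(I)^r]<\infty$ for $r\theta<1$ does \emph{not} follow from the $L^2$ heuristic you quote: for $\theta\ge 1/2$ the second moment diverges, so there is nothing to interpolate from. The paper has to prove this from scratch (\Cref{lem:ui-L1}) via a dyadic decomposition with a self-similar iteration, whose engine is the multifractal estimate \Cref{lem:multifractal}; that estimate in turn needs sup-control of the field increments over short intervals (Hoeffding plus a chaining bound), not just one- and two-point exponential moments. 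Second, your non-atomicity argument is incorrect as stated: the remark that $\EE[\nu_\infty(J)]=|J|$ ``is already true'' proves nothing (a random Dirac mass at a uniform point has the same first moments), and the exponent $|J|^{r-r\theta}$ you propose is not the relevant multifractal exponent and is $<1$ for $\theta$ close to $1$. What is actually needed, and what \Cref{lem:non-atomic} does, is a moment of order $q>1$ with scaling exponent strictly larger than $1$, namely $2^m\EE[\nu_\infty([0,2^{-m}])^q]\ll 2^{-m[(q-1)-\theta q(q'-1)]}$ from \Cref{lem:multifractal}, applied to $Q_m=\sum_j \nu_\infty(I_j^{(m)})^2$ via Markov and concavity. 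Since non-atomicity is exactly what drives \Cref{lem:tensorise}(ii) and the $\delta\to0$ step of your penalised second moment, this gap propagates into the hard half of your argument.

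On the approximation away from the critical line, your plan compares $\nu_{y,\infty}$ and $\nu_{\infty,y}$ directly inside one penalised second moment, whereas the paper inserts the intermediate measure $\nu_{y,y}$ (\Cref{lem:modified-2nd}) and performs two separate comparisons: $\nu_{\infty,y}$ versus $\nu_{y,y}$, where conditioning on $\Fa_y$ factors the primes $p>y$ out into a deterministic kernel $C_y(s_1,s_2)$ handled by \Cref{lem:martingaleL2est-2} and \Cref{lem:tensorise}; and $\nu_{y,\infty}$ versus $\nu_{y,y}$, where only the shift $\sigma_y\to\tfrac12$ on primes $p\le y$ remains and the Girsanov coupling, \Cref{lem:tilt_ratios}, \Cref{lem:M_y} and \Cref{lem:Girsanov_newMean} apply, with the cross term computed by weak $L^r$-convergence. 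In your direct expansion the mixed terms (e.g. $\EE[\nu_{\infty,y}(h)^2 e^{-\nu_{y,\infty}(I)/K}]$) entangle both effects at once, and the penalisation is no longer a function of the tilted variables alone, so the lemmas you cite do not apply off the shelf; also note that the tilting in the paper is by the two-point density $\exp(\Ga_{y,t}(s_1)+\Ga_{y,t}(s_2))$, not by the exponential damping factor, whose role is only to keep everything bounded for dominated convergence. The two-step decomposition through $\nu_{y,y}$ (or an equivalent device separating the two effects) is a genuinely needed idea that your proposal leaves unspecified.
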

Before we move on to various estimates needed for the proof of \Cref{lem:reduced-mc}, let us record the following result for tensorising convergence of random measures. Ultimately this will be used in our modified second moment method.
\begin{lem}\label{lem:tensorise}
Let $I \subset \RR$ be any compact interval. Suppose $(\nu_n)_n, \nu_\infty$ are random Radon measures on $I$ such that $\nu_n \xrightarrow[n \to \infty]{p} \nu_\infty$, then the following are true:
\begin{itemize}
\item[(i)] $\nu_n^{\otimes2} \xrightarrow[n \to \infty]{p} \nu_\infty^{\otimes 2}$ in $\Ma(I \times I)$.
\item[(ii)] Assume the random measure $\nu_\infty$ is non-atomic. Let $K_n(s_1, s_2)$ be a sequence of deterministic continuous functions on $I \times I$ such that  
\[ \sup_{n} \|K_n\|_\infty < \infty
\qquad \text{and} \qquad
K_n(s_1, s_2) \to 0 \qquad \text{uniformly for any $\delta > 0$ and $|s_1 - s_2| > \delta$.} \]
Then $K_n \nu_n^{\otimes 2} \xrightarrow[n \to \infty]{p} 0$ in $\Ma(I \times I)$.
\end{itemize}
\end{lem}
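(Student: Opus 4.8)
The plan is to prove (i) and (ii) separately, using only the definition of weak convergence of Radon measures together with the non-atomicity hypothesis in (ii).

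\emph{Part (i).} The statement $\nu_n^{\otimes 2}\xrightarrow{p}\nu_\infty^{\otimes 2}$ means that for every $G\in C(I\times I)$ we have $\nu_n^{\otimes2}(G)\to\nu_\infty^{\otimes2}(G)$ in probability. First I would reduce to product test functions $G(s_1,s_2)=g_1(s_1)g_2(s_2)$ by a Stone--Weierstrass argument: finite linear combinations of such products are dense in $C(I\times I)$ in the uniform norm, and since $\sup_n\nu_n(I)$ is tight (it converges in probability to $\nu_\infty(I)<\infty$), an $\varepsilon/3$-type estimate lets one pass from product functions to general $G$. For a product function, write
\[
\nu_n^{\otimes2}(g_1\otimes g_2)=\nu_n(g_1)\,\nu_n(g_2),
\]
and since $\nu_n(g_1)\xrightarrow{p}\nu_\infty(g_1)$ and $\nu_n(g_2)\xrightarrow{p}\nu_\infty(g_2)$ by hypothesis (and limits are finite a.s.), the product converges in probability to $\nu_\infty(g_1)\nu_\infty(g_2)=\nu_\infty^{\otimes2}(g_1\otimes g_2)$; here one uses that the map $(a,b)\mapsto ab$ is continuous and that convergence in probability is preserved under continuous maps applied to joint limits. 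This gives (i).

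\emph{Part (ii).} We must show $\nu_n^{\otimes2}(K_n G)\xrightarrow{p}0$ for every $G\in C(I\times I)$; by the tightness of $\nu_n(I)$ again it suffices to bound $\nu_n^{\otimes2}(|K_n|)$. Fix $\delta>0$ and split $I\times I$ into the off-diagonal region $\{|s_1-s_2|>\delta\}$ and the diagonal strip $\Delta_\delta:=\{|s_1-s_2|\le\delta\}$. On the off-diagonal part, $\|K_n\mathbf 1_{|s_1-s_2|>\delta}\|_\infty\to0$ by hypothesis, so that contribution is at most $o(1)\cdot\nu_n(I)^2\xrightarrow{p}0$. On the diagonal strip, bound $|K_n|\le\sup_n\|K_n\|_\infty=:M$, so the contribution is $\le M\,\nu_n^{\otimes2}(\mathbf 1_{\Delta_\delta})$. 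The key point is now: by part (i), $\nu_n^{\otimes2}\xrightarrow{p}\nu_\infty^{\otimes2}$, and since $\nu_\infty$ is non-atomic, $\nu_\infty^{\otimes2}(\Delta_0)=0$ where $\Delta_0=\{s_1=s_2\}$ is the diagonal. Choosing a continuous function $\varphi_\delta$ with $\mathbf 1_{\Delta_\delta}\le\varphi_\delta\le\mathbf 1_{\Delta_{2\delta}}$, weak convergence gives $\limsup_n\nu_n^{\otimes2}(\Delta_\delta)\le\limsup_n\nu_n^{\otimes2}(\varphi_\delta)=\nu_\infty^{\otimes2}(\varphi_\delta)$ in probability, and $\nu_\infty^{\otimes2}(\varphi_\delta)\downarrow\nu_\infty^{\otimes2}(\Delta_0)=0$ as $\delta\downarrow0$ by dominated convergence. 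Combining the two regions and then letting $\delta\to0$ after $n\to\infty$ yields $\nu_n^{\otimes2}(|K_n|)\xrightarrow{p}0$, hence $K_n\nu_n^{\otimes2}\xrightarrow{p}0$ in $\Ma(I\times I)$.

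The only genuinely delicate point is the handling of the diagonal strip in (ii): one has to upgrade the a.s.\ statement $\nu_\infty^{\otimes2}(\{s_1=s_2\})=0$ (which is exactly non-atomicity of $\nu_\infty$, via Fubini) into a uniform-in-$n$ control of $\nu_n^{\otimes2}(\Delta_\delta)$, and this is done by the standard sandwiching of the indicator of a closed set between continuous functions and an interchange of the limits $n\to\infty$ and $\delta\to0$ in the correct order. Everything else is routine manipulation of convergence in probability and tightness of total masses.
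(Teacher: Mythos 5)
Your proof is correct and follows essentially the same route as the paper: reduce (i) to product test functions via Stone--Weierstrass, and in (ii) split off a shrinking diagonal strip with a continuous cutoff, taking $n\to\infty$ before $\delta\to 0$. The only differences are cosmetic: the paper first passes to a.s.\ convergent subsequences (which makes your slightly loose ``$\limsup$ in probability'' step rigorous), and it bounds the near-diagonal mass by a dyadic covering estimate of the form $\nu_\infty(I)\max_j \nu_\infty([j2^{-m},(j+1)2^{-m}])$ rather than your equally valid Fubini-plus-dominated-convergence argument that the diagonal is $\nu_\infty^{\otimes 2}$-null.
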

\begin{proof}
By the subsequential limit characterisation of convergence in probability (\Cref{lem:subseq_measure}), we may assume without loss of generality that $\nu_n \to \nu_\infty$ almost surely. In particular, it suffices to establish these claims when $\nu_n, \nu$ are all non-random measures (similar to the proof of \Cref{lem:add-cont-density}). For notational convenience, let us take $I = [0,1]$.

For (i), we want to show that 
\[ \int_{I \times I} h(s_1, s_2) \nu_n(ds_1) \nu_n(ds_2)
\xrightarrow[n \to \infty]{} \int_{I \times I} h(s_1, s_2) \nu_\infty(ds_1) \nu_\infty(ds_2)
\qquad \forall h \in C(I \times I). \]
If $h(s_1, s_2) = h_1(s_1) h_2(s_2)$ where $h_i \in C(I)$, the claim is immediate because $\nu_n(h_1) \nu_n(h_2) \to \nu_\infty(h_1) \nu_\infty(h_2)$. The general case then follows from the fact that the space of functions spanned by $h_1(s_1)h_2(s_2)$ for $h_i \in C(I)$ is dense in $C(I \times I)$ by e.g.~Stone--Weierstrass.

As for (ii), assume without loss of generality that $\sup_n \|K_n\|_\infty \le 1$. Let us take some non-negative function $h_0 \in C(\RR)$ with the property that $\|h_0\|_\infty \le 1$, $h_0|_{[0,1/2]} \equiv 1$ and $\mathrm{supp}(h_0) = [0,1]$. Then for any fixed $\delta > 0$, 
\begin{align*}
& \limsup_{n \to \infty} \int_{I \times I} |K_n(s_1, s_2)| \nu_n(ds_1) \nu_n(ds_2)\\
& \le \limsup_{n \to \infty} \int_{I \times I} h_0(|s_1 - s_2| / \delta) \nu_n(ds_1) \nu_n(ds_2)
+ \limsup_{n \to \infty} \int_{I \times I} \left[1-h_0(|s_1 - s_2| / \delta)\right] |K_n(s_1, s_2)| \nu_n(ds_1) \nu_n(ds_2)\\
& \le \int_{I \times I} h_0(|s_1 - s_2| / \delta) \nu_\infty(ds_1) \nu_\infty(ds_2)
\end{align*}
by weak convergence of measures, and the uniform convergence of $K_n(s_1, s_2) \to 0$ for $|s_1 - s_2| \ge 2\delta$. Now take $\delta = 2^{-m}$, then
\begin{align*}
\nu_\infty^{\otimes 2}\left(\{s_1, s_2 \in I: |s_1 - s_2| \le 2^{-m}\}\right)
& \ll \sum_{j=0}^{2^m - 1} \nu_\infty([j2^{-m}, (j+1)2^{-m}])^2 \\
& \le \nu_\infty([0,1]) \max_{j \le 2^{m} - 1}\nu_\infty \left( [j2^{-m}, (j+1)2^{-m}]\right) \xrightarrow[m \to \infty]{} 0
\end{align*}
by the non-atomicity of $\nu_\infty$. This means $h_0(|s_1 - s_2| /\delta) = h_0(2^m|s_1 - s_2|)$ converges $\nu^{\otimes 2}$-almost everywhere to $0$ and by dominated convergence we obtain
\[ \limsup_{m \to \infty}\int_{I \times I} h_0(2^m|s_1 - s_2| ) \nu_\infty(ds_1) \nu_\infty(ds_2) = 0 \]
which concludes our proof.
\end{proof}

\subsection{Multifractal estimates and uniform integrability} \label{sec:mc-moment}
Before explaining the proof of \Cref{lem:reduced-mc}, we still need to obtain a few more moment estimates. The first one describes the multifractal scaling of $\nu_{y, t}$.
\begin{lem}\label{lem:multifractal}
Let $\delta := \log^{-1}  \min(y, t)$. For any fixed $n, \lambda > 0$,  we have
\[    C(n, \lambda) := \sup_{y, t \ge 3} \EE\left[ \exp \left( \lambda \sup_{s_1, s_2 \in [0, n \delta]} |\Ga_{y, t}(s_1) - \Ga_{y, t}(s_2)|\right)\right] < \infty,\]
and hence
\[    \EE\left[ \exp \left(\lambda \sup_{s \in [0, n \delta]} \Ga_{y, t}(s)\right)\right] \le C\left(n, \frac{\lambda\lambda'}{\lambda' - \lambda}\right)^{1-\lambda/\lambda'} \EE\left[ \exp \left(\lambda' \Ga_{y, t}(0)\right)\right]^{\lambda / \lambda'} \]
for any $\lambda' > \lambda$. In particular, for any $1 \le q < q'$, we have
\[\EE\left[ \nu_{y, t}([0, \OurEpsilon])^q\right] \ll_{q, q'} \OurEpsilon^q \delta^{-\theta q(q'-1)}\]
uniformly for all $y, t \ge 3$ and $0 \le \OurEpsilon \le \delta$.
\end{lem}
\begin{proof}
Recall
\[ \Ga_{y, t}(\delta s_1) - \Ga_{y, t}(\delta s_2)
 = 2 \Re \sum_{p \le y} \frac{f(p)\alpha(p)}{p^{\sigma_t}} \left(p^{-i \delta s_1} - p^{-i\delta s_2}\right) \]
where
\begin{align*}
 \sum_{p \le y} & \left|2 \Re \left[ \frac{f(p)\alpha(p)}{p^{\sigma_t}} \left(p^{-i \delta  s_1} - p^{-i\delta s_2}\right)\right]\right|^2
 \le 4\delta^2 |s_1 - s_2|^2 \sum_{p \le y} \frac{|f(p)|^2}{p^{1 + 1/\log t}} \log^2 p\\
& \ll \delta^2 |s_1 - s_2|^2 \int_{2^-}^{y^+} \frac{\log^2 p}{p^{1 + 1/\log t}} d\left[\theta \mathrm{Li}(p) + \Ea_{|f|^2}(p)\right]\\
& =  \delta^2 |s_1 - s_2|^2 \left\{ \theta \int_{\log 2}^{\log y} u e^{- u / \log t} du + \left[\frac{\log^2 p}{p^{1 + 1/\log t}}\Ea_{|f|^2}(p)\right]_{2^-}^{y^+} + \int_2^y \frac{\Ea_{|f|^2}(p)}{p^2} \left[\frac{\log^2 p - 2\log p}{p^{1/\log t}} \right] dp \right\}\\
& \le \delta^2 |s_1 - s_2|^2 \left\{
\theta \log^2 \min(y, t) + \frac{\log y}{y^{1/\log t}} \frac{\Ea_{|f|^2(y)}}{y / \log y} + \left(\max_{p \le y} \frac{\log^2 p}{p^{1/\log t}}\right) \int_2^\infty  p^{-2}|\Ea_{|f|^2}(p)|dp\right\}\\
& \ll |s_1 - s_2|^2
\end{align*}
since
\[ \frac{\log y}{y^{1/\log t}} \le \delta^{-1}
\qquad \text{and} \qquad
\max_{p \le y} \frac{\log^2 p}{p^{1/\log t}} \le  2 \delta^{-2}. \]
Therefore, by Hoeffding's inequality \cite{Hoeffding} we have
\[ \PP\left(|\Ga_{y, t}(\delta s_1) - \Ga_{y, t}(\delta s_2)| > u \right)
\le 2 \exp\left( - cu^2 / |s_1 - s_2|^2\right) \qquad \forall s_1, s_2 \in [0,n] \]
for some $c \in (0, \infty)$ uniformly in $y, t \ge 3$. By generic chaining bound (see \Cref{thm:chaining} and \Cref{rem:generic-chaining}), we have
\[ \PP\left(\sup_{s_1, s_2 \in [0, n]}|\Ga_{y, t}(\delta s_1) - \Ga_{y, t}(\delta s_2)| > u \right)
\le 2 \exp\left( - c'u^2\right)  \]
for some $c' \in (0, \infty)$ depending only on the previous constant $c$. Thus, for any $n, \lambda > 0$ 
\[\EE\left[ \exp \left( \lambda \sup_{s_1, s_2 \in [0, n \delta]} |\Ga_{y, t}(s_1) - \Ga_{y, t}(s_2)|\right)\right] = \EE\left[ \exp \left( \lambda \sup_{s_1, s_2 \in [0, n]} |\Ga_{y, t}(\delta s_1) - \Ga_{y, t}(\delta s_2)|\right)\right]
\ll\sum_{k \ge 0} e^{\lambda k} e^{-c' k^2}  \]
is uniformly bounded in $y, t \ge 3$.

Finally, recall by \Cref{lem:new-martingaleL2est} and \Cref{lem:gf-estimates} that 
\begin{equation}\label{eq:mgf_actual}
\EE\left[\exp\left(\lambda \Ga_{y, t}(0)\right)\right]
\asymp \exp \left(\frac{\lambda}{2} \sum_{p \le y} \EE\left[G_{p, t}(0)^2\right]\right)
\asymp \left(\log \min(y, t)\right)^{\theta \lambda^2}
= \delta^{-\theta\lambda^2} \qquad \forall \lambda > 0.
\end{equation} 
Since
\[\nu_{y, t}([0, \OurEpsilon])
 = \OurEpsilon \int_0^1  \frac{\exp\left(\Ga_{y, t}(\OurEpsilon s)\right)}{\EE\exp\left(\Ga_{y, t}(\OurEpsilon s)\right)} ds
 \le 
\OurEpsilon \frac{\exp\left(\sup_{s \in [0,\delta]}\Ga_{y, t}(s)\right)}{\EE\exp\left(\Ga_{y, t}(0)\right)},\]
our earlier bound implies
\[ \EE\left[ \nu_{y, t}([0, \OurEpsilon])^q\right]  \ll_{q, q'} \OurEpsilon^q\frac{\EE\left[\exp\left(q'\Ga_{y, t}(0)\right)\right]^{q/q'}}{\EE\left[\exp\left(\Ga_{y, t}(0)\right)\right]^{q}}
\ll_{q, q'} \OurEpsilon^q \delta^{-\theta q(q'-1)} \]
which concludes our proof.
\end{proof}
\begin{lem}\label{lem:ui-L1}
Let $\delta := \log^{-1} \min(y, t)$. For any $1 \le q < q' < \min(1/\theta, 2)$, we have
\begin{equation}\label{eq:multifractal_nu}
    \sup_{y, t \ge 3} \EE[\nu_{y, t}(I)^q] \ll_{q, q'}  r^{q - \theta q(q'-1)}
\end{equation}
uniformly for all interval $I \subset \RR$ of length $r \in [\delta, r_0]$ for any fixed $r_0 \in (0, \infty)$. In particular, with the same assumption on $(q, q')$ we have
\begin{equation}\label{eq:multifractal_m}
    \sup_{y, t \ge 3} \EE[m_{y, t}(I)^q] \ll_{q, q'} r^{q - \theta q(q'-1)}
\end{equation}
uniformly for all interval $I \subset \RR$ of length $r \in [\delta, r_0]$ for any fixed $r_0 \in (0, \infty)$.
\end{lem}
Recall that we focus on the $L^1$-regime $\theta \in [1/2, 1)$ where $\min(1/\theta, 2) = 1/\theta$. The utility of the above lemma is to reduce the problem of $L^r$-convergence to convergence in probability of $\nu_{y, \infty}(h)$ and $\nu_{\infty, t}(h)$ (resp.~$m_{y, \infty}(h)$ and $m_{\infty, t}(h)$) to the same random variable $\nu_\infty(h)$ (resp.~$m_\infty(h)$) as $y, t \to \infty$ based on standard argument of uniform integrability.
\begin{proof}[Proof of \Cref{lem:ui-L1}]
We shall only focus on the moment estimates \eqref{eq:multifractal_nu} for $\nu_{y, t}$ below since the proof of \eqref{eq:multifractal_m} is similar. Without loss of generality, we consider $r_0 \le \frac{1}{100}$ and $I = [0, r]$.

Let $m \ge 2$ be some fixed integer to be chosen later, $\ell \in [\delta, \frac{1}{100}]$, $y_0(\ell):= \exp(\ell^{-1})$ and $I_1, I_2 \subset [0, 2^m\ell]$ be two intervals of length at most $\ell$ and distance at least $\ell$ away from each other. For any $s_1 \in I_1, s_2 \in I_2$, we have
\[\Ga_{y, t}(s_1) + \Ga_{y, t}(s_2) = \sum_{p \le y} \left[ G_{p, t}(s_1) + G_{p, t}(s_2)\right]
\le 2\sup_{s \in [0, 2^m\ell]} \Ga_{y_0(\ell), t}(s) + \sum_{y_0(\ell) < p \le y} \left[ G_{p, t}(s_1) + G_{p, t}(s_2)\right] \]
and thus
\begin{align*}
& \EE\left[ \left( \nu_{y, t}(I_1)\nu_{y, t}(I_2)\right)^{q/2}\right]
= \EE\left[ \left(\iint_{I_1 \times I_2} \frac{\exp\left(\Ga_{y, t}(s_1) + \Ga_{y, t}(s_2)\right)}{\EE\left[\exp\left(\Ga_{y, t}(0)\right)\right]^2}ds_1 ds_2\right)^{q/2}\right]\\
& \le \frac{\EE\left[\exp\left(q\sup_{s \in [0, 2^m\ell]} \Ga_{y_0(\ell), t}(s)\right)\right]}{\EE\left[\exp\left(\Ga_{y_0(\ell), t}(0)\right)\right]^q}
\EE\left[ \left(\iint_{I_1 \times I_2} \prod_{y_0(\ell) < p \le y}\frac{\exp\left(G_{p, t}(s_1) +G_{p, t}(s_2)\right)}{\EE\left[\exp\left(G_{p, t}(0)\right)\right]^2}ds_1 ds_2\right)^{q/2}\right]
\end{align*}
by independence. The first ratio on the right-hand side is
\[    \ll_{q, q'}\frac{\EE\left[\exp\left(q' \Ga_{y_0(\ell), t}(0)\right)\right]^{q/q'}}{\EE\left[\exp\left(\Ga_{y_0(\ell), t}(0)\right)\right]^q}
    \ll_{q, q'} \ell^{-\theta q (q'-1)} \]
by \Cref{lem:multifractal} and \eqref{eq:mgf_actual}. Meanwhile, from \Cref{lem:gf-estimates} and \Cref{lem:new-martingaleL2est} we see that
\begin{align*}
& \sum_{y_0(\ell) < p \le y} \log \frac{\EE\left[\exp\left(G_{p, t}(s_1) +G_{p, t}(s_2)\right)\right]}{\EE\left[\exp\left(G_{p, t}(0)\right)\right]^2}
= 2\sum_{y_0(\ell) < p \le y}\frac{|f(p)|^2 \cos(|s_1 - s_2| \log p)}{p^{2\sigma_t}} + O(1)\\
& \qquad = 2\sum_{ p \le y}\frac{|f(p)|^2 \cos(|s_1 - s_2| \log p)}{p^{2\sigma_t}}
- 2\sum_{p \le y_0(\ell)}\frac{|f(p)|^2 \cos(|s_1 - s_2| \log p)}{p^{2\sigma_t}} + O(1)\\
& \qquad = 2\theta \log\left[\min \left(|s_1 - s_2|^{-1}, \log y, \log t\right)\right] - 2\theta \log\left[\min \left(|s_1 - s_2|^{-1}, \log y_0(\ell), \log t\right)\right]  + O(1)
\end{align*}
is uniformly bounded for $|s_1 - s_2| \ge \ell$. By H\"older's inequality,
\begin{align*}
& \EE\left[ \left(\iint_{I_1 \times I_2} \prod_{y_0(\ell) < p \le y}\frac{\exp\left(G_{p, t}(s_1) +G_{p, t}(s_2)\right)}{\EE\left[\exp\left(G_{p, t}(0)\right)\right]^2}ds_1 ds_2\right)^{q/2}\right] \\
& \qquad \le \left(\iint_{I_1 \times I_2} \prod_{y_0(\ell) < p \le y}\frac{\EE\left[\exp\left(G_{p, t}(s_1) +G_{p, t}(s_2)\right)\right]}{\EE\left[\exp\left(G_{p, t}(0)\right)\right]^2}ds_1 ds_2\right)^{q/2}
 \ll_q \ell^q
\end{align*}
and hence there exists some constant $C(q, q') \in (0, \infty)$ such that
\begin{equation}\label{eq:cross_fractional}
\EE\left[ \left( \nu_{y, t}(I_1)\nu_{y, t}(I_2)\right)^{q/2}\right] \le C(q, q') \ell^{q-\theta q(q'-1)}
\end{equation}
uniform in $y, t \ge 3$ and any pairs of intervals $I_1, I_2$ under the conditions explained at the start.

Let us write $I_{j}(r) := [jr, (j+1) r]$. We now split our interval into $2^m$ sub-intervals of equal length and group them according to parity, i.e.
\begin{align*}
\EE\left[ \nu_{y, t}([0, r])^q \right]
& = \EE\left[ \left(\sum_{j=0}^{2^m - 1}\nu_{y, t}(I_{j}(r2^{-m}))\right)^{q} \right]\\
& \le 2^{q-1} \left\{\EE\left[ \left(\sum_{j=0}^{2^{m-1} - 1}\nu_{y, t}(I_{2j}(r2^{-m}))\right)^{q} \right]
+\EE\left[ \left(\sum_{j=0}^{2^{m-1} - 1}\nu_{y, t}(I_{2j+1}(r2^{-m}))\right)^{q} \right]\right\}
\\
& = 2^{q} \EE\left[ \left(\sum_{j=0}^{2^{m-1} - 1}\nu_{y, t}(I_{2j}(r2^{-m}))\right)^{q} \right]
\end{align*}
where the last equality follows from the translation invariance of $\nu_{y, t}$. Observe that the intervals appearing on the last line are all at least $r2^{-m}$ away from each other. Using sub-additivity of $x \mapsto x^{q/2}$ (since $q/2 \in [0, 1]$), this can be further upper-bounded by
\begin{align*}
& 2^q \EE\left[ \left(\sum_{j, k=0}^{2^{m-1} - 1}\nu_{y, t}(I_{2j}(r2^{-m}))\nu_{y, t}(I_{2k}(r2^{-m}))\right)^{q/2} \right]\\
& \le 2^q \left\{\sum_{j=0}^{2^{m-1} - 1} \EE\left[\nu_{y, t}(I_{2j}(r2^{-m}))^q\right] 
+2 \sum_{1 \le j < k \le 2^{m-1} - 1} \EE\left[\left(\nu_{y, t}(I_{2j}(r2^{-m}))\nu_{y, t}(I_{2k}(r2^{-m}))\right)^{q/2}\right] 
\right\}\\
& \le 2^q \left[ 2^{m-1} \EE\left[\nu_{y, t}([0, r2^{-m}])^q\right]  + 2^{2(m-1)} \max_{1 \le j < k \le 2^{m-1} - 1}\EE\left[\left(\nu_{y, t}(I_{2j}(r2^{-m}))\nu_{y, t}(I_{2k}(r2^{-m}))\right)^{q/2}\right] \right]\\
& =  2^{q + m-1}\EE\left[\nu_{y, t}([0, r2^{-m}])^q\right] + O_{q, q'}(r^{q - \theta q(q'-1)})
\end{align*}
using the estimate \eqref{eq:cross_fractional} established earlier. Iterating this $n$ times with $r2^{-nm} \le \delta < r 2^{-(n-1)m}$, we combine the estimate from \Cref{lem:multifractal} to deduce
\begin{align*}
\EE\left[ \nu_{y, t}([0, r])^q\right]
&\le 2^{n(q+m-1)} \EE\left[ \nu_{y, t}([0, r2^{-nm}])^q\right]
+ O_{q, q'}\left(\sum_{j=0}^{n-1} 2^{j(q+m-1)} (r2^{-jm})^{q - \theta q(q'-1)} \right)\\
& \ll_{q, q'} \sum_{j=0}^{n} 2^{j(q+m-1)} (r2^{-jm})^{q - \theta q(q'-1)}
= r^{q - \theta q(q'-1)}\sum_{j=0}^{n} 2^{j \{q-1 - m[(q-1) - \theta q(q'-1)]\}}.
\end{align*}
Since $\theta q \in (0, 1)$, we can choose $q' > q$ sufficiently close to $q$ such that $(q-1) - \theta q(q'-1) > 0$, and $m$ sufficiently large such that $q-1 - m[(q-1) - \theta q(q'-1)] < 0$ to ensure the boundedness of the geometric sum as $n \to \infty$. Finally, the exponent of $r$ in \eqref{eq:multifractal_nu} gets worse when $q'$ is larger, and this means the statement holds without the need to impose the extra condition $(q-1) - \theta q(q'-1) > 0$. This concludes our proof.
\end{proof}

\subsection{Martingale construction and support properties} \label{sec:mc-martingale-support}
It is straightforward to check that for any prime number $p_0 \le  y$, 
\[ \EE\left[ \nu_{y, \infty}(h) \bigg| \Fa_{p_0}\right]
 = \int_I h(s) \EE\left[ \frac{\exp\left(\Ga_{y, \infty}(s)\right)}{\EE\left[\exp\left(\Ga_{y, \infty}(s)\right)\right]} \bigg| \Fa_{p_0}\right] ds 
 =  \int_I h(s) \frac{\exp\left(\Ga_{p_0, \infty}(s)\right)}{\EE\left[\exp\left(\Ga_{p_0, \infty}(s)\right)\right]} ds = \nu_{p_0, \infty}(h), \]
i.e.~$\left(\nu_{y, \infty}(h)\right)_{y}$ is a martingale with respect to the filtration $(\Fa_y)_y$. By the almost sure martingale convergence theorem, we conclude that $\nu_{y, \infty}(h) \xrightarrow[y \to \infty]{a.s.} \nu_{\infty}(h)$. By \Cref{lem:ui-L1}, the convergence also holds in $L^r$ for any $r \in [1, 1/\theta)$, and in particular the limit $\nu_\infty(h)$ is non-trivial with probability $1$ as long as $h \not \equiv 0$ since uniform integrability implies $\EE[\nu_\infty(h)] = \lim_{y \to \infty} \EE[\nu_{y, \infty}(h)] = \int_I h(u)du$. The same argument applies to the convergence $m_{y, \infty}(h) \xrightarrow[y \to \infty]{a.s., ~ L^r} m_\infty(h)$.

The fact that the support of the limiting measure $\nu_\infty$ (and $m_\infty$) is the entire real line follows from an argument based on Kolmogorov 0--1 law (see Step 4 in \cite[Section 4.1.2]{GW} for the details). For future reference, we now state the non-atomicity of the limiting measure in the following lemma.
\begin{lem}\label{lem:non-atomic}
The limiting measures $\nu_\infty$ and $m_\infty$ are almost surely non-atomic.
\end{lem}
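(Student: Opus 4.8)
The plan is to derive the non-atomicity from a power-law bound on the mass of short intervals, and to obtain that bound from the uniform integrability estimates of \Cref{sec:mc-moment} by a prime-splitting argument. First I would reduce \Cref{lem:non-atomic} to producing some $q\in(1,\min(2,1/\theta))$, some $\beta>1$ and a constant $C$ such that
\[ \EE\left[\nu_\infty([a,a+\OurEpsilon])^q\right]\le C\,\OurEpsilon^{\beta}\qquad\text{for all }a\in\RR,\ 0<\OurEpsilon\le1. \]
This reduction is routine: covering $[0,1)$ by the $2^m$ dyadic intervals $I^{(m)}_j$ of length $2^{-m}$ and using a union bound, Markov's inequality and the stationarity of $\nu_\infty$ (inherited from $\nu_{y,\infty}$), one gets $\PP\big(\max_j\nu_\infty(I^{(m)}_j)>2^{-m\gamma}\big)\le 2^{m\gamma q}\sum_j\EE[\nu_\infty(I^{(m)}_j)^q]\ll 2^{-m((\beta-1)-\gamma q)}$, which is summable in $m$ once $0<\gamma<(\beta-1)/q$; Borel--Cantelli then forces $\max_j\nu_\infty(I^{(m)}_j)\to0$ a.s., so $\sup_{x\in[0,1)}\nu_\infty(\{x\})\le\inf_m\max_j\nu_\infty(I^{(m)}_j)=0$ a.s. Running this on the translates $[n,n+1)$, $n\in\ZZ$, shows $\nu_\infty$ is a.s.\ non-atomic on $\RR$; and since $m_\infty=X_\infty\nu_\infty$ with $X_\infty$ a.s.\ continuous on compacts (\Cref{lem:cont-density}, \Cref{lem:add-cont-density}), the bound $m_\infty([x-\OurEpsilon,x+\OurEpsilon])\le\|X_\infty\|_{C(I)}\nu_\infty([x-\OurEpsilon,x+\OurEpsilon])$ transfers non-atomicity to $m_\infty$.

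By stationarity I may take $a=0$. Since $\nu_{y,\infty}(h)\to\nu_\infty(h)$ in $L^q$ for $q\in[1,1/\theta)\cap[1,2]$ (\Cref{lem:ui-L1}), testing against a continuous bump $h$ with $\mathbf{1}_{[0,\OurEpsilon]}\le h\le\mathbf{1}_{[-\OurEpsilon,2\OurEpsilon]}$ gives $\EE[\nu_\infty([0,\OurEpsilon])^q]\le\lim_{y\to\infty}\EE[\nu_{y,\infty}(h)^q]\le\sup_{y\ge3}\EE[\nu_{y,\infty}([0,3\OurEpsilon])^q]$, so it suffices to bound $\sup_y\EE[\nu_{y,\infty}([0,\OurEpsilon])^q]$ uniformly in $y$. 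For $y\le e^{1/\OurEpsilon}$ this is immediate from \Cref{lem:multifractal} with $t=\infty$ (so $\delta=1/\log y\ge\OurEpsilon$), which gives $\EE[\nu_{y,\infty}([0,\OurEpsilon])^q]\le C(q,q')\OurEpsilon^{q(1-\theta(q'-1))}$.

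For the crucial range $y>z:=e^{1/\OurEpsilon}$ I would split off the coarse primes. Write $\Ga^{>}(s):=\Ga_{y,\infty}(s)-\Ga_{z,\infty}(s)=\sum_{z<p\le y}G_{p,\infty}(s)$, which is independent of $\Ga_{z,\infty}$; put $W:=e^{\Ga_{z,\infty}(0)}/\EE[e^{\Ga_{z,\infty}(0)}]$, $D:=\exp\big(\sup_{s\in[0,\OurEpsilon]}(\Ga_{z,\infty}(s)-\Ga_{z,\infty}(0))\big)$ and $\tilde\nu^{>}(ds):=\big(e^{\Ga^{>}(s)}/\EE[e^{\Ga^{>}(s)}]\big)ds$. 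Using that $\EE[e^{\Ga_{z,\infty}(s)}]$ does not depend on $s$, one has the pointwise bound $\nu_{y,\infty}([0,\OurEpsilon])\le WD\,\tilde\nu^{>}([0,\OurEpsilon])$ with $(W,D)$ independent of $\tilde\nu^{>}$. I then need:
\begin{itemize}
\item $\EE[W^{r}]\asymp(\log z)^{\theta(r^2-r)}=\OurEpsilon^{-\theta r(r-1)}$ for each $r>0$, as $\EE[e^{\lambda\Ga_{z,\infty}(0)}]\asymp(\log z)^{\theta\lambda^2}$ by \Cref{lem:gf-estimates} and \Cref{lem:new-martingaleL2est};
\item $D$ has all exponential moments bounded uniformly in $\OurEpsilon$: after rescaling $s=\OurEpsilon u$ one has $\sum_{p\le z}|G_{p,\infty}(\OurEpsilon u_1)-G_{p,\infty}(\OurEpsilon u_2)|^2\ll|u_1-u_2|^2$ (the computation in the proof of \Cref{lem:multifractal}, now with $\delta=1/\log z=\OurEpsilon$), so Hoeffding's inequality and generic chaining give $\PP(\sup_{u\in[0,1]}|\Ga_{z,\infty}(\OurEpsilon u)-\Ga_{z,\infty}(0)|>\lambda)\le 2e^{-c\lambda^2}$ uniformly;
\item $\EE[\tilde\nu^{>}([0,\OurEpsilon])^q]\ll\OurEpsilon^q$ uniformly in $\OurEpsilon$ and $y$: after rescaling, the field $u\mapsto\Ga^{>}(\OurEpsilon u)$ on $[0,1]$ has two-point function $\ll\big(|u_1-u_2|\vee(\OurEpsilon\log y)^{-1}\big)^{-2\theta}$ and, at scales $\OurEpsilon'\le(\OurEpsilon\log y)^{-1}$, increments $\sum_{z<p\le y}|G_{p,\infty}(\OurEpsilon\OurEpsilon'u_1)-G_{p,\infty}(\OurEpsilon\OurEpsilon'u_2)|^2\ll|u_1-u_2|^2$ (both via \Cref{lem:new-martingaleL2est} and $\log z=1/\OurEpsilon$) --- that is, $\tilde\nu^{>}/\OurEpsilon$ has exactly the structure of the measures in \Cref{lem:multifractal} and \Cref{lem:ui-L1} with cutoff scale $(\OurEpsilon\log y)^{-1}$ in place of $1/\log\min(y,t)$ --- so the proofs of those lemmas apply verbatim to give $\sup\EE[(\tilde\nu^{>}([0,\OurEpsilon])/\OurEpsilon)^q]<\infty$.
\end{itemize}
Combining by H\"older, $\EE[(WD)^q]\le\EE[W^{q/(1-\eta)}]^{1-\eta}\EE[D^{q/\eta}]^{\eta}\ll_{\eta}\OurEpsilon^{-\theta q(q-1+\eta)/(1-\eta)}$, whence by independence
\[ \EE\big[\nu_{y,\infty}([0,\OurEpsilon])^q\big]\le\EE[(WD)^q]\,\EE\big[\tilde\nu^{>}([0,\OurEpsilon])^q\big]\ll_{q,\eta}\OurEpsilon^{\,q\left(1-\theta\frac{q-1+\eta}{1-\eta}\right)}. \]
As $\eta\downarrow0$ the exponent tends to $q(1-\theta(q-1))$, and since $\theta<1$ the map $q\mapsto q-\theta q(q-1)$ equals $1$ with derivative $1-\theta>0$ at $q=1$; hence for $q$ slightly above $1$ (where also $q<\min(2,1/\theta)$) and $\eta$ small enough the exponent exceeds $1$, which gives the desired displayed bound with some $\beta>1$ and closes the argument.

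The main obstacle is the third bullet. A naive dyadic iteration confined to $[0,\OurEpsilon]$ --- imitating the proof of \Cref{lem:ui-L1} directly on $\nu_{y,\infty}$ --- only produces the exponent $q(1-\theta)$, which fails to exceed $1$ once $\theta\ge\tfrac12$. The improvement to $q(1-\theta(q-1))$, which is what makes the argument work throughout the $L^1$-regime, comes precisely from treating the long-range, coarse-prime contribution as the genuinely random factor $W$, whose multiplicative structure gives $\EE[W^q]\asymp\OurEpsilon^{-\theta q(q-1)}$ rather than the $\OurEpsilon^{-\theta q}$ one obtains by absorbing it crudely into a two-point function.
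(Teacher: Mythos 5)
Your proposal is correct, and its skeleton is the same as the paper's: non-atomicity is reduced to a small-ball moment estimate $\EE\big[\nu_\infty([0,\OurEpsilon])^q\big]\ll\OurEpsilon^{\beta}$ with $\beta>1$, available for $q$ slightly above $1$ because the multifractal exponent $q-\theta q(q-1)$ exceeds $1$ whenever $1<q<1/\theta$. The differences are as follows. First, the reduction step: you run a union bound, Markov and Borel--Cantelli on the dyadic maxima, while the paper works with $Q_m=\sum_j\nu_\infty(I_j^{(m)})^2$, uses its monotonicity in $m$ and bounds $\PP(Q_m>u)$ by Markov with exponent $q/2$, subadditivity and stationarity; this difference is cosmetic. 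Second, and more substantively, the key small-ball bound: the paper obtains $2^m\EE[\nu_\infty([0,2^{-m}])^q]\ll 2^{-m[(q-1)-\theta q(q'-1)]}$ by citing \Cref{lem:multifractal} directly for the limiting measure, i.e.\ with $\delta$ effectively set equal to the scale $2^{-m}$, even though that lemma is stated for $\nu_{y,t}$ only in the regime $\OurEpsilon\le\log^{-1}\min(y,t)$ and does not by itself cover $\nu_{y,\infty}$ with $\log y\gg 2^{m}$, let alone the limit. Your coarse/fine splitting at $z=e^{1/\OurEpsilon}$ --- the lognormal-type factor $W$ with $\EE[W^r]\asymp\OurEpsilon^{-\theta r(r-1)}$, the oscillation factor $D$ controlled by the same Hoeffding-plus-chaining computation as in \Cref{lem:multifractal}, the rescaled fine chaos handled by rerunning the arguments of \Cref{lem:multifractal} and \Cref{lem:ui-L1} with cutoff $(\OurEpsilon\log y)^{-1}$, and then H\"older and independence --- is the standard way to establish exactly this estimate, and it reproduces the exponent $q(1-\theta(q-1))-o(1)$ that the paper uses. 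In other words, your proof is longer because it supplies the justification of the multifractal bound for $\nu_\infty$ at a fixed scale, a step the paper treats as an immediate consequence of \Cref{lem:multifractal}; your transfer to the limit via $L^q$ convergence against a bump function, and the passage from $\nu_\infty$ to $m_\infty$ through \Cref{lem:cont-density} and \Cref{lem:add-cont-density}, agree with how these facts are used in the paper. The only caveat is that ``apply verbatim'' in your third bullet should be read as ``the proofs apply after replacing the prime range $p\le y$ by $z<p\le y$ and redoing the estimates of \Cref{lem:new-martingaleL2est} by differencing,'' which is routine but should be said.
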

\begin{proof}
It suffices to verify the claim for $\nu_\infty$ on any compact interval $I$, and without loss of generality take $I = [0,1]$ for notational convenience. We first remark that $\EE\left[\nu_\infty(\Sa)\right] = 0$ for any fixed finite/countable set $\Sa \subset I$ (and in particular the set of dyadic points in $[0,1]$). Now, observe that
\[ Q_m := \sum_{j=0}^{2^m-1} \nu_{\infty}\left([j2^{-m}, (j+1)2^{-m}]\right)^2 \]
is a non-increasing sequence in $m \in \NN$ so that $Q_\infty := \lim_{m \to \infty} Q_m$ exists almost surely. Since $\nu_\infty$ is non-atomic if and only if $Q_\infty = 0$, it suffices to check that $\lim_{m \to \infty} \PP(Q_m > u) = 0$ for any $u > 0$. But for any $q \in (1, \min(2, 1/\theta))$, 
\[ \PP(Q_m > u)
\le u^{-q/2} \EE \left[Q_m^{q/2}\right]
\le u^{-q/2} \sum_{j=0}^{2^m - 1} \EE\left[\nu_{\infty}\left([j2^{-m}, (j+1)2^{-m}]\right)^q\right]
= u^{-q/2} 2^m \EE\left[\nu_{\infty}\left([0, 2^{-m}]\right)^q\right] \]
by Markov's inequality and the concavity of $x \mapsto x^{q/2}$. By \Cref{lem:ui-L1},
\[ 2^m \EE\left[\nu_{\infty}\left([0, 2^{-m}]\right)^q\right]
\ll_{q, q'} 2^{-m \left[(q-1) - \theta q (q'-1) \right] } \]
where the exponent $(q-1) - \theta q (q'-1) $ can be made negative by choosing $q'$ sufficiently close to $q$ since $\theta q < 1$, and this concludes our proof.
\end{proof}
\subsection{Approximation away from the critical line} \label{sec:approximate-away-critical}
\subsubsection{Modified second moment method}
Our remaining task is to verify \eqref{eq:reduced-mc-conv}(ii), which will be split into two steps as follows.
\begin{lem}\label{lem:modified-2nd}
Let $h \in C(I)$ be a non-negative function. The following are true.
\begin{itemize}
\item[(i)] We have
\[\nu_{y, \infty}(h) - \nu_{y, y}(h) \xrightarrow[y \to \infty]{p} 0.\]
In particular, $\nu_{y, y}(h) \to \nu_\infty(h)$ in probability.
\item[(ii)] We have
\begin{equation}\label{eq:reduced-mc-2}
\nu_{ \infty, y}(h) - \nu_{y, y}(h) \xrightarrow[y \to \infty]{p} 0.
\end{equation}
In particular, $\nu_{y, \infty}(h) \to \nu_\infty(h)$ in probability.
\end{itemize}
\end{lem}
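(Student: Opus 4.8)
\medskip
\noindent\emph{Plan of proof.} Both parts are carried out by the same \emph{modified second moment method}; I describe it for part~(i), part~(ii) being identical up to the changes noted at the end. We may assume $h\ge 0$ (writing $h=h^+-h^-$ otherwise), and set $N_y:=\nu_{y,\infty}(I)$ and $D_y:=\nu_{y,\infty}(h)-\nu_{y,y}(h)$. Since $\big(\nu_{y,\infty}(I)\big)_y$ is a martingale converging almost surely and bounded in $L^1$, for every $\eta>0$ there is $M$ with $\sup_{y}\PP(N_y>M)<\eta$, and on $\{N_y\le M\}$ we have $e^{-N_y/K}\ge e^{-M/K}$; hence Markov's inequality gives $\PP(|D_y|>\OurEpsilon)\le\eta+\OurEpsilon^{-2}e^{M/K}\EE[D_y^2 e^{-N_y/K}]$, so it suffices to prove, for one fixed $K>0$, that $\EE[D_y^2 e^{-N_y/K}]\to 0$. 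The role of the penalising factor $e^{-N_y/K}$ is precisely to make this quantity amenable to a term-by-term analysis, even though the unpenalised second moments $\EE[\nu_{y,t}(I)^2]$ diverge once $\theta\ge\tfrac12$.

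\medskip
\noindent Writing $\varphi_y(s):=\tfrac{e^{\Ga_{y,\infty}(s)}}{\EE e^{\Ga_{y,\infty}(s)}}-\tfrac{e^{\Ga_{y,y}(s)}}{\EE e^{\Ga_{y,y}(s)}}$, so that $D_y=\int_I h(s)\varphi_y(s)\,ds$, one has $\EE[D_y^2 e^{-N_y/K}]=\int_{I\times I}h(s_1)h(s_2)\,\EE[\varphi_y(s_1)\varphi_y(s_2)e^{-N_y/K}]\,ds_1ds_2$, in which $\EE[\varphi_y(s_1)\varphi_y(s_2)e^{-N_y/K}]$ is an alternating sum over $(a,b)\in\{\infty,y\}^2$ of the terms $\EE\big[\tfrac{e^{\Ga_{y,a}(s_1)}}{\EE e^{\Ga_{y,a}(s_1)}}\tfrac{e^{\Ga_{y,b}(s_2)}}{\EE e^{\Ga_{y,b}(s_2)}}e^{-N_y/K}\big]$. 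For fixed $(s_1,s_2)$ I apply the approximate Girsanov theorem (\Cref{theo:Girsanov}): tilting $\PP$ by $e^{\Ga_{y,a}(s_1)+\Ga_{y,b}(s_2)}/\EE[e^{\Ga_{y,a}(s_1)+\Ga_{y,b}(s_2)}]$ recasts each such term as $P^{ab}_y(s_1,s_2)\,Q^{ab}_y(s_1,s_2)$, where $P^{ab}_y=\EE[e^{\Ga_{y,a}(s_1)+\Ga_{y,b}(s_2)}]/(\EE e^{\Ga_{y,a}(s_1)}\,\EE e^{\Ga_{y,b}(s_2)})$ is deterministic and $Q^{ab}_y=\widetilde{\EE}^{ab}_{s_1,s_2}[e^{-N_y/K}]\in(0,1]$. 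The plan is to show that for a.e.\ $(s_1,s_2)$ the three relevant $P^{ab}_y$ converge to a common limit and so do the three $Q^{ab}_y$, whence the alternating sum vanishes pointwise, and then to justify interchanging limit and integral.

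\medskip
\noindent Convergence of the prefactors follows from the estimates of \Cref{sec:elementary-estimates}: by \Cref{lem:tilt_ratios} the ratio $P^{ab}_y/P^{\infty\infty}_y$ is uniformly bounded and tends to $1$ uniformly on $\{\delta\le|s_1-s_2|\le\delta^{-1}\}$, while $P^{\infty\infty}_y$ is of order $(|s_1-s_2|\vee\log^{-1}y)^{-2\theta}$ and, for $s_1\neq s_2$, tends to a constant times $|s_1-s_2|^{-2\theta}$ by \Cref{lem:new-martingaleL2est} and \Cref{lem:gf-estimates}. For the penalisations, under the tilt the field governing $N_y$, namely $\Ga_{y,\infty}(\cdot)=\sum_{p\le y}G_{p,\infty}(\cdot)$, is shifted by the deterministic mean increment whose one-prime contributions are computed in \Cref{lem:Girsanov_newMean} and summed in \Cref{lem:M_y}; since it is enough to compare $Q^{ab}_y$ with $Q^{\infty\infty}_y$, and \Cref{lem:M_y} shows the discrepancy of the shifts is uniformly bounded and vanishes off the diagonal, the common limit follows once one disposes of the near-diagonal discrepancy, which is done using the non-atomicity of $\nu_\infty$ (\Cref{lem:non-atomic}) through the tensorisation lemma \Cref{lem:tensorise}(ii), together with the uniform $L^q$-bounds of \Cref{lem:ui-L1} and the multifractal estimate \Cref{lem:multifractal}; the notion of weak convergence in $L^r$ serves here to streamline the identification of the limiting objects. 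In part~(ii), where $\Ga_{y,y}$ is replaced by $\Ga_{\infty,y}$, the new feature is the contribution of the primes $p>y$ to both the prefactors and the Girsanov shifts, which is uniformly bounded and vanishes off the diagonal by \Cref{lem:martingaleL2est-2}, so the same scheme applies.

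\medskip
\noindent It remains to pass from pointwise convergence to $\EE[D_y^2 e^{-N_y/K}]\to 0$. Splitting $I\times I=\{|s_1-s_2|\ge\delta\}\cup\{|s_1-s_2|<\delta\}$, on the first region the integrand is dominated by a constant depending only on $\delta$ (as $P^{ab}_y\ll_\delta 1$ there and $Q^{ab}_y\le 1$) and converges pointwise, so that part of the integral tends to $0$ for each fixed $\delta>0$. \textbf{The main difficulty is the near-diagonal region}: one must show that $\limsup_{y\to\infty}\EE\big[(\nu_{y,a}\otimes\nu_{y,b})(\{|s_1-s_2|<\delta\}\cap I^2)\,e^{-N_y/K}\big]\to 0$ as $\delta\to0$, uniformly in $(a,b)$. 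The case $a=b=\infty$ is easy, because $(\nu_{y,\infty}\otimes\nu_{y,\infty})(\,\cdot\,)\,e^{-\nu_{y,\infty}(I)/K}\le\sup_{x\ge0}x^2e^{-x/K}<\infty$ \emph{deterministically}, so bounded convergence together with the non-atomicity of $\nu_\infty$ settles it. For the other choices of $(a,b)$ — after a Cauchy--Schwarz reduction to the cases $a=b$ and a further change of measure — the issue is that $P^{ab}_y\asymp(|s_1-s_2|\vee\log^{-1}y)^{-2\theta}$ blows up near the diagonal, and this must be beaten by the smallness of $Q^{ab}_y$: under the tilted law the shift $\mu^{ab}_y$ forces $N_y$ to be correspondingly large near the diagonal, so $Q^{ab}_y$ decays fast enough to keep $P^{ab}_y Q^{ab}_y$ controlled. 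Making this quantitative — i.e.\ obtaining the required lower-tail concentration of $N_y$ under the tilted law from the multifractal moment bounds of \Cref{lem:multifractal} and \Cref{lem:ui-L1} — is the technical heart of the proof. Combining the two regions yields $\EE[D_y^2 e^{-N_y/K}]\to 0$, hence part~(i), and the same argument for $\nu_{\infty,y}-\nu_{y,y}$ gives \eqref{eq:reduced-mc-2}. Finally, combining part~(i) with the almost sure martingale convergence $\nu_{y,\infty}(h)\to\nu_\infty(h)$ from \Cref{sec:mc-martingale-support} gives $\nu_{y,y}(h)\to\nu_\infty(h)$ in probability, and then part~(ii) gives $\nu_{\infty,y}(h)\to\nu_\infty(h)$ in probability.
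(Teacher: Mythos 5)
Your overall philosophy (penalised second moment, approximate Girsanov tilting, non-atomicity of $\nu_\infty$) is the same as the paper's, but the step you yourself single out as ``the technical heart'' is not actually proved, and the way you propose to close it would not work as stated. Concretely: for $\theta\ge\tfrac12$ the prefactor $P^{ab}_y(s_1,s_2)\asymp(|s_1-s_2|\vee\log^{-1}y)^{-2\theta}$ is not integrable near the diagonal, so everything hinges on the near-diagonal region, and your suggestion to beat it by ``lower-tail concentration of $N_y$ under the tilted law'' extracted from \Cref{lem:multifractal} and \Cref{lem:ui-L1} is neither carried out nor available from those lemmas: they are moment bounds under the original measure and say nothing quantitative about how much the Girsanov shift at a pair $(s_1,s_2)$ with $|s_1-s_2|$ small inflates $\nu_{y,\infty}(I)$. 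The paper never needs such an estimate. Its device is to \emph{undo} the change of measure on the near-diagonal block: after bounding $C_{y,y}/C_{y,\infty}$ and the shift $\Ma_y^{(s_1,s_2)}$ by a uniform constant $L$ (\Cref{lem:tilt_ratios}, \Cref{lem:M_y}) and absorbing them into the constant in the exponent, the quantity $\int\!\!\int \bar{h}(2^{m_1}|s_1-s_2|)\,C_{y,\infty}(s_1,s_2)\,\EE^{(s_1,s_2)}_{y,\infty}\big[e^{-\nu_{y,\infty}(I)/(Ke^{L+\delta_1})}\big]\,ds_1ds_2$ is recombined into the untilted expectation $\EE\big[\int\!\!\int \bar{h}(2^{m_1}|s_1-s_2|)\,\nu_{y,\infty}^{\otimes2}(ds_1\,ds_2)\,e^{-\nu_{y,\infty}(I)/(Ke^{L+\delta_1})}\big]$, which is deterministically bounded (by an $x^2e^{-x/K'}$ bound) and killed as $m_1\to\infty$ by non-atomicity --- i.e.\ exactly the mechanism you invoke for the ``easy'' case $a=b=\infty$, made available for the tilted terms by reversing the tilt. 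Relatedly, the paper only needs a one-sided bound, $\limsup_y\EE[\nu_{y,y}(h)^2e^{-\nu_{y,\infty}(I)/K}]\le\EE[\nu_\infty(h)^2e^{-\nu_\infty(I)/K}]$, rather than pointwise limits of all four terms plus an interchange of limit and integral; insisting on the latter is what forces you into the unproven concentration claim.

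Two further structural points. First, the paper does not push all four terms in the expansion through Girsanov: the pure martingale term is dominated convergence, and the cross term $\EE[\nu_{y,y}(h)\nu_{y,\infty}(h)e^{-\nu_{y,\infty}(I)/K}]$ is handled by weak convergence in $L^r$ of $\nu_{y,y}(h)$ paired with strong $L^{r'}$ convergence of $\nu_{y,\infty}(h)e^{-\nu_{y,\infty}(I)/K}$ (\Cref{lem:weak-simple}); your scheme would instead require a Girsanov analysis for mixed tilts $\Ga_{y,\infty}(s_1)+\Ga_{y,y}(s_2)$, which none of the stated lemmas cover verbatim. Second, treating (ii) ``identically'' to (i) both overcomplicates it and inherits the same gap: given (i), the paper gets (ii) cheaply by conditioning on $\Fa_y$, so the primes $p>y$ factor into a deterministic kernel $C_y(s_1,s_2)$ that is uniformly bounded and vanishes off the diagonal (\Cref{lem:martingaleL2est-2}), after which \Cref{lem:tensorise}, non-atomicity and $xe^{-x/K}\le K$ finish; note the penalty there must be $e^{-\nu_{y,y}(I)/K}$, whose tightness comes from (i). As written, your argument has a genuine gap at its central near-diagonal step.
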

When $\theta \in (0, \tfrac{1}{2})$, i.e.~$L^2$-regime of multiplicative chaos, the above claims could be established easily by showing $\EE\left[ |\nu_{y, \infty}(h) - \nu_{\infty, y}(h)|^2\right] \to 0$. For $\theta \in [ \tfrac{1}{2}, 1)$, however, the expanded second moments do not separately exist in the limit as $y \to \infty$, and one solution would be to adapt Berestycki's thick point approach \cite{Berestycki} and gain integrability by asking the underlying field not to exceed its typical height. We would instead pursue a different modified second moment method, which only relies on the observation that the limiting measure is non-atomic and does not require refined control of the underlying field at each spatial point. To demonstrate this new approach we first explain:
\begin{proof}[Proof of \Cref{lem:modified-2nd}(i) $\Rightarrow$ (ii)]
We claim that for any $K > 0$,
\begin{equation}\label{eq:claim-mod2nd-1}
\limsup_{y \to \infty} \EE\left[ |\nu_{\infty, y}(h) - \nu_{y,y}(h)|^2 e^{-\nu_{y, y}(I) / K}\right] = 0.
\end{equation}
This is sufficient to conclude \eqref{eq:reduced-mc-2} because for any $\delta > 0$, we have
\begin{align*}
\limsup_{y \to \infty} & ~\PP\left(|\nu_{ \infty, y}(h) - \nu_{y, y}(h)| > \delta\right)\\
& \le\limsup_{y \to \infty} \left\{ \PP\left(\nu_{y, y}(I) > K\right) + \PP\left(|\nu_{ \infty, y}(h) - \nu_{y, y}(h)| > \delta, \nu_{y, y}(I) \le K\right)\right\}\\
& \le \limsup_{y \to \infty}  \PP\left(\nu_{ y, y}(I) > K\right) + \limsup_{y \to \infty}\delta^{-2}\EE\left[|\nu_{ \infty, y}(h) - \nu_{y, y}(h)|^2e^{1 -\nu_{y, y}(I)/K}\right]\\
& \le \PP(\nu_\infty(I) \ge K) \xrightarrow[K \to \infty]{} 0.
\end{align*}
To establish \eqref{eq:claim-mod2nd-1}, we expand the left-hand side and consider
\begin{align*}
\limsup_{y \to \infty} \EE\Bigg[ &e^{- \nu_{y, y}(I) / K} \int_{I \times I} h(s_1) h(s_2) \nu_{y, y}(ds_1) \nu_{y,y}(ds_2)\\
& \qquad \times \EE\left[ \left(\prod_{p > y} \frac{\exp \left( G_{p, y}(s_1)\right)}{\EE[\exp \left( G_{p, y}(s_1)\right)]}- 1 \right)\left(\prod_{p > y} \frac{\exp \left( G_{p, y}(s_2)\right)}{\EE[\exp \left( G_{p, y}(s_2)\right)]}- 1 \right)\bigg| \Fa_y\right] \Bigg].
\end{align*}
By independence, the conditional expectation can be simplified to
\[ \left\{\prod_{p > y} \frac{\EE\left[\exp \left( G_{p, y}(s_1) + G_{p, y}(s_2)\right)\right]}{\EE\left[\exp \left( G_{p, y}(s_1)\right)\right]\EE\left[\exp \left( G_{p, y}(s_2)\right)\right]}\right\} - 1 =: C_y(s_1, s_2). \]
Using \Cref{lem:gf-estimates}, we have
\begin{align*}
C_{y}(s_1, s_2)
&= \prod_{p > y}\exp \left( \EE\left[G_{p, y}(s_1) G_{p, y}(s_2) \right]  + O\left( \left| \frac{f(p)}{p^{\sigma_t}}\right|^3\right)  \right) - 1\\
& = \exp \left(2 \sum_{p > y} \frac{|f(p)|^2\cos(|s_1 - s_2| \log p)}{p^{2\sigma_t}} + O\left( \sum_{p > y} \frac{|f(p)|^3}{p^{3/2}}\right) \right) - 1.
\end{align*}
Moreover, \Cref{lem:martingaleL2est-2} implies that $C_y(s_1, s_2)$ is uniformly bounded in $s_1, s_2 \in I$ and $y \ge 3$, and when $s_1, s_2 \in I$ are bounded away from each other we have the uniform convergence $C_y(s_1, s_2) \to 0$ as $y \to \infty$. Since $\nu_{y, y} \xrightarrow[y \to \infty]{p} \nu_\infty$ where $\nu_\infty$ is non-atomic according to \Cref{lem:non-atomic}, we see that 
\[ \int_{I \times I} h(s_1)h(s_2) \nu_{y, y}(ds_1) \nu_{y, y}(ds_2) C_y(s_1, s_2) \xrightarrow[y \to \infty]{p} 0 \]
by \Cref{lem:tensorise}. Using the elementary inequality $xe^{-x/K} \le K$, we have
\begin{align*}
\left|e^{-\nu_{y, y}(I) / K}\int_{I \times I} h(s_1)h(s_2) \nu_{y, y}(ds_1) \nu_{y, y}(ds_2) C_y(s_1, s_2)\right|
&\le \|h\|_\infty^2 \|C_y\|_\infty e^{-\nu_{y, y}(I)/ K} \nu_{y, y}(I)^2\\
& \le 4K^2 \|h\|_\infty^2 \sup_y\|C_y\|_\infty,
\end{align*}
and hence \eqref{eq:claim-mod2nd-1} follows from dominated convergence.
\end{proof}
It remains to show that \Cref{lem:modified-2nd}(i) holds, and based on similar reason it is sufficient to establish
\[ \lim_{y \to \infty} \EE\left[ |\nu_{y, \infty}(h) - \nu_{y, y}(h)|^2 e^{-\nu_{y, \infty}(I) / K}\right] = 0 \]
for any $K > 0$. Expanding the square, we will verify in the next subsection that:
\begin{lem}
For any fixed $K > 0$, we have
\begin{align}
\EE\left[ \nu_{\infty}(h)^2 e^{-\nu_{\infty}(I) / K}\right] 
\label{eq:mod2-martingale}
&= \lim_{y \to \infty} \EE\left[ \nu_{y, \infty}(h)^2 e^{-\nu_{y, \infty}(I) / K}\right] \\
\label{eq:mod2-cross}
& = \lim_{y \to \infty} \EE\left[ \nu_{y, y}(h) \nu_{y, \infty}(h) e^{-\nu_{y, \infty}(I) / K}\right]\\
\label{eq:mod2-diagonal}
& = \lim_{y \to \infty} \EE\left[ \nu_{y, y}(h)^2 e^{-\nu_{y, \infty}(I) / K}\right].
\end{align}
\end{lem}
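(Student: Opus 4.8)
The plan is to show that all three quantities converge to the common value $L:=\EE[\nu_\infty(h)^2 e^{-\nu_\infty(I)/K}]$; since the right-hand side of \eqref{eq:mod2-martingale} is one of the three, this also identifies their common limit. First I would dispose of \eqref{eq:mod2-martingale}, which is essentially free: by the almost sure martingale convergence $\nu_{y,\infty}(h)\to\nu_\infty(h)$ from \Cref{sec:mc-martingale-support} (and likewise $\nu_{y,\infty}(I)\to\nu_\infty(I)$, since $\partial I$ is $\nu_\infty$-null by \Cref{lem:non-atomic}) together with the elementary bound $x^2 e^{-x/K}\le 4K^2$, the integrand is dominated by $4K^2\|h\|_\infty^2$ and dominated convergence applies. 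The substance is in \eqref{eq:mod2-cross} and \eqref{eq:mod2-diagonal}, which I would handle by a change of measure.

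\textbf{Change of measure.} For $(a,b)\in\{(\infty,\infty),(y,\infty),(y,y)\}$ I would expand
\[
\EE\left[\nu_{y,a}(h)\,\nu_{y,b}(h)\,e^{-\nu_{y,\infty}(I)/K}\right]
=\int_{I\times I} h(s_1)h(s_2)\,\EE\left[\frac{\exp\big(\Ga_{y,a}(s_1)+\Ga_{y,b}(s_2)\big)}{\EE\exp\Ga_{y,a}(s_1)\,\EE\exp\Ga_{y,b}(s_2)}\,e^{-\nu_{y,\infty}(I)/K}\right]ds_1\,ds_2
\]
and, for fixed $s_1,s_2$, tilt $\PP$ by the normalised density $\exp(\Ga_{y,a}(s_1)+\Ga_{y,b}(s_2))/\EE\exp(\Ga_{y,a}(s_1)+\Ga_{y,b}(s_2))$, which only reweights $(\alpha(p))_{p\le y}$. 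The inner expectation then becomes $\kappa_y^{(a,b)}(s_1,s_2)\,\EE_{Q_y^{(a,b)}}[e^{-\nu_{y,\infty}(I)/K}]$, where $\kappa_y^{(a,b)}(s_1,s_2):=\EE\exp(\Ga_{y,a}(s_1)+\Ga_{y,b}(s_2))/(\EE\exp\Ga_{y,a}(s_1)\,\EE\exp\Ga_{y,b}(s_2))$ is the two-point ratio (so that $\kappa_y^{(\infty,\infty)}=C_{y,\infty}$ and $\kappa_y^{(y,y)}=C_{y,y}$ in the notation of \Cref{lem:tilt_ratios}) and $Q_y^{(a,b)}$ is the tilted law. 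Under $Q_y^{(a,b)}$ the approximate Girsanov theorem \Cref{theo:Girsanov} lets us replace $\nu_{y,\infty}$ by a drifted copy: its fluctuation part is asymptotically that of $\nu_{y,\infty}$ under $\PP$, while its mean is shifted by $\sum_{p\le y}\EE_{Q_y^{(a,b)}}[G_{p,\infty}(u)]$. By \Cref{lem:tilt_ratios}, \Cref{lem:Girsanov_newMean} and \Cref{lem:M_y}, both $\kappa_y^{(a,b)}(s_1,s_2)/\kappa_y^{(\infty,\infty)}(s_1,s_2)$ and the gap between this shift and the canonical drift $D_y(u;s_1,s_2):=\sum_{p\le y}\tfrac{2|f(p)|^2}{p}[\cos(|u-s_1|\log p)+\cos(|u-s_2|\log p)]$ are uniformly bounded and tend to $0$ as $y\to\infty$, uniformly when $|s_1-s_2|,|u-s_1|,|u-s_2|$ are bounded away from $0$ and $\infty$. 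Hence for each fixed $s_1\ne s_2$ both factors converge as $y\to\infty$ to limits independent of $(a,b)$: $\kappa_y^{(a,b)}(s_1,s_2)\to\Psi(s_1,s_2)\asymp|s_1-s_2|^{-2\theta}$, and---after a truncation near $s_1,s_2$ made legitimate by the multifractal bound of \Cref{lem:multifractal} and $\theta<1$---$\EE_{Q_y^{(a,b)}}[e^{-\nu_{y,\infty}(I)/K}]\to\EE[e^{-\hat\nu_\infty^{s_1,s_2}(I)/K}]$, where $\hat\nu_\infty^{s_1,s_2}(du):=|u-s_1|^{-2\theta}|u-s_2|^{-2\theta}\nu_\infty(du)$ is almost surely locally finite off the diagonal.

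\textbf{Interchange of limits.} It remains to move $\lim_{y\to\infty}$ inside $\int_{I\times I}$, which I would do by splitting the domain at a scale $\delta$. On the far region $\{|s_1-s_2|>\delta\}$ the integrand $h(s_1)h(s_2)\kappa_y^{(a,b)}\EE_{Q_y^{(a,b)}}[e^{-\nu_{y,\infty}(I)/K}]$ is dominated by $\|h\|_\infty^2\sup_{y,\,|s_1-s_2|>\delta}\kappa_y^{(\infty,\infty)}<\infty$, so dominated convergence yields, for each fixed $\delta$, the same limit for all three $(a,b)$. On the near region $\{|s_1-s_2|\le\delta\}$ the contribution must be shown to be small uniformly in $y$ once $\delta\to0$. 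For $(a,b)=(\infty,\infty)$ it is $\EE[e^{-\nu_{y,\infty}(I)/K}\int_{|s_1-s_2|\le\delta}h(s_1)h(s_2)\,\nu_{y,\infty}(ds_1)\nu_{y,\infty}(ds_2)]$, which vanishes on sending $y\to\infty$ then $\delta\to0$ thanks to $\nu_{y,\infty}\to\nu_\infty$, the domination $e^{-\nu_{y,\infty}(I)/K}\nu_{y,\infty}(I)^2\le 4K^2$, and the non-atomicity of $\nu_\infty$ (so that $\nu_\infty^{\otimes2}(\{|s_1-s_2|\le\delta\})\downarrow0$, exactly as in the proof of \Cref{lem:tensorise}(ii)). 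For the terms carrying $\nu_{y,y}$ the penalty sits on a measure different from the one generating the near-diagonal mass, so I would instead keep the tilted representation: the drift comparison, together with a small-deviation lower bound showing that under $Q_y^{(a,b)}$ the drifted $\nu_{y,\infty}(I)$ is forced to be atypically large near the diagonal, gives $\kappa_y^{(a,b)}(s_1,s_2)\,\EE_{Q_y^{(a,b)}}[e^{-\nu_{y,\infty}(I)/K}]$ bounded uniformly in $y,s_1,s_2$ (the $\asymp(\log y)^{2\theta}$ blow-up of $\kappa_y^{(a,b)}$ being absorbed by the penalty), and then non-atomicity of $\nu_\infty$ again kills the near-diagonal contribution as $\delta\to0$. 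Combining the two regions, all three expressions converge to one and the same value; comparing with \eqref{eq:mod2-martingale} shows that this value is $L$, which is moreover finite by the bound in the first step.

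\textbf{The main obstacle.} The crux is the near-diagonal estimate for the terms carrying $\nu_{y,y}$: making rigorous that a penalty attached to $\nu_{y,\infty}$ still suppresses the divergence of the $\nu_{y,y}$ two-point function as $y\to\infty$, even though the two measures, while built from the same $(\alpha(p))_p$, are not literally equal and neither dominates the other. This is exactly the point at which the approximate Girsanov theorem becomes indispensable: it recasts each $\EE_{Q_y^{(a,b)}}[e^{-\nu_{y,\infty}(I)/K}]$ as (asymptotically) an expectation of $e^{-(\text{shifted})\,\nu_{y,\infty}(I)/K}$ with a drift uniformly comparable across the three tiltings, so that a single non-atomicity/small-deviation input for the limiting measure $\nu_\infty$ settles all three cases simultaneously. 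Everything else is routine bookkeeping with the elementary estimates of \Cref{sec:elementary-estimates}.
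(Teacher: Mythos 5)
Your treatment of \eqref{eq:mod2-martingale} coincides with the paper's (martingale convergence plus the bound $x^2e^{-x/K}\le 4K^2$ and dominated convergence), and your change-of-measure framework for the other two identities is in the same spirit as the paper's proof of \eqref{eq:mod2-diagonal}. The genuine gap is in your near-diagonal step for the terms carrying $\nu_{y,y}$: you assert that a ``small-deviation lower bound'' forces the tilted mass to be atypically large, so that $\kappa_y^{(a,b)}(s_1,s_2)\,\EE_{Q_y^{(a,b)}}[e^{-\nu_{y,\infty}(I)/K}]$ is bounded uniformly in $y,s_1,s_2$, the $(\log y)^{2\theta}$ blow-up of $\kappa_y$ being ``absorbed by the penalty''. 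This is not proved anywhere in your outline, and it is false in general. Take $s_1=s_2=s$: under the tilted law the relevant mass is comparable to $\int_I \min(|u-s|^{-1},\log y)^{4\theta}\,\nu_{y,\infty}(du)$, and since a fixed point typically carries local mass $\nu(B(s,r))\approx r^{1+\theta+o(1)}$, this integral stays bounded almost surely when $4\theta<1+\theta$, i.e.\ for $\theta<1/3$; then $\EE_{Q}[e^{-\nu_{y,\infty}(I)/K}]$ does not decay at all while $\kappa_y\asymp(\log y)^{2\theta}\to\infty$. Even for $\theta\ge 1/3$, the event that the field is low enough near $s$ at scale $\log^{-1}y$ to keep the tilted mass of order one has probability about $(\log y)^{-(3\theta-1)^2/4\theta}$, and $(3\theta-1)^2/4\theta<2\theta$ throughout $(1/3,1)$, so the product $\kappa_y\,\EE_{Q}[e^{-\nu_{y,\infty}(I)/K}]$ still diverges along the diagonal. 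In short, the pointwise uniform bound you rely on cannot hold, and establishing any quantitative left-tail estimate of this type is exactly the kind of ``thick point'' control the modified second moment method is designed to avoid.

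The paper's route around this is different and you should compare it with yours: on the strip $\{|s_1-s_2|\le 2^{-m_1}\}$ it does not try to bound the integrand pointwise; it uses only the \emph{uniform boundedness} of $C_{y,y}/C_{y,\infty}$ and of the mean shift $\Ma_y^{(s_1,s_2)}$ (\Cref{lem:tilt_ratios}, \Cref{lem:M_y}) to replace the $(y,y)$-tilt by the $(\infty,\infty)$-tilt up to a constant $e^{L+\delta_1}$ in the exponent, and then \emph{undoes} the change of measure, recombining $C_{y,\infty}(s_1,s_2)$ with the tilted expectation into the single $\PP$-expectation $\EE[\iint_{|s_1-s_2|\le 2^{-m_1}}\nu_{y,\infty}(ds_1)\nu_{y,\infty}(ds_2)\,e^{-\nu_{y,\infty}(I)/(Ke^{L+\delta_1})}]$; smallness then comes from the integrated near-diagonal mass of the penalised product measure via non-atomicity of $\nu_\infty$ (\Cref{lem:non-atomic}, \Cref{lem:tensorise}), after letting $y\to\infty$ first. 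Your argument could be repaired by performing this recombination instead of invoking a small-deviation bound. Two further remarks: the paper handles \eqref{eq:mod2-cross} by an entirely soft argument (weak convergence of $\nu_{y,y}(h)$ in $L^r$ tested against $\Fa_k$-measurable variables, paired with strong $L^{r'}$ convergence of $\nu_{y,\infty}(h)e^{-\nu_{y,\infty}(I)/K}$), which sidesteps all diagonal issues, whereas your tilting route for the cross term inherits the same gap; and the paper only needs a $\limsup$ upper bound for \eqref{eq:mod2-diagonal} (the matching lower bound being automatic from \eqref{eq:mod2-martingale}--\eqref{eq:mod2-cross} by Cauchy--Schwarz), so your insistence on identifying the full limit pointwise via $\hat\nu_\infty^{s_1,s_2}$ is more than is required and would itself need additional justification.
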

Since $\nu_{y, \infty}(h) \to \nu_\infty(h)$ and $\nu_{y, \infty}(I) \to \nu_{\infty}(I)$ by martingale convergence, and $\nu_{y, \infty}(h)^2 e^{-\nu_{y, \infty}(I) / K} \le 4K^2\|h\|_\infty^2$, the claim \eqref{eq:mod2-martingale} follows from dominated convergence. Meanwhile, \eqref{eq:mod2-cross} and \eqref{eq:mod2-diagonal} require additional ingredients, and we shall discuss their proofs in separate subsections below.

\subsubsection{Proof of \texorpdfstring{\eqref{eq:mod2-cross}}{mod2-cross} by weak convergence}\label{sec:weakpf}
Let $r \in (1, \infty)$ and $r'$ be the H\"older conjugate of $r$. Recall the following notion from functional analysis: we say a sequence of (real-valued) random variables $X_n$ converges weakly in $L^r(\Omega, \Fa, \PP)$ to $X_\infty$ if
\begin{equation}\label{eq:weak-limit}
\lim_{n \to \infty} \EE[X_n Y] = \EE[X_\infty Y]
\qquad \forall Y \in L^{r'}(\Omega, \Fa, \PP).
\end{equation}
We will consider one further simplification when $X_n$ is unsigned:
\begin{lem}\label{lem:weak-simple}
Let $(X_n)_n$ and $X_\infty$ be non-negative random variables in $L^r(\Omega, \Fa, \PP)$. If $(\Fa_k)_{k \ge 0}$ is a filtration and $\Fa = \Fa_\infty$, then the condition
\begin{equation}\label{eq:weak-limit2}
\lim_{n\to \infty} \EE\left[X_n Y\right] = \EE\left[X_\infty Y\right] \qquad \forall Y \in L^{\infty}(\Omega, \Fa_k, \PP), \qquad \forall k \in \NN
\end{equation}
(or equivalently, $\lim_{n \to \infty} \EE[X_n |\Fa_k] = \EE[X_\infty | \Fa_k]$ for any $k \in \NN$) implies \eqref{eq:weak-limit}.
\end{lem}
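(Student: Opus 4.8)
The plan is to deduce the full weak convergence \eqref{eq:weak-limit} from the restricted hypothesis \eqref{eq:weak-limit2} by a density argument: the bounded functions measurable with respect to some $\Fa_k$ already exhaust $L^{r'}(\Omega,\Fa,\PP)$ in norm, so it suffices to test against those. Throughout I would use that $M:=\sup_n\|X_n\|_r<\infty$; this holds in every application of the lemma in the sequel (where it comes from moment bounds of the type in \Cref{lem:ui-L1}), and it is in any case necessary for \eqref{eq:weak-limit} to hold, by the uniform boundedness principle. Non-negativity of the $X_n$ will not otherwise be used; it is present only because that is the relevant case here.

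The key step is the density claim that $D:=\bigcup_{k\ge0}L^\infty(\Omega,\Fa_k,\PP)$ is dense in $L^{r'}(\Omega,\Fa,\PP)$. Given $Y\in L^{r'}$ and $\OurEpsilon>0$, I would first truncate, replacing $Y$ by $Y_0:=(Y\wedge M_0)\vee(-M_0)\in L^\infty(\Omega,\Fa,\PP)$ with $\|Y-Y_0\|_{r'}<\OurEpsilon/2$ for $M_0$ large (dominated convergence). Since $\Fa=\sigma(\bigcup_k\Fa_k)$, L\'evy's upward martingale convergence theorem gives $\EE[Y_0\mid\Fa_k]\to Y_0$ in $L^{r'}$ as $k\to\infty$, and since $\|\EE[Y_0\mid\Fa_k]\|_\infty\le\|Y_0\|_\infty$ each such conditional expectation lies in $D$; hence some $\EE[Y_0\mid\Fa_k]$ is within $\OurEpsilon$ of $Y$ in $L^{r'}$. (Equivalently, one may note that $\bigcup_k\Fa_k$ is an algebra generating $\Fa$, so every $\Fa$-set is approximable in probability by $\Fa_k$-sets, hence every simple $\Fa$-function by simple $\Fa_k$-functions.)

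With density in hand, the conclusion follows from a three-$\OurEpsilon$ estimate: for $Y\in L^{r'}$ and $\OurEpsilon>0$ choose $Y'\in D$, say $Y'\in L^\infty(\Omega,\Fa_k,\PP)$, with $\|Y-Y'\|_{r'}<\OurEpsilon$; H\"older's inequality gives $|\EE[X_nY]-\EE[X_\infty Y]|\le\|X_n\|_r\|Y-Y'\|_{r'}+|\EE[X_nY']-\EE[X_\infty Y']|+\|X_\infty\|_r\|Y-Y'\|_{r'}$, where the first term is $\le M\OurEpsilon$, the third $\le\|X_\infty\|_r\OurEpsilon$, and the middle one tends to $0$ as $n\to\infty$ by \eqref{eq:weak-limit2} applied to $Y'$; letting $\OurEpsilon\downarrow0$ gives \eqref{eq:weak-limit}. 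I do not expect a substantial obstacle; the points that need a little care are the use of L\'evy's theorem in $L^{r'}$ (legitimate since $r'<\infty$, $Y_0$ is bounded and the $\Fa_k$ generate $\Fa$) and the standing bound $M<\infty$, which cannot be dropped---a sequence of tall, thin bumps concentrating on a shrinking $\Fa_k$-atom satisfies \eqref{eq:weak-limit2} but not \eqref{eq:weak-limit}. Alternatively one can argue by reflexivity of $L^r$ (valid for $1<r<\infty$): bounded sequences are weakly sequentially precompact, every subsequential weak limit is identified with $X_\infty$ by testing against the dense family $D$, and the usual subsequence argument then yields $\EE[X_nY]\to\EE[X_\infty Y]$ for all $Y\in L^{r'}$.
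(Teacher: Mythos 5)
Your proof is correct and takes essentially the same route as the paper's: both approximate a bounded $\Fa$-measurable $Y$ by the uniformly bounded martingale $\EE[Y\mid \Fa_k]$ (L\'evy upward convergence, giving convergence in $L^{r'}$), control the resulting error by H\"older's inequality against $\sup_n \|X_n\|_r$, and then pass from bounded $Y$ to general $Y\in L^{r'}$ by truncation. Your explicit observation that the standing bound $\sup_n\|X_n\|_r<\infty$ is needed (and is available in the applications, indeed necessary for the conclusion) is a fair point of care; the paper's argument uses this bound implicitly as well.
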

\begin{proof}
By splitting the random variable $Y$ into positive and negative part, we may assume without loss of generality that $Y$ is non-negative.

Suppose $Y \in L^\infty(\Omega, \Fa, \PP)$, and define $Y_k := \EE[Y | \Fa_k]$. Then $(Y_k)$ is a uniformly bounded martingale, and $Y_k \to Y$ almost surely and in $L^{r'}$. In particular,
\begin{align*}
 \limsup_{n \to \infty} \left|\EE\left[(X_n - X_\infty) Y\right]\right|
&\le \limsup_{n \to \infty}  \left|  \EE\left[(X_n-X_\infty) Y_k\right]\right|
+  \limsup_{n \to \infty}  \left| \EE\left[(X_n-X_\infty) (Y_k - Y_\infty)\right]\right|\\
& \le \left[ 2 \sup_n \EE[|X_n|^r]^{1/r}\right] \EE\left[|Y_k-Y_\infty |^{r'}\right]^{1/r'}
\end{align*}
which can be made arbitrarily small as $k \to \infty$. This suggests that \eqref{eq:weak-limit2} implies the same condition but for any $Y \in L^\infty(\Omega, \Fa, \PP)$. Finally, to deduce \eqref{eq:weak-limit} for any non-negative $Y \in L^{r'}(\Omega, \Fa, \PP)$, use truncation and conclude with monotone convergence theorem.
\end{proof}
It is straightforward to check that if $X_n$ converges weakly in $L^r$ to $X_\infty$ and $Y_n$ converges (strongly) in $L^{r'}$ to $Y_\infty$, then $\EE[X_n Y_n] \to \EE[X_\infty Y_\infty]$. Therefore, \eqref{eq:mod2-cross} holds if we can establish the following claim:
\begin{lem}
Let $r  > 1$ be such that $\left(\nu_{y, y}(I)^r\right)_{y \ge 3}$ is uniformly integrable. Then
\begin{itemize}
\item[(i)] $\nu_{y, \infty}(h) e^{-\nu_{y, \infty}(I)/K}$ converges (strongly) in $L^{r'}(\Omega, \Fa, \PP)$ for any $r' > 1$ to $\nu_{\infty}(h) e^{-\nu_\infty(I)/K}$.
\item[(ii)] $\nu_{y, y}(h)$ converges weakly in $L^{r}(\Omega, \Fa, \PP)$ to $\nu_\infty(h)$.
\end{itemize}
In particular, the equality \eqref{eq:mod2-cross} holds.
\end{lem}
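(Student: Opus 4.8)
The plan is to prove (i) and (ii) separately and then combine them via the elementary pairing fact recorded just above the lemma: if $X_y$ converges weakly in $L^r$ and $Y_y$ converges strongly in $L^{r'}$ (with $r'$ the H\"older conjugate of $r$), then $\EE[X_y Y_y]$ converges to the product of the two limits. I will apply this with $X_y = \nu_{y, y}(h)$ and $Y_y = \nu_{y, \infty}(h)\,e^{-\nu_{y, \infty}(I)/K}$, so that \eqref{eq:mod2-cross} comes out equal to $\EE[\nu_\infty(h)^2 e^{-\nu_\infty(I)/K}]$.

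Part (i) should be routine. By the almost sure martingale convergence recalled in \Cref{sec:mc-martingale-support}, applied both to $h$ and to the constant function $1$ on $I$, we have $\nu_{y, \infty}(h)\to\nu_\infty(h)$ and $\nu_{y, \infty}(I)\to\nu_\infty(I)$ almost surely as $y\to\infty$. Since $(a, b)\mapsto a\,e^{-b/K}$ is continuous, the product converges a.s.\ to $\nu_\infty(h)\,e^{-\nu_\infty(I)/K}$; and since $h\ge 0$ and $x e^{-x/K}\le K$ for all $x\ge 0$, the whole family is deterministically bounded by $\|h\|_\infty K$, so dominated convergence upgrades this to convergence in $L^{r'}$ for every $r'\in[1,\infty)$.

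Part (ii) carries the weight. Since $h\ge 0$ and $\sup_y \EE[\nu_{y, y}(h)^r]\le \|h\|_\infty^r \sup_y \EE[\nu_{y, y}(I)^r] < \infty$ by hypothesis, while $\nu_\infty(h)\in L^r$ by the $L^r$-estimates of \Cref{lem:ui-L1} (the hypothesised $r$ may be taken to be any value in $(1,\min(2,1/\theta))$, again by \Cref{lem:ui-L1}), I would invoke \Cref{lem:weak-simple} to reduce the claimed weak convergence to checking that $\EE[\nu_{y, y}(h) Y]\to\EE[\nu_\infty(h) Y]$ as $y\to\infty$ for every prime $p$ and every bounded $\Fa_p$-measurable $Y$. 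The crucial computation is the conditional-expectation identity
\[
\EE[\nu_{y, y}(h)\mid \Fa_p] = \int_I h(s)\prod_{q\le p}\frac{\exp(G_{q, y}(s))}{\EE\exp(G_{q, y}(s))}\,ds =: \nu_{p, y}(h), \qquad p\le y,
\]
which follows from the independence of $(\alpha(q))_q$ and Fubini (bounded integrand, compact $I$). Fixing $p$ and letting $y\to\infty$: since $\sigma_y\to\tfrac12$ we get $G_{q, y}(s)\to G_{q, \infty}(s)$ uniformly in $s\in I$ for each of the finitely many $q\le p$, and the normalisers (positive and $s$-independent by rotational invariance) converge as well, so $\nu_{p, y}(h)\to\nu_{p, \infty}(h)$ with the convergence dominated by a deterministic constant; hence $\EE[\nu_{y, y}(h) Y] = \EE[\nu_{p, y}(h) Y]\to\EE[\nu_{p, \infty}(h) Y]$ by bounded convergence. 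Finally, the martingale $(\nu_{y, \infty}(h))_y$ is closed in $L^1$ (indeed $L^r$) by its limit, so $\nu_{p, \infty}(h) = \EE[\nu_\infty(h)\mid \Fa_p]$ and $\EE[\nu_{p, \infty}(h) Y] = \EE[\nu_\infty(h) Y]$. This verifies the hypothesis of \Cref{lem:weak-simple}, whence $\nu_{y, y}(h)\to\nu_\infty(h)$ weakly in $L^r$, and combining with (i) gives \eqref{eq:mod2-cross}.

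I expect the main obstacle to be part (ii), precisely because the usual second-moment route is unavailable here: $\nu_{y, y}(h)$ is not yet known to converge strongly, so one must settle for the ``half-strength'' weak convergence. The pairing fact is what makes this suffice, but the price is that the weak limit must be identified indirectly --- through conditioning on $\Fa_p$ and the martingale closure $\nu_{p, \infty}(h)=\EE[\nu_\infty(h)\mid\Fa_p]$ --- rather than emerging from a moment computation.
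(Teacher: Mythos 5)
Your proposal is correct and follows essentially the same route as the paper: part (i) by martingale convergence plus the deterministic bound $x e^{-x/K}\le K$ and dominated convergence, and part (ii) by reducing to non-negative $h$, invoking \Cref{lem:weak-simple}, and verifying its hypothesis through the identity $\EE[\nu_{y,y}(h)\,Y]=\EE[\nu_{k,y}(h)\,Y]$ for $\Fa_k$-measurable $Y$ (independence of the $\alpha(p)$, $p>k$, cancelling against the normalisers), then letting $y\to\infty$ for fixed $k$ and using the $L^1$-closed martingale property $\nu_{k,\infty}(h)=\EE[\nu_\infty(h)\mid\Fa_k]$. The final pairing of weak $L^r$ with strong $L^{r'}$ convergence to deduce \eqref{eq:mod2-cross} is exactly the paper's mechanism as well.
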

\begin{proof}
We only need to verify (ii) for non-negative $h \in C(I)$ (by splitting it into positive and negative parts), and by \Cref{lem:weak-simple} it suffices to check that
\begin{equation}\label{eq:check_weak}
    \lim_{y \to \infty} \EE[\nu_{y, y}(h) Y] = \EE[\nu_\infty(h) Y] \qquad \forall Y \in L^\infty(\Omega, \Fa_k, \PP), \qquad \forall k \in \NN.
\end{equation}
But this is obviously true because
\begin{align*}
\EE[\nu_{y, y}(h) Y]
&=\int_I h(s) ds \EE \left[ \EE\left[\frac{\exp\left(\Ga_{y, y}(s)\right)}{\EE\exp\left(\Ga_{y, y}(s)\right)} Y\bigg|\Fa_k\right]\right]
 = \int_I h(s) ds \EE \left[ \frac{\exp\left(\Ga_{k, y}(s)\right)}{\EE\exp\left(\Ga_{k, y}(s)\right)} Y\right]\\
& \xrightarrow[y \to \infty]{} \int_I h(s) ds \EE \left[ \frac{\exp\left(\Ga_{k, \infty}(s)\right)}{\EE\exp\left(\Ga_{k, \infty}(s)\right)} Y\right]
= \EE[\nu_{k, \infty}(h) Y] = \EE[\nu_{\infty}(h)Y].
\end{align*}
\end{proof}
\begin{rem}
Equation \eqref{eq:check_weak} is actually equivalent to $\lim_{y \to \infty}\EE[\nu_{y, y}(h)|\Fa_k] = \EE[\nu_\infty(h) | \Fa_k]$ for any $k \in \NN$.
\end{rem}

\subsubsection{Proof of \texorpdfstring{\eqref{eq:mod2-diagonal}}{mod2-diagonal} by change of measure}\label{sec:pf-ch-measure}
Note that we can rewrite
\begin{align*}
& \EE\left[ \nu_{y,t}(h)^2 e^{-\nu_{y, \infty}(I) / K}\right]
= \int_{I \times I} h(s_1) h(s_2) ds_1 ds_2 \EE\left[ \frac{\exp\left(\Ga_{y, t}(s_1) + \Ga_{y, t}(s_2)\right)}{\EE\exp\left(\Ga_{y, t}(s_1)\right)\EE\exp\left(\Ga_{y, t}(s_2)\right)}e^{-\nu_{y, \infty}(I) / K}\right]\\
& = \int_{I \times I} ds_1 ds_2 h(s_1) h(s_2) 
\frac{\EE\exp\left(\Ga_{y, t}(s_1) + \Ga_{y, t}(s_2)\right)}{\EE\exp\left(\Ga_{y, t}(s_1)\right)\EE\exp\left(\Ga_{y, t}(s_2)\right)}
\EE\left[ \frac{\exp\left(\Ga_{y, t}(s_1) + \Ga_{y, t}(s_2)\right)}{\EE\exp\left(\Ga_{y, t}(s_1)+\Ga_{y, t}(s_2)\right)}e^{-\nu_{y, \infty}(I) / K}\right]\\
& =: \int_{I \times I} ds_1 ds_2 h(s_1) h(s_2) C_{y, t}(s_1, s_2) \EE_{y, t}^{(s_1, s_2)}\left[e^{-\nu_{y, \infty}(I) / K}\right]
\end{align*}
where
\[ C_{y, t}(s_1, s_2)  := \frac{\EE\exp\left(\Ga_{y, t}(s_1) + \Ga_{y, t}(s_2)\right)}{\EE\exp\left(\Ga_{y, t}(s_1)\right)\EE\exp\left(\Ga_{y, t}(s_2)\right)} \]
and $\EE_{y,t}^{(s_1, s_2)}$ is the expectation with respect to the new probability measure $\PP_{y, t}^{(s_1, s_2)}$ under which $(\alpha(p))_p$ remain jointly independent and that their marginal distributions are characterised by
\[ \EE_{y,t}^{(s_1, s_2)} \left[ g(\alpha(p))\right]
:= \EE \left[ \frac{\exp\left(G_{p, t}(s_1) + G_{p, t}(s_2)\right)}{\EE\exp\left(G_{p, t}(s_1) + G_{p, t}(s_2)\right)}g(\alpha(p))\right] \]
for all $p \le y$, and $\left(\alpha(p)\right)_{p > y}$ remain i.i.d.~and uniformly distributed on the unit circle. In particular,
\begin{equation}\label{eq:mod2-control-diagonal}
\EE\left[ \nu_{y,y}(h)^2 e^{-\nu_{y, \infty}(I) / K}\right]
= \int_{I \times I} ds_1 ds_2 h(s_1)h(s_2)\frac{C_{y, y}(s_1, s_2)}{C_{y, \infty}(s_1, s_2)} {C_{y, \infty}(s_1, s_2)} \EE_{y, y}^{(s_1, s_2)} \left[e^{-\nu_{y, \infty}(I) / K}\right]
\end{equation}
and if we can show (in some suitable sense) that
\begin{itemize}
\item $C_{y, y}(s_1, s_2) / C_{y, \infty}(s_1, s_2) \approx 1$ as $y \to \infty$, and
\item $\EE_{y, y}^{(s_1, s_2)} \left[e^{-\nu_{y, \infty}(I) / K}\right] \approx \EE_{y,\infty}^{(s_1, s_2)} \left[e^{-\nu_{y, \infty}(I) / K}\right]$ as $y \to \infty$,
\end{itemize}
then \eqref{eq:mod2-control-diagonal} would approximately equal $\EE\left[ \nu_{y,\infty}(h)^2 e^{-\nu_{y, \infty}(I) / K}\right]$ and hence have the same limit as $y \to \infty$. 

Note that we will often abuse the notation and write $\EE_{y, t}^{(s_1, s_2)} \left[ A \right]  = \EE_{\infty, t}^{(s_1, s_2)} \left[ A \right]$ when $A$ is $\Fa_y$-measurable random variable, i.e.~it only depends on $\alpha(p)$ for $p \le y$ and thus its distribution is not affected when the law of $(\alpha(p))_{p > y}$ is changed. In particular, $\EE_{y, y}^{(s_1, s_2)} \left[e^{-\nu_{y, \infty}(I) / K}\right] = \EE_{\infty, y}^{(s_1, s_2)} \left[e^{-\nu_{y, \infty}(I) / K}\right]$.

To motivate our analysis below, it may be instructive to first sketch the arguments for the toy problem, namely when $(\alpha(p))_{p}$ are i.i.d.~standard complex Gaussian random variables. In this simplified case, recall the Cameron--Martin--Girsanov theorem: if $(N, X)$ are centred and jointly Gaussian (here $N$ is a real-valued random variable and $X= X(\cdot)$ is a random field/collection of real-valued random variables), then
\[ \EE\left[ \frac{\exp(N)}{\EE[\exp(N)]} g(X(\cdot))\right]
= \EE\left[g\left(X(\cdot) + \EE\left[ N X(\cdot)\right]\right)\right] \]
for any suitable test functions $g$, i.e.~the change of measure by exponential tilting only results in the shifting of mean. If we apply this to
\[    N \leftrightarrow \Ga_{y, t}(s_1) + \Ga_{y, t}(s_2),
    \quad X(\cdot) \leftrightarrow \Ga_{y, \infty}(\cdot) \quad \text{and} \quad
    g(X(\cdot)) \leftrightarrow \exp\left(-\frac{1}{K}\int_I \frac{\exp\left(\Ga_{y, \infty}(s)\right)ds}{\EE\left[\exp\left(\Ga_{y, \infty}(0)\right) \right]}\right)
    = e^{-\nu_{y, \infty}(I) / K}, \]
we can write
\[\EE_{y, t}^{(s_1, s_2)}\left[e^{-\nu_{y, \infty}(I) / K}\right]
= \EE\left[\exp\left(-\frac{1}{K}\int_I \exp\left(\EE\left[ \left(\Ga_{y, t}(s_1)+\Ga_{y, t}(s_2) \right) \Ga_{y, \infty}(s)\right]\right)\nu_{y, \infty}(ds)\right) \right] \]
and in particular
\begin{align*}
\EE_{y, y}^{(s_1, s_2)}\left[e^{-\nu_{y, \infty}(I) / K}\right]
& = \EE\left[\exp\left(-\frac{1}{K}\int_I \exp\left(\EE\left[ \left(\Ga_{y, y}(s_1)+\Ga_{y, y}(s_2) \right) \Ga_{y, \infty}(s)\right]\right)\nu_{y, \infty}(ds)\right) \right]\\
& = \EE_{y, \infty}^{(s_1, s_2)}\left[\exp\left(-\frac{1}{K}\int_I \exp\left(\Ma_y^{(s_1, s_2)}(s)\right)\nu_{y, \infty}(ds)\right) \right]
\end{align*}
where
\begin{align*}
\Ma_y^{(s_1, s_2)}(s)
:=&~ \EE\left[ \left(\Ga_{y, y}(s_1)+\Ga_{y, y}(s_2) \right) \Ga_{y, \infty}(s)\right]
- \EE\left[\left(\Ga_{y, \infty}(s_1)+\Ga_{y, \infty}(s_2) \right) \Ga_{y, \infty}(s)\right]\\
=&~ \EE_{y, y}^{(s_1, s_2)}\left[\Ga_{y, \infty}(s) \right]-\EE_{y, \infty}^{(s_1, s_2)}\left[\Ga_{y, \infty}(s) \right].
\end{align*}
Based on direct computations, one can easily show that $C_{y, y}(s_1, s_2) / C_{y, \infty}(s_1, s_2)$ is uniformly bounded in $s_1, s_2 \in I$ and this ratio converges to $1$ for any fixed $s_1 \ne s_2$ as $y \to \infty$. Similarly, one can verify that $|\Ma_{y}^{(s_1, s_2)}(u)|$ is uniformly bounded in $s, s_1, s_2 \in I$ and that it converges to $0$ as long as $s, s_1, s_2$ are distinct. Since $\nu_{y, \infty}$ and its limit $\nu_\infty$ are non-atomic measures, we observe that
\[\EE_{y, \infty}^{(s_1, s_2)}\left[\exp\left(-\frac{1}{K}\int_I \exp\left(\Ma_y^{(s_1, s_2)}(s)\right)\nu_{y, \infty}(ds)\right) \right]
\xrightarrow[y \to \infty]{}
\EE_{\infty, \infty}^{(s_1, s_2)}\left[e^{-\nu_{ \infty}(I)/K} \right] \]
for any $s_1 \ne s_2$, and hence we can conclude that \eqref{eq:mod2-control-diagonal} has the same limit as $\EE\left[ \nu_{y,\infty}(h)^2 e^{-\nu_{y, \infty}(I) / K}\right]$ when $y \to \infty$ by dominated convergence.

Going back to the actual problem where $(\alpha(p))_p$ are non-Gaussian, we have lost access to Cameron--Martin--Girsanov theorem and many other Gaussian techniques. To proceed with a change-of-measure argument, we will need the following approximate Girsanov theorem which could be of independent interest.
\begin{thm}\label{theo:Girsanov}
Let $\eta$ be a probability measure on $\RR^d$ with compact support in $\overline{B}(0, r)$, and for each $a \in \RR^d$ let $\eta_a$ be the probability measure satisfying $ \eta_a(dx) \propto e^{\langle a, x \rangle}\eta(dx)$. Then there exist a pair of random variables $(U, U_a)$ on some probability space such that $U \sim \eta$, $U_a \sim \eta_a$ and with the properties that
\begin{itemize}
\item[(i)] $\EE\left[U_a - U\right] = \nabla_a \log \EE[\exp\left(\langle a, U\rangle \right)] - \nabla_{a=0} \log \EE[\exp\left(\langle a, U\rangle \right)]$.
\item[(ii)] $\EE\left[|U_a - U|\right] \le C|a|$ and $\EE\left[|U_a - U|^2\right] \le C|a|$ for some $C > 0$.
\end{itemize}
\end{thm}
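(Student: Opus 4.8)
The plan is to realise the coupling explicitly by an interpolation in the tilting parameter, following the classical way of building maximal-type couplings along a one-parameter family of measures. Concretely, given $a \in \RR^d$, write $a = |a| v$ with $|v| = 1$ and consider the path $\lambda \mapsto \eta_{\lambda v}$ for $\lambda \in [0, |a|]$, whose density with respect to $\eta$ is $e^{\lambda \langle v, x\rangle} / L(\lambda)$ with $L(\lambda) := \EE_\eta[e^{\lambda \langle v, U\rangle}]$. The key observation is that this family solves a forward equation: if $U_\lambda \sim \eta_{\lambda v}$, then formally $\tfrac{d}{d\lambda}\EE[\phi(U_\lambda)] = \mathrm{Cov}_{\eta_{\lambda v}}(\phi(U_\lambda), \langle v, U_\lambda\rangle)$, which exhibits $(\eta_{\lambda v})_\lambda$ as the flow generated by a (time-inhomogeneous) Markov generator. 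One then couples the whole path on a single probability space — e.g.\ via the standard construction of a coupling of a Markov flow, or more elementarily by a Skorokhod-type representation combined with the fact that $\lambda \mapsto \eta_{\lambda v}$ is Lipschitz in Wasserstein-$1$ distance on the compact ball $\overline{B}(0,r)$ — to obtain $(U_\lambda)_{\lambda \in [0, |a|]}$ with $U_0 \sim \eta$, $U_{|a|} \sim \eta_a$, and good control on the increments. Set $U := U_0$ and $U_a := U_{|a|}$.

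For part (i), the mean identity is forced by the computation above: $\EE[\phi(U_a)] - \EE[\phi(U)] = \int_0^{|a|} \mathrm{Cov}_{\eta_{\lambda v}}(\phi, \langle v, \cdot\rangle)\, d\lambda$, and taking $\phi(x) = x$ gives $\EE[U_a - U] = \int_0^{|a|} \big(\nabla \log L\big)(\lambda v)\,\cdot\, d\lambda \cdot \text{(appropriate bookkeeping)}$, which telescopes to $\nabla_a \log \EE_\eta[e^{\langle a, U\rangle}] - \nabla_{a=0}\log \EE_\eta[e^{\langle a, U\rangle}]$ since $\tfrac{d}{d\lambda}\log L(\lambda) = \langle v, \nabla \log \EE_\eta[e^{\langle \cdot, U\rangle}]\rangle$ evaluated at $\lambda v$, by the chain rule. (Equivalently, one can simply \emph{define} $U_a := U + \nabla_a \log \EE_\eta[e^{\langle a, U\rangle}] - \EE[U]$ on the coupling if one prefers, but the flow construction already delivers this.) For part (ii), the first moment bound $\EE[|U_a - U|] \le C|a|$ is immediate from the Wasserstein-Lipschitz property of the path (the covariance integrand is bounded by $r \cdot \mathrm{diam}(\mathrm{supp}\,\eta) \le 2r^2$ in operator terms). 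The $\EE[|U_a - U|^2] \le C|a|$ bound — note the exponent $1$, not $2$, on the right — should follow because along the coupled flow each infinitesimal jump has size $O(1)$ but occurs at rate $O(d\lambda)$, so the quadratic variation accumulated over $[0,|a|]$ is $O(|a|)$ rather than $O(|a|^2)$; concretely one writes $\EE[|U_a - U|^2] \le \EE\big[|U_a - U| \cdot \mathrm{diam}(\mathrm{supp}\,\eta)\big] \le 2r\,\EE[|U_a - U|] \le C|a|$, which is the cleanest route and uses only boundedness of the support together with part (ii)'s first inequality.

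The main obstacle is the coupling construction itself: one must produce $(U, U_a)$ on a \emph{common} probability space with the stated joint control, not merely prescribe the two marginals. The cleanest rigorous route is to avoid Markov-flow technology and instead argue: (1) the map $a \mapsto \eta_a$ is Lipschitz from $\overline{B}(0,R) \subset \RR^d$ into $(\mathcal{P}(\overline{B}(0,r)), W_1)$ with constant $C = C(r, R)$, since $\partial_{a_j} \EE_{\eta_a}[\phi] = \mathrm{Cov}_{\eta_a}(\phi, x_j)$ is bounded by $2r^2\|\phi\|_{\mathrm{Lip}}$; (2) hence there is an optimal (for $W_1$) coupling of $\eta$ and $\eta_a$ with $\EE[|U_a - U|] = W_1(\eta, \eta_a) \le C|a|$; (3) the support constraint upgrades this to $\EE[|U_a - U|^2] \le 2r\,W_1(\eta, \eta_a) \le C|a|$; and (4) the exact mean identity in (i) is then obtained not from the coupling but as an identity of the marginals — $\EE[U_a] - \EE[U] = \nabla_a \log \EE_\eta[e^{\langle a, U\rangle}] - \nabla_{a=0}(\cdots)$ is just the standard cumulant-generating-function computation $\EE_{\eta_a}[x] = \nabla \log \EE_\eta[e^{\langle a, \cdot\rangle}]$ — so (i) holds for \emph{any} coupling, in particular the one from (2). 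This decouples the delicate part (getting the right joint moments) from the algebraic part (the mean formula), and the only real work is the elementary Wasserstein-Lipschitz estimate in step (1), which is a short covariance bound using $\mathrm{supp}\,\eta \subseteq \overline{B}(0,r)$.
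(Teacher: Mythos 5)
Your proposal is correct, and its rigorous second route has the same skeleton as the paper's proof: property (i) is a purely marginal identity obtained by differentiating the cumulant generating function (so it holds for \emph{any} coupling), and property (ii) is reduced to the bound $\Wa_1(\eta,\eta_a)\le C|a|$ together with the trivial upgrade $\EE[|U_a-U|^2]\le 2r\,\EE[|U_a-U|]$ coming from the compact support. The only genuine difference is how the Wasserstein bound is obtained: the paper passes through total variation, Taylor-expanding the density ratio $e^{\langle a,u\rangle}/\EE[e^{\langle a,U\rangle}]$ to get $d_{\mathrm{TV}}(\eta,\eta_a)\ll|a|$ for small $|a|$ and then using $\Wa_1\le 2r\,d_{\mathrm{TV}}$ on the bounded support, whereas you differentiate $a\mapsto\EE_{\eta_a}[\phi]$, bound the resulting covariance by $O(r^2)\|\phi\|_{\mathrm{Lip}}$, and conclude by Kantorovich duality with an optimal coupling. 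Your route is, if anything, slightly cleaner: the covariance bound is uniform in $a$, so no small-$|a|$ restriction or separate treatment of large $|a|$ is needed, and the Lipschitz constant in fact depends only on $r$, not on the radius $R$ of the parameter ball. Two caveats: the first paragraph's Markov-flow/path coupling is only heuristic as written and is superseded by your steps (1)--(4), so it should be dropped; and the parenthetical suggestion to ``simply define $U_a := U + \nabla_a\log\EE[e^{\langle a,U\rangle}] - \EE[U]$'' is incorrect, since a deterministic shift of $U$ has the right mean but not the law $\eta_a$ --- harmless here only because your final argument never uses it.
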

\begin{proof}
Property (i) follows by differentiation. As for property (ii), since $\EE\left[|U_a - U|^2\right] \le 2r \EE\left[|U_a - U|\right]$ it is sufficient to establish a coupling such that the first moment estimate
\[ \EE\left[|U_a - U|\right] \le C |a| \]
holds for $|a|$ sufficiently small (so that all Taylor series expansions below are justified), or in other words the $1$-Wasserstein distance satisfies $\Wa_1(\eta, \eta_a) \le C|a|$. Recall that the total variation distance between two probability measures can be defined as
\[  d_{\mathrm{TV}}(\eta, \eta_a) := \frac{1}{2} \sup_{f: \, \|f\|_\infty \le 1} \EE\left[ f(U) - f(U_a)\right] \]
where $U \sim \eta$ and $U_a \sim \eta_a$, and that under the maximal coupling we have $d_{\mathrm{TV}}(\eta, \eta_a) = \PP(U \ne U_a)$. Since $\eta, \eta_a$ are equivalent probability measures, we may use the alternative formula for total variation distance
\[ d_{\mathrm{TV}}(\eta, \eta_a) 
 = \frac{1}{2} \int \left|\frac{e^{a \cdot u}}{\EE[e^{a \cdot U}]} - 1\right| \eta(du)
\ll \int \left[|a \cdot (u - \EE[U])| + |a|^2\right] \eta(du)
\ll |a| \]
where the first inequality follows from Taylor series expansion and second inequality from the fact that $|a| \ll 1$. Finally, recall by duality that the $1$-Wasserstein distance can be obtained via
\[ \Wa_1(\eta, \eta_a) := \sup_{f:\, \|f\|_{\mathrm{Lip}} \le 1} \EE\left[ f(U) - f(U_a)\right] \]
where the expression on the right-hand side is invariant under a constant shift in $f$, so we may assume without loss of generality that $f(u) = 0$ for some $u \in B(0, r)$. But such functions also satisfy $\|f\|_\infty \le 2r$ by the Lipschitz condition, and hence $\Wa_1(\eta, \eta_a) \le 2r d_{\mathrm{TV}}(\eta, \eta_a)$ from which the result follows. 
\end{proof}
When our random variables are non-Gaussian, exponential tilting induces not only a shift in mean but also extra random fluctuations as in \Cref{theo:Girsanov}(ii). For this reason, we will pursue a more refined analysis of $\EE_{y, y}^{(s_1, s_2)}\left[e^{-\nu_{y, \infty}(I) / K}\right]$ depending on whether $|s_1 - s_2|$ is small or not (see Step 3 below), but our overall philosophy essentially mirrors that in the toy problem discussed earlier. We are now ready to explain:
\begin{proof}[Proof of \eqref{eq:mod2-diagonal}]
For notational convenience let us only treat $I = [0,1]$ even though the proof can be easily generalised. Given \eqref{eq:mod2-martingale} and \eqref{eq:mod2-cross}, \eqref{eq:mod2-diagonal} follows if we can show that
\[ \limsup_{y \to \infty} \EE\left[ \nu_{y, y}(h)^2 e^{-\nu_{y, \infty}(I) / K}\right]
\le  \EE\left[ \nu_{ \infty}(h)^2 e^{-\nu_{ \infty}(I) / K}\right]. \]
\paragraph{Step 1: coupling.}
Recall
\[ \nu_{y, \infty}(ds) = \nu_{y, \infty}(ds; \alpha) = \frac{\exp\left(\Ga_{y, \infty}(s; \alpha)\right)}{\EE\exp\left(\Ga_{y, \infty}(s)\right)} ds \]
where the notations $\nu_{y, \infty}(ds; \alpha), \Ga_{y, \infty}(s; \alpha)$ emphasise that they may be seen as functions of the vector $(\alpha(p))_p$ (we are not using this notation in the denominator because the expectation is evaluated to a deterministic value and does not depend on the realisation of a random vector). Let us apply \Cref{theo:Girsanov} with
\[ U \leftrightarrow \begin{pmatrix}
\Re \alpha(p) \\ \Im \alpha(p)
\end{pmatrix},
\quad 
U_a \leftrightarrow \begin{pmatrix}
\Re \alpha_y(p) \\ \Im \alpha_y(p)
\end{pmatrix}
\quad \text{and} \quad 
a \leftrightarrow \begin{pmatrix}
\dfrac{2|f(p)| \OurEpsilon_{p, y}}{p^{1/2}} \cos(|s_1 - s_2| \log p)\\
-\dfrac{2|f(p)| \OurEpsilon_{p, y}}{p^{1/2}} \sin(|s_1 - s_2| \log p)
\end{pmatrix} \]
to obtain independent pairs of random variables $(\alpha_y(p), \alpha(p))_{p \le y}$ on a common probability space such that the marginal laws are characterised by
\[ \EE_{y, y}^{(s_1, s_2)}\left[ g(\alpha(p))\right]
= \EE_{y,\infty}^{(s_1, s_2)}\left[ g(\alpha_y(p))\right] \qquad \forall p \le y,\]
and that they satisfy
\[ \EE_{y, \infty}^{(s_1, s_2)}\left[|\alpha_y(p) - \alpha(p)|\right]
+\EE_{y, \infty}^{(s_1, s_2)}\left[|\alpha_y(p) - \alpha(p)|^2\right] \ll |\OurEpsilon_{p, y}| p^{-1/2} \qquad \forall p \le y. \]
Then
\begin{align}
\notag
\EE_{y, y}^{(s_1, s_2)} \left[ e^{-\nu_{y, \infty}(I) / K}\right]
& = \EE_{y, \infty}^{(s_1, s_2)} \left[ \exp \left(-K^{-1}\int_I \frac{\exp\left(\Ga_{y, \infty}(u; \alpha_y)\right)}{\EE\exp\left(\Ga_{y, \infty}(u)\right)} du  \right) \right]\\
\label{eq:coupled_exp}
& = \EE_{y, \infty}^{(s_1, s_2)} \left[ \exp \left(-K^{-1}\int_I \exp\left(\Ga_{y, \infty}(u; \alpha_y - \alpha)  \right)\frac{\exp\left(\Ga_{y, \infty}(u; \alpha)\right)}{\EE\exp\left(\Ga_{y, \infty}(u)\right)} du  \right) \right]
\end{align}
where
\[ \Ga_{y, \infty}(u; \alpha_y - \alpha) 
= \Ga_{y, \infty}(u; \alpha_y)  - \Ga_{y, \infty}(u; \alpha) 
= 2\Re\sum_{p \le y}\frac{[\alpha_y(p) - \alpha(p)] f(p)}{p^{1/2+ iu}}. \]
If we set $\Ma_y^{(s_1, s_2)}(u) := \EE_{y, \infty}^{(s_1, s_2)}\left[\Ga_{y, \infty}(u; \alpha_y - \alpha)\right]$ and $\widetilde{\Ga}_{y, \infty}(u) := \Ga_{y, \infty}(u; \alpha_y - \alpha)  - \Ma_y^{(s_1, s_2)}(u)$, then by the Sobolev inequality (\Cref{thm:sobolev})
\begin{align*}
& \EE_{y, \infty}^{(s_1, s_2)} \left[ \|\widetilde{\Ga}_{y, \infty}\|_\infty^2\right]
=\EE_{\infty, \infty}^{(s_1, s_2)} \left[ \|\widetilde{\Ga}_{y, \infty}\|_\infty^2\right]\\
&  \ll \EE_{\infty, \infty}^{(s_1, s_2)} \left[ \|\widetilde{\Ga}_{y, \infty}\|_{W^{1,2}(I)}^2\right]
\ll \sum_{p \le y} \frac{\log^2 p}{p}  \EE_{\infty, \infty}^{(s_1, s_2)}\left[|\alpha_y(p) - \alpha(p)|^2\right]
\ll \sum_{p \le y} |\OurEpsilon_{p, y}| \frac{|f(p)|^3}{p^{3/2}} \log^2 p \ll \frac{1}{\log y} \xrightarrow[y \to \infty]{} 0
\end{align*}
uniformly in $s_1, s_2 \in I$. In particular, for any fixed $\delta_1 > 0$, we can upper bound \eqref{eq:coupled_exp} by
\begin{align} 
\notag
& \EE_{\infty, \infty}^{(s_1, s_2)} \left[ \exp \left(-K^{-1}\int_I \exp\left( -|\Ma_y^{(s_1, s_2)}(u)|- \delta_1 \right)\nu_{y, \infty}(ds; \alpha)  \right) \right]+ \PP_{\infty, \infty}^{(s_1, s_2)} \left( \|\widetilde{\Ga}_{y, \infty}\|_\infty > \delta_1\right)\\
\label{eq:damping-factor}
&\quad   =: E_y(s_1, s_2; \delta_1) + \PP_\infty^{(s_1, s_2)} \left( \|\widetilde{\Ga}_{y, \infty}\|_\infty > \delta_1\right)
\end{align}
where the last probability is bounded by $ \delta_1^{-2} \EE_{\infty, \infty}^{(s_1, s_2)} \left[ \|\widetilde{\Ga}_{y, \infty}(\cdot)\|_\infty^2\right] = O\left(1 / (\delta_1^2 \log y)\right)$ by Markov's inequality. 
\paragraph{Step 2: estimates for $C_{y, t}(s_1, s_2)$.}
Using \Cref{lem:gf-estimates}, we have
\begin{align*}
 \frac{\EE\exp\left(G_{p, t}(s_1) + G_{p, t}(s_2)\right)}{\EE\exp\left(G_{p, t}(s_1)\right)\EE\exp\left(G_{p, t}(s_2)\right)}
& = \exp \left\{\EE[G_{p, t}(s_1) G_{p,t}(s_2)] + O\left(\frac{|f(p)|^3}{p^{3/2}} \right) \right\}\\
& = \exp \left\{ \frac{2|f(p)|^2}{p^{2\sigma_t}} \cos(|s_1 - s_2| \log p)+ O\left(\frac{|f(p)|^3}{p^{3/2}} \right) \right\}.
\end{align*}
In particular, 
\begin{align*}
C_{y, \infty}(s_1, s_2) 
&= \prod_{p \le y}  \frac{\EE\exp\left(G_{p, \infty}(s_1) + G_{p, \infty}(s_2)\right)}{\EE\exp\left(G_{p, \infty}(s_1)\right)\EE\exp\left(G_{p, \infty}(s_2)\right)}\\
& \ll \exp \left\{\sum_{p \le y} \frac{2|f(p)|^2}{p} \cos(|s_1 - s_2| \log p) \right\}
 \ll \left(|s_1 - s_2| \vee \log^{-1}(y)\right)^{-2\theta}
\end{align*}
where the last estimate follows from  \Cref{lem:new-martingaleL2est}, and this implies
\[ \int_{I \times I}ds_1 ds_2 \frac{C_{y, y}(s_1, s_2)}{C_{y, \infty}(s_1, s_2)} C_{y, \infty}(s_1, s_2)
\ll \int_{I \times I} ds_1 ds_2  \left(|s_1 - s_2| \vee \log^{-1}(y)\right)^{-2\theta}
\ll 1 + \log^{2\theta - 1} y. \]
Combining this with \eqref{eq:damping-factor}, we have
\[ \limsup_{y \to \infty} \EE\left[ \nu_{y, y}(h)^2 e^{-\nu_{y, \infty}(I) / K}\right]
\le \limsup_{y \to \infty} \int_{I \times I}ds_1 ds_2 h(s_1) h(s_2)\frac{C_{y, y}(s_1, s_2)}{C_{y, \infty}(s_1, s_2)} C_{y, \infty}(s_1, s_2) E_y(s_1, s_2; \delta_1). \]
\paragraph{Step 3: treating merging singularities.} Let $L > 0$ be such that
\[ \frac{C_{y, y}(s_1, s_2)}{C_{y, \infty}(s_1, s_2)} \le L \qquad \text{and} \qquad |\Ma_y^{(s_1, s_2)}| \le L \]
uniformly in $y \ge 3$ and $s_1, s_2 \in I$, which is possible by \Cref{lem:tilt_ratios} and \Cref{lem:M_y}. Also let $\bar{h}$ be a non-negative continuous function with the property that $\bar{h} \le 1$, $\bar{h}|_{[0, 1/2]} = 1$ and $\mathrm{supp}(\bar{h}) = [0,1]$. Then for any $m_1 \in \NN$, we have
\begin{align*}
& \int_{I \times I}ds_1 ds_2 h(s_1) h(s_2) \bar{h}(2^{m_1}|s_1 - s_2|)\frac{C_{y, y}(s_1, s_2)}{C_{y, \infty}(s_1, s_2)} {C_{y, \infty}(s_1, s_2)} E_y(s_1, s_2; \delta_1) \\
& \le L \|h\|_\infty^2  \int_{I \times I} ds_1 ds_2 \bar{h}(2^{m_1}|s_1 - s_2|) C_{y, \infty}(s_1, s_2) \EE_\infty^{(s_1, s_2)} \left[ \exp \left(-\frac{1}{Ke^{L + \delta_1}}\int_I \frac{\exp\left(\Ga_{y, \infty}(u; \alpha)\right)}{\EE\exp\left(\Ga_{y, \infty}(u)\right)} du  \right) \right]\\
& = L \|h\|_\infty^2  \EE\left[\int_{I \times I}  \bar{h}(2^{m_1}|s_1 - s_2|) \nu_{y, \infty}(ds_1) \nu_{y, \infty}(ds_2) \exp \left(-\frac{\nu_{y, \infty}(I) }{Ke^{L + \delta_1}}\right) \right]\\
& \le L\|h\|_\infty^2  \EE\left[\left(\max_{j \le 2^{m_1} - 1} \nu_{y, \infty}\left([j2^{-m_1}, (j+1)2^{-m_1}]\right)\right) \nu_{y, \infty}(I)\exp \left(-\frac{\nu_{y, \infty}(I) }{Ke^{L + \delta_1}}\right) \right]\\
& \xrightarrow[y \to \infty]{} L\|h\|_\infty^2  \EE\left[\left(\max_{j \le 2^{m_1} - 1} \nu_{\infty}\left([j2^{-m_1}, (j+1)2^{-m_1}]\right)\right) \nu_{ \infty}(I)\exp \left(-\frac{\nu_{\infty}(I) }{Ke^{L + \delta_1}}\right) \right]
\end{align*}
where the last line again follows from dominated convergence, since $\nu_{y, \infty}(J) \xrightarrow[y \to \infty]{a.s.} \nu_{\infty}(J)$ for any fixed sub-interval $J \subset I$ by martingale convergence theorem, and 
\[ \left(\max_{j \le 2^{m_1} - 1} \nu_{\infty}([j2^{-m_1}, (j+1)2^{-m_1}])\right) \nu_{ \infty}(I)\exp \left(-\frac{\nu_{\infty}(I) }{Ke^{L + \delta_1}}\right) 
\le \nu_{ \infty}(I)^2\exp \left(-\frac{\nu_{\infty}(I) }{Ke^{L + \delta_1}}\right) 
\le  (2Ke^{L + \delta_1})^2 \]
uniformly in $j, m_1$ and $y$.

Now for any $m_2  > m_1$, define $I_{m_2}(s_1, s_2) = I \setminus \left(B(s_1, 2^{-m_2}) \cup B(s_2, 2^{-m_2})\right)$. Using  \Cref{lem:tilt_ratios} and \Cref{lem:M_y}, let $\delta_2 = \delta_2(m_2) > 0$ be such that
\[ \frac{C_{y, y}(s_1, s_2)}{C_{y, \infty}(s_1, s_2)} \le 1+\delta_2 \qquad \text{and} \qquad |\Ma_y^{(s_1, s_2)}(u)| \le \delta_2 \]
uniformly in $|s_1 - s_2| \ge 2^{-m_2}$ and  $u \in I_{m_2}(s_1, s_2)$ for all $y$ sufficiently large (possibly depending on $m_2$). Note that $\delta_2(m)$ can be chosen in a way such that $\delta_2(m_2) \to 0$ as $m_2 \to \infty$. Then
\begin{align*}
 \int_{I \times I} &ds_1 ds_2 h(s_1) h(s_2) \left[1 - \bar{h}(2^{m_1}|s_1 - s_2|)\right]\frac{C_{y, y}(s_1, s_2)}{C_{y, \infty}(s_1, s_2)} {C_{y, \infty}(s_1, s_2)} E_y(s_1, s_2; \delta_1) \\
&\le  (1+\delta_2) \EE\Bigg[ \int_{I \times I} h(s_1) h(s_2) \left[1 - \bar{h}(2^{m_1}|s_1 - s_2|)\right]\nu_{y, \infty}(ds_1) \nu_{y, \infty}(ds_2) \exp \left(-\frac{\nu_{y, \infty}\left(I_{m_2}(s_1, s_2)\right)}{Ke^{\delta_1 + \delta_2}}  \right) \Bigg]\\
&\le  (1+\delta_2) \EE\Bigg[ \int_{I \times I} h(s_1) h(s_2) \left[1 - \bar{h}(2^{m_1}|s_1 - s_2|)\right]\nu_{y, \infty}(ds_1) \nu_{y, \infty}(ds_2) \exp \left(-\frac{\nu_{\infty}\left(I_{m_2}(s_1, s_2)\right)}{Ke^{\delta_1 + \delta_2}}  \right) \Bigg]
\end{align*}
where the last line follows from the martingale property of $\nu_\infty(I_m(s_1, s_2))$ and conditional Jensen's inequality. Since the function
\[ (s_1, s_2) \mapsto \exp \left(-\frac{\nu_{\infty}\left(I_{m_2}(s_1, s_2)\right)}{Ke^{\delta_1 + \delta_2}}  \right) \]
is continuous for $|s_1 -s_2| \ge 2^{-m_2}$ as a consequence of dominated convergence and non-atomicity of $\nu_\infty$, we combine \Cref{lem:tensorise} and \Cref{lem:add-cont-density} to obtain
\begin{align*}
& \lim_{y \to \infty} \EE\Bigg[ \int_{I \times I} h(s_1) h(s_2) \left[1 - \bar{h}(2^{m_1}|s_1 - s_2|)\right]\nu_{y, \infty}(ds_1) \nu_{y, \infty}(ds_2) \exp \left(-\frac{\nu_{\infty}\left(I_{m_2}(s_1, s_2)\right)}{Ke^{\delta_1 + \delta_2}}  \right) \Bigg]\\
& = \EE\Bigg[ \int_{I \times I} h(s_1) h(s_2) \left[1 - \bar{h}(2^{m_1}|s_1 - s_2|)\right]\nu_{\infty}(ds_1) \nu_{\infty}(ds_2) \exp \left(-\frac{\nu_{\infty}\left(I_{m_2}(s_1, s_2)\right)}{Ke^{\delta_1 + \delta_2}}  \right) \Bigg].
\end{align*}
\paragraph{Step 4: conclusion.}
Summarising all the work so far, we have
\begin{align*}
 \limsup_{y \to \infty}& \EE\left[ \nu_{y, y}(h)^2 e^{-\nu_{y, \infty}(I) / K}\right]
 \le L\|h\|_\infty^2  \EE\left[\left(\max_{j \le 2^{m_1} - 1} \nu_{\infty}\left([j2^{-m_1}, (j+1)2^{-m_1}]\right)\right) \nu_{ \infty}(I)\exp \left(-\frac{\nu_{\infty}(I) }{Ke^{L + \delta_1}}\right) \right]\\
& +  \EE\Bigg[ \int_{I \times I} h(s_1) h(s_2) \left[1 - \bar{h}(2^{m_1}|s_1 - s_2|)\right]\nu_{\infty}(ds_1) \nu_{\infty}(ds_2) \exp \left(-\frac{\nu_{\infty}\left(I_{m_2}(s_1, s_2)\right)}{Ke^{\delta_1 + \delta_2}}  \right) \Bigg].
\end{align*}
The first term on the right-hand side vanishes as $m_1 \to \infty$ since
\[ \max_{j \le 2^{m_1} - 1} \nu_{\infty}\left([j2^{-m_1}, (j+1)2^{-m_1}]\right) \xrightarrow[m_1 \to \infty]{a.s.} 0 \]
by the non-atomicity of $\nu_\infty$ (\Cref{lem:non-atomic}). Meanwhile, the second term satisfies
\begin{align*}
&  \EE\Bigg[ \int_{I \times I} h(s_1) h(s_2) \left[1 - \bar{h}(2^{m_1}|s_1 - s_2|)\right]\nu_{\infty}(ds_1) \nu_{\infty}(ds_2) \exp \left(-\frac{\nu_{\infty}\left(I_{m_2}(s_1, s_2)\right)}{Ke^{\delta_1 + \delta_2}}  \right) \Bigg]\\
& \xrightarrow[m_2 \to \infty, \delta_2 \to 0]{}
 \EE\Bigg[ \int_{I \times I} h(s_1) h(s_2) \left[1 - \bar{h}(2^{m_1}|s_1 - s_2|)\right]\nu_{\infty}(ds_1) \nu_{\infty}(ds_2) \exp \left(-\frac{\nu_{\infty}(I)}{Ke^{\delta_1}}  \right) \Bigg]\\
& \xrightarrow[m_1 \to \infty, \delta_1 \to 0]{}
 \EE\Bigg[ \int_{I \times I} h(s_1) h(s_2) \nu_{\infty}(ds_1) \nu_{\infty}(ds_2) \exp \left(-\nu_{\infty}(I) / K  \right) \Bigg]
= \EE\left[ \nu_{ \infty}(h)^2 e^{-\nu_{ \infty}(I) / K}\right]
\end{align*}
again by the non-atomicity of $\nu_\infty$ as well as monotone convergence. This concludes our proof.
\end{proof}
\begin{proof}[Proof of \Cref{cor:mc-L1}]
Suppose $h\colon \RR \to \RR$ is a continuous function with $\mathrm{supp}(h) = I$ for some compact interval $I$. Then
\begin{align*}
\EE|m_{y, t(y)}(h) - m_\infty(h)|
&\le \EE|m_{y, \infty}(h) - m_\infty(h)| + \EE|m_{y, t(y)}(h) - m_{y, \infty}(h)|\\
&= \EE|m_{y, \infty}(h) - m_\infty(h)| + \EE|\EE\left[m_{\infty, t(y)}(h) - m_{\infty}(h) | \Fa_y\right]|\\
& \le \EE|m_{y, \infty}(h) - m_\infty(h)| + \EE|m_{\infty, t(y)}(h) - m_{\infty}(h)|
\end{align*}
where the last step follows from (conditional) Jensen's inequality. The above bound converges to $0$ as $y \to \infty$ by \Cref{thm:mc-L1}.

Now suppose $h \in L^1(\RR)$ instead. Since $C_c(\RR)$ is dense in $L^1(\RR)$, there exists a sequence of functions $h_n \in C_c(\RR)$ such that $\|h-h_n\|_1 \to 0$ as $n \to \infty$. Then
\begin{align*}
\EE|m_{y, t(y)}(h) - m_\infty(h)|
& \le \EE|m_{y, t(y)}(h_n) - m_\infty(h_n)|
+ \EE\left[m_{y, t(y)}(|h - h_n|)\right]
+ \EE\left[m_{\infty}(|h - h_n|)\right]\\
& \le \EE|m_{y, t(y)}(h_n) - m_\infty(h_n)| + 2\|h - h_n\|_{1}
\end{align*}
which vanishes in the limit as $y \to \infty$ and then $n \to \infty$.
\end{proof}

\section{Part II: Distribution of random sums}\label{sec:randomsum}
This section is devoted to the proof of \Cref{thm:summain} and is organised as follows:
\begin{itemize}
\item In \Cref{sec:gen} we state \Cref{thm:summainw}, a generalised version of \Cref{thm:summain} which will be the one we prove. We also recall the notion of stable convergence.
\item In \Cref{sec:mclt}, we recall the (complex) martingale central limit theorem, the main tool used in the proof. 
\item In \Cref{sec:class}, we recall a wide class of multiplicative functions with which it will be convenient to work.
\item In \Cref{sec:struct}, we outline the application of the martingale central limit theorem, in particular defining $S_{x,\OurEpsilon,\delta}$, a truncated variant of $S_{x}$ that is essential to the argument. The proof of \Cref{thm:summainw} will be established up to the verification of three results:
\item In \Cref{sec:trunc} we establish \Cref{lem:negdelta}, which justifies working with $S_{x,\OurEpsilon,\delta}$ instead of $S_x$.
\item In \Cref{sec:Lindeberg} we establish \Cref{lem:lind}, which verifies the Lindeberg condition present in the relevant application of the martingale central limit theorem.
\item In \Cref{sec:bracket} we establish \Cref{prop:bracket}, which is the main technical result of this section. It computes the limit of the relevant bracket process. It is this proposition which relies on the properties of the measure $m_{y,t}(ds)$ established in \Cref{cor:mc-L1}.
\end{itemize}
\subsection{Generalisation}\label{sec:gen}
We prove the following slight generalisation of \Cref{thm:summain}. Given $\weight \colon \RR_{\ge0}\to \CC$ with compact support, let
\[S^{\weight}_x  = S^{\weight}_{x,f}:=\big(\sum_{n\le x} |f(n)|^2\big)^{-\frac{1}{2}}\sum_{n=1}^{\infty} \alpha(n) f(n) \weight\left(\frac{n}{x}\right).\]
For $\weight = \mathbf{1}_{[0,1]}$ this recovers $S_x$. For $\Re s >0$ we define
\[K_{\weight}(s) := \int_{0}^{\infty} \weight(x)x^{s-1}dx.\]
For $\weight=\mathbf{1}_{[0,1]}$ we have $K_{\weight}(s)=1/s$. If $\weight$ is a step function with compact support then $K_{\weight}(1/2+it)\ll 1/|t|$. By Plancherel's theorem, $\sqrt{2\pi} \|  \weight\|_2=\|K_{\weight}(1/2+it)\|_2$.
Let $f \in \Ma$ be a function satisfying the conditions of \Cref{thm:summain} for some $\theta \in (0,1)$. We claim that \[\lim_{x\to \infty}\EE [|S^{\weight}_x|^2]=\int_{0}^{\infty} |\weight(t)|^2 dt\] if $\weight$ is a step function with compact support. To see this, we use \eqref{eq:orth} to get 
\[\EE [|S^{\weight}_x|^2] = \sum_{n\ge 1}|f(n)|^2|\weight(n/x)|^2 / \sum_{n\le x}|f(n)|^2.\]
Necessarily $|\weight|^2$ is a linear combination of functions of the form $\mathbf{1}_{I}$ for compact intervals $I$, so the claim is reduced to $\sum_{n/x \in I}|f(n)|^2 \sim |I| \sum_{n\le x}|f(n)|^2$. This follows from Wirsing's theorem \cite{Wirsing} which says, for our $f$, that $\sum_{n\le x}|f(n)|^2 \sim C_f x(\log x)^{\theta-1}$ for some positive constant $C_f$.
\begin{thm}\label{thm:summainw}
Let $\weight\colon \RR_{\ge0}\to \CC$ be a step function with compact support such that $\|\weight\|_2>0$. Then
\[    S^{\weight}_x \xrightarrow[x \to \infty]{d} \sqrt{V^{\weight}_\infty} \ G \]
where 
\begin{equation}\label{eq:Vinfweight}
V^{\weight}_\infty:= \frac{1}{2\pi}\int_{\RR} \big|K_{\weight}\big(\tfrac{1}{2}+is\big)\big|^2 m_\infty(ds)
\end{equation}
is almost surely finite and strictly positive, and is independent of $G \sim \Na_\CC(0, 1)$. 
Moreover, the convergence in distribution is stable in the sense of \Cref{def:stable}.
\end{thm}
\begin{definition}\label{def:stable}
    Let $(\Omega, \Fa, \PP)$ be a probability space. We say a sequence of random variables $Z_n$ converges stably as $n \to \infty$ if $(Y, Z_n)$ converges in distribution as $n \to \infty$ for any bounded $\Fa$-measurable random variable $Y$.
\end{definition}
Similarly to the discussion in \cite[{\S}B.1]{GW}, \Cref{thm:summainw} can be rephrased as
\begin{equation}\label{eq:stable3w}
    \lim_{x \to \infty} \EE\big[Y \widetilde{h}(S^{\weight}_x)\big]
    = \EE\big[Y\widetilde{h}(\sqrt{V^{\weight}_\infty} G) \big]
\end{equation}
for any bounded continuous function $\widetilde{h}\colon \CC \to \RR$, and any bounded $\Fa_\infty$-measurable random variable $Y$ where $\Fa_\infty  := \sigma(\alpha(p), p=2,3,5,\ldots)$. The rest of the section is devoted to proving \Cref{thm:summainw}.

\subsection{Martingale central limit theorem}\label{sec:mclt}
As in \cite{GW}, our main tool for approaching \Cref{thm:summainw} is the martingale central limit theorem. This choice of tool is inspired by the works of Najnudel, Paquette and Simm \cite{NPS} and  Najnudel, Paquette, Simm and Vu \cite{NPSV} in the setting of  holomorphic multiplicative chaos. 
\begin{lem}\label{lem:mCLT}
For each $n$, let $(M_{n,j})_{j\le k_n}$ be a complex-valued, mean-zero and square integrable martingale with respect to the filtration $(\Fa_{j})_j$, and $\Delta_{n, j} := M_{n, j} - M_{n, j-1}$ be the corresponding martingale differences. Suppose the following conditions are satisfied.
\begin{itemize}
    \item[(a)] The conditional covariances converge: $\sum_{j =1}^{k_n} \EE\left[\Delta_{n, j}^2 | \Fa_{j-1}\right] \xrightarrow[n \to \infty]{p} 0$ and
    \[ \sum_{j =1}^{k_n} \EE\left[|\Delta_{n, j}|^2 | \Fa_{j-1}\right]  \xrightarrow[n \to \infty]{p} V_\infty.\]
    \item[(b)] The conditional Lindeberg condition holds: for any $\delta > 0$,
 $\sum_{j = 1}^{k_n} \EE\left[|\Delta_{n, j}|^2 \mathbf{1}_{\{|\Delta_{n, j}| > \delta\}} | \Fa_{j-1}\right] \xrightarrow[n \to \infty]{p} 0$.
\end{itemize}
Then $M_{n, k_n} \xrightarrow[n \to \infty]{d} \sqrt{V_\infty} \ G$ where $G \sim \Na_{\CC}(0,1)$ is independent of $V_\infty$, and the convergence in distribution is stable.
\end{lem}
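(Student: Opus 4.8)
The plan is to reduce the complex statement to a real bivariate martingale central limit theorem with random limiting covariance, observe that the two conditions in (a) together pin down the limiting Gaussian to be isotropic (which is exactly what distinguishes $\Na_\CC$ from an arbitrary centred complex Gaussian), and then quote a stable version of the classical martingale CLT.

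First I would split $M_{n,j}=X_{n,j}+iY_{n,j}$ into real and imaginary parts, so that $(X_{n,j},Y_{n,j})_{j\le k_n}$ is a mean-zero, square-integrable $\RR^2$-valued martingale with difference array $\vec\Delta_{n,j}:=(\Re\Delta_{n,j},\Im\Delta_{n,j})$. The elementary identities $\Delta_{n,j}^2=(\Re\Delta_{n,j})^2-(\Im\Delta_{n,j})^2+2i(\Re\Delta_{n,j})(\Im\Delta_{n,j})$ and $|\Delta_{n,j}|^2=(\Re\Delta_{n,j})^2+(\Im\Delta_{n,j})^2$ show that the two limits in (a) are jointly equivalent to the single matrix statement
\[
\sum_{j=1}^{k_n}\EE\!\left[\vec\Delta_{n,j}\vec\Delta_{n,j}^{\,T}\mid\Fa_{j-1}\right]\xrightarrow[n\to\infty]{p}\frac{V_\infty}{2}I_2 ,
\]
i.e. the vanishing of the ``pseudo-covariance'' $\sum_j\EE[\Delta_{n,j}^2\mid\Fa_{j-1}]$ forces the off-diagonal entry and the difference of the diagonal entries of the limiting conditional covariance to vanish, leaving an isotropic matrix; and condition (b) is precisely the conditional Lindeberg condition for the array $(\vec\Delta_{n,j})$.

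Next I would invoke the real, multidimensional martingale CLT in stable form (as in the work of Hall and Heyde), whose nesting hypothesis is automatic here since the filtration $(\Fa_j)_j$ does not depend on $n$, noting that $V_\infty$ is measurable with respect to $\bigvee_j\Fa_j$. Concretely, one can run the standard exponential-martingale argument: fixing $\mathbf{t}\in\RR^2$, one passes to the scalar martingale $j\mapsto\mathbf{t}\cdot(X_{n,j},Y_{n,j})$, whose conditional variances sum to $\mathbf{t}^T\big(\sum_j\EE[\vec\Delta_{n,j}\vec\Delta_{n,j}^{\,T}\mid\Fa_{j-1}]\big)\mathbf{t}\xrightarrow{p}\tfrac12|\mathbf{t}|^2V_\infty$ and which satisfies the scalar Lindeberg condition because $|\mathbf{t}\cdot\vec\Delta_{n,j}|\le|\mathbf{t}|\,|\Delta_{n,j}|$, so that applying (b) with threshold $\delta/|\mathbf{t}|$ controls the truncated sum; one then checks that $E_{n,j}:=\exp\big(i\mathbf{t}\cdot(X_{n,j},Y_{n,j})+\tfrac12\sum_{l\le j}\EE[(\mathbf{t}\cdot\vec\Delta_{n,l})^2\mid\Fa_{l-1}]\big)$ is, up to errors vanishing in probability by a second-order Taylor expansion of $e^{ix}$ together with Lindeberg, a bounded martingale, and tests it against $Y\in L^\infty(\Fa_m)$, lets $n\to\infty$, uses the convergence of the quadratic variation, and then lets $m\to\infty$ to obtain $\EE[Y e^{i\mathbf{t}\cdot(X_{n,k_n},Y_{n,k_n})}]\to\EE[Y e^{-|\mathbf{t}|^2V_\infty/4}]$ for every bounded $\Fa$-measurable $Y$.

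Finally I would assemble the pieces: the last display (equivalently, the stable CLT applied to each linear functional) says that for every bounded $\Fa$-measurable $Y$ and every $\mathbf{u}\in\RR^2$ one has $\EE[Y e^{i\mathbf{u}\cdot(X_{n,k_n},Y_{n,k_n})}]\to\EE[Y e^{-|\mathbf{u}|^2V_\infty/4}]$, which is exactly joint stable convergence of $(X_{n,k_n},Y_{n,k_n})$ to a variable $W$ that, conditionally on $V_\infty$, is centred Gaussian on $\RR^2$ with covariance $\tfrac12V_\infty I_2$; recombining coordinates, such a $W$ equals $\sqrt{V_\infty}\,G$ with $G\sim\Na_\CC(0,1)$ independent of $V_\infty$ (indeed $\EE[e^{i\Re(\bar\lambda\sqrt{V_\infty}G)}\mid V_\infty]=e^{-|\lambda|^2V_\infty/4}$, matching the limit upon taking $\mathbf{u}=(\Re\lambda,\Im\lambda)$). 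This yields the claimed convergence in law together with stability. There is no deep obstacle here: the real content is the bookkeeping of the $\RR^2$-reduction and the observation that the pair of conditions in (a) encodes precisely the circular symmetry characterising $\Na_\CC$. The only mildly delicate points are the soft ones in the stable-convergence step — controlling the Taylor-remainder terms in the exponential martingale (where (b) is used again to bound truncated second moments) and reducing the test class to $\bigcup_m L^\infty(\Fa_m)$ via martingale convergence — both of which are routine.
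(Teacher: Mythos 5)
Your proposal is correct and follows essentially the same route as the paper, which simply cites \cite[Theorem 3.2 and Corollary 3.1]{HH1980} for the real stable martingale CLT and \cite[\S B.2]{GW} for the standard complex adaptation: your reduction to an $\RR^2$-valued array whose conditional covariances converge to $\tfrac{V_\infty}{2}I_2$ (with condition (a) encoding the circular symmetry) and the exponential-martingale/characteristic-function argument tested against $Y\in L^\infty(\Fa_m)$ is precisely the argument underlying those references. The only caveat is that the step where you treat the compensated exponential as ``a bounded martingale up to vanishing errors'' needs the usual stopping-time truncation to keep the compensator $\exp\bigl(\tfrac12\sum_{l\le j}\EE[(\mathbf{t}\cdot\vec\Delta_{n,l})^2\mid\Fa_{l-1}]\bigr)$ bounded (convergence in probability of the bracket does not give a uniform bound), a standard technicality already handled in Hall--Heyde.
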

See \cite[Theorem 3.2 and Corollary 3.1]{HH1980} for a proof of \Cref{lem:mCLT} for real-valued martingales (in the real case, the requirement $\sum_{j =1}^{k_n} \EE\left[\Delta_{n, j}^2 | \Fa_{j-1}\right] \xrightarrow[n \to \infty]{p} 0$ is omitted) and  \cite[{\S}B.2]{GW} 
for a standard adaptation to the complex case (cf.~\cite[Corollary 4.3]{NPS}). As explained e.g.~in \cite[Section 2.4]{GW}, the conditional Lindeberg condition above is implied by
\[\lim_{n\to \infty}\sum_{j=1}^{k_n}  \EE\left[|\Delta_{n, j}|^4\right] =0.\]
\subsection{A class of functions}\label{sec:class}
To state the precise conditions required by our arguments, it is convenient to introduce a class of functions $\Ma_{\theta}\subseteq \Ma$, which consists of all functions $g \in \Ma$ which satisfy the following mild conditions:
\begin{enumerate}
\item For every $n \in \NN$, $g(n) \ge 0$.
\item As $t \to \infty$, $ \sum_{p\le t} \frac{g(p)\log p}{p} \sim \theta \log t$.
\item For $t \ge 2$, $\sum_{p \le t} g(p) \ll \frac{t}{\log t}$.
\item The series $\sum_{p} \big(\frac{g^2(p)}{p^2}+\sum_{i \ge 2} \frac{g(p^i)}{p^i}\big)$ converges.
\item If $\theta\le 1$ then we also impose that for $t \ge 2$, $\sum_{p, i \ge 2,\, p^i \le t}g(p^i) \ll \frac{t}{\log t}$.
\item As $x \to \infty$,
\begin{equation}\label{eq:flatsum} 
\sum_{n \le x} g(n) \sim \frac{e^{-\gamma\theta}}{\Gamma(\theta)}\frac{x}{\log x}\prod_{p \le x} \left( \sum_{i=0}^{\infty} \frac{g(p^i)}{p^i}\right)
\end{equation}
where $\gamma$ is the Euler--Mascheroni constant.
\end{enumerate}
If $f \in \Ma$ satisfies the conditions in \Cref{thm:summain} then necessarily $|f|^2\in \Ma_{\theta}$ (the only non-trivial condition to verify is \eqref{eq:flatsum}, which is a direct consequence of Wirsing's theorem \cite{Wirsing}).
\subsection{Proof structure}\label{sec:struct}
Let $\weight\colon \RR_{\ge 0}\to \CC$ be a function that is supported on $[0,A]$ for some $A> 1$. Let $f\in \Ma$. Let $P(1) = 1$, and for $n >1$  denote by $P(n)$ the largest prime factor of $n$. We introduce parameters $\OurEpsilon,\delta>0$. For every $k\ge 0$ we define
\[ x_k=x_{k,\OurEpsilon,\delta} := x^{\OurEpsilon+k\delta}, \qquad I_k=I_k(x,\OurEpsilon,\delta) := (x_{k}, x_{k+1}].\]
We let $K$ be the smallest non-negative integer such that $x_{K+1} \ge Ax$. Strictly speaking $K$ depends on $\OurEpsilon$, $\delta$, $A$ and $x$, but once $x$ is sufficiently large (in terms of $\OurEpsilon$, $\delta$ and $A$) $K$ stabilises as $K=\lfloor (1-\OurEpsilon)/\delta \rfloor$. We consider the following truncation of $S^{\weight}_{x}$:
\begin{equation}\label{eq:trunc}
S^{\weight}_{x,\OurEpsilon,\delta} :=\big(\sum_{n \le x} |f(n)|^2 \big)^{-\frac{1}{2}}\sum_{k=0}^{K}\sum_{\substack{ P(n)\in I_k\\ P(n/P(n)) \le x_{k}}} \alpha(n) f(n)\weight\left(\frac{n}{x}\right).
\end{equation}
This definition is inspired by the construction introduced recently in the context of holomorphic multiplicative chaos \cite{NPSV}. In words, we discard the term $\alpha(n)f(n)\weight(n/x)$ from $S^{\weight}_x$ if $P(n)$ is small ($P(n)\le x_0=x^{\OurEpsilon}$), or if $P(n/P(n))$ is close to $P(n)$ (i.e.~$P(n)$ and $P(n/P(n))$ belong to the same $I_k$). Since any $n$ in the inner sum in the right-hand side of \eqref{eq:trunc}  satisfies $P(n)> x_{0}=x^{\OurEpsilon}$ and $P(n)\le Ax$, we may write
\[S^{\weight}_{x,\OurEpsilon,\delta} = \sum_{p>x^{\OurEpsilon}} Z'_{x,p}\]
where, if $p \in I_k$ for some (unique) $k \ge 0$, we define
\[ Z'_{x,p} := \big(\sum_{n \le x} |f(n)|^2\big)^{-\frac{1}{2}} \sum_{\substack{\substack{P(n)=p \\ P(n/P(n)) \le x_{k}}}} \alpha(n) f(n)\weight\left(\frac{n}{x}\right).\] 
Clearly $Z'_{x,p}\equiv 0$ for $p>Ax$.
\begin{lem}\label{lem:negdelta}
Let $\theta>0$. Let $f\in \Ma$ be a function such that $|f|^2 \in \Ma_{\theta} \cap \mathbf{P}_{\theta}$ and
\begin{equation}\label{eq:ppasump}
\sum_{p,i\ge 2,\, p^i \le x}  |f(p^i)|^2 = o\left(  \frac{x}{\log^2 x}\right)
\end{equation}
as $x \to \infty$. Then $\limsup_{\OurEpsilon\to 0^+} \limsup_{\delta\to 0^+}\limsup_{x \to \infty}\EE\big[|S^{\weight}_x - S^{\weight}_{x,\OurEpsilon,\delta}|^2\big]=0$ for every bounded $\weight\colon \RR_{\ge 0}\to \CC$ with compact support.
\end{lem}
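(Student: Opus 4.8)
The plan is to estimate $\EE[|S^\weight_x - S^\weight_{x,\OurEpsilon,\delta}|^2]$ directly via orthogonality. Write $g := |f|^2$, $V(x) := \sum_{n\le x} g(n)$, and fix $A\ge1$ with $\mathrm{supp}(\weight)\subseteq[0,A]$. The sums $S^\weight_x$ and $S^\weight_{x,\OurEpsilon,\delta}$ differ by a sum of $\alpha(n)f(n)\weight(n/x)$ over the set $D$ of discarded $n\le Ax$, disjoint from the set of retained $n$; hence \eqref{eq:orth} gives
\[\EE\big[|S^\weight_x - S^\weight_{x,\OurEpsilon,\delta}|^2\big] = \frac{1}{V(x)}\sum_{n\in D} g(n)\,|\weight(n/x)|^2 \le \frac{\|\weight\|_\infty^2}{V(x)}\big(|A_1|_g + |A_2|_g + |A_3|_g\big),\]
where $|A|_g := \sum_{n\in A} g(n)$ and $D$ splits (among $n\le Ax$) into the disjoint families $A_1 = \{P(n) < x^\OurEpsilon\}$, $A_2 = \{P(n)\ge x^\OurEpsilon,\ P(n)^2\mid n\}$, and $A_3 = \{P(n)\ge x^\OurEpsilon,\ P(n)^2\nmid n,\ P(n/P(n))\ge x_k\text{ where }P(n)\in I_k\}$. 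I will freely use the consequences of $g\in\Ma_\theta\cap\mathbf{P}_\theta$ obtained by partial summation: $V(y)\asymp y(\log(y+2))^{\theta-1}$ as $y\to\infty$, $\sum_{p\le y} g(p)\ll y/\log y$, and $\sum_{p\le t} g(p)/p = \theta\log\log t + C_g + o(1)$. The goal is then to show that each ratio $|A_i|_g/V(x)$ vanishes in the iterated limit $\limsup_{\OurEpsilon\to0^+}\limsup_{\delta\to0^+}\limsup_{x\to\infty}$.

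For $A_1$, the quantity $|A_1|_g = \Psi_g(Ax, x^\OurEpsilon)$ is a mean value of $g$ over $x^\OurEpsilon$-friable integers, and here I would invoke a standard Dickman-type bound $\Psi_g(Ax, x^\OurEpsilon)\ll_\theta x(\log x)^{\theta-1}\rho_\theta(1/\OurEpsilon)$ valid for nonnegative multiplicative $g$ with $\sum_{p\le t} g(p)\log p/p\sim\theta\log t$ (a bare Rankin argument is insufficient, as it loses a factor $\log x$). Together with $V(x)\gg x(\log x)^{\theta-1}$ this yields $\limsup_{x\to\infty}|A_1|_g/V(x)\ll_\theta\rho_\theta(1/\OurEpsilon)\to0$ as $\OurEpsilon\to0^+$, which disposes of the outer limit.

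For $A_2$, writing $n = p^a m$ with $p = P(n)\ge x^\OurEpsilon$, $a\ge2$, $P(m) < p$ gives $|A_2|_g\le\sum_{a\ge2}\sum_{p\ge x^\OurEpsilon} g(p^a)\,V(Ax/p^a)\ll(\log x)^{\max(\theta-1,0)}\,x\sum_{i\ge2,\,p^i\ge x^{2\OurEpsilon}} |f(p^i)|^2/p^i$, using $p^a\ge x^{2\OurEpsilon}$. By \eqref{eq:ppasump} and partial summation one has $\sum_{i\ge2,\,p^i\ge Y}|f(p^i)|^2/p^i = o(1/\log Y)$, so with $Y = x^{2\OurEpsilon}$ the inner sum is $o(1/\log x)$, whence $|A_2|_g/V(x)\to0$ as $x\to\infty$ for each fixed $\OurEpsilon>0$ (the part of $A_3$ in which the second-largest prime of $n$ occurs to a power $\ge2$ is bounded identically). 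The remaining, main contribution to $A_3$ comes from $n$ whose two largest prime factors $p_1 = P(n) > p_2 = P(n/p_1)$ both lie in the same window $I_k = [x_k, x_{k+1})$, of multiplicative width $x^\delta$; factoring $n = p_1 p_2 r$ with $P(r)\le p_2$, $r\le Ax/(p_1p_2)$, I would bound $|A_3|_g\le\sum_{k}\sum_{p_1,p_2\in I_k} g(p_1)g(p_2)\,V(Ax/(p_1p_2))$. Setting $w_k := \OurEpsilon + k\delta = \log x_k/\log x$, applying $V(y)\le C_f y(\log(y+2))^{\theta-1}$, pulling out $Ax/(p_1p_2)$, using $\sum_{p\in I_k} g(p)/p = \theta(\log\log x_{k+1} - \log\log x_k) + o(1)\le\theta\delta/w_k + o_x(1)$, and dividing by $V(x)\gg x(\log x)^{\theta-1}$, this collapses to a Riemann sum and I obtain
\[\frac{|A_3|_g}{V(x)}\ \ll_\theta\ A\,\delta\int_{\OurEpsilon}^{1/2}\frac{(1-2w)^{\theta-1}}{w^2}\,dw\ +\ O_\theta(A\,\delta)\ +\ o_x(1).\]
The integral is finite precisely because $\theta > 0$ (so $\theta - 1 > -1$ and the singularity at $w = 1/2$ is integrable), and it is $\ll_\theta 1/\OurEpsilon$; hence $\limsup_{x\to\infty}|A_3|_g/V(x)\ll_{\OurEpsilon,\theta}\delta\to0$ as $\delta\to0^+$.

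Assembling the pieces, $\limsup_{x\to\infty}\EE[|S^\weight_x - S^\weight_{x,\OurEpsilon,\delta}|^2]\ll_{\weight,\theta}\rho_\theta(1/\OurEpsilon) + O_{\OurEpsilon,\theta}(\delta)$, and letting first $\delta\to0^+$ and then $\OurEpsilon\to0^+$ completes the proof. I expect the delicate step to be the $A_3$ estimate: converting the prime-pair sum into the integral $\int_\OurEpsilon^{1/2} w^{-2}(1-2w)^{\theta-1}\,dw$ needs some uniformity, and in particular in the boundary region $w\approx1/2$ (where $p_1p_2$ is close to $Ax$) one must retain the genuine factor $Ax/(p_1p_2)$ in $V(Ax/(p_1p_2))$ rather than bounding $V$ crudely — a short summation then shows these terms contribute only $O_\theta(A\,\delta)$ — while the whole argument rests on the hypothesis $\theta > 0$ for integrability at $w = 1/2$. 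A secondary nuisance is locating the friable mean-value bound for $A_1$ with the stated uniformity in $x$.
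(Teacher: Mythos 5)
Your proposal is correct in outline and follows essentially the same strategy as the paper: expand the second moment via \eqref{eq:orth} and split the discarded integers into the friable part, the part where a prime $\ge x^{\OurEpsilon}$ occurs to a power $\ge 2$, and the part where the two largest prime factors lie in the same window $I_k$. Your main-range computation for the last part (factor $n=p_1p_2m$, bound the $m$-sum by the full mean value $V(Ax/(p_1p_2))$, apply Mertens over $I_k$ from $\mathbf{P}_\theta$, sum over $k$) is exactly the paper's treatment of $B_{k,1}$ for $x_k\le x^{1/2-2\delta}$, with your integral $\int_{\OurEpsilon}^{1/2}w^{-2}(1-2w)^{\theta-1}dw$ playing the role of the paper's bound $\ll \delta\OurEpsilon^{-2}\log(1/\delta)$; likewise your use of \eqref{eq:ppasump} via partial summation for the prime-power contributions matches the paper's $B_{k,2}$ argument. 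Two points of divergence deserve comment. First, for the friable part the paper simply cites \cite[\S 3.1]{GW}, whereas you invoke a Dickman-type friable mean-value bound; the needed input is precisely the de Bruijn--van Lint asymptotic that the paper itself uses later in this same proof, so it is available under the stated hypotheses, and your observation that a bare Rankin bound loses a factor $\log x$ is correct. Second, and this is the only place where your sketch is genuinely thinner than the paper: the $O_A(1)$ boundary windows with $x_k$ near $(Ax)^{1/2}$. The paper handles them by dropping the condition on the second largest prime and using de Bruijn--van Lint to bound the density of $n\le Ax$ with $P(n)\in I_k$ by $O_{\OurEpsilon}(A\delta)$. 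Your proposed fix (``retain the genuine factor $Ax/(p_1p_2)$'') is right in spirit but, as stated, is not yet a proof: for $\theta<1$ the bound $V(y)\ll y(\log(y+2))^{\theta-1}\ll y$ still loses a factor $(\log x)^{1-\theta}$ after dividing by $V(x)$, so one must genuinely exploit the constraint $p_1p_2\le Ax/m$, e.g.\ reorder and sum over $p_2$ first using the Chebyshev-type bound $\sum_{p\le t}g(p)\ll t/\log t$ (condition (3) of $\Ma_\theta$, with $g=|f|^2$), then over $p_1$ by Mertens and over $m$ by $\sum_{m\le M}g(m)/m\ll(\log M)^{\theta}$; this yields a contribution $\ll_A \delta^{1+\theta}+o_{x\to\infty}(1)$ per boundary window, which suffices. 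With that completion (or by substituting the paper's de Bruijn--van Lint window-density argument), your proof goes through.
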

\begin{lem}\label{lem:lind}
	Let $\theta >0$. Let $f\in \Ma$ be a function such that $|f|^2 \in \Ma_{\theta}$ and
\begin{align}
\label{eq:sumpi}&\sum_{p} \bigg(\sum_{i \ge 2} \frac{|f(p^i)|^2i}{p^i}\bigg)^2<\infty,\\
\label{eq:passum}&\sum_{p \le x} \left(|f(p^2)|^2+|f(p)|^4\right) \ll x^2(\log x)^{-2\theta-4}.
\end{align}
Then $\limsup_{x \to \infty}\sum_{p>x^{\OurEpsilon}} \EE [|Z'_{x,p}|^4] =0$ for every $\OurEpsilon,\delta>0$ and every bounded  $\weight\colon \RR_{\ge 0}\to \CC$ with compact support.
\end{lem}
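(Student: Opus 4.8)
The plan is to exploit the multiplicative structure of the summand $Z'_{x,p}$ to strip off the factor $\alpha(p)f(p)$, reducing the task to a fourth‑moment estimate for a Steinhaus Dirichlet polynomial, and then to combine that estimate with a Wirsing/Shiu‑type mean‑value bound and the hypothesis \eqref{eq:passum} to extract a power saving in $\log x$.

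\emph{Step 1 (reduction).} Fix $\OurEpsilon,\delta>0$. For a prime $p\in I_k$ with $x^{\OurEpsilon}\le p\le Ax$, every $n$ in the definition of $Z'_{x,p}$ is of the form $n=mp$ with $p\nmid m$ and $P(m)<x_k\le p$; since $\alpha$ is completely multiplicative and $f$ is multiplicative with $(m,p)=1$, we have $\alpha(n)f(n)=\alpha(p)f(p)\,\alpha(m)f(m)$. Writing $N_x:=\sum_{n\le x}|f(n)|^2$ and
\[ T_p:=\sum_{\substack{p\nmid m\\ P(m)<x_k}} f(m)\,\weight(mp/x)\,\alpha(m), \]
the identity $|\alpha(p)|=1$ gives $\EE[|Z'_{x,p}|^4]=N_x^{-2}|f(p)|^4\,\EE[|T_p|^4]$. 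Since $|f|^2\in\Ma_{\theta}$, the asymptotic \eqref{eq:flatsum} together with $\prod_{p\le x}(1+|f(p)|^2/p+\cdots)\asymp(\log x)^{\theta}$ yields $N_x\gg_f x(\log x)^{\theta-1}$.

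\emph{Step 2 (fourth moment of $T_p$).} Expanding and using \eqref{eq:orth}, $\EE[|T_p|^4]=\sum_{m_1m_2=m_3m_4}c_{m_1}c_{m_2}\overline{c_{m_3}}\,\overline{c_{m_4}}$ with $c_m:=f(m)\weight(mp/x)\mathbf{1}_{\{p\nmid m,\,P(m)<x_k\}}$. Using $|\weight|\le\|\weight\|_\infty$ and $\mathrm{supp}(\weight)\subseteq[0,A]$ (so $c_m=0$ for $m>Ax/p$) and discarding the coprimality and smoothness constraints, the inequality $|c_{m_1}c_{m_2}c_{m_3}c_{m_4}|\le\tfrac12(|c_{m_1}|^2|c_{m_2}|^2+|c_{m_3}|^2|c_{m_4}|^2)$ and the symmetry of $m_1m_2=m_3m_4$ reduce this to $\sum_{m_1,m_2}|c_{m_1}|^2|c_{m_2}|^2 d(m_1m_2)\le\big(\sum_m|c_m|^2 d(m)\big)^2$. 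Since $d(m)|f(m)|^2$ is a non‑negative multiplicative function equal to $2|f(p)|^2$ on primes, and $\sum_{p\le t}\tfrac{2|f(p)|^2\log p}{p}\sim2\theta\log t$, a Wirsing/Shiu‑type mean‑value estimate — with the prime‑power Euler factors controlled by \eqref{eq:sumpi} (factor $m$ into its squarefree and squarefull parts, the latter contributing a convergent product) — gives $\sum_{m\le Y}|f(m)|^2 d(m)\ll_f Y(\log(Y+2))^{2\theta-1}$ for $Y\ge1$, and hence $\EE[|T_p|^4]\ll_{f,\weight}(x/p)^2(\log x)^{O_\theta(1)}$ uniformly in $x^{\OurEpsilon}\le p\le Ax$ and in $k$; in particular the bound does not depend on $\delta$.

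\emph{Step 3 (assembling).} Combining Steps 1--2,
\[ \sum_{x^{\OurEpsilon}\le p\le Ax}\EE[|Z'_{x,p}|^4]\ll_{f,\weight}\frac{(\log x)^{2(1-\theta)}}{x^2}\sum_{x^{\OurEpsilon}\le p\le Ax}|f(p)|^4\Big(\frac{x}{p}\Big)^2(\log x)^{O_\theta(1)}\ll_{f,\weight}(\log x)^{O_\theta(1)}\sum_{x^{\OurEpsilon}\le p\le Ax}\frac{|f(p)|^4}{p^2}. \]
By \eqref{eq:passum} we have $\sum_{p\le t}|f(p)|^4\ll t^2(\log t)^{-2\theta-4}$, so partial summation gives $\sum_{x^{\OurEpsilon}<p\le Ax}|f(p)|^4/p^2\ll_{\OurEpsilon}(\log x)^{-2\theta-3}$, and the total is $\ll_{f,\weight,\OurEpsilon}(\log x)^{-\kappa}$ for some $\kappa=\kappa(\theta)>0$, which tends to $0$ as $x\to\infty$; this proves \Cref{lem:lind}. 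The one genuinely delicate ingredient is the mean‑value bound for $\sum_{m\le Y}|f(m)|^2 d(m)$: one must confirm that the prime powers of $f$ do not spoil the Wirsing/Shiu estimate, and this is exactly where \eqref{eq:sumpi} (together with the convergence of $\sum_p\sum_{i\ge2}|f(p^i)|^2/p^i$, part of $|f|^2\in\Ma_\theta$) enters. Everything else — the parametrisation of $m_1m_2=m_3m_4$, and bookkeeping of the harmless powers of $\log x$ so that \eqref{eq:passum} (rather than a mere pointwise bound on $|f(p)|$) yields the power saving — is routine, and closely parallels the analogous argument in \cite{GW}.
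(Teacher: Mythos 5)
Your reduction in Step 1 is correct (every $n$ counted in $Z'_{x,p}$ is $n=pm$ with $P(m)<x_k\le p$, so the factor $\alpha(p)f(p)$ strips off and $\EE|Z'_{x,p}|^4=N_x^{-2}|f(p)|^4\,\EE|T_p|^4$), Step 3's bookkeeping is arithmetically consistent (with the claimed mean value one gets a total of $(\log x)^{-3}$), and the overall skeleton — orthogonality, a divisor-type fourth-moment bound, and \eqref{eq:passum} via partial summation for the decisive saving $\sum_{p\ge x^{\OurEpsilon}}|f(p)|^4/p^2\ll_{\OurEpsilon}(\log x)^{-2\theta-3}$ — is the same mechanism the paper uses. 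The difference is that the paper does not reprove the mean-value input: it bounds the inner sum pointwise by $\fname=(|f|\ast|f|)^2$, drops the constraint $P(m/p^2)<x_k$, and quotes \eqref{eq:prevlind} from \cite[\S3.2]{GW}, which was established there under exactly \eqref{eq:sumpi}--\eqref{eq:passum} (plus a rescaling $x\mapsto Ax$, $\OurEpsilon\mapsto\OurEpsilon/2$ justified by \eqref{eq:flatsum}).

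The genuine gap in your write-up is precisely the step you flag as delicate, and your proposed justification of it does not work as stated. You claim $\sum_{m\le Y}|f(m)|^2d(m)\ll Y(\log Y)^{2\theta-1}$ by a Wirsing/Shiu-type bound, with the squarefull part ``contributing a convergent product''. That product is $\prod_p\bigl(1+\sum_{i\ge2}(i+1)|f(p^i)|^2p^{-i}\bigr)$, and its convergence requires $\sum_p\sum_{i\ge2}i\,|f(p^i)|^2p^{-i}<\infty$. The hypotheses of \Cref{lem:lind} do not give this: \eqref{eq:sumpi} only provides square-summability over $p$ of $h(p):=\sum_{i\ge2}i|f(p^i)|^2p^{-i}$, and condition (4) of $\Ma_\theta$ provides summability of the same quantity \emph{without} the factor $i$; neither implies $\sum_p h(p)<\infty$. (Likewise, Shiu/Nair--Tenenbaum-type theorems need pointwise prime-power bounds such as $g(p^a)\le C^a$, which are not assumed here; the Halberstam--Richert route needs $\sum_p\sum_{i\ge2}g(p^i)\log(p^i)p^{-i}<\infty$, which is again unavailable.) The shape of \eqref{eq:sumpi} is a hint that it is meant to be used via Cauchy--Schwarz against $\sum_p|f(p)|^4p^{-2}<\infty$ when prime squares get paired with primes in the fourth-moment expansion, not as absolute summability of divisor-weighted Euler factors. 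Your final estimate has about three spare powers of $\log x$, so a lossier mean value (any exponent below $2\theta+\tfrac12$) would still close the argument, and the approach is very likely salvageable — but controlling the prime-power contribution from the stated hypotheses is exactly the nontrivial content that the paper outsources to \cite[\S3.2]{GW}, and your sketch does not supply it.
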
 
We introduce the $\sigma$-algebra
\[ \Fa_{y^-} := \sigma(\alpha(p), p <y).\]
\begin{proposition}\label{prop:bracket}
Let $\theta \in (0, 1)$. Let $f$ be a function satisfying the conditions of \Cref{thm:summain}. Let $\weight\colon \RR_{\ge 0}\to \CC$ be a step function with compact support. Then
\[T_{x,\OurEpsilon,\delta} :=\sum_{p>x^{\OurEpsilon}} \EE\left[ |Z'_{x,p}|^2 \mid \Fa_{p^-}\right]   \xrightarrow[x \to \infty]{p} C_{\OurEpsilon,\delta} V^{\weight}_\infty\]
where $V^{\weight}_\infty$ is defined in \eqref{eq:Vinfweight} and $\lim_{\OurEpsilon\to0^+}\lim_{\delta\to0^+} C_{\OurEpsilon,\delta}=1$.
\end{proposition}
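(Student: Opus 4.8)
The plan is to make the bracket process explicit, reduce it to a \emph{finite} sum of quantities of the shape \eqref{eq:planc} in which the smoothing parameter is constant on each block, and then quote the Plancherel‑type input built on \Cref{cor:mc-L1}. For the explicit computation, fix a block $I_k=[x_k,x_{k+1})$ and a prime $p\in I_k$. Every $n$ contributing to $Z'_{x,p}$ factors uniquely as $n=pm$ with $P(m)<x_k\le p$ (so automatically $p\nmid m$, $P(n)=p$ and $P(n/P(n))=P(m)<x_k$), and conversely; so by complete multiplicativity of $\alpha$ and multiplicativity of $f$,
\[ Z'_{x,p}=\Big(\sum_{n\le x}|f(n)|^2\Big)^{-1/2}\alpha(p)f(p)\sum_{P(m)<x_k}\alpha(m)f(m)\,\weight\!\Big(\frac{pm}{x}\Big). \]
The inner sum is $\Fa_{p^-}$‑measurable while $\alpha(p)$ is independent of $\Fa_{p^-}$ with $\EE[|\alpha(p)|^2]=1$ (and $\EE[\alpha(p)^2]=0$, which incidentally kills $\sum_p\EE[Z_{x,p}'^2\mid\Fa_{p^-}]$, i.e.\ the first part of \Cref{lem:mCLT}(a)). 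Writing $L:=\sum_{n\le x}|f(n)|^2$ and $s^{\weight}_{w,y}:=w^{-1/2}\sum_{P(m)<y}\alpha(m)f(m)\weight(m/w)$, it follows that $\EE[|Z'_{x,p}|^2\mid\Fa_{p^-}]=\frac{|f(p)|^2}{L}\cdot\frac{x}{p}\,|s^{\weight}_{x/p,\,x_k}|^2$, and hence (terms with $p>Ax$ vanishing since then $\weight(pm/x)=0$)
\[ T_{x,\OurEpsilon,\delta}=\frac{x}{L}\sum_{k=0}^{K}\sum_{p\in I_k}\frac{|f(p)|^2}{p}\,\big|s^{\weight}_{x/p,\,x_k}\big|^2. \]

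Next I would pass from prime sums to integrals. For each fixed $k$ the smoothing cutoff in $s^{\weight}_{x/p,\,x_k}$ is the \emph{constant} $x_k=x^{\OurEpsilon+k\delta}$ — this is exactly the feature that the modified truncation \eqref{eq:trunc} was designed to produce (compare the discussion around \eqref{eq:Txapprox}--\eqref{eq:planc}). Using $|f|^2\in\mathbf P_\theta$, i.e.\ $d\pi_{|f|^2}(u)=\theta\,du/\log u+d\Ea_{|f|^2}(u)$ with $\Ea_{|f|^2}(u)=o(u/\log u)$ and $\int_2^\infty u^{-2}|\Ea_{|f|^2}(u)|\,du<\infty$, partial summation replaces the inner $p$‑sum by $\theta\int_{x_k}^{x_{k+1}}\frac{|s^{\weight}_{x/u,\,x_k}|^2}{u\log u}\,du$ up to an error which is negligible after dividing by $L\asymp x(\log x)^{\theta-1}$; here one uses that $|\weight(t)|^2 t$ has bounded variation (to control the $d\Ea_{|f|^2}$‑integral of $|s^{\weight}_{x/u,x_k}|^2$ in $u$), together with the elementary bound $\EE|s^{\weight}_{w,y}|^2\asymp(\log\min(y,w))^{\theta}/\log w$. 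Substituting $u=x^v$ then turns the $k$‑th summand into precisely an instance of \eqref{eq:planc}, with smoothing exponent $a=\OurEpsilon+k\delta$ and window $q_k(e^v)=(\theta/v)\,\mathbf 1_{[\OurEpsilon+k\delta,\,\OurEpsilon+(k+1)\delta]}(v)$.

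Because $K=\lfloor(1-\OurEpsilon)/\delta\rfloor$ is fixed once $x$ is large, $T_{x,\OurEpsilon,\delta}$ is a finite sum of such terms. Each $q_k$ is only an indicator, but sandwiching it between continuous windows $q_k^{\pm}$ with $\int(q_k^{+}-q_k^{-})\to0$ and invoking \Cref{cor:sandwich}/\Cref{lem:plancherelapp} — whose proofs use \Cref{cor:mc-L1}, Plancherel's identity, and the non‑atomicity of $m_\infty$ to kill the boundary contributions — shows that the $k$‑th term converges in probability to $c_k\,V^{\weight}_\infty$ for an explicit deterministic $c_k=c_k(\OurEpsilon,\delta)$, so $T_{x,\OurEpsilon,\delta}\xrightarrow[x\to\infty]{p}\big(\sum_{k=0}^K c_k\big)V^{\weight}_\infty=:C_{\OurEpsilon,\delta}V^{\weight}_\infty$. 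The Plancherel constant carries a factor $\asymp a_k^{\theta}$ coming from $\EE|A_{x_k}(\tfrac12+i\cdot)|^2$, the remaining Mertens/Wirsing normalisation being absorbed by \eqref{eq:flatsum} and the definition of $m_\infty$; consequently $c_k\asymp(\OurEpsilon+(k+1)\delta)^{\theta}-(\OurEpsilon+k\delta)^{\theta}$ (the last block being truncated at exponent $1$ by $\mathrm{supp}\,\weight$), so $C_{\OurEpsilon,\delta}$ is a telescoping/Riemann sum equal to $(1-\OurEpsilon^{\theta})(1+o_\delta(1))$, giving $\lim_{\OurEpsilon\to0^+}\lim_{\delta\to0^+}C_{\OurEpsilon,\delta}=\int_0^1\theta a^{\theta-1}\,da=1$. (As a check, $\weight=\mathbf 1_{[0,1]}$ gives $\EE[V^{\weight}_\infty]=\tfrac1{2\pi}\int_{\RR}|\tfrac12+is|^{-2}\,ds=1=\EE|S_x|^2$.)

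The hard part is the combination of the prime‑sum‑to‑integral passage and the Plancherel input: recognising $\sum_{p\in I_k}\frac{|f(p)|^2}{p}|s^{\weight}_{x/p,x_k}|^2$ as an honest instance of \eqref{eq:planc} requires (i) justifying that passage even though $\weight$ is merely bounded with $|\weight|^2 t$ of bounded variation, and (ii) controlling the Mellin--Plancherel computation behind \Cref{lem:plancherelapp}, where the relevant oscillatory kernel exhibits delicate cancellation and where \Cref{cor:mc-L1} (convergence of $m_{y,t}$) and non‑atomicity of $m_\infty$ are essential. Extracting the precise constant $c_k$ — and hence $\lim C_{\OurEpsilon,\delta}=1$ — is then bookkeeping, and can alternatively be pinned down by consistency with \Cref{lem:negdelta}.
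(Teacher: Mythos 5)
Your reduction of the bracket process is essentially the paper's own route: the explicit factorisation $n=pm$ giving $T_{x,\OurEpsilon,\delta}=\frac{x}{L}\sum_k\sum_{p\in I_k}\frac{|f(p)|^2}{p}|s_{x/p,x_k}|^2$, the passage from the prime sum to $\theta\int dc/c$ via the $\mathbf{P}_\theta$ hypothesis with the error controlled through the bounded variation of $t|\weight(t)|^2$ (this is exactly \Cref{lem:Gdecomp} and \Cref{lem:Gneg}), and the application of \Cref{cor:sandwich}/\Cref{lem:plancherelapp} with the window $Q(e^{c-1})=c^{-1}\mathbf{1}_{[\OurEpsilon+k\delta,\OurEpsilon+(k+1)\delta]}$ and smoothing exponent $a_k=\OurEpsilon+k\delta$. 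Up to the sketchiness of the error estimates, this part is fine and matches the paper.

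The genuine gap is in your evaluation of the constants $c_k$ and hence in the proof that $\lim_{\OurEpsilon\to0^+}\lim_{\delta\to0^+}C_{\OurEpsilon,\delta}=1$. What \Cref{cor:sandwich} actually yields is $c_k=\Gamma(\theta+1)\,a_k^{\theta}\int Q(e^{-a_k v})\rho_\theta(v)\,dv$, and since $Q(e^{-a_k v})=(1-a_kv)^{-1}\mathbf{1}$ is supported on a $v$-window of length $\approx\delta/a_k$ located near $v=(1-a_k)/a_k$ (not near $v=0$), one gets $c_k\approx\Gamma(\theta+1)a_k^{\theta-2}\rho_\theta\bigl((1-a_k)/a_k\bigr)\delta$. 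This is \emph{not} $\asymp (a_k+\delta)^\theta-a_k^\theta$, and the sum does not telescope to $1-\OurEpsilon^\theta$ (already for $\theta=1$ the two disagree, since $\rho_1\bigl((1-a)/a\bigr)/a\neq 1$). Moreover, the $v$-windows attached to consecutive blocks are disjoint with gaps, because the smoothing parameter is frozen at $x_k$ across the whole block $I_k$; summing over $k$ is therefore not a clean Riemann sum. The paper handles this by rewriting $\sum_k c_k$ with a fractional-part indicator and proving an equidistribution lemma (\Cref{lem:equid}), which produces the extra density $\frac{1}{1+v}$ and gives $\lim_{\delta\to0^+}C_{\OurEpsilon,\delta}=\Gamma(\theta+1)\int_0^{1/\OurEpsilon-1}\rho_\theta(v)(1+v)^{-\theta}dv$; the limit $1$ as $\OurEpsilon\to0^+$ then rests on a nontrivial Dickman-function identity quoted from \cite{NPS}, not on $\int_0^1\theta a^{\theta-1}da$. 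Your fallback, "pinning down" the constant by consistency with \Cref{lem:negdelta}, is also incomplete as stated: it would require $\EE[T_{x,\OurEpsilon,\delta}]\to C_{\OurEpsilon,\delta}\,\EE[V^{\weight}_\infty]$, i.e.\ uniform integrability of $T_{x,\OurEpsilon,\delta}$, which is not established (convergence in probability plus Fatou gives only a one-sided inequality). So the second assertion of the proposition is not proved by your argument, while the first (convergence to $C_{\OurEpsilon,\delta}V^{\weight}_\infty$ for \emph{some} constant) is on the same track as the paper.
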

\begin{proof}[Proof of \Cref{thm:summainw}]
Let $\OurEpsilon,\delta > 0$ be fixed. Let $f\in \Ma$ be a function satisfying the conditions of \Cref{thm:summain}. The reader may verify that this implies that $f$ satisfies the conditions in \Cref{lem:negdelta} and \Cref{lem:lind}. Suppose $\weight$ is a step function with compact support. We claim one may apply \Cref{lem:mCLT} to the martingale sequence associated to $S_{x, \OurEpsilon,\delta}=\sum_{p>x^{\OurEpsilon}}Z'_{x,p}$. Indeed, since $Z'_{x, p}$ is linear in $\alpha(p)$ by construction, for any $x > 0$ we automatically obtain $\sum_{p>x^{\OurEpsilon}} \EE[(Z'_{x, p})^2 | \Fa_{p^-}] =  0$ almost surely. By \Cref{lem:lind}, the conditional Lindeberg condition holds. Combining this with \Cref{prop:bracket} and applying \Cref{lem:mCLT}, we obtain that
$S^{\weight}_{x, \OurEpsilon,\delta} \xrightarrow[x \to \infty]{d}  \sqrt{C_{\OurEpsilon,\delta} V^{\weight}_\infty} G$ where the distributional convergence is also stable.

To establish the stable convergence of $S^{\weight}_{x}$ we use the formulation \eqref{eq:stable3w}: we would like to show that
\[    \lim_{x \to \infty} \EE\big[Y \widetilde{h}(S^{\weight}_x)\big]
    = \EE\big[Y\widetilde{h}(\sqrt{V^{\weight}_\infty} G) \big] \]
holds for any bounded $\Fa_\infty$-measurable random variable $Y$ and any bounded continuous function $\widetilde{h}\colon \CC \to \RR$. By a density argument, it suffices to establish this claim for bounded Lipschitz functions $\widetilde{h}$. Consider
\begin{align*}
    \left|\EE\left[Y \widetilde{h}(S^{\weight}_x)\right] - 
    \EE\left[Y\widetilde{h}(\sqrt{V^{\weight}_\infty} G) \right]\right|
    & \le \left|\EE\left[Y \widetilde{h}(S^{\weight}_{x, \OurEpsilon,\delta})\right] - 
    \EE\left[Y\widetilde{h}(\sqrt{C_{\OurEpsilon,\delta} V^{\weight}_\infty} G) \right]\right|\\
    & \qquad + \EE\left[Y \left|\widetilde{h}(S^{\weight}_x) - \widetilde{h}(S^{\weight}_{x, \OurEpsilon,\delta})\right|\right]
    + \EE\left[Y \left|\widetilde{h}(\sqrt{V^{\weight}_\infty} G) - \widetilde{h}(\sqrt{C_{\OurEpsilon,\delta} V^{\weight}_\infty} G)\right|\right].
\end{align*}
We see that the first term on the right-hand side converges to $0$ as $x \to \infty$ as a consequence of the stable convergence of $S^{\weight}_{x, \OurEpsilon,\delta}$, whereas the third term converges to $0$ as $\delta \to 0^+$ and then $\OurEpsilon \to 0^+$ as a consequence of dominated convergence. Meanwhile, by Cauchy--Schwarz, the middle term is bounded by
\[   \|\widetilde{h}\|_{\mathrm{Lip}} \EE[Y^2]^{\frac{1}{2}} 
     \EE\big[\big|S^{\weight}_x - S^{\weight}_{x, \OurEpsilon,\delta}\big|^2\big]^{\frac{1}{2}} \]
which goes to $0$ in the limit as $x \to \infty$, then $\delta \to 0^+$ and then $\OurEpsilon\to 0^+$ by \Cref{lem:negdelta}. Finally, $V_\infty$ is a.s.~finite because $\EE V_{\infty} = \int_{\RR} ds/(2\pi|1/2+is|^2)<\infty$ (since $\EE m_{\infty}(ds) = ds$), and it is a.s.~positive since $|K_{\weight}(1/2+it)|^2$ is strictly positive in some non-empty open interval $I$ and $m_\infty(I) > 0$ by its support property. This completes the proof.
\end{proof}

\subsection{Truncation: proof of \texorpdfstring{\Cref{lem:negdelta}}{Lemma \ref{lem:negdelta}}}\label{sec:trunc}
Let $\weight\colon \RR_{\ge 0}\to\CC$ be bounded and supported on $[0,A]$ for some $A\ge 1$. We introduce a simpler truncation of $S_x^{\weight}$:
\[S^{\weight}_{x,\OurEpsilon} :=\big(\sum_{n \le x} |f(n)|^2\big)^{-\frac{1}{2}} 
 \sum_{P(n)> x^{\OurEpsilon},\,P(n)^2 \nmid n} \alpha(n) f(n)\weight(n/x).\]
By Cauchy--Schwarz, 
\[\EE\left[|S^{\weight}_x - S^{\weight}_{x,\OurEpsilon,\delta}|^2\right] \ll \EE\left[|S^{\weight}_x - S^{\weight}_{x,\OurEpsilon}|^2\right]+\EE\left[|S^{\weight}_{x,\OurEpsilon} - S^{\weight}_{x,\OurEpsilon,\delta}|^2\right],\]
so it suffices to show that
\begin{equation}\label{eq:lim1}
\limsup_{\OurEpsilon\to 0^+} \limsup_{x \to \infty}\EE\left[|S^{\weight}_x - S^{\weight}_{x,\OurEpsilon}|^2\right]=0
\end{equation}
and
\begin{equation}\label{eq:lim2}
\limsup_{\delta\to 0^+}\limsup_{x \to \infty}\EE\left[|S^{\weight}_{x,\OurEpsilon} - S^{\weight}_{x,\OurEpsilon,\delta}|^2\right]=0
\end{equation}
hold. By \eqref{eq:orth},
\[\EE\left[|S^{\weight}_x - S^{\weight}_{x,\OurEpsilon}|^2\right] = (\sum_{n\le x}|f(n)|^2)^{-1} \sum_{P(n) \le x^{\OurEpsilon}\text{ or } P(n)^2 \mid n}|f(n) \weight(n/x)|^2.\]
By our assumptions on $\weight$,
\begin{equation}\label{eq:Asupp}
\EE\left[|S^{\weight}_x - S^{\weight}_{x,\OurEpsilon}|^2\right] \ll_{\weight}(\sum_{n\le x}|f(n)|^2)^{-1} \sum_{\substack{n \le Ax \\ P(n) \le x^{\OurEpsilon}\text{ or } P(n)^2 \mid n}}|f(n)|^2.
\end{equation}
In \cite[Section 3.1]{GW} it was shown that 
\begin{equation}\label{eq:earliera}
\limsup_{\OurEpsilon\to 0^+}\limsup_{x\to \infty}(\sum_{n\le x}|f(n)|^2)^{-1} \sum_{\substack{n \le x \\ P(n) <x^{\OurEpsilon}\text{ or } P(n)^2 \mid n}}|f(n)|^2=0 
\end{equation}
holds under \eqref{eq:ppasump}. The limit \eqref{eq:lim1} follows from \eqref{eq:Asupp} and \eqref{eq:earliera} once we replace $x$ and $\OurEpsilon$ with $Ax$ and $2\OurEpsilon$ (respectively) in \eqref{eq:earliera} and observe $\sum_{n \le Ax} |f(n)|^2 \ll_A \sum_{n \le x}|f(n)|^2$ by \eqref{eq:flatsum} with $g=|f|^2$. 
We turn to \eqref{eq:lim2}.
By \eqref{eq:orth},
\begin{align*}
\EE\left[|S^{\weight}_{x,\OurEpsilon} - S^{\weight}_{x,\OurEpsilon,\delta}|^2\right] &=
(\sum_{n\le x}|f(n)|^2)^{-1} \sum_{k=0}^{K} \sum_{ x_{k+1}\ge P(n)>P(n/P(n))> x_k}|f(n)\weight(n/x)|^2\\
&\ll (\sum_{n\le x}|f(n)|^2)^{-1} \sum_{k=0}^{K} \sum_{\substack{n\le Ax \\ x_{k+1}\ge P(n)>P(n/P(n))> x_k}}|f(n)|^2=\sum_{k=0}^{K} B_k
\end{align*}
for 
\[ B_{k} :=(\sum_{n\le x}|f(n)|^2)^{-1} \sum_{\substack{n \le Ax\\ x_{k+1}\ge P(n)>P(n/P(n))> x_{k}}}|f(n)|^2.\]
We denote $P(n)$ by $p$, $P(n/P(n))$ by $q$, and express $B_{k}$ as
\[ B_{k}=(\sum_{n\le x}|f(n)|^2)^{-1} \sum_{p\in I_k}\sum_{q \in (x_{k},p)} \sum_{\substack{n \le Ax\\ P(n)=p\\ P(n/p)=q} }|f(n)|^2=B_{k,1}+B_{k,2}\]
where 
\begin{align*}
B_{k,1}&:=(\sum_{n\le x}|f(n)|^2)^{-1} \sum_{p \in I_k} \sum_{q \in (x_{k},p)}\sum_{\substack{n \le Ax \\ P(n)=p\\ P(n/p)=q,\, q^2 \nmid n}}|f(n)|^2,\\
B_{k,2}&:=(\sum_{n\le x}|f(n)|^2)^{-1} \sum_{p \in I_k} \sum_{q \in (x_{k},p)}\sum_{\substack{n \le Ax \\ P(n)=p\\ P(n/p)=q,\,q^2 \mid n}}|f(n)|^2.
\end{align*}
To estimate $B_{k,1}$ we consider several ranges of $k$.  If $x_k\ge (Ax)^{\frac{1}{2}}$ then the conditions $p\in I_k$ and $q\in (x_k,p)$ force $pq > x_k^2\ge Ax$, so $B_{k,1}=B_{k,2}=0$. From now on we assume $x_k < (Ax)^{\frac{1}{2}}$. If we discard the conditions on $P(n/p)$ in $B_{k,1}$ we obtain the bound
\[ B_{k,1} \le (\sum_{n \le x}|f(n)|^2)^{-1}  \sum_{\substack{n\le Ax\\ P(n) \in I_k}} |f(n)|^2=(\sum_{n \le x}|f(n)|^2)^{-1} \big( \sum_{\substack{n\le Ax \\ P(n) \le x_{k+1}}} |f(n)|^2- \sum_{\substack{n\le Ax\\ P(n) \le x_k}} |f(n)|^2\big).\]
By the work of de Bruijn and van Lint \cite{dBvL}, one has
\[ \lim_{x\to \infty} (\sum_{n \le x}|f(n)|^2)^{-1} \sum_{n\le Ax ,\, P(n) \le x^c} |f(n)|^2=A\widetilde{\rho_{\theta}}(1/c)\]
for every $c\in (0,1]$, where $\widetilde{\rho_{\theta}}\colon [1,\infty)\to (0,\infty)$ is a differentiable function with bounded derivative. This implies that  
\[ \lim_{x\to \infty}  (\sum_{n \le x}|f(n)|^2)^{-1} \sum_{\substack{n\le Ax \\ P(n) \in I_k}} |f(n)|^2 =A(\widetilde{\rho_{\theta}}((\OurEpsilon+(k+1)\delta)^{-1}) -\widetilde{\rho_{\theta}}((\OurEpsilon+k\delta)^{-1}))\ll_{\OurEpsilon} A\delta.\]
It follows that $\limsup_{\delta\to0^+} \limsup_{x\to \infty} B_{k,1}=0$ holds for each $\OurEpsilon>0$. In particular, as there are at most $O_A(1)$ values of $k$ such that $(Ax)^{\frac{1}{2}} > x_k\ge x^{\frac{1}{2}-2\delta}$, 
\[ \limsup_{\delta\to0^+} \limsup_{x\to \infty}\sum_{k\ge 0:\, x_k \ge x^{\frac{1}{2}-2\delta}} B_{k,1} =0\]
holds for every $\OurEpsilon>0$. Next we suppose that $x_k < x^{\frac{1}{2}-2\delta}$. We may write $B_{k,1}$ as 
\begin{equation}\label{eq:bk1new}
B_{k,1}=(\sum_{n\le x}|f(n)|^2)^{-1} \sum_{p \in I_k}|f(p)|^2 \sum_{q \in (x_{k},p)} |f(q)|^2 \sum_{\substack{m \le Ax/(pq) \\ P(m)<q}}|f(m)|^2
\end{equation}
where $m$ stands for $n/(pq)$. We apply  \eqref{eq:flatsum} with $g=|f|^2$ to estimate $\sum_{n\le x}|f(n)|^2$, obtaining
\[  (\sum_{n \le x}|f(n)|^2)^{-1} \sum_{m \le Ax/(pq)}|f(m)|^2 \ll\frac{\log (Ax)}{\log (Ax/(pq))}\frac{1}{pq} \ll \frac{\log (Ax)}{\log (Ax/x_{k+1}^2)} \frac{1}{pq}\]
if $p\in I_k$ and $q \in (x_k,p)$. 
Thus, discarding the condition $P(m)<q$ in \eqref{eq:bk1new}, we get
\[	B_{k,1} \ll \frac{\log (Ax)}{\log (Ax/x_{k+1}^2)} \sum_{p \in I_k}\frac{|f(p)|^2}{p}  \sum_{q \in (x_k,p)}\frac{|f(q)|^2}{q}\le \frac{\log (Ax)}{\log (Ax/x_{k+1}^2)}\bigg( \sum_{p \in I_k} \frac{|f(p)|^2}{p}\bigg)^2.\]
Since $|f|^2 \in \mathbf{P}_{\theta}$,
\begin{align*} 
\sum_{p \in I_k} \frac{|f(p)|^2}{p} &=\theta(  \log \log x_{k+1}- \log \log x_k) + o_{x_k\to \infty}(1) \\
& =\theta \log \bigg(1+\frac{\delta}{\OurEpsilon+k\delta}\bigg) + o_{x_k\to \infty}(1)\ll \frac{\delta}{\OurEpsilon}+o_{x_k\to \infty}(1)
\end{align*}
where the terms $o_{x_k\to \infty}(1)$ go to $0$ as $x_k\to \infty$. In particular,
\begin{align*}
\sum_{k\ge 0:\, x_k < x^{\frac{1}{2}-2\delta}} B_{k,1} &\ll    \sum_{k\ge 0:\, x_k < x^{\frac{1}{2}-2\delta}} \frac{\log (Ax)}{\log (Ax/x_{k+1}^2)} \left(  \frac{\delta}{\OurEpsilon} + o_{x_k\to\infty}(1)\right)^2\\
&\ll  o_{x\to\infty}(1)+\frac{\delta^2}{\OurEpsilon^2} \sum_{k\ge 0:\, \OurEpsilon+k\delta < \frac{1}{2}-2\delta} \frac{1}{\frac{1}{2}-\delta(k+1)-\OurEpsilon} \ll o_{x\to\infty}(1) +  \frac{\delta}{\OurEpsilon^2} \log \frac{1}{\delta}
\end{align*}
where the terms $o_{x\to\infty}(1)$ go to $0$ as $x\to \infty$. It follows that
\[ \limsup_{\delta \to 0^+}\limsup_{x\to \infty}\sum_{k\ge 0:\, x_k < x^{\frac{1}{2}-2\delta}} B_{k,1} = 0\]
holds for every $\OurEpsilon>0$. We turn to estimate $B_{k,2}$. We denote the multiplicity of $q$ in $n$ by $i$ (i.e.~$q^i \mid n$ but $q^{i+1} \nmid n$), so
\begin{align*}
	B_{k,2} &= (\sum_{n \le x}|f(n)|^2)^{-1} \sum_{p \in I_k}|f(p)|^2\sum_{q\in (x_k,p)}  \sum_{i \ge 2} |f(q^i)|^2 \sum_{\substack{m \le Ax/(pq^i)\\ P(m)<q}}|f(m)|^2\\
    &\le  (\sum_{n \le x}|f(n)|^2)^{-1} \sum_{p \in I_k}|f(p)|^2\sum_{q\in (x_k,p)}  \sum_{i \ge 2} |f(q^i)|^2 \sum_{m \le Ax/(pq^i)}|f(m)|^2
\end{align*}
where $m$ stands for $n/(pq^i)$, and in the inequality we discarded the condition $P(m)<q$. We apply  \eqref{eq:flatsum} twice with $g=|f|^2$ to estimate $\sum_{n\le x}|f(n)|^2$ (once with $x$ and a second time with $Ax/(pq^i)$), obtaining
\[  (\sum_{n \le x}|f(n)|^2)^{-1} \sum_{m \le Ax/(pq^i)}|f(m)|^2 \ll \frac{\log x}{pq^i}\]
uniformly in $p$, $q$ and $i$. Thus, 
\begin{equation}\label{eq:Bk}
	B_{k,2} \ll \log x \sum_{p \in I_k}\frac{|f(p)|^2}{p}  \sum_{\substack{q \in (x_k,p) \\ i \ge 2: \, pq^i \le Ax}} \frac{|f(q^i)|^2}{q^i}\le \log x \sum_{p \in I_k}\frac{|f(p)|^2}{p} \sum_{\substack{q > x_k\\  i \ge 2: \, q^i \le Ax/x_k}}\frac{|f(q^i)|^2}{q^i}.
\end{equation}
We have $\sum_{p \in I_k} |f(p)|^2/p \ll_{\OurEpsilon,\delta} 1$ since $|f|^2 \in \mathbf{P}_{\theta}$. Moreover, the $q$-sum in the right-hand side of \eqref{eq:Bk} is $o_{x\to\infty}(1/\log x)$ as $x\to \infty$ by \eqref{eq:ppasump}. This means $\lim_{x\to \infty}B_{k,2} =0$ as $x\to \infty$. Since $K\ll_{\OurEpsilon,\delta}1$, this implies \[\lim_{x\to \infty} \sum_{k=0}^{K}B_{k,2} = 0\]
holds for every $\OurEpsilon,\delta>0$. Collecting the results on $B_{k,1}$ and $B_{k,2}$, we conclude that \eqref{eq:lim2} holds.
\subsection{Lindeberg condition: proof of \texorpdfstring{\Cref{lem:lind}}{Lemma \ref{lem:lind}}}
\label{sec:Lindeberg}
Suppose $\weight\colon \RR_{\ge 0}\to\CC$ is bounded and supported on $[0,A]$ for $A \ge 1$. By \eqref{eq:orth},
\[\sum_{p>x^{\OurEpsilon}} \EE\left[|Z'_{x,p}|^4\right]=\big(\sum_{n\le x}|f(n)|^2\big)^{-2}\sum_{k=0}^{K}\sum_{p\in I_k}G(x,p,k)\]
for 
\[ G(x,p,k):=\sum_{\substack{ab=cd\\ P(a)=P(b)=P(c)=P(d)=p\\  P(a/p),P(b/p),P(c/p),P(d/p)\le x_k}} f(a)f(b)\overline{f(c)f(d)}\weight(a/x)\weight(b/x)\overline{\weight(c/x)\weight(d/x)}.\]
One can write $G(x,p,k)$ as
\[G(x,p,k)=\sum_{\substack{P(m)=p,\, p^2 \mid m\\ P(m/p^2)\le x_k}}h(m), \qquad h(m):=\bigg|\sum_{\substack{ab=m\\ P(a)=P(b)}}f(a)f(b)\weight(a/x)\weight(b/x)\bigg|^2,\]
where $m$ stands for the common value of $ab$ and $cd$. We have the pointwise bound
$h \le (|f|*|f|)^2 \cdot (\sup |\weight|)^4$ so that
\[ G(x,p,k) \ll_{\weight} \sum_{\substack{p^2 \mid m \le A^2 x^2\\P(m)=p\\ P(m/p^2)\le x_k}}\fname(m), \quad \fname := (|f|*|f|)^2.\]
It follows that 
\begin{equation}\label{eq:magicupper2}
\begin{split}
\sum_{p>x^{\OurEpsilon}} \EE\left[|Z'_{x,p}|^4\right]&\ll_{\weight,\OurEpsilon,\delta} \big(\sum_{n\le x}|f(n)|^2\big)^{-2}\sum_{k=0}^{K} \sum_{p\in I_k} \sum_{\substack{p^2 \mid m \le A^2 x^2 \\ P(m)=p\\P(m/p^2)\le x_k}} \fname(m)\\
&\le \big(\sum_{n\le x}|f(n)|^2\big)^{-2}\sum_{p>x^{\OurEpsilon}} \sum_{\substack{p^2 \mid \mid m \le A^2 x^2 \\ P(m)=p}} \fname(m),
\end{split}
\end{equation}
where we replaced the condition on $P(m/p^2)$ with the less strict condition $p^2 \mid \mid m$, that indicates that $p^2$ divides $m$ but $p^3$ does not. In \cite[Section 3.2]{GW} it was shown (under \eqref{eq:sumpi}--\eqref{eq:passum}) that
\begin{equation}\label{eq:prevlind} \lim_{x\to \infty} (\sum_{n\le x}|f(n)|^2)^{-2}\sum_{p>x^{\OurEpsilon}} \sum_{\substack{p^2 \mid \mid m \le  x^2 \\ P(m)=p}} \fname(m)=0.
\end{equation}
This implies that the right-hand side of \eqref{eq:magicupper2} tends to $0$ as $x\to \infty$, once we replace $x$ and $\OurEpsilon$ with $Ax$ and $\OurEpsilon/2$ (respectively) in \eqref{eq:prevlind} and recall $\sum_{n \le x}|f(n)|^2 \gg_A \sum_{n \le Ax}|f(n)|^2$ by \eqref{eq:flatsum} with $g=|f|^2$.
\subsection{Bracket process: proof of \texorpdfstring{\Cref{prop:bracket}}{Proposition \ref{prop:bracket}}}\label{sec:bracket}
Suppose $f$ and $\weight$ satisfy the conditions of \Cref{prop:bracket}. By Wirsing's theorem \cite{Wirsing},
\begin{equation}\label{eq:wirsing}
\sum_{n \le x}|f(n)|^2 \sim \frac{C_{|f|^2}}{\Gamma(\theta)} x (\log x)^{\theta-1}
\end{equation}
where
\[ C_g:= \prod_{p} \left(\sum_{i=0}^{\infty} \frac{g(p^i)}{p^i}\right)\left( 1-\frac{1}{p}\right)^{\theta}.\]
The following lemma is proved below.
\begin{lem}\label{lem:tshift}
Let $\theta>0$. Suppose $g\in \mathbf{P}_{\theta}$ is a non-negative multiplicative function satisfying
\[ \sum_{p} \bigg(\frac{g^2(p)}{p^2} +\sum_{i\ge 2}\frac{g(p^i) i}{p^i}\bigg) < \infty.\]
Then, for any $t\ge 0$, we have
\[ \sum_{P(n)\le y} \frac{g(n)}{n^{1+t/\log y}} \sim C_g  \exp\bigg(\theta \gamma -\theta \int_{0}^{t} \frac{1-e^{-s}}{s}ds\bigg) (\log y)^{\theta}, \qquad y\to \infty.\]
\end{lem}
We introduce
\[ s_{t,y} := t^{-\frac{1}{2}}\sum_{P(n) \le y}\alpha(n) f(n)\weight(n/t).\]
We use \Cref{cor:mc-L1} and  \Cref{lem:tshift} to deduce the following result, also proved below.
\begin{lem}\label{lem:plancherelapp}
Let $f$ and $\weight$ be as in \Cref{prop:bracket}. Then, for every $r>0$,
\[C_{|f|^2}^{-1} (\log y)^{-\theta} \int_{0}^{\infty}  \frac{|s_{t,y}|^2 dt}{t^{1+r/\log y}}\xrightarrow[y \to \infty]{p}    \exp\bigg( \theta\gamma -\theta \int_{0}^{r} \frac{1-e^{-s}}{s}ds\bigg) \, V^{\weight}_\infty.\]
\end{lem}
We recall the definition of the $\theta$th Dickman function $\rho_{\theta}\colon (0,\infty)\to(0,\infty)$. For $t\le 1$ it is given by $\rho_{\theta}(t)=t^{\theta-1}/\Gamma(\theta)$ and for $t> 1$ by $t\rho_{\theta}(t)=\theta \int_{t-1}^{t}\rho_{\theta}(v)dv$. It is well known that \cite[Equation (2.5)]{Hensley}\cite[Equation (3.4)]{Smida}
\[ \int_{0}^{\infty} e^{-tv}\rho_{\theta}(v)dv= \exp\left( \theta \gamma - \theta \int_{0}^{t} \frac{1-e^{-s}}{s}ds\right)\]
holds for $t\ge 0$, so \Cref{lem:plancherelapp} may be written equivalently as
\begin{equation}\label{eq:dickmanequiv}
C_{|f|^2}^{-1} (\log y)^{-\theta} \int_{0}^{\infty}  \frac{|s_{t,y}|^2 dt}{t^{1+r/\log y}}\xrightarrow[y \to \infty]{p}   \bigg(\int_{0}^{\infty} e^{-rv}\rho_{\theta}(v)dv \bigg)\, V^{\weight}_\infty.
\end{equation}
For any given $a>0$, we substitute $y=x^a$, $t=x^{1-c}$ and $r=ka$ in \eqref{eq:dickmanequiv}, obtaining
\begin{equation}\label{eq:poly}
C_{|f|^2}^{-1} (\log x)^{1-\theta} \int_{\RR} e^{k(c-1)}|s_{x^{1-c},x^a}|^2 dc \xrightarrow[x \to \infty]{p}   \bigg(a^{\theta}\int_{0}^{\infty} e^{-kav}\rho_{\theta}(v)dv \bigg) V^{\weight}_\infty
\end{equation}
for $k=1,2,\ldots$. By a standard argument (given below) based on the Weierstrass approximation theorem, the following corollary is obtained from \eqref{eq:poly}.
\begin{cor}\label{cor:sandwich}
Let $f$ and $\weight$ be as in \Cref{prop:bracket}. Fix $a>0$ and $0<a_0<a_1$. If $Q\colon (0,\infty) \to [0,\infty)$ is equal to a continuous function times the indicator $\mathbf{1}_{[a_0,a_1]}$, then
\begin{equation}\label{eq:extend}
C_{|f|^2}^{-1} (\log x)^{1-\theta} \int_{\RR} Q(e^{c-1})|s_{x^{1-c},x^a}|^2 dc \xrightarrow[x \to \infty]{p}   \bigg(a^{\theta}\int_{0}^{\infty} Q(e^{-av})\rho_{\theta}(v)dv \bigg) V^{\weight}_\infty. 
\end{equation}
\end{cor}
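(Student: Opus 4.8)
The plan is to derive \eqref{eq:extend} from the already--proven monomial case \eqref{eq:poly} by approximation. Put $c_x := C_f^{-1}(\log x)^{1-\theta}$, and for bounded Borel $\phi$ on $(0,\infty)$ write $J_x(\phi) := c_x\int_\RR\phi(e^{c-1})|s_{x^{1-c},x^a}|^2\,dc$ and $\Lambda(\phi) := a^\theta V^{\weight}_\infty\int_0^\infty\phi(e^{-av})\rho_\theta(v)\,dv$, so that \eqref{eq:poly} reads $J_x(w\mapsto w^k)\xrightarrow{p}\Lambda(w\mapsto w^k)$ for every integer $k\ge1$, and hence $J_x(\phi)\xrightarrow{p}\Lambda(\phi)$ for every polynomial $\phi$ with zero constant term. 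Two facts feed into the approximation. First, since $\weight$ is supported on $[0,A]$, the map $c\mapsto s_{x^{1-c},x^a}$ vanishes for $c>1+\log A/\log x$, so on its support $e^{c-1}\le A^{1/\log x}\le 2$ once $x$ is large; consequently $|J_x(\phi)-J_x(\psi)|\le\|\phi-\psi\|_{L^\infty([0,2])}\,\mathcal T_x$ for all large $x$, where $\mathcal T_x := J_x(\mathbf 1)$ is the total mass. Second --- and this is the crux --- $(\mathcal T_x)_{x}$ is bounded in probability.

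For the tightness of $\mathcal T_x = C_f^{-1}(\log x)^{-\theta}\int_0^\infty|s_{t,x^a}|^2\,dt/t$ (after the substitution $t=x^{1-c}$) I would split the $t$-integral at $t=x^{aB}$ for a fixed large $B$. On $[0,x^{aB}]$ one has $t^{r/\log(x^a)}\le e^{Br}$ for any $r>0$, so $t^{-1}\le e^{Br}t^{-1-r/\log(x^a)}$, and this part of $\mathcal T_x$ is at most $e^{Br}a^\theta$ times $C_f^{-1}(\log(x^a))^{-\theta}\int_0^\infty|s_{t,x^a}|^2\,dt/t^{1+r/\log(x^a)}$, which converges in probability by \Cref{lem:plancherelapp} and is therefore tight. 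For the tail I would bound $\EE|s_{t,x^a}|^2\le\|\weight\|_\infty^2\,t^{-1}\sum_{n\le At,\,P(n)<x^a}|f(n)|^2$ and feed in the de Bruijn--van Lint asymptotics for smooth sums of $|f|^2$ recalled in \Cref{sec:trunc} together with Wirsing's theorem \eqref{eq:wirsing}: parametrising $t=(x^a)^\beta$ gives $\EE|s_{t,x^a}|^2\ll_{A,f}(\log x)^{\theta-1}\beta^{\theta-1}\widetilde{\rho_\theta}(\beta)$, whence $\EE\int_{x^{aB}}^\infty|s_{t,x^a}|^2\,dt/t\ll(\log x)^{\theta}\int_B^\infty\beta^{\theta-1}\widetilde{\rho_\theta}(\beta)\,d\beta=(\log x)^\theta\, o_{B\to\infty}(1)$ by the rapid decay of the generalised Dickman function $\widetilde{\rho_\theta}$. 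Markov's inequality then makes the tail contribution to $\mathcal T_x$ uniformly small as $B\to\infty$, and tightness follows. Controlling precisely this far tail --- where the Plancherel-type input \eqref{eq:dickmanequiv}, being stated only for $r>0$, no longer bounds the unweighted integral --- is the main obstacle of the proof, and it is there that the smooth-sum estimates and the decay of $\widetilde{\rho_\theta}$ are genuinely needed.

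The remaining approximation step is routine bookkeeping. Write $Q=g\,\mathbf 1_{[a_0,a_1]}$ with $g$ continuous and, after replacing $g$ by $\max(g,0)$, nonnegative; fix continuous cutoffs $0\le\chi^-_\eta\le\mathbf 1_{[a_0,a_1]}\le\chi^+_\eta\le1$ supported in $(0,\infty)$ with $\chi^\pm_\eta\to\mathbf 1_{[a_0,a_1]}$ pointwise off $\{a_0,a_1\}$, and set $Q^\pm_\eta:=g\chi^\pm_\eta$, so that $Q^-_\eta\le Q\le Q^+_\eta$ and each $Q^\pm_\eta$ is continuous on $[0,2]$ with $Q^\pm_\eta(0)=0$. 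For fixed $\eta$, applying the Weierstrass theorem to $w\mapsto Q^\pm_\eta(w)/w$ on $[0,2]$ produces, for any $\varepsilon'>0$, a polynomial $P$ with $\sup_{[0,2]}|Q^\pm_\eta(w)-wP(w)|\le2\varepsilon'$ and $wP(w)$ of zero constant term; then $J_x(wP)\xrightarrow{p}\Lambda(wP)$ by the monomial case, $|J_x(Q^\pm_\eta)-J_x(wP)|\le2\varepsilon'\mathcal T_x$ by the first recorded fact, and $|\Lambda(Q^\pm_\eta)-\Lambda(wP)|\le2\varepsilon'a^\theta e^{\theta\gamma}|V^{\weight}_\infty|$ using $\int_0^\infty\rho_\theta=e^{\theta\gamma}$ and $e^{-av}\in[0,2]$. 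Letting $\varepsilon'\to0$, the tightness of $\mathcal T_x$ and the almost sure finiteness of $V^{\weight}_\infty$ give $J_x(Q^\pm_\eta)\xrightarrow{p}\Lambda(Q^\pm_\eta)$. Finally $J_x(Q^-_\eta)\le J_x(Q)\le J_x(Q^+_\eta)$ since $|s_{x^{1-c},x^a}|^2\ge0$, and letting $\eta\to0$ --- so that $\Lambda(Q^\pm_\eta)\to\Lambda(Q)$ and $\Lambda(Q^+_\eta)-\Lambda(Q^-_\eta)\to0$ almost surely by dominated convergence --- a standard sandwich argument yields $J_x(Q)\xrightarrow{p}\Lambda(Q)=a^\theta V^{\weight}_\infty\int_0^\infty Q(e^{-av})\rho_\theta(v)\,dv$, which is precisely \eqref{eq:extend}.
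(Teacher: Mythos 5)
Your skeleton is sound and ultimately close in spirit to the paper's argument: both proofs reduce \eqref{eq:extend} to the monomial case \eqref{eq:poly} via Weierstrass approximation and a sandwich in probability, and your treatment of the endpoint discontinuities of $Q$ (the cutoffs $\chi^{\pm}_\eta$, dominated convergence using that $\rho_\theta(v)\,dv$ gives no mass to the two exceptional points) mirrors the paper's functions $Q_\pm$. The genuine difference is where the approximation error is absorbed. The paper, after reducing to $a_1 \le 1$, builds \emph{one-sided} polynomial bounds $\Pa_- \le Q \le \Pa_+$ on $[0,1]$ with $\Pa_\pm(0)=0$, so that $\Ia_x(\Pa_-) \le \Ia_x(Q) \le \Ia_x(\Pa_+)$ by positivity of the integrand; the approximation error is then measured only against the \emph{limit}, via $\Ia_\infty(\Pa_+) - \Ia_\infty(\Pa_-)$, which is controlled by $V^{\weight}_\infty$ alone, so no control of the mass at finite $x$ is ever needed. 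You instead use the two-sided bound $|J_x(\phi)-J_x(\psi)| \le \|\phi-\psi\|_\infty \mathcal{T}_x$, which forces you to prove tightness of the total mass $\mathcal{T}_x = J_x(\mathbf{1})$ --- an extra ingredient the paper's route avoids entirely.

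That extra ingredient is the weak point of your write-up. Your far-tail estimate $\EE|s_{t,x^a}|^2 \ll (\log x)^{\theta-1}\beta^{\theta-1}\widetilde{\rho_\theta}(\beta)$ for $t=(x^a)^\beta$, $\beta \ge B$, invokes the de Bruijn--van Lint asymptotic uniformly in $\beta$ growing with $x$, whereas the paper records it only as a limit at a fixed ratio; and it uses decay/integrability of $\widetilde{\rho_\theta}$ at infinity, which the paper never states (it only says $\widetilde{\rho_\theta}$ is differentiable with bounded derivative). As written this step is not supported by the cited input. Fortunately the tightness you need is true and has a one-line proof: by \eqref{eq:orth} and Fubini,
\begin{equation*}
\EE[\mathcal{T}_x]
= C_f^{-1}(\log x)^{-\theta}\int_0^\infty \EE|s_{t,x^a}|^2\,\frac{dt}{t}
= C_f^{-1}(\log x)^{-\theta}\,\|\weight\|_2^2 \sum_{P(n)<x^a}\frac{|f(n)|^2}{n}
\xrightarrow[x\to\infty]{} a^{\theta}e^{\gamma\theta}\|\weight\|_2^2
\end{equation*}
by \Cref{lem:tshift} with $t=0$, so Markov's inequality gives tightness of $(\mathcal{T}_x)_x$ directly, with no splitting at $x^{aB}$ and no smooth-number decay. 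With that replacement your argument is complete; it is a legitimate alternative to the paper's proof, at the cost of one extra (but cheap) ingredient that the one-sided sandwich makes unnecessary.
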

We now connect \Cref{cor:sandwich} with the bracket process. If $m_1$ and $m_2$ satisfy $P(m_i)<p$ then
\[\EE\left[ \alpha(pm_1 ) \overline{\alpha(pm_2 )}\mid \Fa_{p^-}\right] = \mathbf{1}_{m_1=m_2}\]
and so we can express $T_{x,\OurEpsilon,\delta}$ as
\[T_{x,\OurEpsilon,\delta} = \big(\sum_{n \le x}|f(n)|^2\big)^{-1}\sum_{k=0}^{K}\sum_{p \in I_k} |f(p)|^2 \bigg| \sum_{\substack{P(m)\le x_{k}}}f(m)\alpha(m)\weight\left(\frac{m}{x/p}\right)\bigg|^2=\sum_{k=0}^{K} T_{x,\OurEpsilon,\delta,k}\]
for 
\[T_{x,\OurEpsilon,\delta,k}:=\big(\sum_{n\le x}|f(n)|^2\big)^{-1} \sum_{p \in I_k} |f(p)|^2 \bigg| \sum_{P(m)\le x_{k}}f(m)\alpha(m)\weight\left(\frac{m}{x/p}\right)\bigg|^2.\]
We introduce
\[R_{a,b}:=\sum_{p \in (x^a,x^b]}\frac{|f(p)|^2}{p}|s_{x/p,x^a}|^2.\]
In this notation,
\begin{equation}\label{eq:TR}
T_{x,\OurEpsilon,\delta,k} = \big(\sum_{n\le x}|f(n)|^2\big)^{-1} \cdot x \cdot R_{\OurEpsilon+k\delta,\OurEpsilon+(k+1)\delta}.
\end{equation}
\begin{lem}\label{lem:closel1}
Let $f$ and $\weight$ be as in \Cref{prop:bracket}. Let $0<a<b$. The statement
\[ (\sum_{n\le x}|f(n)|^2)^{-1} \cdot x \cdot R_{a,b} \xrightarrow[x \to \infty]{p}C \, V^{\weight}_\infty \]
for some constant $C$ is equivalent to 
\[(\sum_{n\le x}|f(n)|^2)^{-1} \cdot x \cdot  \int_{x^{a}}^{x^{b}}\theta\frac{|s_{x/t,x^a}|^2}{t\log t} d t \xrightarrow[x \to \infty]{p}C\, V^{\weight}_\infty.\]
\end{lem}
Using \Cref{lem:closel1} and the change of variables $t=x^c$, we find from \eqref{eq:TR} and \eqref{eq:wirsing} that the statement
\[ T_{x,\OurEpsilon,\delta,k} \xrightarrow[x \to \infty]{p}C(k,\OurEpsilon,\delta) \, V^{\weight}_\infty \]
for some $C(k,\OurEpsilon,\delta)$ is equivalent to 
\begin{equation}\label{eq:equiv} \Gamma(\theta)C_{|f|^2}^{-1} (\log  x)^{1-\theta}  \int_{\OurEpsilon+k\delta}^{\OurEpsilon+(k+1)\delta} \theta c^{-1}|s_{x^{1-c},x^{\OurEpsilon+k\delta}}|^2 d c \xrightarrow[x \to \infty]{p}C(k,\OurEpsilon,\delta) \, V^{\weight}_\infty.
\end{equation}
By \Cref{cor:sandwich} with $Q(e^{c-1})=c^{-1}\mathbf{1}_{c\in [\OurEpsilon+k\delta,\OurEpsilon+(k+1)\delta]}$ and $a=\OurEpsilon+k\delta$, \eqref{eq:equiv} holds with
\[ C(k,\OurEpsilon,\delta) = \Gamma(\theta+1) (\OurEpsilon+k\delta)^{\theta} \int_{\max\{\frac{1-(\OurEpsilon+(k+1)\delta)}{\OurEpsilon+k\delta},0\}}^{\max\{\frac{1-(\OurEpsilon+k\delta)}{\OurEpsilon+k\delta},0\}} \rho_{\theta}(v) (1-(\OurEpsilon+k\delta)v)^{-1}dv.\]
It follows, since $T_{x,\OurEpsilon,\delta} = \sum_{k=0}^{K} T_{x,\OurEpsilon,\delta,k}$, that
\[T_{x,\OurEpsilon,\delta}  \xrightarrow[x \to \infty]{p} C_{\OurEpsilon,\delta} \, V^{\weight}_\infty \]
with
\begin{equation}\label{eq:CG}
C_{\OurEpsilon,\delta}:=\sum_{k=0}^{K} C(k,\OurEpsilon,\delta)=\Gamma(\theta+1) \int_{0}^{\frac{1}{\OurEpsilon}-1} \rho_{\theta}(v) G_{\OurEpsilon,\delta}(v) dv
\end{equation}
where
\[G_{\OurEpsilon,\delta}(v):=\sum_{\substack{0\le k \le K\\ \frac{1-\delta}{v+1}< \OurEpsilon+k\delta\le \frac{1}{v+1}  }}\frac{(\OurEpsilon+k\delta)^{\theta}}{1-(\OurEpsilon+k\delta)v}\]
for $v\in (0,1/\OurEpsilon-1)$. We write $\{t\}$ for the fractional part of $t$: $\{t\}=t-\lfloor t\rfloor$. Then
\begin{equation}\label{eq:Gsimp}
G_{\OurEpsilon,\delta}(v)=\frac{\left( \frac{1+O(\delta)}{v+1}\right)^{\theta} }{1-\frac{1+O(\delta)}{v+1}v}\mathbf{1}_{\left\{(\frac{1}{v+1}-\OurEpsilon)/\delta \right\} < \frac{1}{v+1}}=(1+O_{\OurEpsilon}(\delta))(1+v)^{1-\theta}\mathbf{1}_{\left\{(\frac{1}{v+1}-\OurEpsilon)/\delta \right\} < \frac{1}{v+1}}
\end{equation}
for $v\in (0,1/\OurEpsilon-1)$. We need one more lemma in order to estimate $C_{\OurEpsilon,\delta}$.
\begin{lem}\label{lem:equid}
Let $A,a>0$. If $g \colon [0,A]\to \RR$ is continuous then
\[\lim_{\delta\to0^+}\int_{0}^{A} g(v) \left(\mathbf{1}_{\left\{(\frac{1}{v+1}-a)/\delta \right\} < \frac{1}{v+1}} - \frac{1}{v+1}\right)dv =0.\]
\end{lem}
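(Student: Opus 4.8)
\emph{Overview and Step 1 (change of variables).} The plan is to recognise the statement as a one--dimensional equidistribution fact, after straightening out the function $v\mapsto \tfrac{1}{v+1}-a$, and then prove it by an elementary ``one full period at a time'' averaging argument. The map $\phi(v):=\tfrac{1}{v+1}-a$ is a $C^1$ diffeomorphism of $[0,A]$ onto a compact interval $[c_0,c_1]$ with $\phi'(v)=-\tfrac{1}{(v+1)^2}\neq 0$, so substituting $w=\phi(v)$ and using the identity $\tfrac{1}{v+1}=w+a$ turns the integral in the statement into
\[
\int_{c_0}^{c_1}\psi(w)\Big(\mathbf{1}_{\{w/\delta\}<w+a}-(w+a)\Big)\,dw,
\qquad
\psi(w):=g\big(\phi^{-1}(w)\big)\,\big|(\phi^{-1})'(w)\big|,
\]
where $\psi$ is continuous on $[c_0,c_1]$ and $w+a=\tfrac{1}{\phi^{-1}(w)+1}\in[\tfrac{1}{A+1},1]\subseteq(0,1]$ throughout. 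It therefore suffices to show this quantity tends to $0$ as $\delta\to0^+$; write $\omega_\psi$ for the modulus of continuity of $\psi$.

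\emph{Steps 2--3 (discretisation and averaging over full periods).} Fix a small parameter $\eta>0$ and partition $[c_0,c_1]$ into intervals $J_1,\dots,J_N$ of length at most $\eta$ (so $N\le (c_1-c_0)/\eta+1$), taking $w_i$ to be the midpoint of $J_i$. The key elementary observation is that for any interval $J$ of length $\ell$ and any Borel set $B\subseteq[0,1)$, the $\delta$-periodic function $w\mapsto\mathbf{1}_{\{w/\delta\}\in B}$ has average $|B|$ and $J$ consists of $\lfloor \ell/\delta\rfloor$ full periods plus a leftover of length $<\delta$, whence
\[
\Big|\int_J \mathbf{1}_{\{w/\delta\}\in B}\,dw-|B|\,\ell\Big|\le\delta .
\]
Applying this on each $J_i$ with $B=[0,w_i+a)$ extracts the main term $(w_i+a)\,|J_i|$; applying it with $B=(w_i+a-\eta,\,w_i+a+\eta)\cap[0,1)$, a set of measure $\le 2\eta$, controls the effect of the \emph{moving} threshold, since on $J_i$ we have $|\mathbf{1}_{\{w/\delta\}<w+a}-\mathbf{1}_{\{w/\delta\}<w_i+a}|\le \mathbf{1}_{|\{w/\delta\}-(w_i+a)|<\eta}$, whose integral over $J_i$ is then $\le 2\eta\,|J_i|+\delta$.

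\emph{Step 4 (assembly and limits).} On each $J_i$ replace $\psi(w)$ by $\psi(w_i)$ (an error of at most $\omega_\psi(\eta)\,|J_i|$ since $|\mathbf{1}_{\{w/\delta\}<w+a}-(w+a)|\le1$), and combine the two bounds of Step 3 together with $\int_{J_i}(w+a)\,dw=(w_i+a)\,|J_i|$. Summing over $i$ and using $\sum_i|J_i|=c_1-c_0$ gives
\[
\Big|\int_{c_0}^{c_1}\psi(w)\big(\mathbf{1}_{\{w/\delta\}<w+a}-(w+a)\big)\,dw\Big|
\;\le\;\|\psi\|_\infty\Big((c_1-c_0)\big(\omega_\psi(\eta)+C\eta\big)+C N\delta\Big)
\]
for an absolute constant $C$. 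Letting $\delta\to0^+$ with $\eta$ (hence $N$) fixed removes the last term, so $\limsup_{\delta\to0^+}|\cdots|\le \|\psi\|_\infty(c_1-c_0)(\omega_\psi(\eta)+C\eta)$; letting $\eta\to0^+$ and using the uniform continuity of $\psi$ completes the proof.

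\emph{Main obstacle.} Once the change of variables is in place the argument is routine; the only point needing care is that the threshold $w+a$ inside the indicator varies with $w$ rather than being a fixed set, which is exactly what the second application of the one--period averaging bound in Step 3 handles. (One also uses that $w+a\le1$, so the event $\{w/\delta\}<w+a$ is nontrivial; at the endpoint $w+a=1$ the indicator equals $1$ almost everywhere, consistent with the claimed density $\tfrac{1}{v+1}$.)
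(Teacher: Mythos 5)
Your proof is correct and takes essentially the same route as the paper: the same change of variables straightening $\tfrac{1}{v+1}$, followed by an elementary periodic-averaging (equidistribution) argument for the sawtooth $\{\cdot/\delta\}$ against the slowly varying threshold and weight. The only difference is bookkeeping: the paper reduces to Lipschitz $h$ by density and works period-by-period (each full $\delta$-period contributes $O(\delta^2)$, giving an $O(\delta)$ total), whereas you work on a fixed mesh $\eta$, freeze the threshold at the block midpoints, control the moving threshold by an extra indicator of measure $O(\eta)$, and take $\delta\to0^+$ before $\eta\to0^+$; both are sound.
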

Taking $\delta \to 0^+$, and using \Cref{lem:equid} with $g(v)=\rho_{\theta}(v)(1+v)^{1-\theta}$, we obtain from \eqref{eq:CG} and \eqref{eq:Gsimp} that
\begin{equation}\label{eq:Ceps}
\lim_{\delta\to 0^+}C_{\OurEpsilon,\delta}= \Gamma(\theta+1)\int_{0}^{\frac{1}{\OurEpsilon} - 1}\rho_{\theta}(v)(1+v)^{-\theta}dv.
\end{equation}
The right-hand side of \eqref{eq:Ceps} was shown in \cite[Equation (4.38)]{NPS} to tend to $1$ as $\OurEpsilon\to 0^+$.
\subsubsection{Proof of \texorpdfstring{\Cref{lem:tshift}}{Lemma \ref{lem:tshift}}}
For $t=0$, the claim follows by writing
\[ \sum_{P(n)\le y} \frac{g(n)}{n} = \prod_{p\le y} \left( \sum_{i=0}^{\infty}\frac{g(p^i)}{p^i}\right)\left(1-\frac{1}{p}\right)^{\theta} \prod_{p\le y}\left(1-\frac{1}{p}\right)^{-\theta},\]
and the right-hand side is asymptotic to $C_g e^{\gamma \theta} (\log y)^{\theta}$ by Mertens' theorem \cite[Theorem 2.7]{MV}. To prove the claim for $t>0$, it suffices to show that
\[ \lim_{y \to \infty} \bigg(  \sum_{P(n)\le y} g(n)/n^{1+t/\log y} \bigg)/\bigg(  \sum_{P(n)\le y} g(n)/n\bigg) = \exp\bigg( -\theta \int_{0}^{t} \frac{1-e^{-s}}{s}ds\bigg).\]
Let $a_p(t):=g(p)/p^{1+t}$ and $b_p(t) := \sum_{i \ge 2} g(p^i)/p^{i(1+t)}$, so that we want to prove
\[ \lim_{y\to \infty}\prod_{p\le y} \frac{1+a_p(t/\log y)+b_p(t/\log y)}{1+a_p(0)+b_p(0)}= \exp\bigg( -\theta \int_{0}^{t} \frac{1-e^{-s}}{s}ds\bigg).\]
For $t\ge 0$, $a_p(t)$ and $b_p(t)$ are non-negative decreasing  functions. Moreover, $b_p(0)$ and $a_p(0)$ are uniformly bounded by assumption. Thus
\[\frac{1+a_p(t)+b_p(t)}{1+a_p(0)+b_p(0)} = \exp(a_p(t)-a_p(0) + O(|b_p(t)-b_p(0)|+|a_p(t)-a_p(0)|\cdot  (|a_p(0)|+|b_p(0)|)))\]
uniformly in $p$ and $t\ge 0$. Using $p^{-t}=1+O(t\log p)$ for $t\ge 0$ we have
\[ \sum_{p\le y} |a_p(t)-a_p(0)| \cdot |a_p(0)| \ll  t\sum_{p\le y} \frac{ g^2(p)\log p}{p^2} = o_{y\to \infty}( t\log y) \]
since $\sum_{p} g^2(p)/p^2$ converges. In particular, $\lim_{y\to\infty}\sum_{p\le y} |a_p(t/\log y)-a_p(0)| |a_p(0)| =0$ for fixed $t> 0$. Similarly, 
\[ \sum_{p\le y}|a_p(t)-a_p(0)| \cdot |b_p(0)| \ll t \sum_{p\le  y} \frac{g(p)}{p} \sum_{i\ge 2}\frac{g(p^i) \log p}{p^i}= o_{y\to\infty} (t\log y)\]
since $\sum_{p}g(p)/p \sum_{i\ge 2} g(p^i)/p^i$ converges, and 
\[ \sum_{p\le y}|b_p(t)-b_p(0)| \ll t \sum_{p\le  y} \sum_{i\ge 2}\frac{g(p^i) \log(p^i)}{p^i}= o_{y\to\infty} (t\log y)\]
since $\sum_{p}\sum_{i\ge 2}g(p^i)i/p^i$ converges. It remains to prove that
\[ \lim_{y\to \infty}\sum_{p\le y} (a_p(t/\log y)-a_p(0))= -\theta \int_{0}^{t} \frac{1-e^{-s}}{s}ds\]
for $t\ge 0$. This follows from the fact that $g \in \mathbf{P}_{\theta}$ and integration by parts.

\subsubsection{Proof of \texorpdfstring{\Cref{lem:plancherelapp}}{Lemma \ref{lem:plancherelapp}}}
By \eqref{eq:wirsing}, the expectation of $\int_{0}^{\infty}  |s_{t,y}|^2 t^{-1-r} dt$ is finite and so the integral converges almost surely. By Plancherel’s identity \cite[Theorem 5.4]{MV},
\begin{equation}\label{eq:Planch} \int_{0}^{\infty} \frac{ |s_{t,y}|^2 dt}{t^{1+r}}=\frac{1}{2\pi}\int_{\RR} \big| A_y\big(\tfrac{1}{2}+\tfrac{r}{2}+it\big) K_{\weight}\big(\tfrac{1}{2}+\tfrac{r}{2}+it\big)\big|^2 dt
\end{equation}
holds a.s.~for $r>0$. A direct computation using \eqref{eq:orth} shows that
\begin{equation}\label{eq:directcomp}
\EE [\big|A_y(\tfrac{1}{2}+\tfrac{r}{2}+it\big)|^2] = \sum_{P(n)\le y}\frac{|f(n)|^2}{n^{1+r}}
\end{equation}
for every $t\in \RR$. By \eqref{eq:Planch} and \eqref{eq:directcomp},
\begin{equation}\label{eq:corapp}
\begin{split}
&\bigg(\sum_{P(n)\le y}\frac{|f(n)|^2}{n^{1+r/\log y}}\bigg)^{-1} \int_{0}^{\infty}  \frac{|s_{t,y}|^2 dt}{t^{1+r/\log y}}\\
&\qquad =\frac{1}{2\pi}\int_{\RR} \big|K_{\weight}\big(\sigma_{y^{1/r}}+is\big)\big|^2m_{y,y^{1/r}}(ds)\\
&\qquad = \frac{1}{2\pi}\int_{\RR} \big|K_{\weight}\big(\tfrac{1}{2}+is\big)\big|^2m_{y, y^{1/r}}(ds)+\frac{1}{2\pi}\int_{\RR} \big(\big|K_{\weight}\big(\sigma_{y^{1/r}}+is\big)\big|^2 - \big|K_{\weight}\big(\tfrac{1}{2}+is\big)\big|^2\big)m_{y, y^{1/r}}(ds),
\end{split}
\end{equation}
where $\sigma_t$ and $m_{y,t}$ are introduced in \Cref{thm:mc-L1}. Since $\weight$ is a step function with compact support, for $\Re s > 0$ we have $K_{\weight}(s)=F(s)/s$ where $F$ is a linear combination of exponential functions $a^s$ ($a>0$). In particular, $|K_{\weight}(1/2+it)|^2$ is in $L^1(\RR)$. Thus, \Cref{cor:mc-L1} implies that
\begin{equation}\label{eq:thmcor}
\frac{1}{2\pi}\int_{\RR} \big|K_{\weight}\big(\tfrac{1}{2}+is\big)\big|^2m_{y, y^{1/r}}(ds) \xrightarrow[y \to \infty]{p}   \frac{1}{2\pi}\int_{\RR} \big|K_{\weight}\big(\tfrac{1}{2}+is\big)\big|^2 m_\infty(ds).
\end{equation}
The right-hand side of \eqref{eq:thmcor} is $V^{\weight}_\infty$. The $n$-sum in the left-hand side of \eqref{eq:corapp} can be simplified using \Cref{lem:tshift}. It remains to show that 
\[\int_{\RR} \big(\big|K_{\weight}\big(\sigma_{y^{1/r}}+is\big)\big|^2 - \big|K_{\weight}\big(\tfrac{1}{2}+is\big)\big|^2\big)m_{y, r}(ds) \xrightarrow[y \to \infty]{p}  0.\]
Taking expectations, it suffices to show that
\begin{equation}\label{eq:remain}
\lim_{y \to \infty}\int_{\RR} \left|\big|K_{\weight}\big(\sigma_{y^{1/r}}+is\big)\big|^2 - \big|K_{\weight}\big(\tfrac{1}{2}+is\big)\big|^2\right|ds=0.
\end{equation}
Since $K_{\weight}(s),K'_{\weight}(s)\ll 1/|s|$ for $\Re s \in [1/2,1]$, the mean value theorem yields
\begin{align*} \big|K_{\weight}\big(\sigma_{y^{1/r}}+is\big)\big|^2 - \big|K_{\weight}\big(\tfrac{1}{2}+is\big)\big|^2 &\ll (1+|s|)^{-1}\big(\big|K_{\weight}\big(\sigma_{y^{1/r}}+is\big)\big| - \big|K_{\weight}\big(\tfrac{1}{2}+is\big)\big|\big) \\
 &\ll (1+|s|)^{-2}|\sigma_{y^{1/r}}-1/2| \ll (1+|s|)^{-2}\frac{r}{\log y} 
\end{align*}
which implies \eqref{eq:remain} and concludes the proof of the lemma.
\subsubsection{Proof of \texorpdfstring{\Cref{cor:sandwich}}{Corollary \ref{cor:sandwich}}}
We first reduce to the case $a_1 \le 1$. To do so we need to establish \eqref{eq:extend} when $a_1>a_0\ge 1$. When $Q$ is supported on $[a_0,a_1]$ and $a_0\ge 1$, the right-hand side of \eqref{eq:extend} is $0$ so it suffices to show that the expectation of the absolute value of the left-hand side of \eqref{eq:extend} tends to $0$ as $x\to \infty$. If $a_0\ge 1$ then $Q(e^{c-1})=0$ for $c>1$. Moreover, if $\weight$ is supported on $[0,A]$ then $s_{x^{1-c},x^a}=0$ for $c< 1+ \tfrac{\log A}{\log x}$, and $s_{x^{1-c},x^a}$ is bounded (by a deterministic constant) when $c$ is between $1$ and $1+\tfrac{\log A}{\log x}$. This establishes \eqref{eq:extend} when $a_0\ge 1$. 

From now on suppose $a_1 \le 1$. By \eqref{eq:poly}, \eqref{eq:extend} already holds for any polynomial $Q$ with $Q(0)=0$. For our target $Q$ and each $\OurEpsilon > 0$ consider two continuous functions $Q_\pm$ constructed from $Q$ by linearly interpolating the jumps in an $\OurEpsilon$-neighbourhood of the end points of the interval $[a_0,a_1]$ such that $Q_-(z) \le Q(z) \le Q_+(z)$ and that $Q_\pm(z) = Q(z)=0$ whenever $[z-\OurEpsilon,z+\OurEpsilon]$ does not intersect $[a_0,a_1]$.

Observe that $Q_\pm(z) / z$ is a continuous function (if $\OurEpsilon<a_0$), and by Stone--Weierstrass there exist polynomials $\widetilde{\Pa}_\pm$ such that
\[\left| \widetilde{\Pa}_\pm(z) -  \left(\frac{Q_{\pm}(z)}{z} \pm \OurEpsilon\right) \right| \le \OurEpsilon\]
holds for  $z\in [0,1]$, and in particular the polynomials $\Pa_{\pm}(z) := z \widetilde{\Pa}_\pm(z)$ satisfy $\Pa_{\pm}(0)=0$,
\[\Pa_-(z) \le Q_-(z) \le Q(z) \le Q_+(z) \le \Pa_+(z) \qquad \forall z \in [0,1]
\qquad \text{and} \qquad
\sup_{z \in [0, 1]} |\Pa_\pm(z) - Q_\pm(z)| \le 2\OurEpsilon.\]
Let us now write, for any suitable test functions $g$,
\[\Ia_x( g)
:= C_{|f|^2}^{-1} (\log x)^{1-\theta}\int_{\RR} g(e^{c-1}) |s_{x^{1-c},x^a}|^2 d c,\qquad 
\Ia_\infty(g)
 :=\bigg( a^{\theta}  \int_{0}^{\infty} g(e^{-av}) \rho_{\theta}(v) dv \bigg) V^{\weight}_\infty. \]
For any $\delta > 0$,  we have
\begin{align*}
&  \PP\left(  \Ia_x(Q) - \Ia_\infty(Q)> \delta \right)\\
&\qquad \le  \PP\left(  \Ia_x(\Pa_+) - \Ia_\infty(\Pa_+) + \Ia_\infty(\Pa_+) - \Ia_\infty(\Pa_-)> \delta \right)\\
&\qquad \le  \PP\left(  \Ia_x(\Pa_+) - \Ia_\infty(\Pa_+) > \frac{\delta}{2}\right) + \PP\left( \Ia_\infty(\Pa_+) - \Ia_\infty(\Pa_-)>\frac{ \delta}{2} \right)
 \xrightarrow[x \to \infty]{}
\PP\left( \Ia_\infty(\Pa_+) - \Ia_\infty(\Pa_-) > \frac{\delta}{2}\right)
\end{align*}
since \eqref{eq:extend} holds for $Q=\Pa_{+}$. But then,
\[\Ia_\infty(\Pa_+) - \Ia_\infty(\Pa_-)  \le 4a^{\theta} V^{\weight}_\infty    \int_{0}^{\infty}\left[\OurEpsilon + \|Q\|_{\infty}\left(\mathbf{1}_{|e^{-av} - a_0| \le \OurEpsilon} + \mathbf{1}_{|e^{-av} - a_1| \le \OurEpsilon}\right) \right]\rho_{\theta}(v) dv
\xrightarrow[\OurEpsilon \to 0^+]{a.s.} 0,\]
which means
\[\lim_{x \to \infty}  \PP\left(  \Ia_x(Q) - \Ia_\infty(Q)> \delta \right) = 0
\qquad \text{and similarly} \qquad 
\lim_{x \to \infty} \PP\left(   \Ia_x(Q) - \Ia_\infty(Q) < - \delta \right) = 0,\]
i.e.~$\Ia_x(Q) \xrightarrow[x \to \infty]{p}\Ia_\infty(Q)$, as needed.
\subsection{Proof of \texorpdfstring{\Cref{lem:closel1}}{Lemma \ref{lem:closel1}}}
Let $\weight\colon \RR_{\ge0}\to \CC$ be a step function supported on $[0,A]$. We may assume $b \le 1$ since $s_{x/t,x^a}\equiv 0$ if $t> Ax$, and $s_{x/t,x^a}$ is bounded by a deterministic quantity for $t\in [x,Ax]$ (note $\int_{x}^{Ax}dt/(t\log t)$ and $\sum_{p\in [x,Ax]}|f(p)|^2/p$ are $O(1/\log x)$ since $|f|^2\in \mathbf{P}_{\theta}$).  

It suffices to show that 
\[ \limsup_{x\to \infty}(\log x)^{1-\theta}\EE\bigg| R_{a,b} - \int_{x^a}^{x^b} \theta \frac{|s_{x/t,x^a}|^2}{t\log t}dt\bigg|=0\]
where we used \eqref{eq:wirsing} to simplify the expression $(\sum_{n \le x}|f(n)|^2)^{-1}\cdot x$ which appears in \Cref{lem:closel1}. We show that the last limit is $0$ by fixing an arbitrary $r\in (0,1/2)$ and demonstrating that 
\begin{equation}\label{eq:demonstrate}
\limsup_{x\to \infty}(\log x)^{1-\theta}\EE\bigg| R_{a,b} - \int_{x^a}^{x^b} \theta \frac{|s_{x/t,x^a}|^2}{t\log t}dt\bigg| =O(r).
\end{equation}
We give the proof of \eqref{eq:demonstrate}, where $0<a<b\le 1$ is assumed. We need the following Lipschitz-type property of $s_{x,y}$: when $x_2 \ge x_1$,
\begin{equation}\label{eq:lip}
 |s_{x_1,y}-s_{x_2,y}|\le |s_{x_2,y}|(\sqrt{x_2/x_1}-1) + X_{x_1,x_2}/\sqrt{x_1}
\end{equation}
holds where $X_{x_1,x_2}$ is a non-negative random variable with $\EE[|X_{x_1,x_2}|^2]=\sum_{n}|f(n)|^2 |\weight(n/x_1)-\weight(n/x_2)|^2$. The proof is immediate from the equality \[\sqrt{x_1}s_{x_1,y}-\sqrt{x_2}s_{x_2,y} =\sum_{P(n)\le y} f(n)\alpha(n)(\weight(n/x_1)-\weight(n/x_2)),\]
the triangle inequality and \eqref{eq:orth}. We fix the function 
\[h(t)=rt\] 
and apply \eqref{eq:lip} with $x_1=x/p$ and $x_2=x/t$, obtaining that
\[|s_{x/p,x^a}|^2 = \frac{1}{h(p)} \int_{p-h(p)}^{p}(|s_{x/t,x^a}|^2 +Y_{x,p,t})dt \]
holds for the random variable $Y_{x,p,t}=|s_{x/p,x^a}|^2 - |s_{x/t,x^a}|^2$. We claim that 
\begin{equation}\label{eq:Ybnd}
\EE |Y_{x,p,t}| \ll (\log (2x/p))^{\theta-1} (\sqrt{p/t}-1) + (\log(2x/p))^{\frac{\theta-1}{2}} \sqrt{\sum_{n}|f(n)|^2|\weight(np/x)-\weight(nt/x)|^2/(x/p)}
\end{equation}
for $p-h(p)\le t\le p$. This follows from \eqref{eq:lip} by writing $Y_{x,p,t} = (|s_{x/p,x^a}|+|s_{x/t,x^a}|)(|s_{x/p,x^a}|-|s_{x/t,x^a}|)$, applying Cauchy--Schwarz and recalling
\begin{equation}\label{eq:cs}
\EE [|s_{x,y}|^2]\ll (\log(2+ x))^{\theta-1}
\end{equation}
by \eqref{eq:wirsing}. In fact, the following consequence of \eqref{eq:Ybnd} is sufficient: if $x/p$ is sufficiently large in terms of $r$, then
\begin{equation}\label{eq:Ybnd2}
\EE |Y_{x,p,t}| \ll (\log (2x/p))^{\theta-1} (\sqrt{p/t}-1 +\sqrt{r}) 
\end{equation}
holds uniformly for $t \in [p-h(p),p]$ and $p\le x$ (with implied constant independent of $r$, depending only on $|f|^2$). The bound \eqref{eq:Ybnd2} follows from \eqref{eq:Ybnd} by the fact that $\weight$ is a step function with compact support  and applications of \eqref{eq:wirsing}.

Next, let
\[ M(t) := \sum_{p\in (x^a,x^b]: \, t \in [p-h(p),p]} \frac{|f(p)|^2}{ph(p)} \ge 0\]
so that we have the identity
\begin{equation}\label{eq:Rabeq}
R_{a,b} =\int_{x^{a}-h(x^a)}^{x^b} |s_{x/t,x^a}|^2 M(t) dt + Z_{a,b} \quad \text{where} \quad Z_{a,b}:=\sum_{p\in (x^a,x^b]}\frac{1}{ph(p)}\int_{p-h(p)}^{p}Y_{x,p,t}dt.
\end{equation}
The proof of \eqref{eq:demonstrate} will follow by the triangle inequality from \eqref{eq:Rabeq} by showing the following three limits:
\begin{equation}\label{eq:3ints}
\begin{split}
&\lim_{x\to \infty}(\log x)^{1-\theta} \EE\int_{x^a-h(x^a)}^{x^a}|s_{x/t,x^a}|^2 M(t)dt=0,\\
&\lim_{x\to \infty} (\log x)^{1-\theta} \EE \int_{x^a}^{x^b} |s_{x/t,x^a}|^2 \big|M(t) - \frac{\theta}{t \log t}\big|dt =0,\\
&\limsup_{x\to \infty}(\log x)^{1-\theta}\EE |Z_{a,b}|\ll \sqrt{r}.
\end{split}
\end{equation}
The first  limit in \eqref{eq:3ints} follows from \eqref{eq:cs} and the bound $M(t) \ll 1/(t\log t)$ which holds for $t\in [x^a-h(x^a),x^b]$; this bound follows from the assumption $|f|^2 \in \mathbf{P}_{\theta}$. The second limit in \eqref{eq:3ints} follows (using \eqref{eq:cs}) from $M(t)\ll 1/(t\log t)$ together with the asymptotic formula
\[M(t) \sim \frac{\theta}{t \log t}\]
which holds as $x\to \infty$ uniformly for $t\in (x^{a},x^b(1-r))$, also thanks to the assumption $|f|^2 \in \mathbf{P}_{\theta}$. To treat the last limit in \eqref{eq:3ints} we use \eqref{eq:Ybnd2} (and Chebyshev's bound $\sum_{p\in [x,2x]}1\ll x/\log(2x)$ \cite[Theorem 2.4]{MV} is needed to treat the $p$-s in $Z_{a,b}$ for which $x/p$ is not sufficiently large in terms of $r$).

\subsubsection{Proof of \texorpdfstring{\Cref{lem:equid}}{Lemma \ref{lem:equid}}}
Through the change of variables $u=1/(v+1)$, the claim is the same as 
\[\lim_{\delta\to0^+}\int_{B}^{1} h(u) \left(\mathbf{1}_{\{(u-a)/\delta\}<u}-u\right)du =0\]
for any $0<B<1$ and any continuous $h$. Since Lipschitz functions are dense in $C([B,1])$, we may assume that $h$ is Lipschitz.

For each $k\in \ZZ$ we define $I_k := [B,1] \cap [k\delta+a,(k+1)\delta+a)$. We write $[B,1]$ as the disjoint union $\cup_{k\in J} I_k$ where $J:=\{k \in \ZZ : I_k \neq \emptyset\}$. The size of $J$ is $O(1+\delta^{-1})$. The above integral restricted to a given $I_k$ contributes at most $O(|I_k|)=O(\delta)$ . We have $I_k=[k\delta+a,(k+1)\delta+a)$ for any $k\in J\setminus \{\max J,\min J\}$. To conclude it suffices to show that the integral above restricted to $I_k$ where $k\in J\setminus \{\max J,\min J\}$ is $O(\delta^2)$. For such $k$ we write
\[ \int_{I_k} h(u) \left(\mathbf{1}_{\{(u-a)/\delta\}<u}-u\right)du = h(k\delta+a) \int_{I_k}  \left(\mathbf{1}_{\{(u-a)/\delta\}<u}-u\right)du  + O(\int_{I_k} |h(k\delta+a)-h(u)|du). \]
The first integral is $O(\delta^2)$ by a direct computation and the second one is $O(\delta^2)$ as well since $h$ is  Lipschitz.

\section*{Acknowledgments}
O.G.~is supported by the Israel Science Foundation (grant no.~2088/24). O.G.~is the incumbent of the Rabbi Dr.~Roger Herst Faculty Fellowship, which supported this work. M.D.W.~is supported by the Royal Society Research Grant RG\textbackslash R1\textbackslash 251187.

\appendix
\section{Supremum of stochastic processes}
\begin{thm}[{Generic chaining bound, cf.~\cite[Equation 2.47]{Tal2014}}] \label{thm:chaining}
Let $(X_t)_{t \in T}$ be a collection of zero-mean random variables indexed by elements of a metric space $(T, d)$ satisfying
\[    \PP\left(|X_s - X_t| \ge u\right) \le 2 \exp\left(-\frac{u^2}{2d(s, t)^2}\right) \qquad \forall s, t \in T, \quad u \ge 0.\]
Then there exists some absolute constant $C_1 > 0$ such that
\[ \PP\left(\sup_{s, t \in T} |X_s - X_t| \ge u\right)
\le C_1 \exp\left(-\frac{u^2}{C_1\gamma_2(T, d)^2}\right) \qquad \forall u \ge 0. \]
The special constant $\gamma_2(T, d)$ can be estimated from above by Dudley's entropy bound: there exists some absolute constant $C_2 > 0$ independent of $(T, d)$ such that
\begin{equation}\label{eq:Dudley}
    \gamma_2(T, d) \le C_2 \int_0^\infty  \sqrt{\log N(T, d, r)} dr
\end{equation}
where $N(T, d, r)$ denotes the smallest number of balls of radius $r$ (with respect to $d$) needed to cover $T$.
\end{thm}
\begin{rem}\label{rem:generic-chaining}
When $T$ is some compact interval of $\RR$ with length $|T|$ and $d(s, t) = K|s-t|$ is the Euclidean distance (rescaled by some fixed factor $K > 0$), the cover number with respect to $d$ satisfies $N(T, d, r) \le 1+ \lfloor K|T| / r\rfloor$ for any $r > 0$, and Dudley's entropy bound \eqref{eq:Dudley} yields
\[    \gamma_2(T,d) \ll \int_0^\infty \sqrt{\log  (1+ \lfloor K|T| / r\rfloor)} dr
    = K|T| \int_0^1 \sqrt{\log (1 + \lfloor u^{-1} \rfloor )} du \ll K|T|. \]
\end{rem}

\section{Convergence of abstract random variables}\label{app:abstract}
Throughout this article we often speak of random functions/measures and their convergence, and these concepts can be presented under the unified framework of probability on complete separable metric (Polish) spaces. Before summarising a few abstract probability facts here, let us collect a few function spaces that are used in the main text. In this appendix, $K \subset \RR^d$ denote a compact subset.
\begin{itemize}
\item For any integer $k \ge 0$, denote by $C^k(K)$ the space of functions with uniformly continuous derivatives of all orders $\le k$. This is a Banach space when equipped with the norm
\[    \|u\|_{C^k(K)} := \sum_{ |\alpha| \le k} \sup_{x \in K} |\partial^{\alpha} u(x)| \qquad \text{where} \quad \partial^\alpha u := \frac{\partial^{|\alpha|}u}{\partial x_1^{\alpha_1} \dots \partial x_d^{\alpha_d}}
    \quad  \forall \alpha = (\alpha_1, \dots, \alpha_d) \in \ZZ_{\ge 0}^d,\]
and it is separable (say, by Stone--Weierstrass theorem). When $k=0$ we simply write $C(K) = C^0(K)$.
\item Let $\Ma(K)$ be the space of Radon measures on $K$ equipped with the topology of weak convergence, i.e.~$\nu_n \xrightarrow[n \to \infty]{} \nu_\infty$ in $\Ma(K)$ if $\nu_n(f) \xrightarrow[n \to \infty]{} \nu_\infty(f)$ for all $f \in C(K)$. It is well known that $\Ma(K)$ is a Polish space and the Prokhorov metric completely metrises the topology (see e.g.~\cite[Theorem 4.2 and Lemma 4.3]{Kal2017}).
\end{itemize}
Let $(\Omega, \Fa, \PP)$ be a probability space and $(\Sa, d)$ a complete separable metric space. A random variable is a $\Fa$-measurable function $X\colon \Omega \to \Sa$. Given a collection of ($\Sa$-valued) random variables $(X_n)_{n \ge 1}$ and $X_\infty$, we say:
\begin{itemize}
\item $X_n$ converges almost surely to $X_\infty$ if $\PP\left(\{\omega \in \Omega: \lim_{n \to \infty} X_n(\omega) = X_\infty(\omega)\}\right) = 1$.
\item $X_n$ converges in probability to $X_\infty$ if $\lim_{n \to \infty} \PP\left( d(X_n, X) > \OurEpsilon \right) = 0$ for any $\OurEpsilon > 0$.
\end{itemize}
We now collect some abstract probability results. The first one is the subsequential limit characterisation of convergence in probability.
\begin{lem}[{cf.~\cite[Lemma 5.2]{Kal2021}}]\label{lem:subseq_rv}
Let $X_\infty$, $X_1, X_2, \dots$ be random variables taking values in a separable metric space $(\Sa, d)$. Then $X_n \xrightarrow[n \to \infty]{p} X_\infty$ if and only if the following is true: for any subsequence $(n_k)_k$ there exists a further subsequence $(n_{m_k})_k$ along which we have $X_{n_{m_k}} \xrightarrow[k \to \infty]{a.s.} X_\infty$.
\end{lem}
The next result provides a similar equivalent characterisation of convergence of random measures.
\begin{lem}[{cf.~\cite[Lemma 4.8(iii)]{Kal2017}}]\label{lem:subseq_measure}
Let $\nu_\infty$, $\nu_1, \nu_2, \dots$ be random measures on some compact set $K \subset \RR^d$. Then $\nu_n \xrightarrow[n \to \infty]{p} \nu_\infty$ in $\Ma(K)$ if and only if the following is true: for any subsequence $(n_k)_k$ there exists a further subsequence $(n_{m_k})_k$ along which we have $\nu_{n_{m_k}} \xrightarrow[k \to \infty]{a.s.} \nu_\infty$ in $\Ma(K)$.
\end{lem}

\section{Sobolev inequality}
\begin{thm}\label{thm:sobolev}
Let $I \subset \RR$ be a compact interval. There exists a universal constant $C = C(I)>0$ such that
\[    \sup_{x \in I} |u(x)| \le C \left[ \int_I \left(|u(t)|^2 + |u'(t)|^2 \right) dt \right]^{1/2} =: C\|u\|_{W^{1,2}(I)} \]
for any $u \in C^1(I)$.
\end{thm}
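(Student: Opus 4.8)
The plan is to prove this by combining the fundamental theorem of calculus with an averaging argument over $I$, which yields the bound with an explicit constant and no need for approximation. Write $I=[a,b]$ and $|I|=b-a>0$. Since the fields appearing throughout the paper are complex-valued, I would work with $|u|^2$ rather than $u$ itself: for $u\in C^1(I)$ one has $\frac{d}{dt}|u(t)|^2=2\Re\big(\overline{u(t)}\,u'(t)\big)$, hence the pointwise bound $\big|\frac{d}{dt}|u(t)|^2\big|\le 2|u(t)|\,|u'(t)|$ on $I$.

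First I would fix $x,y\in I$ and apply the fundamental theorem of calculus to $t\mapsto |u(t)|^2$, obtaining
\[
|u(x)|^2=|u(y)|^2+\int_y^x \frac{d}{dt}|u(t)|^2\,dt\le |u(y)|^2+2\int_I |u(t)|\,|u'(t)|\,dt,
\]
where the last integral is independent of $x$ and $y$. Then I would integrate this inequality in $y$ over $I$ and divide by $|I|$, giving
\[
|u(x)|^2\le \frac{1}{|I|}\int_I |u(y)|^2\,dy+2\int_I |u(t)|\,|u'(t)|\,dt .
\]
Applying $2ab\le a^2+b^2$ to the integrand of the second term bounds it by $\int_I\big(|u(t)|^2+|u'(t)|^2\big)\,dt=\|u\|_{W^{1,2}(I)}^2$, while the first term is at most $|I|^{-1}\|u\|_{W^{1,2}(I)}^2$. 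As the resulting estimate is uniform in $x\in I$, taking the supremum yields $\sup_{x\in I}|u(x)|^2\le\big(1+|I|^{-1}\big)\|u\|_{W^{1,2}(I)}^2$, i.e.\ the claim with $C(I)=\sqrt{1+|I|^{-1}}$.

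There is no real obstacle here; the argument is elementary. The only points deserving a line of care are the reduction to $|u|^2$ in the complex-valued setting, handled by the pointwise derivative bound above, and the observation that $u\in C^1(I)$ on a compact interval already lies in $W^{1,2}(I)$, so the fundamental theorem of calculus applies directly and no density/mollification step is needed. If one prefers the normalisation $I=[0,1]$ used elsewhere in the paper, an affine change of variables reduces to that case, but this is not necessary.
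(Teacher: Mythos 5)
Your proof is correct, and it takes a genuinely more elementary route than the paper. The paper deduces the bound from Morrey's inequality, $\|u\|_{C^{0,1/2}(I)} \le C_1 \|u\|_{W^{1,2}(I)}$ (citing Evans), and then controls $\sup_I |u|$ by evaluating $u$ at a point $x_0$ where $|u|$ attains its minimum, using $\min_I |u| \ll \left(\int_I |u|^2\right)^{1/2}$. You instead apply the fundamental theorem of calculus to $|u|^2$, average over the base point $y \in I$, and finish with $2ab \le a^2+b^2$; this is entirely self-contained, needs no external Sobolev/Morrey machinery, handles complex-valued $u$ directly through the identity $\frac{d}{dt}|u|^2 = 2\Re(\overline{u}\,u')$, and produces the explicit constant $C(I) = \sqrt{1+|I|^{-1}}$ (which for $I=[0,1]$ gives $\sqrt{2} \le 2$, consistent with the constant $2$ used in the paper's application in Lemma \ref{lem:randomFourier}). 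What the paper's citation-based argument buys is brevity and the fact that the same scheme extends verbatim to higher-dimensional or H\"older-regularity statements, at the cost of an inexplicit constant; your argument is preferable if one wants the one-dimensional inequality with a clean quantitative constant and no references. All steps check out: the integral $\int_y^x \frac{d}{dt}|u(t)|^2\,dt$ is indeed bounded by $2\int_I |u||u'|$ uniformly in $x,y$, and $C^1(I)$ on a compact interval makes both the FTC step and the membership in $L^2(I)$ automatic.
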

\begin{proof}
This follows from standard Sobolev inequality. For a sketch of proof, recall by Morrey's inequality (see e.g.~\cite[Section 5.6.2]{evans}) that there exists some constant $C_1 > 0$ such that
\[   \|u\|_{C^{0, 1/2}(I)} := \sup_{\substack{x, y \in I \\ x \ne y}} \frac{|u(x) - u(y)|}{|x-y|^{1/2}} \le C_1 \|u\|_{W^{1,2}(I)}. \]
On the other hand, for any $x_0 \in I$ we have
\[ \sup_{x \in I} |u(x)|
\le |u(x_0)| + \sup_{x \in I} |u(x) - u(x_0)|
\ll_I |u(x_0)| + \|u\|_{C^{0, 1/2}(I)}. \]
The desired inequality follows by choosing $x_0 \in I$ such that $|u(x_0)| = \min_{x \in I} |u(x)|$ and noting that $\min_{x \in I} |u(x)| \ll_I \left(\int_I |u(t)|^2 dt\right)^{1/2} \le \|u\|_{W^{1,2}(I)}$.
\end{proof}

\bibliographystyle{abbrv}
\bibliography{references}

\Addresses
\end{document}